\documentclass[12pt]{amsart}
\usepackage[margin = 1in]{geometry}
\usepackage[english]{babel}
\usepackage{amsmath}         
\usepackage{mathtools}       
\usepackage{amssymb}   
\usepackage{amsthm}
\usepackage{mathrsfs}   
\usepackage{colonequals}
\usepackage{enumitem}
\usepackage[dvipsnames]{xcolor}
\usepackage{tikz}
\usepackage{tikz-cd}
\usepackage{pgfplots}
\usepackage{float}
\usepackage{subfig}
\usepackage{graphicx}
\usepackage{comment}

\usepackage[figuresleft]{rotating}
\usepackage{adjustbox}
\usepackage{array}
\usepackage{bbm}

\usepackage{hyperref}
\hypersetup{
	colorlinks=true,
	linkcolor=blue,
	citecolor=blue,
	urlcolor=blue}

\newcommand{\cc}{\mathbb{C}}

\newcommand{\one}{\mathbbm{1}}

\newcommand{\D}{\mathcal D}

\newcommand{\K}{\mathcal K}
\renewcommand{\L}{\mathcal L}
\newcommand{\M}{\mathcal M}
\newcommand{\N}{\mathcal N}

\renewcommand{\P}{\mathcal P}

\let\sectionsign\S
\renewcommand{\S}{\mathcal S}

\newcommand{\qq}{\mathbb{Q}}

\newcommand{\Ind}{\operatorname{Ind}}

\newcommand{\Mbar}{\overline{\mathcal M}}
\newcommand{\Mb}{\overline{\mathcal M}}
\newcommand{\Mbargn}{\overline{\mathcal M}_{g,n}}

\newtheorem{theorem}{Theorem}[section]
\newtheorem{proposition}[theorem]{Proposition}
\newtheorem{lemma}[theorem]{Lemma}
\newtheorem{corollary}[theorem]{Corollary}

\theoremstyle{definition}
\newtheorem{definition}[theorem]{Definition}
\newtheorem{example}[theorem]{Example}

\theoremstyle{remark}

\newtheorem{remark}[theorem]{Remark}

\tikzset{
  graph vertex/.style={
    circle,
    draw,
    line width=.65pt,
    minimum size=5.2mm,
    inner sep=0pt,
    font=\scriptsize
  },
  graph edge/.style={
    line width=.65pt,
    line cap=round
  },
  graph marked points/.style={
    line width=1.5pt,
    line cap=round
  },
  graph cut/.style={
    densely dashed,
    line width=.55pt
  },
  graph label/.style={
    font=\scriptsize,
    inner sep=1pt
  }
}

\newcommand{\GraphBox}[2][1]{%
  \ensuremath{\mathord{\vcenter{\hbox{%
    \begin{tikzpicture}[
      x=1cm,
      y=1cm,
      scale=#1,
      transform shape
    ]
      #2
    \end{tikzpicture}%
  }}}}%
}

\newcommand{\GraphLambdaoneB}[1][1]{%
  \GraphBox[#1]{%
    \node[graph vertex] (c1) at (0,0) {$1$};
    \node[graph vertex] (c2) at (1.55,0) {$1$};
    \node[graph vertex] (c3) at (3.10,0) {$1$};

    \draw[graph marked points] (-0.65,0) -- (c1);
    \draw[graph edge] (c1) -- (c2);
    \draw[graph edge] (c2) -- (c3);

    \node[graph label,anchor=east] at (-0.72,0)
      {$[11]$};

    \node[graph label,above] at (0.38,0.02)
      {$p$};

    \node[graph label] at (0,-0.52)
      {$\omega_{B\cup\{p\}}$};
  }%
}

\newcommand{\GraphLambdatwo}[1][1]{%
  \GraphBox[#1]{%
    \node[graph vertex] (c1) at (0,0) {$1$};
    \node[graph vertex] (c2) at (1.55,0) {$1$};
    \node[graph vertex] (c3) at (3.10,0) {$1$};

    \draw[graph marked points] (-0.65,0) -- (c1);
    \draw[graph marked points] (1.55,0.55) -- (c2);
    \draw[graph edge] (c1) -- (c2);
    \draw[graph edge] (c2) -- (c3);

    \node[graph label,anchor=east] at (-0.72,0)
      {$[11]\backslash \{i\}$};

      \node[graph label,anchor=north] at (1.55,0.9)
      {$i$};

    \node[graph label,above] at (0.38,0.02)
      {$p$};

    \node[graph label] at (0,-0.52)
      {$\omega_{{B }\cup\{p\}}$};
  }%
}

\newcommand{\GraphLambdathree}[1][1]{%
  \GraphBox[#1]{%
    \node[graph vertex] (c1) at (0,0) {$1$};
    \node[graph vertex] (c2) at (1.55,0) {$1$};
    \node[graph vertex] (c3) at (3.10,0) {$1$};

    \draw[graph marked points] (-0.65,0) -- (c1);
    \draw[graph marked points] (3.75,0) -- (c3);
    \draw[graph edge] (c1) -- (c2);
    \draw[graph edge] (c2) -- (c3);

    \node[graph label,anchor=east] at (-0.72,0)
      {$[11]\backslash \{i\}$};

      \node[graph label,anchor=west] at (3.82,0)
      {$i$};

    \node[graph label,above] at (0.38,0.02)
      {$p$};

    \node[graph label] at (0,-0.52)
      {$\omega_{{B }\cup\{p\}}$};
  }%
}

\newcommand{\GraphLambdafour}[1][1]{%
 \GraphBox[#1]{%
    \node[graph vertex] (c1) at (0,0) {$1$};
    \node[graph vertex] (c2) at (1.55,0) {$0$};
    \node[graph vertex] (c3) at (3.10,0) {$2$};

    \draw[graph marked points] (-0.65,0) -- (c1);
    \draw[graph marked points] (1.55,0.55) -- (c2);
    \draw[graph edge] (c1) -- (c2);
    \draw[graph edge] (c2) -- (c3);

    \node[graph label,anchor=east] at (-0.72,0)
      {$[11]\backslash \{i\}$};

      \node[graph label,anchor=north] at (1.55,0.9)
      {$i$};

    \node[graph label,above] at (0.38,0.02)
      {$p$};

    \node[graph label] at (0,-0.52)
      {$\omega_{{B }\cup\{p\}}$};
  }%
}

\newcommand{\GraphLambdafive}[1][1]{%
  \GraphBox[#1]{%
    \node[graph vertex] (a1) at (0,0) {$1$};
    \node[graph vertex] (a2) at (1.55,0) {$1$};
    \node[graph vertex] (a3) at (3.10,0) {$1$};

    \draw[graph marked points] (-0.65,0) -- (a1);
    \draw[graph edge] (a1) -- (a2);
    \draw[graph edge] (a2) -- (a3);
    \draw[graph marked points] (a2) -- (1.55,0.72);

    \node[graph label,anchor=east] at (-0.72,0)
      {$i$};

    \node[graph label,above] at (1.55,0.72)
      {$[11]\setminus\{i\}$};

    \node[graph label,above] at (1.20,0.02)
      {$p$};

    \node[graph label,above] at (1.91,0.02)
      {$q$};

    \node[graph label] at (1.55,-0.58)
      {$\omega_{B'\cup p \cup q}$};

  }%
}

\newcommand{\GraphLambdasix}[1][1]{%
  \GraphBox[#1]{%
    \node[graph vertex] (a1) at (0,0) {$1$};
    \node[graph vertex] (a2) at (1.55,0) {$1$};
    \node[graph vertex] (a3) at (3.10,0) {$1$};

    \draw[graph marked points] (-0.65,0) -- (a1);
    \draw[graph edge] (a1) -- (a2);
    \draw[graph edge] (a2) -- (a3);
    \draw[graph marked points] (a2) -- (1.55,0.72);

    \node[graph label,anchor=east] at (-0.72,0)
      {$\{i,j\}$};

    \node[graph label,above] at (1.55,0.72)
      {$[11]\setminus\{i,j\}$};

    \node[graph label,above] at (1.20,0.02)
      {$p$};

    \node[graph label,above] at (1.91,0.02)
      {$q$};

    \node[graph label] at (1.55,-0.58)
      {$\omega_{B'\cup p \cup q}$};

  }%
}

\newcommand{\GraphLambdaseven}[1][1]{%
  \GraphBox[#1]{%
    \node[graph vertex] (a1) at (0,0) {$1$};
    \node[graph vertex] (a2) at (1.55,0) {$1$};
    \node[graph vertex] (a3) at (3.10,0) {$1$};

    \draw[graph marked points] (-0.65,0) -- (a1);
     \draw[graph marked points] (3.75,0) -- (a3);
    \draw[graph edge] (a1) -- (a2);
    \draw[graph edge] (a2) -- (a3);
    \draw[graph marked points] (a2) -- (1.55,0.72);

    \node[graph label,anchor=east] at (-0.72,0)
      {$i$};
       \node[graph label,anchor=west] at (3.82,0)
      {$j$};

    \node[graph label,above] at (1.55,0.72)
      {$[11]\setminus\{i,j\}$};

    \node[graph label,above] at (1.20,0.02)
      {$p$};

    \node[graph label,above] at (1.91,0.02)
      {$q$};

    \node[graph label] at (1.55,-0.58)
      {$\omega_{B'\cup p \cup q}$};

  }%
}

\newcommand{\GraphLambdanine}[1][1]{%
  \GraphBox[#1]{%
    \node[graph vertex] (a1) at (0,0) {$1$};
    \node[graph vertex] (a2) at (1.55,0) {$1$};
    \node[graph vertex] (a3) at (3.10,0) {$1$};

    \draw[graph edge] (a1) -- (a2);
    \draw[graph edge] (a2) -- (a3);
    \draw[graph marked points] (a2) -- (1.55,0.72);

    \node[graph label,above] at (1.55,0.72)
      {$[11]$};

    \node[graph label,above] at (1.20,0.02)
      {$p$};

    \node[graph label,above] at (1.91,0.02)
      {$q$};

    \node[graph label] at (1.55,-0.58)
      {$\omega_{B}^{S_2}$};

  }%
}

\newcommand{\GraphLambdaten}[1][1]{%
  \GraphBox[#1]{%
    \node[graph vertex] (a1) at (0,0) {$1$};
    \node[graph vertex] (a2) at (1.55,0) {$1$};
    \node[graph vertex] (a3) at (3.10,0) {$1$};

    \draw[graph edge] (a1) -- (a2);
    \draw[graph edge] (a2) -- (a3);
    \draw[graph marked points] (a2) -- (1.55,0.72);
    \draw[graph marked points] (-0.65,0) -- (a1);

    \node[graph label,above] at (1.55,0.72)
      {$[11] \backslash \{i\}$};

        \node[graph label,anchor=east] at (-0.72,0)
      {$i$};

    \node[graph label,above] at (1.20,0.02)
      {$p$};

    \node[graph label,above] at (1.91,0.02)
      {$q$};

    \node[graph label] at (1.55,-0.58)
      {$\omega_{B}^{S_2}$};

  }%
}

\newcommand{\GraphLambdaeleven}[1][1]{%
  \GraphBox[#1]{%
    \node[graph vertex] (e1) at (0,0) {$1$};
    \node[graph vertex] (e2) at (2.15,0) {$1$};

    \draw[graph edge] (e1) -- (e2);

    \draw[graph edge] (e2)
      to[out=45,in=-45,looseness=4.5] (e2);

    \draw[graph marked points] (e1) -- (-0.72,0);

    \node[graph label,above] at (0.40,0.02)
      {$p$};

    \node[graph label,anchor=east] at (-0.79,0)
      {$[11]$};

    \node[graph label] at (0,-0.53)
      {$\omega_{B\cup\{p\}}$};
  }%
}

\newcommand{\GraphLambdatwelve}[1][1]{%
  \GraphBox[#1]{%
    \node[graph vertex] (e1) at (0,0) {$1$};
    \node[graph vertex] (e2) at (2.15,0) {$1$};

    \draw[graph edge] (e1) -- (e2);

    \draw[graph edge] (e2)
      to[out=45,in=-45,looseness=4.5] (e2);
       
    \draw[graph marked points] (e2) -- (2.15,0.72);

    \draw[graph marked points] (e1) -- (-0.72,0);

    \node[graph label,above] at (0.40,0.02)
      {$p$};

    \node[graph label,anchor=east] at (-0.79,0)
      {$[11]\backslash \{i\}$};

      \node[graph label,above] at (2.15,0.72)
      {$i$};

    \node[graph label] at (0,-0.53)
      {$\omega_{B\cup\{p\}}$};
  }%
}

\newcommand{\GraphDLambdathirteen}[1][1]{%
  \GraphBox[#1]{%
    \node[graph vertex] (f1) at (0,0) {$1$};
    \node[graph vertex] (f2) at (2.15,0) {$1$};

    \draw[graph marked points] (-0.72,0) -- (f1);

    \draw[graph edge] (f1)
      to[out=45,in=135,looseness=1.25] (f2);

    \draw[graph edge] (f1)
      to[out=-45,in=-135,looseness=1.25] (f2);

    \node[graph label,anchor=east] at (-0.79,0)
      {$[11]$};

    \node[graph label] at (0.5,0.2)
      {$p$};

    \node[graph label] at (0.5,-0.2)
      {$q$};

    \node[graph label] at (0.0,-0.55)
      {$\omega_{B}^{S_2}$};
  }%
}

\newcommand{\GraphDLambdaforteen}[1][1]{%
  \GraphBox[#1]{%
    \node[graph vertex] (f1) at (0,0) {$1$};
    \node[graph vertex] (f2) at (2.15,0) {$1$};

    \draw[graph marked points] (-0.72,0) -- (f1);

    \draw[graph edge] (f1)
      to[out=45,in=135,looseness=1.25] (f2);

    \draw[graph edge] (f1)
      to[out=-45,in=-135,looseness=1.25] (f2);

    \draw[graph marked points] (f2) -- (2.8,0);

    \node[graph label,anchor=east] at (-0.79,0)
      {$[11]\backslash \{i\}$};

    \node[graph label] at (0.5,0.2)
      {$p$};

        \node[graph label,anchor=west] at (2.8,0)
      {$i$};

    \node[graph label] at (0.5,-0.2)
      {$q$};

    \node[graph label] at (0.0,-0.55)
      {$\omega_{B}^{S_2}$};
  }%
}

\newcommand{\GraphLambdafifteen}[1][1]{%
  \GraphBox[#1]{%
    \node[graph vertex] (d1) at (0,0) {$1$};
    \node[graph vertex] (d2) at (1.90,0) {$2$};

    \draw[graph edge] (d1) -- (d2);
    \draw[graph marked points] (d1) -- (-0.72,0);

    \node[graph label,above] at (0.36,0.02)
      {$p$};

    \node[graph label,above] at (1.53,0.02)
      {$q$};

    \node[graph label,anchor=east] at (-.79,0)
      {$[11]$};

    \node[graph label] at (0,-0.53)
    {$\omega_{B\cup\{p\}}$};

    \node[graph label] at (1.90,-0.53)
      {$\psi_q$};
  }%
}

\newcommand{\GraphLambdasixteen}[1][1]{%
  \GraphBox[#1]{%
    \node[graph vertex] (d1) at (0,0) {$1$};
    \node[graph vertex] (d2) at (1.90,0) {$2$};

    \draw[graph edge] (d1) -- (d2);
    \draw[graph marked points] (d1) -- (-0.72,0);
    \draw[graph marked points] (d2) -- (2.55,0);

    \node[graph label,above] at (0.36,0.02)
      {$p$};

    \node[graph label,above] at (1.53,0.02)
      {$q$};

    \node[graph label,anchor=east] at (-.79,0)
      {$[11] \setminus \{i\}$};

       \node[graph label,anchor=west] at (2.55,0)
      {$i$};

    \node[graph label] at (0,-0.53)
    {$\omega_{B\cup\{p\}}$};

    \node[graph label] at (1.90,-0.53)
      {$\psi_q$};
  }%
}

\newcommand{\GraphLambdaseventeen}[1][1]{%
  \GraphBox[#1]{%
    \node[graph vertex] (d1) at (0,0) {$1$};
    \node[graph vertex] (d2) at (1.90,0) {$2$};

    \draw[graph edge] (d1) -- (d2);
    \draw[graph marked points] (d1) -- (-0.72,0);
    \draw[graph marked points] (d2) -- (2.55,0);

    \node[graph label,above] at (0.36,0.02)
      {$p$};

    \node[graph label,above] at (1.53,0.02)
      {$q$};

    \node[graph label,anchor=east] at (-.79,0)
      {$[11] \setminus \{i\}$};

       \node[graph label,anchor=west] at (2.55,0)
      {$i$};

    \node[graph label] at (0,-0.53)
    {$\omega_{B\cup\{p\}}$};

    \node[graph label] at (1.90,-0.53)
      {$\psi_i$};
  }%
}

\newcommand{\GraphDLambdasixteenpush}[1][1]{%
  \GraphBox[#1]{%
    \node[graph vertex] (f1) at (0,0) {$1$};
    \node[graph vertex] (f2) at (2.15,0) {$2$};

    \draw[graph marked points] (0,.72) -- (f1);

    \draw[graph edge] (f1)
      to[out=45,in=135,looseness=1.25] (f2);

    \draw[graph edge] (f1)
      to[out=-45,in=-135,looseness=1.25] (f2);

      \draw[graph edge] (f1)
      to[out=135,in=215,looseness=5] (f1);

    \node[graph label,anchor=south] at (0,.79)
      {$[11]\backslash \{x,y,s,t\}$};

    \node[graph label] at (0.5,0.21)
      {$p$};

    \node[graph label] at (1.65,0.21)
      {$q$};

      \node[graph label] at (0.5,-0.25)
      {$x$};

    \node[graph label] at (1.65,-0.25)
      {$y$};

       \node[graph label] at (-.2,.4)
      {$s$};

    \node[graph label] at (-.2,-0.35)
      {$t$};

    \node[graph label] at (0,-0.7)
      {$\omega_{B \cup \{p\} } $};

     \node[graph label] at (2.15,-0.55)
      {$\psi_q$ };
      
  }%
}

\newcommand{\GraphMoneBxpush}[1][1]{%
  \GraphBox[#1]{%
    \node[graph vertex] (f1) at (0,0) {$1$};
    \node[graph vertex] (f2) at (2.15,0) {$1$};
    \node[graph vertex] (f3) at (4.30,0) {$1$};

    \draw[graph marked points] (2.80,-.3) -- (f2);

    \draw[graph edge] (f1)
      to[out=35,in=145,looseness=1.25] (f2);

    \draw[graph edge] (f1)
      to[out=-35,in=-145,looseness=1.25] (f2);

      \draw[graph edge] (f2)
      to[out=45,in=135,looseness=6] (f2);

      \draw[graph edge] (f2)--(f3);

    \node[graph label,right] at (2.82,-.4)
      {$[11]\backslash \{x,y,s,t\}$};

    \node[graph label] at (2.6,0.15)
      {$q$};

    \node[graph label] at (1.65,0.17)
      {$p$};

      \node[graph label] at (0.5,-0.17)
      {$x$};

    \node[graph label] at (1.65,-0.17)
      {$y$};

       \node[graph label] at (1.7,.55)
      {$s$};

    \node[graph label] at (2.6,.55)
      {$t$};

    \node[graph label] at (2.15,-0.8)
      {$\omega_{B' \cup \{p\} \cup \{q\} } $};

  }%
}

\newcommand{\GraphMthreeten}[1][1]{%
  \GraphBox[#1]{%
    \node[graph vertex] (a1) at (0,0) {$1$};
    \node[graph vertex] (a2) at (1.55,0) {$1$};
    \node[graph vertex] (a3) at (3.10,0) {$1$};

    \draw[graph marked points] (-0.65,0) -- (a1);
    \draw[graph edge] (a1) -- (a2);
    \draw[graph edge] (a2) -- (a3);
    \draw[graph marked points] (a2) -- (1.55,0.72);

    \node[graph label,anchor=east] at (-0.72,0)
      {$i$};

    \node[graph label,above] at (1.55,0.72)
      {$[10]\setminus\{i\}$};

    \node[graph label,above] at (1.20,0.02)
      {$p$};

    \node[graph label,above] at (1.91,0.02)
      {$q$};

    \node[graph label] at (1.55,-0.58)
      {$\omega_{[10]\setminus i \cup p \cup q}$};

  }%
}

\newcommand{\GraphMfournine}[1][1]{%
  \GraphBox[#1]{%
    \node[graph vertex] (a1) at (0,0) {$1$};
    \node[graph vertex] (a2) at (1.55,0) {$1$};
    \node[graph vertex] (a3) at (3.10,0) {$2$};

    \draw[graph edge] (a1) -- (a2);
    \draw[graph edge] (a2) -- (a3);
    \draw[graph marked points] (a2) -- (1.55,0.72);

    \node[graph label,above] at (1.55,0.72)
      {$[9]$};

    \node[graph label,above] at (1.20,0.02)
      {$p$};

    \node[graph label,above] at (1.91,0.02)
      {$q$};

    \node[graph label] at (1.55,-0.58)
      {$\omega_{[9] \cup p \cup q}$};

  }%
}

\newcommand{\GraphNthreeten}[1][1]{%
  \GraphBox[#1]{%
    \node[graph vertex] (b1) at (0,0) {$1$};
    \node[graph vertex] (b2) at (1.55,0) {$1$};
    \node[graph vertex] (b3) at (3.10,0) {$1$};

    \draw[graph edge] (b1) -- (b2);
    \draw[graph edge] (b2) -- (b3);
    \draw[graph marked points] (b2) -- (1.55,0.72);

    \node[graph label,above] at (1.55,0.72)
      {$[10]$};

    \node[graph label,above] at (1.20,0.02)
      {$p$};

    \node[graph label,above] at (1.91,0.02)
      {$q$};

    \node[graph label] at (1.55,-0.58)
      {$\omega_{[10]}^{S_2}$};
  }%
}

\newcommand{\GraphSthreeten}[1][1]{%
  \GraphBox[#1]{%
    \node[graph vertex] (c1) at (0,0) {$1$};
    \node[graph vertex] (c2) at (1.55,0) {$1$};
    \node[graph vertex] (c3) at (3.10,0) {$1$};

    \draw[graph marked points] (-0.65,0) -- (c1);
    \draw[graph edge] (c1) -- (c2);
    \draw[graph edge] (c2) -- (c3);

    \node[graph label,anchor=east] at (-0.72,0)
      {$[10]$};

    \node[graph label,above] at (0.38,0.02)
      {$p$};

    \node[graph label] at (0.48,-0.52)
      {$\omega_{[10]\cup\{p\}}$};
  }%
}

\newcommand{\GraphSthreetenpushforward}[1][1]{%
  \GraphBox[#1]{%
    \node[graph vertex] (c1) at (0,0) {$1$};
    \node[graph vertex] (c2) at (1.55,0) {$1$};
    \node[graph vertex] (c3) at (3.10,0) {$1$};

    \draw[graph marked points] (-0.65,0) -- (c1);
    \draw[graph edge] (c1) -- (c2);
    \draw[graph edge] (c2) -- (c3);
    \draw[graph edge] (c1)to[out=135,in=45,looseness=4.5] (c1);

    \node[graph label,anchor=east] at (-0.72,0)
      {$[8]$};

    \node[graph label,above] at (0.45,0.02)
      {$p$};

    \node[graph label] at (0.0,-0.52)
      {$\omega_{[10]\cup\{p\}}$};
  }%
}

\newcommand{\GraphPthreeten}[1][1]{%
  \GraphBox[#1]{%
    \node[graph vertex] (d1) at (0,0) {$1$};
    \node[graph vertex] (d2) at (1.90,0) {$2$};

    \draw[graph edge] (d1) -- (d2);
    \draw[graph marked points] (d1) -- (-0.72,0);

    \node[graph label,above] at (0.36,0.02)
      {$p$};

    \node[graph label,above] at (1.53,0.02)
      {$q$};

    \node[graph label,anchor=east] at (-.79,0)
      {$[10]$};

    \node[graph label] at (0,-0.53)
    {$\omega_{[10]\cup\{p\}}$};

    \node[graph label] at (1.90,-0.53)
      {$\psi_q$};
  }%
}

\newcommand{\GraphLthreeten}[1][1]{%
  \GraphBox[#1]{%
    \node[graph vertex] (e1) at (0,0) {$1$};
    \node[graph vertex] (e2) at (2.15,0) {$1$};

    \draw[graph edge] (e1) -- (e2);

    \draw[graph edge] (e2)
      to[out=45,in=-45,looseness=4.5] (e2);

    \draw[graph marked points] (e1) -- (-0.72,0);

    \node[graph label,above] at (0.40,0.02)
      {$p$};

    \node[graph label,anchor=east] at (-0.79,0)
      {$[10]$};

    \node[graph label] at (0,-0.53)
      {$\omega_{[10]\cup\{p\}}$};
  }%
}

\newcommand{\GraphDthreeten}[1][1]{%
  \GraphBox[#1]{%
    \node[graph vertex] (f1) at (0,0) {$1$};
    \node[graph vertex] (f2) at (2.15,0) {$1$};

    \draw[graph marked points] (-0.72,0) -- (f1);

    \draw[graph edge] (f1)
      to[out=45,in=135,looseness=1.25] (f2);

    \draw[graph edge] (f1)
      to[out=-45,in=-135,looseness=1.25] (f2);

    \node[graph label,anchor=east] at (-0.79,0)
      {$[10]$};

    \node[graph label] at (0.5,0.2)
      {$p$};

    \node[graph label] at (0.5,-0.2)
      {$q$};

    \node[graph label] at (0.0,-0.55)
      {$\omega_{[10]}^{S_2}$};
  }%
}

\usepackage{amsmath,amssymb}
\usepackage{tikz}
\usetikzlibrary{shapes.geometric}
\usepackage{array,booktabs,tabularx}

\tikzset{
  graph vertex/.style={
    circle, draw, line width=.65pt,
    minimum size=5.2mm, inner sep=0pt,
    font=\scriptsize
  },
  graph genus vertex/.style={
    graph vertex,
    minimum size=6mm
  },
  graph edge/.style={line width=.65pt,line cap=round},
  graph marked points/.style={line width=1.5pt,line cap=round},
  graph label/.style={font=\scriptsize,inner sep=1pt}
}

\providecommand{\GraphBox}[2][1]{%
  \ensuremath{\mathord{\vcenter{\hbox{%
    \begin{tikzpicture}[x=1cm,y=1cm,scale=#1,transform shape]
      #2
    \end{tikzpicture}%
  }}}}%
}

\newcommand{\GraphGenusLabel}[1]{\scalebox{.68}{$#1$}}

\newcommand{\GraphChartSide}[1][1]{%
  \GraphBox[#1]{%
    \node[graph vertex]      (v1) at (0,0) {$1$};
    \node[graph vertex]      (v2) at (1.45,0) {$a$};
    \node[graph vertex] (v3) at (3.10,0) {$b$};
    \draw[graph edge] (v1)--(v2)--(v3);
    \draw[graph marked points] (v1)--(-.62,.3);
    \draw[graph marked points] (v2)--(1.45,.64);
    \draw[graph marked points] (v3)--(3.72,.30);
    \node[graph label,above] at (-.62,.30) {$(X\cup Y)^c$};
    \node[graph label,above] at (1.45,.64) {$X$};
    \node[graph label,right] at (3.72,.30) {$Y$};
    \node[graph label,above] at (.42,.02) {$p$};
    \node[graph label,below] at (0,-.28) {$\omega_{B\cup\{p\}}$};
  }%
}

\newcommand{\GraphChartMiddleAnti}[1][1]{%
  \GraphBox[#1]{%
    \node[graph vertex]      (v1) at (0,0) {$a$};
    \node[graph vertex]      (v2) at (1.55,0) {$1$};
    \node[graph vertex] (v3) at (3.20,0) {$b$};
    \draw[graph edge] (v1)--(v2)--(v3);
    \draw[graph marked points] (v1)--(-.62,.30);
    \draw[graph marked points] (v2)--(1.55,.64);
    \draw[graph marked points] (v3)--(3.82,.30);
    \node[graph label,left]  at (-.62,.30) {$X$};
    \node[graph label,above] at (1.55,.64) {$(X\cup Y)^c$};
    \node[graph label,right] at (3.82,.30) {$Y$};
    \node[graph label,above] at (1.05,.02) {$p$};
    \node[graph label,above] at (2.05,.02) {$q$};
    \node[graph label,below] at (1.55,-.28) {$\omega_{B'\cup\{p,q\}}$};
  }%
}

\newcommand{\GraphChartMiddleSym}[1][1]{%
  \GraphBox[#1]{%
    \node[graph vertex]      (v1) at (0,0) {$a$};
    \node[graph vertex]      (v2) at (1.55,0) {$1$};
    \node[graph vertex] (v3) at (3.20,0) {$b$};
    \draw[graph edge] (v1)--(v2)--(v3);
    \draw[graph marked points] (v1)--(-.62,.30);
    \draw[graph marked points] (v2)--(1.55,.64);
    \draw[graph marked points] (v3)--(3.82,.30);
    \node[graph label,left]  at (-.62,.30) {$X$};
    \node[graph label,above] at (1.55,.64) {$(X\cup Y)^c$};
    \node[graph label,right] at (3.82,.30) {$Y$};
    \node[graph label,above] at (1.05,.02) {$p$};
    \node[graph label,above] at (2.05,.02) {$q$};
    \node[graph label,below] at (1.55,-.28) {$\omega_B^{S_2}$};
  }%
}

\newcommand{\GraphChartLoop}[1][1]{%
  \GraphBox[#1]{%
    \node[graph vertex]      (v1) at (0,0) {$1$};
    \node[graph genus vertex] (v2) at (1.90,0)
      {\GraphGenusLabel{g-2}};
    \draw[graph edge] (v1)--(v2);
    \draw[graph edge] (v2) to[out=45,in=-45,looseness=4.2] (v2);
    \draw[graph marked points] (v1)--(0,.64);
    \draw[graph marked points] (v2)--(1.90,.64);
    \node[graph label,above] at (0,.68) {$X^c$};
    \node[graph label,above] at (1.9,.68) {$X$};
    \node[graph label,above] at (.50,.02) {$p$};
    \node[graph label,below] at (0,-.28) {$\omega_{B\cup\{p\}}$};
  }%
}

\newcommand{\GraphChartDoubleEdge}[1][1]{%
  \GraphBox[#1]{%
    \node[graph vertex]      (v1) at (0,0) {$1$};
    \node[graph genus vertex] (v2) at (2.05,0)
      {\GraphGenusLabel{g-2}};
    \draw[graph edge] (v1) to[out=45,in=135,looseness=1.15] (v2);
    \draw[graph edge] (v1) to[out=-45,in=-135,looseness=1.15] (v2);
    \draw[graph marked points] (v1)--(-.68,0);
    \draw[graph marked points] (v2)--(2.73,0);
    \node[graph label,left] at (-.68,0) {$X^c$};
    \node[graph label,right] at (2.73,.0) {$X$};
    \node[graph label,above] at (.62,.18) {$p$};
    \node[graph label,below] at (.62,-.18) {$q$};
    \node[graph label,below] at (0,-.28) {$\omega_B^{S_2}$};
  }%
}

\newcommand{\GraphChartPsi}[1][1]{%
  \GraphBox[#1]{%
    \node[graph vertex]      (v1) at (0,0) {$1$};
    \node[graph genus vertex] (v2) at (1.95,0)
      {\GraphGenusLabel{g-1}};
    \draw[graph edge] (v1)--(v2);
    \draw[graph marked points] (v1)--(-.68,0);
    \draw[graph marked points] (v2)--(2.73,0);
    \node[graph label,left] at (-.68,0) {$X^c$};
    \node[graph label,right] at (2.73,0) {$X$};
    \node[graph label,above] at (.48,.02) {$p$};
    \node[graph label,above] at (1.48,.02) {$q$};
    \node[graph label,below] at (0,-.28) {$\omega_{B\cup\{p\}}$};
    \node[graph label,below] at (1.95,-.28) {$\psi_i$};
    \node[graph label] at (1,-1) {$i\in X\cup\{q\}$};
  }%
}

\newcommand{\GraphChartKappa}[1][1]{%
  \GraphBox[#1]{%
    \node[graph vertex]      (v1) at (0,0) {$1$};
    \node[graph genus vertex] (v2) at (1.95,0)
      {\GraphGenusLabel{g-1}};
    \draw[graph edge] (v1)--(v2);
    \draw[graph marked points] (v1)--(-.68,0);
    \draw[graph marked points] (v2)--(2.73,0);
    \node[graph label,left] at (-.68,0) {$X^c$};
    \node[graph label,right] at (2.73,0) {$X$};
    \node[graph label,above] at (.48,.02) {$p$};
    \node[graph label,below] at (0,-.28) {$\omega_{B\cup\{p\}}$};
    \node[graph label,below] at (1.95,-.38) {$\kappa_1$};
  }%
}

\newcommand{\ChartName}[2]{%
  \begin{tabular}{@{}c@{}}
    \(\displaystyle #1\)\\[-.2ex]
    {\scriptsize\itshape\shortstack[c]{#2}}
  \end{tabular}%
}
\newcommand{\ChartRestriction}[2]{%
  \par\noindent
  \hangindent=2.7em\hangafter=1
  \makebox[2.7em][l]{\textup{(R#1)}}#2%
}
\newcommand{\ChartRestrictions}[1]{%
  \begin{minipage}[c]{\linewidth}\small #1\end{minipage}%
}

\newlength{\GeneratorChartRowHeight}
\newcommand{\ChartGeneratorCell}[1]{%
  \begin{minipage}[c][\GeneratorChartRowHeight][c]{\linewidth}
    \centering #1
  \end{minipage}%
}

\newcommand{\GeneratorChart}{%
\begin{figure}[ht]
\centering
\begingroup
\renewcommand{\arraystretch}{1.25}
\renewcommand{\tabularxcolumn}[1]{m{##1}}
\setlength{\tabcolsep}{5pt}
\begin{tabularx}{\textwidth}{@{}
  >{\centering\arraybackslash}m{.16\textwidth}|
  >{\centering\arraybackslash}m{.34\textwidth}|
  >{\raggedright\arraybackslash}X@{}}
\hline
\textbf{Name} & \textbf{Generator} & \textbf{Restrictions}\\
\hline
\ChartName{\mathcal S^a_{B,X,Y}}{side}
& \ChartGeneratorCell{\GraphChartSide[.83]} & \\
\hline
\ChartName{\mathcal M^a_{B',X,Y}}{middle\\antisymmetric}
& \ChartGeneratorCell{\GraphChartMiddleAnti[.83]}
& \ChartRestrictions{
    \ChartRestriction{1}{\(a\leq\frac{g-1}{2}\), and if equality holds, \(\min(X)<\min(Y)\).}
    \ChartRestriction{2}{\((a,b,X,Y)\neq(a,a,\varnothing,\varnothing)\).}
    \ChartRestriction{3}{\((a,|X|)\neq(0,2)\).}}
\\
\hline
\ChartName{\mathcal N^a_{B,X,Y}}{middle symmetric}
& \ChartGeneratorCell{\GraphChartMiddleSym[.83]}
& \ChartRestrictions{
    \ChartRestriction{4}{\(a\leq\frac{g-1}{2}\), and if equality holds, \(\min(X)<\min(Y)\).}
    \ChartRestriction{5}{If \((a,|X|)=(0,2)\), then\\ \(\min(X)<\min(B)\).}}
\\
\hline
\ChartName{\mathcal L_{B,X}}{loop}
& \ChartGeneratorCell{\GraphChartLoop[.83]}
& \ChartRestrictions{\ChartRestriction{6}{If \(g=2\), then \(|X|\geq1\).}}
\\
\hline
\ChartName{\mathcal D_{B,X}}{double edge}
& \ChartGeneratorCell{\GraphChartDoubleEdge[.83]}
& \ChartRestrictions{
    \ChartRestriction{7}{If \(g=2\), then \(|X|\geq2\).}
    \ChartRestriction{8}{If \(g=2\) and \(|X|=2\), then\\ \(\min(X)<\min(B)\).}}
\\
\hline
\ChartName{\mathcal P^i_{B,X}}{psi}
& \ChartGeneratorCell{\GraphChartPsi[.83]}
& \ChartRestrictions{\ChartRestriction{9}{\(g\geq3\).}}
\\
\hline
\ChartName{\mathcal K_{B,X}}{kappa}
& \ChartGeneratorCell{\GraphChartKappa[.83]}
& \ChartRestrictions{\ChartRestriction{10}{\(g\geq4\).}}
\\
\hline
\end{tabularx}
\endgroup
\caption{Generators for $S^{13,2}_\omega(\Mb_{g,n})$. Here, we have $X,Y \subset [n]$ of any size such that the resulting graph
is stable. Additionally, $B \subset (X \cup Y)^c$, or $B \subset X^c$ when
$Y$ is not defined, with $|B|=10$. In the second row,
$B' \subset (X \cup Y)^c$ with $|B'|=9$. In the first, second, and third rows, $b=g-a-1$. }
\label{fig:gentable}
\end{figure}
}

\usepackage{amsmath}
\usepackage{tikz}

\tikzset{
  relation vertex/.style={
    circle,
    draw,
    line width=.65pt,
    minimum size=5.2mm,
    inner sep=0pt,
    font=\scriptsize
  },
  relation edge/.style={
    line width=.65pt,
    line cap=round
  },
  relation marking/.style={
    line width=1.45pt,
    line cap=round
  },
  relation label/.style={
    font=\scriptsize,
    inner sep=1pt
  },
  relation decoration/.style={
    font=\scriptsize,
    inner sep=1pt
  }
}

\usepackage{amsmath}
\usepackage{tikz}

\tikzset{
  equation graph vertex/.style={
    circle,
    draw,
    line width=.65pt,
    minimum size=5.2mm,
    inner sep=0pt,
    font=\scriptsize
  },
  equation graph edge/.style={
    line width=.65pt,
    line cap=round
  },
  equation graph marking/.style={
    line width=1.45pt,
    line cap=round
  },
  equation graph label/.style={
    font=\scriptsize,
    inner sep=1pt
  },
  equation graph decoration/.style={
    font=\scriptsize,
    inner sep=1pt
  }
}

\newcommand{\EquationGraphBox}[2][1]{%
  \ensuremath{\mathord{\vcenter{\hbox{%
    \begin{tikzpicture}[
      x=1cm,
      y=1cm,
      scale=#1,
      transform shape
    ]
      #2
    \end{tikzpicture}%
  }}}}%
}

\newcommand{\GraphEquationChainS}[1][1]{%
  \EquationGraphBox[#1]{%
    \node[equation graph vertex] (a) at (0,0) {$1$};
    \node[equation graph vertex] (z) at (1.30,0) {$0$};
    \node[equation graph vertex] (b) at (2.60,0) {$1$};

    \draw[equation graph edge] (a)--(z)--(b);
    \draw[equation graph marking] (a)--(0,.62);
    \draw[equation graph marking] (z)--(1.30,.62);

    \node[equation graph label,above] at (0,.64) {$X$};
    \node[equation graph label,above] at (.40,.02) {$p$};
    \node[equation graph label,above] at (1.30,.64) {$s$};
    \node[equation graph decoration,below] at (0,-.29)
      {$\omega_{X\cup\{p\}}$};
  }%
}

\newcommand{\GraphEquationLoopS}[1][1]{%
  \EquationGraphBox[#1]{%
    \node[equation graph vertex] (a) at (0,0) {$1$};
    \node[equation graph vertex] (z) at (1.38,0) {$0$};

    \draw[equation graph edge] (a)--(z);
    \draw[equation graph edge]
      (z) to[out=125,in=55,looseness=5.0] (z);
    \draw[equation graph marking] (a)--(0,.62);
    \draw[equation graph marking] (z)--(2.08,0);

    \node[equation graph label,above] at (0,.64) {$X$};
    \node[equation graph label,above] at (.42,.02) {$p$};
    \node[equation graph label,right] at (2.08,0) {$s$};
    \node[equation graph decoration,below] at (0,-.29)
      {$\omega_{X\cup\{p\}}$};
  }%
}

\newcommand{\GraphEquationPsiS}[1][1]{%
  \EquationGraphBox[#1]{%
    \node[equation graph vertex] (a) at (0,0) {$1$};
    \node[equation graph vertex] (b) at (1.45,0) {$1$};

    \draw[equation graph edge] (a)--(b);
    \draw[equation graph marking] (a)--(0,.62);
    \draw[equation graph marking] (b)--(2.15,0);

    \node[equation graph label,above] at (0,.64) {$X$};
    \node[equation graph label,above] at (.43,.02) {$p$};
    \node[equation graph label,right] at (2.15,0) {$s$};
    \node[equation graph decoration,below] at (0,-.29)
      {$\omega_{X\cup\{p\}}$};
    \node[equation graph decoration,below] at (1.45,-.29)
      {$-\psi_s$};
  }%
}

\newcommand{\GraphEquationDoubleEdgeS}[1][1]{%
  \EquationGraphBox[#1]{%
    \node[equation graph vertex] (a) at (0,0) {$1$};
    \node[equation graph vertex] (z) at (1.60,0) {$0$};

    \draw[equation graph edge]
      (a) to[out=45,in=135,looseness=1.15] (z);
    \draw[equation graph edge]
      (a) to[out=-45,in=-135,looseness=1.15] (z);
    \draw[equation graph marking] (a)--(0,.62);
    \draw[equation graph marking] (z)--(2.30,0);

    \node[equation graph label,above] at (0,.64) {$X$};
    \node[equation graph label,above] at (.57,.15) {$p$};
    \node[equation graph label,below] at (.57,-.15) {$q$};
    \node[equation graph label,right] at (2.30,0) {$s$};
    \node[equation graph decoration,below] at (0,-.29)
      {$\omega^{S_2}$};
  }%
}

\usepackage{tikz}

\tikzset{
  auxiliary graph vertex/.style={
    circle,
    draw,
    line width=.65pt,
    minimum size=5.2mm,
    inner sep=0pt,
    font=\scriptsize
  },
  auxiliary graph edge/.style={
    line width=.65pt,
    line cap=round
  },
  auxiliary graph marking/.style={
    line width=1.45pt,
    line cap=round
  },
  auxiliary graph label/.style={
    font=\scriptsize,
    inner sep=1pt
  },
  auxiliary graph decoration/.style={
    font=\scriptsize,
    inner sep=1pt
  }
}

\newcommand{\AuxiliaryGraphBox}[2][1]{%
  \ensuremath{\mathord{\vcenter{\hbox{%
    \begin{tikzpicture}[
      x=1cm,
      y=1cm,
      scale=#1,
      transform shape
    ]
      #2
    \end{tikzpicture}%
  }}}}%
}

\newcommand{\GraphChainXComplementY}[1][1]{%
  \AuxiliaryGraphBox[#1]{%
    \node[auxiliary graph vertex] (z) at (0,0) {$0$};
    \node[auxiliary graph vertex] (a) at (1.40,0) {$1$};
    \node[auxiliary graph vertex] (b) at (2.90,0) {$1$};

    \draw[auxiliary graph edge] (z)--(a)--(b);
    \draw[auxiliary graph marking] (z)--(-.62,.48);
    \draw[auxiliary graph marking] (a)--(1.40,.70);
    \draw[auxiliary graph marking] (b)--(3.52,.48);

    \node[auxiliary graph label,above] at (-.75,.55) {$X$};
    \node[auxiliary graph label,above] at (1.40,.70)
      {$[n]\setminus(X\cup Y)$};
    \node[auxiliary graph label,above] at (3.7,.48) {$Y$};
    \node[auxiliary graph label,below] at (1,-.02) {$p$};
    \node[auxiliary graph label,below] at (1.8,-.02) {$q$};
    \node[auxiliary graph decoration,below] at (1.40,-.31)
      {$\omega_{B\cup\{p\}}$};
  }%
}

\newcommand{\GraphKunnethElevenFour}[1][1]{%
  \AuxiliaryGraphBox[#1]{%
    \node[auxiliary graph vertex] (a) at (0,0) {$1$};
    \node[auxiliary graph vertex] (b) at (1.75,0) {$1$};

    \draw[auxiliary graph edge] (a)--(b);
    \draw[auxiliary graph marking] (a)--(-.63,.48);
    \draw[auxiliary graph marking] (b)--(2.38,.48);

    \node[auxiliary graph label,above] at (-.8,.48)
      {$[n]\setminus X'$};
    \node[auxiliary graph label,above] at (2.55,.55) {$X'$};
    \node[auxiliary graph label,above] at (.4,.02) {$s$};
    \node[auxiliary graph label,above] at (1.4,.02) {$t$};
    \node[auxiliary graph label,above] at (0.875,.7) {$H^{11}\otimes H^4$};
  }%
}

\newcommand{\GraphKunnethThirteenTwo}[1][1]{%
  \AuxiliaryGraphBox[#1]{%
    \node[auxiliary graph vertex] (a) at (0,0) {$1$};
    \node[auxiliary graph vertex] (b) at (1.75,0) {$1$};

    \draw[auxiliary graph edge] (a)--(b);
    \draw[auxiliary graph marking] (a)--(-.63,.48);
    \draw[auxiliary graph marking] (b)--(2.38,.48);

    \node[auxiliary graph label,above] at (-.8,.48)
      {$[n]\setminus X'$};
    \node[auxiliary graph label,above] at (2.55,.55) {$X'$};
    \node[auxiliary graph label,above] at (.4,.02) {$s$};
    \node[auxiliary graph label,above] at (1.4,.02) {$t$};
    \node[auxiliary graph label,above] at (0.875,.7) {$H^{13}\otimes H^2$};
  }%
}

\usepackage{amsmath,amssymb}
\usepackage{tikz}

\tikzset{
  case graph vertex/.style={
    circle,
    draw,
    line width=.65pt,
    minimum size=5.2mm,
    inner sep=0pt,
    font=\scriptsize
  },
  case graph edge/.style={line width=.65pt,line cap=round},
  case graph marking/.style={line width=1.45pt,line cap=round},
  case graph label/.style={font=\scriptsize,inner sep=1pt},
  case graph decoration/.style={font=\scriptsize,inner sep=1pt}
}

\newcommand{\CaseGraphBox}[2][1]{%
  \ensuremath{\mathord{\vcenter{\hbox{%
    \begin{tikzpicture}[x=1cm,y=1cm,scale=#1,transform shape]
      #2
    \end{tikzpicture}%
  }}}}%
}

\newcommand{\GraphKunnethThirteenTwoGenusZero}[1][1]{%
  \AuxiliaryGraphBox[#1]{%
    \node[auxiliary graph vertex] (a) at (0,0) {$2$};
    \node[auxiliary graph vertex] (b) at (1.75,0) {$0$};

    \draw[auxiliary graph edge] (a)--(b);
    \draw[auxiliary graph marking] (a)--(-.63,.48);
    \draw[auxiliary graph marking] (b)--(2.38,.48);

    \node[auxiliary graph label,above] at (-.8,.48)
      {$[n]\setminus X'$};
    \node[auxiliary graph label,above] at (2.55,.55) {$X'$};
    \node[auxiliary graph label,above] at (.4,.02) {$s$};
    \node[auxiliary graph label,above] at (1.4,.02) {$t$};
    \node[auxiliary graph label,above] at (0.875,.7) {$H^{13}\otimes H^2$};
  }%
}

\newcommand{\BoundaryCaseOneSource}[1][1]{%
  \CaseGraphBox[#1]{%
    \node[case graph vertex] (a) at (0,0) {$1$};
    \node[case graph vertex] (z) at (1.30,0) {$0$};
    \node[case graph vertex] (b) at (2.60,0) {$1$};
    \draw[case graph edge] (a)--(z)--(b);
    \draw[case graph marking] (a)--(-.56,.46);
    \draw[case graph marking] (z)--(1.30,.63);
    \draw[case graph marking] (b)--(3.16,.46);
    \node[case graph label,above] at (-.75,.5)
      {$[n]\setminus X'$};
    \node[case graph label,above] at (1.30,.7) {$X$};
    \node[case graph label,above] at (3.3,.52) {$S$};
    \node[case graph label,above] at (.43,.02) {$p$};
    \node[case graph decoration,below] at (0,-.29)
      {$\omega_{B\cup\{p\}}$};
  }%
}

\newcommand{\BoundaryCaseOneTarget}[1][1]{%
  \CaseGraphBox[#1]{%
    \node[case graph vertex] (a) at (0,0) {$1$};
    \node[case graph vertex] (z) at (1.30,0) {$0$};
    \node[case graph vertex] (b) at (2.60,0) {$1$};
    \draw[case graph edge] (a)--(z)--(b);
    \draw[case graph marking] (a)--(-.56,.46);
    \draw[case graph marking] (z)--(1.30,.63);
    \draw[case graph marking] (b)--(3.16,.46);
    \node[case graph label,above] at (-.75,.5)
      {$[n]\setminus X'$};
    \node[case graph label,above] at (1.30,.7) {$X$};
    \node[case graph label,above] at (3.3,.52) {$S$};
    \node[case graph label,above] at (.43,.02) {$s$};
    \node[case graph label,above] at (.92,.02) {$t$};
    \node[case graph decoration,below] at (0,-.29)
      {$\omega_{B\cup\{s\}} \cdot (-\psi_s)$};
  }%
}

\newcommand{\BoundaryCaseTwoSource}[1][1]{%
  \CaseGraphBox[#1]{%
    \node[case graph vertex] (a) at (0,0) {$1$};
    \node[case graph vertex] (z) at (1.30,0) {$0$};
    \node[case graph vertex] (b) at (2.60,0) {$1$};
    \draw[case graph edge] (a)--(z)--(b);
    \draw[case graph marking] (a)--(-.56,.46);
    \draw[case graph marking] (z)--(1.30,.63);
    \draw[case graph marking] (b)--(3.16,.46);
    \node[case graph label,above] at (-.56,.5)
      {$A\subsetneq[n]\setminus X'$};
    \node[case graph label,above] at (1.30,.7) {$X$};
    \node[case graph label,above] at (3.3,.52) {$X'$};
    \node[case graph label,above] at (.43,.02) {$p$};
    \node[case graph decoration,below] at (0,-.29)
      {$\omega_{B\cup\{p\}}$};
  }%
}

\newcommand{\BoundaryCaseTwoTarget}[1][1]{%
  \CaseGraphBox[#1]{%
    \node[case graph vertex] (a) at (0,0) {$1$};
    \node[case graph vertex] (z) at (1.30,0) {$0$};
    \node[case graph vertex] (b) at (2.60,0) {$1$};
    \draw[case graph edge] (a)--(z)--(b);
    \draw[case graph marking] (a)--(-.56,.46);
    \draw[case graph marking] (z)--(1.30,.63);
    \draw[case graph marking] (b)--(3.16,.46);
    \node[case graph label,above] at (-.7,.52) {$A$};
    \node[case graph label,above] at (1.30,.7) {$X$};
    \node[case graph label,above] at (3.3,.52) {$X'$};
    \node[case graph label,above] at (.43,.02) {$p$};
    \node[case graph label,above] at (1.73,.02) {$s$};
    \node[case graph label,above] at (2.22,.02) {$t$};
    \node[case graph decoration,below] at (0,-.29)
      {$\omega_{B\cup\{p\}}$};
    \node[case graph decoration,below] at (2.60,-.29) {$-\psi_t$};
  }%
}

\newcommand{\BoundaryCaseThreeSource}[1][1]{%
  \CaseGraphBox[#1]{%
    \node[case graph vertex] (a) at (0,0) {$1$};
    \node[case graph vertex] (z) at (1.30,0) {$0$};
    \node[case graph vertex] (b) at (2.60,0) {$1$};
    \draw[case graph edge] (a)--(z)--(b);
    \draw[case graph marking] (a)--(-.56,.46);
    \draw[case graph marking] (z)--(1.30,.63);
    \draw[case graph marking] (b)--(3.16,.46);
    \node[case graph label,above] at (-.56,.5)
      {$A\subsetneq[n]\setminus X'$};
    \node[case graph label,above] at (1.30,.7) {$X$};
    \node[case graph label,above] at (3.2,.52)
      {$S\subsetneq X'$};
    \node[case graph label,above] at (.43,.02) {$p$};
    \node[case graph decoration,below] at (0,-.29)
      {$\omega_{B\cup\{p\}}$};
  }%
}

\newcommand{\BoundaryCaseThreeTarget}[1][1]{%
  \CaseGraphBox[#1]{%
    \node[case graph vertex] (a)  at (0,0) {$1$};
    \node[case graph vertex] (z1) at (1.10,0) {$0$};
    \node[case graph vertex] (z2) at (2.25,0) {$0$};
    \node[case graph vertex] (b)  at (3.35,0) {$1$};
    \draw[case graph edge] (a)--(z1)--(z2)--(b);
    \draw[case graph marking] (a)--(-.54,.46);
    \draw[case graph marking] (z1)--(1.10,.63);
    \draw[case graph marking] (z2)--(2.25,.63);
    \draw[case graph marking] (b)--(3.89,.46);
    \node[case graph label,above] at (-.7,.52) {$A$};
    \node[case graph label,above] at (1.10,.7) {$X\setminus X'$};
    \node[case graph label,above] at (2.25,.7) {$X'\setminus S$};
    \node[case graph label,above] at (3.95,.52) {$S$};
    \node[case graph label,above] at (.38,.02) {$p$};
    \node[case graph label,above] at (1.48,.02) {$s$};
    \node[case graph label,above] at (1.86,.02) {$t$};
    \node[case graph decoration,below] at (0,-.29)
      {$\omega_{B\cup\{p\}}$};
  }%
}

\newcommand{\BoundaryCaseFourSource}[1][1]{%
  \CaseGraphBox[#1]{%
    \node[case graph vertex] (x) at (0,0) {$0$};
    \node[case graph vertex] (a) at (1.30,0) {$1$};
    \node[case graph vertex] (b) at (2.60,0) {$1$};
    \draw[case graph edge] (x)--(a)--(b);
    \draw[case graph marking] (x)--(-.56,.46);
    \draw[case graph marking] (a)--(1.30,.63);
    \draw[case graph marking] (b)--(3.16,.46);
    \node[case graph label,above] at (-.7,.52) {$X$};
    \node[case graph label,above] at (1.30,.7) {$A$};
    \node[case graph label,above] at (3.3,.52) {$X'$};
    \node[case graph label,above] at (.9,.02) {$p$};
    \node[case graph label,above] at (1.73,.02) {$q$};
    \node[case graph decoration,below] at (1.30,-.29)
      {$\omega_{B\cup\{p\}}$};
  }%
}

\newcommand{\BoundaryCaseFourTarget}[1][1]{%
  \CaseGraphBox[#1]{%
    \node[case graph vertex] (x) at (0,0) {$0$};
    \node[case graph vertex] (a) at (1.30,0) {$1$};
    \node[case graph vertex] (b) at (2.60,0) {$1$};
    \draw[case graph edge] (x)--(a)--(b);
    \draw[case graph marking] (x)--(-.56,.46);
    \draw[case graph marking] (a)--(1.30,.63);
    \draw[case graph marking] (b)--(3.16,.46);
    \node[case graph label,above] at (-.7,.52) {$X$};
    \node[case graph label,above] at (1.30,.7) {$A$};
    \node[case graph label,above] at (3.3,.52) {$X'$};
    \node[case graph label,above] at (.9,.02) {$p$};
    \node[case graph label,above] at (1.73,.02) {$s$};
    \node[case graph label,above] at (2.17,.02) {$t$};
    \node[case graph decoration,below] at (1.30,-.29)
      {$\omega_{B\cup\{p\}}$};
    \node[case graph decoration,below] at (2.60,-.29) {$-\psi_t$};
  }%
}

\newcommand{\BoundaryCaseFiveSource}[1][1]{%
  \CaseGraphBox[#1]{%
    \node[case graph vertex] (x) at (0,0) {$0$};
    \node[case graph vertex] (a) at (1.30,0) {$1$};
    \node[case graph vertex] (b) at (2.60,0) {$1$};
    \draw[case graph edge] (x)--(a)--(b);
    \draw[case graph marking] (x)--(-.56,.46);
    \draw[case graph marking] (a)--(1.30,.63);
    \draw[case graph marking] (b)--(3.16,.46);
    \node[case graph label,above] at (-.56,.5)
      {$X\subseteq[n]\setminus X'$};
    \node[case graph label,above] at (1.30,.7) {$A$};
    \node[case graph label,above] at (3.2,.55)
      {$S\subsetneq X'$};
    \node[case graph label,above] at (.9,.02) {$p$};
    \node[case graph label,above] at (1.73,.02) {$q$};
    \node[case graph decoration,below] at (1.30,-.29)
      {$\omega_{B\cup\{p\}}$};
  }%
}

\newcommand{\BoundaryCaseFiveTarget}[1][1]{%
  \CaseGraphBox[#1]{%
    \node[case graph vertex] (x)  at (0,0) {$0$};
    \node[case graph vertex] (a)  at (1.10,0) {$1$};
    \node[case graph vertex] (z)  at (2.25,0) {$0$};
    \node[case graph vertex] (b)  at (3.35,0) {$1$};
    \draw[case graph edge] (x)--(a)--(z)--(b);
    \draw[case graph marking] (x)--(-.54,.46);
    \draw[case graph marking] (a)--(1.10,.63);
    \draw[case graph marking] (z)--(2.25,.63);
    \draw[case graph marking] (b)--(3.89,.46);
    \node[case graph label,above] at (-.7,.52) {$X$};
    \node[case graph label,above] at (1.10,.67) {$A\setminus X'$};
    \node[case graph label,above] at (2.25,.67) {$X'\setminus S$};
    \node[case graph label,above] at (3.99,.52) {$S$};
    \node[case graph label,above] at (.75,.02) {$p$};
    \node[case graph label,above] at (1.48,.02) {$s$};
    \node[case graph label,above] at (1.86,.02) {$t$};
    \node[case graph decoration,below] at (1.10,-.29)
      {$\omega_{B\cup\{p\}}$};
  }%
}

\newcommand{\BoundaryCaseSixSource}[1][1]{%
  \CaseGraphBox[#1]{%
    \node[case graph vertex] (a) at (0,0) {$1$};
    \node[case graph vertex] (b) at (1.30,0) {$1$};
    \node[case graph vertex] (x) at (2.60,0) {$0$};
    \draw[case graph edge] (a)--(b)--(x);
    \draw[case graph marking] (a)--(-.56,.46);
    \draw[case graph marking] (b)--(1.30,.63);
    \draw[case graph marking] (x)--(3.16,.46);
    \node[case graph label,above] at (-.6,.55)
      {$[n]\setminus X'$};
    \node[case graph label,above] at (1.30,.7) {$S$};
    \node[case graph label,above] at (3.3,.52) {$X$};
    \node[case graph label,above] at (.43,.02) {$p$};
    \node[case graph decoration,below] at (0,-.29)
      {$\omega_{B\cup\{p\}}$};
  }%
}

\newcommand{\BoundaryCaseSixTarget}[1][1]{%
  \CaseGraphBox[#1]{%
    \node[case graph vertex] (a) at (0,0) {$1$};
    \node[case graph vertex] (b) at (1.30,0) {$1$};
    \node[case graph vertex] (x) at (2.60,0) {$0$};
    \draw[case graph edge] (a)--(b)--(x);
    \draw[case graph marking] (a)--(-.56,.46);
    \draw[case graph marking] (b)--(1.30,.63);
    \draw[case graph marking] (x)--(3.16,.46);
    \node[case graph label,above] at (-.6,.55)
      {$[n]\setminus X'$};
    \node[case graph label,above] at (1.30,.7) {$S$};
    \node[case graph label,above] at (3.3,.52) {$X$};
    \node[case graph label,above] at (.43,.02) {$s$};
    \node[case graph label,above] at (.92,.02) {$t$};
    \node[case graph decoration,below] at (0,-.29)
      {$\omega_{B\cup\{s\}}\cdot(-\psi_s)$};
  }%
}

\newcommand{\BoundaryCaseSevenSource}[1][1]{%
  \CaseGraphBox[#1]{%
    \node[case graph vertex] (a) at (0,0) {$1$};
    \node[case graph vertex] (b) at (1.30,0) {$1$};
    \node[case graph vertex] (x) at (2.60,0) {$0$};
    \draw[case graph edge] (a)--(b)--(x);
    \draw[case graph marking] (a)--(-.56,.46);
    \draw[case graph marking] (b)--(1.30,.63);
    \draw[case graph marking] (x)--(3.16,.46);
    \node[case graph label,above] at (-.6,.55)
      {$A\subsetneq[n]\setminus X'$};
    \node[case graph label,above] at (1.30,.7) {$S$};
    \node[case graph label,above] at (3.3,.55) {$X\subseteq X'$};
    \node[case graph label,above] at (.43,.02) {$p$};
    \node[case graph decoration,below] at (0,-.29)
      {$\omega_{B\cup\{p\}}$};
  }%
}

\newcommand{\BoundaryCaseSevenTarget}[1][1]{%
  \CaseGraphBox[#1]{%
    \node[case graph vertex] (a)  at (0,0) {$1$};
    \node[case graph vertex] (z)  at (1.10,0) {$0$};
    \node[case graph vertex] (b)  at (2.25,0) {$1$};
    \node[case graph vertex] (x)  at (3.35,0) {$0$};
    \draw[case graph edge] (a)--(z)--(b)--(x);
    \draw[case graph marking] (a)--(-.54,.46);
    \draw[case graph marking] (z)--(1.10,.63);
    \draw[case graph marking] (b)--(2.25,.63);
    \draw[case graph marking] (x)--(3.89,.46);
    \node[case graph label,above] at (-.7,.52) {$A$};
    \node[case graph label,above] at (1.10,.7) {$S\setminus X'$};
    \node[case graph label,above] at (2.25,.7) {$X'\setminus X$};
    \node[case graph label,above] at (3.99,.52) {$X$};
    \node[case graph label,above] at (.38,.02) {$p$};
    \node[case graph label,above] at (1.48,.02) {$s$};
    \node[case graph label,above] at (1.86,.02) {$t$};
    \node[case graph decoration,below] at (0,-.29)
      {$\omega_{B\cup\{p\}}$};
  }%
}

\newcommand{\BoundaryCaseEightSource}[1][1]{%
  \CaseGraphBox[#1]{%
    \node[case graph vertex] (a) at (0,0) {$1$};
    \node[case graph vertex] (z) at (1.55,0) {$0$};
    \draw[case graph edge] (a)--(z);
    \draw[case graph edge]
      (z) to[out=125,in=55,looseness=4.8] (z);
    \draw[case graph marking] (a)--(-.56,.46);
    \draw[case graph marking] (z)--(2.13,.46);
    \node[case graph label,above] at (-.7,.52)
      {$[n]\setminus X'$};
    \node[case graph label,above] at (2.3,.55) {$X'$};
    \node[case graph label,above] at (.48,.02) {$p$};
    \node[case graph decoration,below] at (0,-.29)
      {$\omega_{B\cup\{p\}}$};
  }%
}

\newcommand{\BoundaryCaseEightTarget}[1][1]{%
  \CaseGraphBox[#1]{%
    \node[case graph vertex] (a) at (0,0) {$1$};
    \node[case graph vertex] (z) at (1.55,0) {$0$};
    \draw[case graph edge] (a)--(z);
    \draw[case graph edge]
      (z) to[out=125,in=55,looseness=4.8] (z);
    \draw[case graph marking] (a)--(-.56,.46);
    \draw[case graph marking] (z)--(2.13,.46);
    \node[case graph label,above] at (-.7,.52)
      {$[n]\setminus X'$};
    \node[case graph label,above] at (2.3,.55) {$X'$};
    \node[case graph label,above] at (.48,.02) {$s$};
    \node[case graph label,above] at (1.04,.02) {$t$};
    \node[case graph decoration,below] at (0,-.29)
      {$\omega_{B\cup\{s\}} \cdot (-\psi_s)$};
  }%
}

\newcommand{\BoundaryCaseNineSource}[1][1]{%
  \CaseGraphBox[#1]{%
    \node[case graph vertex] (a) at (0,0) {$1$};
    \node[case graph vertex] (z) at (1.55,0) {$0$};
    \draw[case graph edge] (a)--(z);
    \draw[case graph edge]
      (z) to[out=125,in=55,looseness=4.8] (z);
    \draw[case graph marking] (a)--(-.56,.46);
    \draw[case graph marking] (z)--(2.13,.46);
    \node[case graph label,above] at (-.56,.52)
      {$A\subsetneq[n]\setminus X'$};
    \node[case graph label,above] at (2.3,.55)
      {$X\supsetneq X'$};
    \node[case graph label,above] at (.48,.02) {$p$};
    \node[case graph decoration,below] at (0,-.29)
      {$\omega_{B\cup\{p\}}$};
  }%
}

\newcommand{\BoundaryCaseNineTarget}[1][1]{%
  \CaseGraphBox[#1]{%
    \node[case graph vertex] (a)  at (0,0) {$1$};
    \node[case graph vertex] (z1) at (1.10,0) {$0$};
    \node[case graph vertex] (z2) at (2.35,0) {$0$};
    \draw[case graph edge] (a)--(z1)--(z2);
    \draw[case graph edge]
      (z2) to[out=125,in=55,looseness=4.8] (z2);
    \draw[case graph marking] (a)--(-.54,.46);
    \draw[case graph marking] (z1)--(1.10,.63);
    \draw[case graph marking] (z2)--(2.93,.46);
    \node[case graph label,above] at (-.7,.55) {$A$};
    \node[case graph label,above] at (1.10,.7) {$X\setminus X'$};
    \node[case graph label,above] at (3.2,.55) {$X'$};
    \node[case graph label,above] at (.38,.02) {$p$};
    \node[case graph label,above] at (1.48,.02) {$s$};
    \node[case graph label,above] at (1.93,.02) {$t$};
    \node[case graph decoration,below] at (0,-.29)
      {$\omega_{B\cup\{p\}}$};
  }%
}

\usepackage{amsmath,amssymb}
\usepackage{tikz}

\tikzset{
  second case vertex/.style={
    circle,
    draw,
    line width=.65pt,
    minimum size=5.2mm,
    inner sep=0pt,
    font=\scriptsize
  },
  second case edge/.style={line width=.65pt,line cap=round},
  second case marking/.style={line width=1.45pt,line cap=round},
  second case label/.style={font=\scriptsize,inner sep=1pt},
  second case decoration/.style={font=\scriptsize,inner sep=1pt}
}

\newcommand{\SecondCaseGraphBox}[2][1]{%
  \ensuremath{\mathord{\vcenter{\hbox{%
    \begin{tikzpicture}[x=1cm,y=1cm,scale=#1,transform shape]
      #2
    \end{tikzpicture}%
  }}}}%
}

\newcommand{\SecondBoundaryCaseOneSource}[1][1]{%
  \SecondCaseGraphBox[#1]{%
    \node[second case vertex] (a) at (0,0) {$1$};
    \node[second case vertex] (z) at (1.30,0) {$0$};
    \node[second case vertex] (b) at (2.60,0) {$1$};
    \draw[second case edge] (a)--(z)--(b);
    \draw[second case marking] (a)--(-.56,.46);
    \draw[second case marking] (z)--(1.30,.63);
    \draw[second case marking] (b)--(3.16,.46);
    \node[second case label,above] at (-.56,.55)
      {$[n]\setminus X'$};
    \node[second case label,above] at (1.30,.7) {$X$};
    \node[second case label,above] at (3.3,.52) {$S$};
    \node[second case label,above] at (.43,.02) {$p$};
    \node[second case decoration,below] at (0,-.29)
      {$\omega_{B\cup\{p\}}$};
  }%
}

\newcommand{\SecondBoundaryCaseOneTarget}[1][1]{%
  \SecondCaseGraphBox[#1]{%
    \node[second case vertex] (a) at (0,0) {$1$};
    \node[second case vertex] (z) at (1.30,0) {$0$};
    \node[second case vertex] (b) at (2.60,0) {$1$};
    \draw[second case edge] (a)--(z)--(b);
    \draw[second case marking] (a)--(-.56,.46);
    \draw[second case marking] (z)--(1.30,.63);
    \draw[second case marking] (b)--(3.16,.46);
    \node[second case label,above] at (-.56,.55)
      {$[n]\setminus X'$};
    \node[second case label,above] at (1.30,.7) {$X$};
    \node[second case label,above] at (3.3,.52) {$S$};
    \node[second case label,above] at (.40,.02) {$s$};
    \node[second case label,above] at (.91,.02) {$t$};
    \node[second case decoration,below] at (0,-.29)
      {$\omega_{B\cup\{s\}}$};
    \node[second case decoration,below] at (1.30,-.29)
      {$-\psi_t$};
  }%
}

\newcommand{\SecondBoundaryCaseTwoSource}[1][1]{%
  \SecondCaseGraphBox[#1]{%
    \node[second case vertex] (a) at (0,0) {$1$};
    \node[second case vertex] (z) at (1.30,0) {$0$};
    \node[second case vertex] (b) at (2.60,0) {$1$};
    \draw[second case edge] (a)--(z)--(b);
    \draw[second case marking] (a)--(-.56,.46);
    \draw[second case marking] (z)--(1.30,.63);
    \draw[second case marking] (b)--(3.16,.46);
    \node[second case label,above] at (-.7,.55) {$A$};
    \node[second case label,above] at (1.30,.7) {$X$};
    \node[second case label,above] at (3.3,.52) {$S$};
    \node[second case label,above] at (.43,.02) {$p$};
    \node[second case decoration,below] at (0,-.29)
      {$\omega_{B\cup\{p\}}$};
    \node[second case label] at (0,-.82)
      {$\bigl(B\subseteq[n]\setminus X'\subsetneq A\bigr)$};
  }%
}

\newcommand{\SecondBoundaryCaseTwoTarget}[1][1]{%
  \SecondCaseGraphBox[#1]{%
    \node[second case vertex] (a)  at (0,0) {$1$};
    \node[second case vertex] (z1) at (1.15,0) {$0$};
    \node[second case vertex] (z2) at (2.35,0) {$0$};
    \node[second case vertex] (b)  at (3.50,0) {$1$};
    \draw[second case edge] (a)--(z1)--(z2)--(b);
    \draw[second case marking] (a)--(-.56,.46);
    \draw[second case marking] (z1)--(1.15,.63);
    \draw[second case marking] (z2)--(2.35,.63);
    \draw[second case marking] (b)--(4.06,.46);
    \node[second case label,above] at (-.6,.55)
      {$[n]\setminus X'$};
    \node[second case label,above] at (1.15,.7) {$A\cap X$};
    \node[second case label,above] at (2.35,.7) {$X$};
    \node[second case label,above] at (4.2,.52) {$S$};
    \node[second case label,above] at (.34,.02) {$s$};
    \node[second case label,above] at (.79,.02) {$t$};
    \node[second case decoration,below] at (0,-.29)
      {$\omega_{B\cup\{s\}}$};
  }%
}

\newcommand{\SecondBoundaryCaseThreeSource}[1][1]{%
  \SecondCaseGraphBox[#1]{%
    \node[second case vertex] (z) at (0,0) {$0$};
    \node[second case vertex] (a) at (1.30,0) {$1$};
    \node[second case vertex] (b) at (2.60,0) {$1$};
    \draw[second case edge] (z)--(a)--(b);
    \draw[second case marking] (z)--(-.56,.46);
    \draw[second case marking] (a)--(1.30,.63);
    \draw[second case marking] (b)--(3.16,.46);
    \node[second case label,above] at (-.7,.52) {$X$};
    \node[second case label,above] at (1.30,.7) {$A$};
    \node[second case label,above] at (3.3,.52) {$S$};
    \node[second case label,above] at (.9,.02) {$p$};
    \node[second case label,above] at (1.7,.02) {$q$};
    \node[second case decoration,below] at (1.30,-.29)
      {$\omega_{B\cup\{p\}}$};
    \node[second case label] at (1.30,-.82)
      {$\bigl(q\notin B\subseteq[n]\setminus X'\subseteq A\bigr)$};
  }%
}

\newcommand{\SecondBoundaryCaseThreeTarget}[1][1]{%
  \SecondCaseGraphBox[#1]{%
    \node[second case vertex] (a) at (0,0) {$1$};
    \node[second case vertex] (z) at (1.52,0) {$0$};
    \node[second case vertex] (b) at (2.90,0) {$1$};
    \node[second case vertex] (c) at (1.52,1.3) {$0$};
    \draw[second case edge] (a)--(z)--(b);
    \draw[second case edge] (z)--(c);
    \draw[second case marking] (a)--(-.56,.46);
    \draw[second case marking] (c)--(1.52,1.9);
    \draw[second case marking] (z)--(2.06,.46);
    \draw[second case marking] (b)--(3.46,.46);
    \node[second case label,above] at (-.56,.52)
      {$[n]\setminus X'$};
    \node[second case label,above] at (1.52,1.97) {$X$};
    \node[second case label,above] at (2.1,.53) {$A\cap X'$};
    \node[second case label,above] at (3.6,.52) {$S$};
    \node[second case label,above] at (.40,.02) {$s$};
    \node[second case label,above] at (1.15,.02) {$t$};
    \node[second case label,above] at (1.9,-.35) {$q$};
    \node[second case decoration,below] at (0,-.29)
      {$\omega_{B\cup\{s\}}$};
  }%
}

\newcommand{\SecondBoundaryCaseFourSource}[1][1]{%
  \SecondCaseGraphBox[#1]{%
    \node[second case vertex] (a) at (0,0) {$1$};
    \node[second case vertex] (b) at (1.30,0) {$1$};
    \node[second case vertex] (z) at (2.60,0) {$0$};
    \draw[second case edge] (a)--(b)--(z);
    \draw[second case marking] (a)--(-.56,.46);
    \draw[second case marking] (b)--(1.30,.63);
    \draw[second case marking] (z)--(3.16,.46);
    \node[second case label,above] at (-.56,.55)
      {$[n]\setminus X'$};
    \node[second case label,above] at (1.30,.7) {$S$};
    \node[second case label,above] at (3.3,.52) {$X$};
    \node[second case label,above] at (.43,.02) {$p$};
    \node[second case decoration,below] at (0,-.29)
      {$\omega_{B\cup\{p\}}$};
  }%
}

\newcommand{\SecondBoundaryCaseFourTarget}[1][1]{%
  \SecondCaseGraphBox[#1]{%
    \node[second case vertex] (a) at (0,0) {$1$};
    \node[second case vertex] (b) at (1.30,0) {$1$};
    \node[second case vertex] (z) at (2.60,0) {$0$};
    \draw[second case edge] (a)--(b)--(z);
    \draw[second case marking] (a)--(-.56,.46);
    \draw[second case marking] (b)--(1.30,.63);
    \draw[second case marking] (z)--(3.16,.46);
    \node[second case label,above] at (-.6,.55)
      {$[n]\setminus X'$};
    \node[second case label,above] at (1.30,.7) {$S$};
    \node[second case label,above] at (3.3,.52) {$X'$};
    \node[second case label,above] at (.40,.02) {$s$};
    \node[second case label,above] at (.91,.02) {$t$};
    \node[second case decoration,below] at (0,-.29)
      {$\omega_{B\cup\{s\}}$};
    \node[second case decoration,below] at (1.30,-.29)
      {$-\psi_t$};
  }%
}

\newcommand{\SecondBoundaryCaseFiveSource}[1][1]{%
  \SecondCaseGraphBox[#1]{%
    \node[second case vertex] (a) at (0,0) {$1$};
    \node[second case vertex] (b) at (1.30,0) {$1$};
    \node[second case vertex] (z) at (2.60,0) {$0$};
    \draw[second case edge] (a)--(b)--(z);
    \draw[second case marking] (a)--(-.56,.46);
    \draw[second case marking] (b)--(1.30,.63);
    \draw[second case marking] (z)--(3.16,.46);
    \node[second case label,above] at (-.7,.52) {$A$};
    \node[second case label,above] at (1.30,.7) {$S$};
    \node[second case label,above] at (3.3,.52) {$X$};
    \node[second case label,above] at (.43,.02) {$p$};
    \node[second case decoration,below] at (0,-.29)
      {$\omega_{B\cup\{p\}}$};
    \node[second case label] at (0,-.82)
      {$\bigl(B\subseteq[n]\setminus X'\subsetneq A\bigr)$};
  }%
}

\newcommand{\SecondBoundaryCaseFiveTarget}[1][1]{%
  \SecondCaseGraphBox[#1]{%
    \node[second case vertex] (a)  at (0,0) {$1$};
    \node[second case vertex] (z1) at (1.15,0) {$0$};
    \node[second case vertex] (b)  at (2.30,0) {$1$};
    \node[second case vertex] (z2) at (3.45,0) {$0$};
    \draw[second case edge] (a)--(z1)--(b)--(z2);
    \draw[second case marking] (a)--(-.56,.46);
    \draw[second case marking] (z1)--(1.15,.63);
    \draw[second case marking] (b)--(2.30,.63);
    \draw[second case marking] (z2)--(4.01,.46);
    \node[second case label,above] at (-.6,.55)
      {$[n]\setminus X'$};
    \node[second case label,above] at (1.15,.7) {$A\cap X'$};
    \node[second case label,above] at (2.30,.7) {$S$};
    \node[second case label,above] at (4.2,.52) {$X$};
    \node[second case label,above] at (.34,.02) {$s$};
    \node[second case label,above] at (.79,.02) {$t$};
    \node[second case decoration,below] at (0,-.29)
      {$\omega_{B\cup\{s\}}$};
  }%
}

\newcommand{\SecondBoundaryCaseSixSource}[1][1]{%
  \SecondCaseGraphBox[#1]{%
    \node[second case vertex] (a) at (0,0) {$1$};
    \node[second case vertex] (z) at (1.65,0) {$0$};
    \draw[second case edge] (a)--(z);
    \draw[second case edge]
      (z) to[out=125,in=55,looseness=4.8] (z);
    \draw[second case marking] (a)--(-.56,.46);
    \draw[second case marking] (z)--(2.23,.46);
    \node[second case label,above] at (-.6,.55)
      {$[n]\setminus X'$};
    \node[second case label,above] at (2.4,.55) {$X'$};
    \node[second case label,above] at (.48,.02) {$p$};
    \node[second case decoration,below] at (0,-.29)
      {$\omega_{B\cup\{p\}}$};
  }%
}

\newcommand{\SecondBoundaryCaseSixTarget}[1][1]{%
  \SecondCaseGraphBox[#1]{%
    \node[second case vertex] (a) at (0,0) {$1$};
    \node[second case vertex] (z) at (1.65,0) {$0$};
    \draw[second case edge] (a)--(z);
    \draw[second case edge]
      (z) to[out=125,in=55,looseness=4.8] (z);
    \draw[second case marking] (a)--(-.56,.46);
    \draw[second case marking] (z)--(2.23,.46);
    \node[second case label,above] at (-.6,.55)
      {$[n]\setminus X'$};
    \node[second case label,above] at (2.4,.55) {$X'$};
    \node[second case label,above] at (.47,.02) {$s$};
    \node[second case label,above] at (1.10,.02) {$t$};
    \node[second case decoration,below] at (0,-.29)
      {$\omega_{B\cup\{s\}}$};
    \node[second case decoration,below] at (1.65,-.29)
      {$-\psi_t$};
  }%
}

\newcommand{\SecondBoundaryCaseSevenSource}[1][1]{%
  \SecondCaseGraphBox[#1]{%
    \node[second case vertex] (a) at (0,0) {$1$};
    \node[second case vertex] (z) at (1.65,0) {$0$};
    \draw[second case edge] (a)--(z);
    \draw[second case edge]
      (z) to[out=125,in=55,looseness=4.8] (z);
    \draw[second case marking] (a)--(-.56,.46);
    \draw[second case marking] (z)--(2.23,.46);
    \node[second case label,above] at (-.7,.52) {$A$};
    \node[second case label,above] at (2.45,.55)
      {$X\supsetneq X'$};
    \node[second case label,above] at (.48,.02) {$p$};
    \node[second case decoration,below] at (0,-.29)
      {$\omega_{B\cup\{p\}}$};
    \node[second case label] at (0,-.82)
      {$\bigl(B\subseteq[n]\setminus X'\subsetneq A\bigr)$};
  }%
}

\newcommand{\SecondBoundaryCaseSevenTarget}[1][1]{%
  \SecondCaseGraphBox[#1]{%
    \node[second case vertex] (a)  at (0,0) {$1$};
    \node[second case vertex] (z1) at (1.15,0) {$0$};
    \node[second case vertex] (z2) at (2.45,0) {$0$};
    \draw[second case edge] (a)--(z1)--(z2);
    \draw[second case edge]
      (z2) to[out=125,in=55,looseness=4.8] (z2);
    \draw[second case marking] (a)--(-.6,.52);
    \draw[second case marking] (z1)--(1.15,.65);
    \draw[second case marking] (z2)--(3.2,.52);
    \node[second case label,above] at (-.6,.58)
      {$[n]\setminus X'$};
    \node[second case label,above] at (1.15,.7) {$A\cap X'$};
    \node[second case label,above] at (3.25,.58) {$X$};
    \node[second case label,above] at (.34,.02) {$s$};
    \node[second case label,above] at (.79,.02) {$t$};
    \node[second case decoration,below] at (0,-.29)
      {$\omega_{B\cup\{s\}}$};
  }%
}

\newcommand{\SecondBoundaryCaseEightSource}[1][1]{%
  \SecondCaseGraphBox[#1]{%
    \node[second case vertex] (a) at (0,0) {$1$};
    \node[second case vertex] (z) at (1.65,0) {$0$};
    \draw[second case edge]
      (a) to[out=45,in=135,looseness=1.15] (z);
    \draw[second case edge]
      (a) to[out=-45,in=-135,looseness=1.15] (z);
    \draw[second case marking] (a)--(-.56,.46);
    \draw[second case marking] (z)--(2.23,.46);
    \node[second case label,above] at (-.7,.52) {$A$};
    \node[second case label,above] at (2.45,.52) {$X$};
    \node[second case decoration,below] at (0,-.29)
      {$\omega_B^{S_2}$};
    \node[second case label] at (0,-.92)
      {$\bigl(B\subseteq[n]\setminus X'\subseteq A\bigr)$};
  }%
}

\newcommand{\SecondBoundaryCaseEightTarget}[1][1]{%
  \SecondCaseGraphBox[#1]{%
    \node[second case vertex] (a)  at (0,0) {$1$};
    \node[second case vertex] (z1) at (1.15,0) {$0$};
    \node[second case vertex] (z2) at (2.70,0) {$0$};
    \draw[second case edge] (a)--(z1);
    \draw[second case edge]
      (z1) to[out=45,in=135,looseness=1.15] (z2);
    \draw[second case edge]
      (z1) to[out=-45,in=-135,looseness=1.15] (z2);
    \draw[second case marking] (a)--(-.56,.46);
    \draw[second case marking] (z1)--(1.15,.63);
    \draw[second case marking] (z2)--(3.28,.46);
    \node[second case label,above] at (-.6,.52)
      {$[n]\setminus X'$};
    \node[second case label,above] at (1.15,.67) {$A\cap X'$};
    \node[second case label,above] at (3.45,.52) {$X$};
    \node[second case label,above] at (.34,.02) {$s$};
    \node[second case label,above] at (.79,.02) {$t$};
    \node[second case decoration,below] at (0,-.29)
      {$\omega_{B\cup\{s\}}$};
  }%
}

\newcommand{\SecondBoundaryCaseList}{%
  \begingroup
  \allowdisplaybreaks[4]
  \begin{alignat*}{3}
    \text{1.}
      &\quad \mathcal S^0_{B,X,S}
      &&\qquad \SecondBoundaryCaseOneSource[.72]
      &&\qquad \longrightarrow\quad \SecondBoundaryCaseOneTarget[.72]
      \\[.85em]
    \text{2.}
      &\quad \mathcal S^0_{B,X,S}
      &&\qquad \SecondBoundaryCaseTwoSource[.72]
      &&\qquad \longrightarrow\quad \SecondBoundaryCaseTwoTarget[.72]
      \\[.85em]
    \text{3.}
      &\quad \mathcal N'_{B,X,S}
      &&\qquad \SecondBoundaryCaseThreeSource[.72]
      &&\qquad \longrightarrow\quad \SecondBoundaryCaseThreeTarget[.72]
      \\[.85em]
    \text{4.}
      &\quad \mathcal S^1_{B,S,X}
      &&\qquad \SecondBoundaryCaseFourSource[.72]
      &&\qquad \longrightarrow\quad \SecondBoundaryCaseFourTarget[.72]
      \\[.85em]
    \text{5.}
      &\quad \mathcal S^1_{B,S,X}
      &&\qquad \SecondBoundaryCaseFiveSource[.72]
      &&\qquad \longrightarrow\quad \SecondBoundaryCaseFiveTarget[.72]
      \\[.85em]
    \text{6.}
      &\quad \mathcal L_{B,X'}
      &&\qquad \SecondBoundaryCaseSixSource[.72]
      &&\qquad \longrightarrow\quad \SecondBoundaryCaseSixTarget[.72]
      \\[.85em]
    \text{7.}
      &\quad \mathcal L_{B,X}
      &&\qquad \SecondBoundaryCaseSevenSource[.72]
      &&\qquad \longrightarrow\quad \SecondBoundaryCaseSevenTarget[.72]
      \\[.85em]
    \text{8.}
      &\quad \mathcal D_{B,X}
      &&\qquad \SecondBoundaryCaseEightSource[.72]
      &&\qquad \longrightarrow\quad \SecondBoundaryCaseEightTarget[.72]
  \end{alignat*}
  \endgroup
}

\tikzset{
  product case vertex/.style={
    circle,
    draw,
    line width=.65pt,
    minimum size=5.2mm,
    inner sep=0pt,
    font=\scriptsize
  },
  product case edge/.style={line width=.65pt,line cap=round},
  product case marking/.style={line width=1.45pt,line cap=round},
  product case label/.style={font=\scriptsize,inner sep=1pt},
  product case decoration/.style={font=\scriptsize,inner sep=1pt}
}

\newcommand{\ProductCaseGraphBox}[2][1]{%
  \ensuremath{\mathord{\vcenter{\hbox{%
    \begin{tikzpicture}[x=1cm,y=1cm,scale=#1,transform shape]
      #2
    \end{tikzpicture}%
  }}}}%
}

\newcommand{\ProductCaseFirstFactor}[1][1]{%
  \ProductCaseGraphBox[#1]{%
    \node[product case vertex] (x) at (0,0) {$0$};
    \node[product case vertex] (a) at (1.30,0) {$1$};
    \node[product case vertex] (y) at (2.65,0) {$1$};
    \draw[product case edge] (x)--(a)--(y);
    \draw[product case marking] (x)--(-.56,.48);
    \draw[product case marking] (a)--(1.30,.66);
    \draw[product case marking] (y)--(3.21,.48);
    \node[product case label,above] at (-.6,.55) {$X$};
    \node[product case label,above] at (1.30,.72) {$A$};
    \node[product case label,above] at (3.3,.55) {$Y$};
    \node[product case label,above] at (.88,.02) {$p$};
    \node[product case decoration,below] at (1.30,-.30)
      {$\omega_{B\cup\{p\}}$};
  }%
}

\newcommand{\ProductCaseSecondFactor}[1][1]{%
  \ProductCaseGraphBox[#1]{%
    \node[product case vertex] (g) at (0,0) {$2$};
    \node[product case vertex] (z) at (1.55,0) {$0$};
    \draw[product case edge] (g)--(z);
    \draw[product case marking] (g)--(-.64,.48);
    \draw[product case marking] (z)--(2.17,.48);
    \node[product case label,above] at (-.85,.52)
      {$[n]\setminus X'$};
    \node[product case label,above] at (2.3,.55) {$X'$};
    \node[product case label,above] at (.48,.02) {$s$};
    \node[product case label,above] at (1.06,.02) {$t$};
    \node[product case label] at (.775,.8) {$H^{13} \otimes H^2$};
  }%
}

\newcommand{\ProductCaseEqualTarget}[1][1]{%
  \ProductCaseGraphBox[#1]{%
    \node[product case vertex] (x) at (0,0) {$1$};
    \node[product case vertex] (a) at (1.30,0) {$1$};
    \node[product case vertex] (y) at (2.65,0) {$0$};
    \draw[product case edge] (x)--(a)--(y);
    \draw[product case marking] (x)--(-.56,.48);
    \draw[product case marking] (a)--(1.30,.66);
    \draw[product case marking] (y)--(3.21,.48);
    \node[product case label,above] at (-.6,.55) {$Y$};
    \node[product case label,above] at (1.30,.72) {$A$};
    \node[product case label,above] at (3.3,.55) {$X$};
    \node[product case label,above] at (2.24,.02) {$t$};
    \node[product case label,above] at (1.69,.02) {$s$};
    \node[product case decoration,below] at (2.65,-.30) {$-\psi_t$};
    \node[product case decoration,below] at (1.30,-.30)
      {$\omega_{B\cup\{s\}}$};
  }%
}

\newcommand{\ProductCaseUnequalTarget}[1][1]{%
  \ProductCaseGraphBox[#1]{%
    \node[product case vertex] (x)  at (0,0) {$1$};
    \node[product case vertex] (d)  at (1.20,0) {$1$};
    \node[product case vertex] (a)  at (2.55,0) {$0$};
    \node[product case vertex] (y)  at (3.95,0) {$0$};
    \draw[product case edge] (x)--(d)--(a)--(y);
    \draw[product case marking] (x)--(-.56,.48);
    \draw[product case marking] (d)--(1.20,.68);
    \draw[product case marking] (a)--(2.55,.68);
    \draw[product case marking] (y)--(4.51,.48);
    \node[product case label,above] at (-.6,.55) {$Y$};
    \node[product case label,above] at (1.1,.73)
      {$A\setminus(X'\setminus X)$};
    \node[product case label,above] at (3,.73)
      {$X'\setminus X$};
    \node[product case label,above] at (4.6,.55) {$X$};
    \node[product case label,above] at (2.12,.02) {$t$};
    \node[product case label,above] at (1.63,.02) {$s$};
    \node[product case decoration,below] at (1.2,-.30)
      {$\omega_{B\cup\{s\}}$};
  }%
}

\newcommand{\ProductCaseDisplay}{%
  \begin{equation*}
    \ProductCaseFirstFactor[.82]
    \mathbin{\times}
    \ProductCaseSecondFactor[.82]
    =
\begin{cases}
  \ProductCaseEqualTarget[.82]
    & \text{if }X'=X,
    \\[1.4em]

  \ProductCaseUnequalTarget[.82]
    &
    \begin{aligned}
      &\text{if }X\subsetneq X',\\[-.15em]& X'\setminus X\subseteq A,\\[-.15em]
      &\text{and }B\subseteq A\setminus X',
    \end{aligned}
    \\[1.4em]

  \hspace{1.5cm}0 & \text{otherwise.}
\end{cases}
  \end{equation*}
}
  
\usepackage{amsmath,amssymb}
\usepackage{tikz}

\tikzset{
  second product vertex/.style={
    circle,
    draw,
    line width=.65pt,
    minimum size=5.2mm,
    inner sep=0pt,
    font=\scriptsize
  },
  second product edge/.style={line width=.65pt,line cap=round},
  second product marking/.style={line width=1.45pt,line cap=round},
  second product label/.style={font=\scriptsize,inner sep=1pt},
  second product decoration/.style={font=\scriptsize,inner sep=1pt}
}

\newcommand{\SecondProductGraphBox}[2][1]{%
  \ensuremath{\mathord{\vcenter{\hbox{%
    \begin{tikzpicture}[x=1cm,y=1cm,scale=#1,transform shape]
      #2
    \end{tikzpicture}%
  }}}}%
}

\newcommand{\SecondProductFirstFactor}[1][1]{%
  \SecondProductGraphBox[#1]{%
    \node[second product vertex] (a) at (0,0) {$0$};
    \node[second product vertex] (y) at (1.35,0) {$1$};
    \node[second product vertex] (x) at (2.70,0) {$1$};
    \draw[second product edge] (a)--(y)--(x);
    \draw[second product marking] (a)--(-.56,.48);
    \draw[second product marking] (y)--(1.35,.66);
    \draw[second product marking] (x)--(3.26,.48);
    \node[second product label,above] at (-.6,.55) {$X$};
    \node[second product label,above] at (1.35,.72) {$Y$};
    \node[second product label,above] at (3.35,.55) {$A$};
    \node[second product label,above] at (2.27,.02) {$p$};
    \node[second product decoration,below] at (2.7,-.30)
      {$\omega_{B\cup\{p\}}$};
  }%
}

\newcommand{\SecondProductSecondFactor}[1][1]{%
  \SecondProductGraphBox[#1]{%
    \node[second product vertex] (g) at (0,0) {$2$};
    \node[second product vertex] (z) at (1.55,0) {$0$};
    \draw[second product edge] (g)--(z);
    \draw[second product marking] (g)--(-.64,.48);
    \draw[second product marking] (z)--(2.17,.48);
    \node[second product label,above] at (-.85,.52)
      {$[n]\setminus X'$};
    \node[second product label,above] at (2.3,.55) {$X'$};
    \node[second product label,above] at (.48,.02) {$s$};
    \node[second product label,above] at (1.06,.02) {$t$};
    \node[second product label] at (.775,.8)
      {$H^{13}\otimes H^2$};
  }%
}

\newcommand{\SecondProductEqualTarget}[1][1]{%
  \SecondProductGraphBox[#1]{%
    \node[second product vertex] (a) at (0,0) {$1$};
    \node[second product vertex] (y) at (1.35,0) {$1$};
    \node[second product vertex] (x) at (2.70,0) {$0$};
    \draw[second product edge] (a)--(y)--(x);
    \draw[second product marking] (a)--(-.56,.48);
    \draw[second product marking] (y)--(1.35,.66);
    \draw[second product marking] (x)--(3.26,.48);
    \node[second product label,above] at (-.6,.55) {$A$};
    \node[second product label,above] at (1.35,.72) {$Y$};
    \node[second product label,above] at (3.35,.55) {$X$};
    \node[second product label,above] at (.43,.02) {$p$};
    \node[second product label,above] at (1.78,.02) {$s$};
    \node[second product label,above] at (2.27,.02) {$t$};
    \node[second product decoration,below] at (0,-.30)
      {$\omega_{B\cup\{p\}}$};
    \node[second product decoration,below] at (2.70,-.30)
      {$-\psi_t$};
  }%
}

\newcommand{\SecondProductUnequalTarget}[1][1]{%
  \SecondProductGraphBox[#1]{%
    \node[second product vertex] (a) at (0,0) {$1$};
    \node[second product vertex] (y) at (1.45,0) {$1$};
    \node[second product vertex] (d) at (2.90,0) {$0$};
    \node[second product vertex] (x) at (4.15,0) {$0$};
    \draw[second product edge] (a)--(y)--(d)--(x);
    \draw[second product marking] (a)--(-.56,.48);
    \draw[second product marking] (y)--(1.45,.70);
    \draw[second product marking] (d)--(2.90,.70);
    \draw[second product marking] (x)--(4.71,.48);
    \node[second product label,above] at (-.6,.55) {$A$};
    \node[second product label,above] at (1.00,.75)
      {$Y\setminus(X'\setminus X)$};
    \node[second product label,above] at (3.00,.75)
      {$X'\setminus X$};
    \node[second product label,above] at (4.8,.55) {$X$};
    \node[second product label,above] at (.45,.02) {$p$};
    \node[second product label,above] at (1.93,.02) {$s$};
    \node[second product label,above] at (2.42,.02) {$t$};
    \node[second product decoration,below] at (0,-.30)
      {$\omega_{B\cup\{p\}}$};
  }%
}

\newcommand{\SecondProductCaseDisplay}{%
  \begin{equation*}
    \SecondProductFirstFactor[.82]
    \mathbin{\times}
    \SecondProductSecondFactor[.82]
    =
  \begin{cases}
    \SecondProductEqualTarget[.82]
      & \text{if }X'=X,
      \\[1.4em]

    \SecondProductUnequalTarget[.82]
      &
      \begin{aligned}
        &\text{if }X\subsetneq X'\text{ and}\\[-.15em]
        &X'\setminus X\subseteq Y,
      \end{aligned}
      \\[1.4em]

    \hspace{1.5cm}0 & \text{otherwise.}
  \end{cases}
  \end{equation*}%
}

\usepackage{amsmath,amssymb}
\usepackage{tikz}

\newcommand{\GraphLPrimeCutFactor}[1][1]{%
  \AuxiliaryGraphBox[#1]{%
    \node[auxiliary graph vertex] (a) at (0,0) {$1$};
    \node[auxiliary graph vertex] (z) at (1.45,0) {$0$};

    \draw[auxiliary graph edge] (a)--(z);
    \draw[auxiliary graph marking] (a)--(0,.67);
    \draw[auxiliary graph marking] (z)--(2.07,.50);
    \draw[auxiliary graph marking] (z)--(2.10,0);

    \node[auxiliary graph label,above] at (0,.75)
      {$[n]\setminus X$};
    \node[auxiliary graph label,above] at (2.07,.57)
      {$X\setminus X'$};
    \node[auxiliary graph label,above] at (.48,.02) {$p$};
    \node[auxiliary graph label,right] at (2.13,.0) {$s$};
    \node[auxiliary graph decoration,below] at (0,-.31)
      {$\omega_{B\cup\{p\}}$};
  }%
}

\newcommand{\GraphLPrimeIrreducible}[1][1]{%
  \AuxiliaryGraphBox[#1]{%
    \node[auxiliary graph vertex] (z) at (0,0) {$0$};

    \draw[auxiliary graph edge]
      (z) to[out=60,in=120,looseness=4.6] (z);
    \draw[auxiliary graph marking] (z)--(-.66,0);
    \draw[auxiliary graph marking] (z)--(.62,.48);

    \node[auxiliary graph label,left] at (-.69,.0) {$t$};
    \node[auxiliary graph label,above] at (.67,.55) {$X'$};
  }%
}

\newcommand{\GraphLPrimeBoundary}[1][1]{%
  \AuxiliaryGraphBox[#1]{%
    \node[auxiliary graph vertex] (z) at (0,0) {$0$};
    \node[auxiliary graph vertex] (a) at (1.45,0) {$1$};

    \draw[auxiliary graph edge] (z)--(a);
    \draw[auxiliary graph marking] (z)--(-.66,0);
    \draw[auxiliary graph marking] (z)--(0,.65);
    \draw[auxiliary graph marking] (a)--(2.07,.48);

    \node[auxiliary graph label,left] at (-.69,0) {$t$};
    \node[auxiliary graph label,above] at (0,.72) {$T$};
    \node[auxiliary graph label,above] at (2.07,.52)
      {$X'\setminus T$};
  }%
}

\newcommand{\GraphLPrimePsi}[1][1]{%
  \AuxiliaryGraphBox[#1]{%
    \node[auxiliary graph vertex] (a) at (0,0) {$1$};

    \draw[auxiliary graph marking] (a)--(-.66,0);
    \draw[auxiliary graph marking] (a)--(.62,.48);

    \node[auxiliary graph label,left] at (-.7,.0) {$t$};
    \node[auxiliary graph label,above] at (.68,.55) {$X'$};
    \node[auxiliary graph decoration,below] at (0,-.31)
      {$-\psi_t$};
  }%
}

\newcommand{\LPrimeProductDisplay}{%
  \begin{align*}
    \mathcal L'_{B,X}
    \mathbin{\times}
\GraphKunnethThirteenTwo[.82] =&\GraphLPrimeCutFactor[.82]\\
     &\mathbin{\otimes} 
    \left(
      \GraphLPrimeIrreducible[.82]
      +24\!\sum_{\substack{\varnothing\neq T\subseteq X'}}
        \GraphLPrimeBoundary[.82]
      +24\,\GraphLPrimePsi[.82]
    \right).
  \end{align*}%
}

\newcommand{\GraphEtaPrimeDoubleEdge}[1][1]{%
  \AuxiliaryGraphBox[#1]{%
    \node[auxiliary graph vertex] (a) at (0,0) {$1$};
    \node[auxiliary graph vertex] (z) at (1.75,0) {$0$};

    \draw[auxiliary graph edge]
      (a) to[out=45,in=135,looseness=1.20] (z);
    \draw[auxiliary graph edge]
      (a) to[out=-45,in=-135,looseness=1.20] (z);

    \draw[auxiliary graph marking] (a)--(0,.66);
    \draw[auxiliary graph marking] (z)--(2.38,.48);
    \draw[auxiliary graph marking] (z)--(2.38,-.48);

    \node[auxiliary graph label,above] at (0,.69) {$B$};
    \node[auxiliary graph label,above] at (2.46,.52) {$i$};
    \node[auxiliary graph label,below] at (2.45,-.5) {$j$};

    \node[auxiliary graph decoration,below] at (0,-.33)
      {$\omega_B^{S_2}$};
  }%
}

\newcommand{\GraphEtaPrimeChainSource}[1][1]{%
  \AuxiliaryGraphBox[#1]{%
    \node[auxiliary graph vertex] (a) at (0,0) {$1$};
    \node[auxiliary graph vertex] (b) at (1.35,0) {$1$};
    \node[auxiliary graph vertex] (z) at (2.65,0) {$0$};

    \draw[auxiliary graph edge] (a)--(b)--(z);
    \draw[auxiliary graph marking] (a)--(0,.66);
    \draw[auxiliary graph marking] (z)--(3.27,.48);
    \draw[auxiliary graph marking] (z)--(3.27,-.48);

    \node[auxiliary graph label,above] at (0,.69) {$B$};
    \node[auxiliary graph label,above] at (.43,.02) {$p$};
    \node[auxiliary graph label,above] at (3.4,.52) {$i$};
    \node[auxiliary graph label,below] at (3.4,-.52) {$j$};

    \node[auxiliary graph decoration,below] at (0,-.33)
      {$\omega_{B\cup\{p\}}$};
  }%
}

\newcommand{\GraphEtaPrimeChainTarget}[1][1]{%
  \AuxiliaryGraphBox[#1]{%
    \node[auxiliary graph vertex] (a) at (0,0) {$1$};
    \node[auxiliary graph vertex] (b) at (1.35,0) {$1$};
    \node[auxiliary graph vertex] (z) at (2.65,0) {$0$};

    \draw[auxiliary graph edge] (a)--(b)--(z);
    \draw[auxiliary graph marking] (a)--(0,.66);
    \draw[auxiliary graph marking] (z)--(3.27,.48);
    \draw[auxiliary graph marking] (z)--(3.27,-.48);

    \node[auxiliary graph label,above] at (0,.69) {$B$};
    \node[auxiliary graph label,above] at (.43,.02) {$s$};
    \node[auxiliary graph label,above] at (3.4,.52) {$i$};
    \node[auxiliary graph label,below] at (3.4,-.52) {$j$};

    \node[auxiliary graph decoration,below] at (0,-.33)
      {$\omega_{B\cup\{s\}}\cdot(-\psi_s)$};
  }%
}

\usepackage{amsmath,amssymb}
\usepackage{tikz}

\newcommand{\GraphMijExpandedOne}[1][1]{%
  \AuxiliaryGraphBox[#1]{%
    \node[auxiliary graph vertex] (a)  at (0,0) {$1$};
    \node[auxiliary graph vertex] (zj) at (1.25,0) {$0$};
    \node[auxiliary graph vertex] (zi) at (2.55,0) {$0$};
    \node[auxiliary graph vertex] (b)  at (3.80,0) {$1$};

    \draw[auxiliary graph edge] (a)--(zj)--(zi)--(b);
    \draw[auxiliary graph marking] (a)--(0,.66);
    \draw[auxiliary graph marking] (zj)--(1.25,.66);
    \draw[auxiliary graph marking] (zi)--(2.55,.66);

    \node[auxiliary graph label,above] at (0,.74) {$B$};
    \node[auxiliary graph label,above] at (1.25,.74) {$j$};
    \node[auxiliary graph label,above] at (2.55,.74) {$i$};
    \node[auxiliary graph label,above] at (.40,.02) {$p$};
    \node[auxiliary graph label,above] at (1.68,.02) {$s$};
    \node[auxiliary graph label,above] at (2.12,.02) {$t$};
    \node[auxiliary graph decoration,below] at (0,-.33)
      {$\omega_{B\cup\{p\}}$};
  }%
}

\newcommand{\GraphMijExpandedTwo}[1][1]{%
  \AuxiliaryGraphBox[#1]{%
    \node[auxiliary graph vertex] (a)  at (0,0) {$1$};
    \node[auxiliary graph vertex] (zj) at (1.25,0) {$0$};
    \node[auxiliary graph vertex] (zi) at (2.55,0) {$0$};

    \draw[auxiliary graph edge] (a)--(zj)--(zi);
    \draw[auxiliary graph edge]
      (zi) to[out=60,in=120,looseness=4.3] (zi);
    \draw[auxiliary graph marking] (a)--(0,.66);
    \draw[auxiliary graph marking] (zj)--(1.25,.66);
    \draw[auxiliary graph marking] (zi)--(3.17,.48);

    \node[auxiliary graph label,above] at (0,.74) {$B$};
    \node[auxiliary graph label,above] at (1.25,.74) {$j$};
    \node[auxiliary graph label,above] at (3.20,.57) {$i$};
    \node[auxiliary graph label,above] at (.40,.02) {$p$};
    \node[auxiliary graph label,above] at (1.68,.02) {$s$};
    \node[auxiliary graph label,above] at (2.12,.02) {$t$};
    \node[auxiliary graph decoration,below] at (0,-.33)
      {$\omega_{B\cup\{p\}}$};
  }%
}

\newcommand{\GraphMijExpandedThree}[1][1]{%
  \AuxiliaryGraphBox[#1]{%
    \node[auxiliary graph vertex] (a)  at (0,0) {$1$};
    \node[auxiliary graph vertex] (zj) at (1.25,0) {$0$};
    \node[auxiliary graph vertex] (b)  at (2.55,0) {$1$};

    \draw[auxiliary graph edge] (a)--(zj)--(b);
    \draw[auxiliary graph marking] (a)--(0,.66);
    \draw[auxiliary graph marking] (zj)--(1.25,.66);
    \draw[auxiliary graph marking] (b)--(3.17,.48);

    \node[auxiliary graph label,above] at (0,.74) {$B$};
    \node[auxiliary graph label,above] at (1.25,.74) {$j$};
    \node[auxiliary graph label,above] at (3.20,.57) {$i$};
    \node[auxiliary graph label,above] at (.40,.02) {$p$};
    \node[auxiliary graph label,above] at (1.68,.02) {$s$};
    \node[auxiliary graph label,above] at (2.12,.02) {$t$};
    \node[auxiliary graph decoration,below] at (0,-.33)
      {$\omega_{B\cup\{p\}}$};
    \node[auxiliary graph decoration,below] at (2.55,-.33)
      {$-\psi_t$};
  }%
}

\newcommand{\GraphMijCommonFactor}[1][1]{%
  \AuxiliaryGraphBox[#1]{%
    \node[auxiliary graph vertex] (a)  at (0,0) {$1$};
    \node[auxiliary graph vertex] (zj) at (1.25,0) {$0$};

    \draw[auxiliary graph edge] (a)--(zj);
    \draw[auxiliary graph marking] (zj)--(1.92,0);
    \draw[auxiliary graph marking] (a)--(0,.66);
    \draw[auxiliary graph marking] (zj)--(1.25,.66);

    \node[auxiliary graph label,above] at (0,.74) {$B$};
    \node[auxiliary graph label,above] at (1.25,.74) {$j$};
    \node[auxiliary graph label,above] at (.40,.02) {$p$};
    \node[auxiliary graph label,right] at (1.97,.02) {$s$};
    \node[auxiliary graph decoration,below] at (0,-.33)
      {$\omega_{B\cup\{p\}}$};
  }%
}

\newcommand{\GraphMijRightOne}[1][1]{%
  \AuxiliaryGraphBox[#1]{%
    \node[auxiliary graph vertex] (zi) at (0,0) {$0$};
    \node[auxiliary graph vertex] (b)  at (1.35,0) {$1$};

    \draw[auxiliary graph edge] (zi)--(b);
    \draw[auxiliary graph marking] (zi)--(-.66,0);
    \draw[auxiliary graph marking] (zi)--(0,.66);

    \node[auxiliary graph label,left] at (-.72,.02) {$t$};
    \node[auxiliary graph label,above] at (0,.74) {$i$};
  }%
}

\newcommand{\GraphMijRightTwo}[1][1]{%
  \AuxiliaryGraphBox[#1]{%
    \node[auxiliary graph vertex] (zi) at (0,0) {$0$};

    \draw[auxiliary graph marking] (zi)--(-.66,0);
    \draw[auxiliary graph edge]
      (zi) to[out=60,in=120,looseness=4.3] (zi);
    \draw[auxiliary graph marking] (zi)--(.62,.48);

    \node[auxiliary graph label,left] at (-.72,.02) {$t$};
    \node[auxiliary graph label,above] at (.65,.55) {$i$};
  }%
}

\newcommand{\GraphMijRightThree}[1][1]{%
  \AuxiliaryGraphBox[#1]{%
    \node[auxiliary graph vertex] (b) at (0,0) {$1$};

    \draw[auxiliary graph marking] (b)--(-.66,0);
    \draw[auxiliary graph marking] (b)--(.62,.48);

    \node[auxiliary graph label,left] at (-.7,0) {$t$};
    \node[auxiliary graph label,above] at (.67,.54) {$i$};
    \node[auxiliary graph decoration,below] at (0,-.33)
      {$-\psi_t$};
  }%
}

\newcommand{\EtaPrimeMijPullbackDisplay}{%
  \begin{align*}
    (\eta')^*(M_{ij})
      &={}
        24\,\GraphMijExpandedOne[.78]
        +\GraphMijExpandedTwo[.78]
        +24\,\GraphMijExpandedThree[.78]
        \\[1.2em]
      &={}
        \GraphMijCommonFactor[.82]
        \mathbin{\otimes}
        \left(
          24\,\GraphMijRightOne[.82]
          +\GraphMijRightTwo[.82]
          +24\,\GraphMijRightThree[.82]
        \right).
  \end{align*}%
}

\usepackage{amsmath,amssymb}
\usepackage{tikz}

\newcommand{\GraphMijPairExpandedPsi}[1][1]{%
  \AuxiliaryGraphBox[#1]{%
    \node[auxiliary graph vertex] (a) at (0,0) {$1$};
    \node[auxiliary graph vertex] (z) at (1.45,0) {$0$};
    \node[auxiliary graph vertex] (b) at (2.90,0) {$1$};

    \draw[auxiliary graph edge] (a)--(z)--(b);
    \draw[auxiliary graph marking] (a)--(0,.66);
    \draw[auxiliary graph marking] (z)--(1.17,.66);
    \draw[auxiliary graph marking] (z)--(1.73,.66);

    \node[auxiliary graph label,above] at (0,.69) {$B$};
    \node[auxiliary graph label,above] at (1.17,.69) {$i$};
    \node[auxiliary graph label,above] at (1.73,.69) {$j$};
    \node[auxiliary graph label,above] at (.42,.02) {$p$};
    \node[auxiliary graph label,above] at (1.88,.02) {$s$};
    \node[auxiliary graph label,above] at (2.48,.02) {$t$};
    \node[auxiliary graph decoration,below] at (0,-.33)
      {$\omega_{B\cup\{p\}}$};
    \node[auxiliary graph decoration,below] at (2.90,-.33)
      {$-\psi_t$};
  }%
}

\newcommand{\GraphMijPairExpandedLoop}[1][1]{%
  \AuxiliaryGraphBox[#1]{%
    \node[auxiliary graph vertex] (a) at (0,0) {$1$};
    \node[auxiliary graph vertex] (z) at (1.45,0) {$0$};
    \node[auxiliary graph vertex] (w) at (2.90,0) {$0$};

    \draw[auxiliary graph edge] (a)--(z)--(w);
    \draw[auxiliary graph edge]
      (w) to[out=60,in=120,looseness=4.3] (w);
    \draw[auxiliary graph marking] (a)--(0,.66);
    \draw[auxiliary graph marking] (z)--(1.17,.66);
    \draw[auxiliary graph marking] (z)--(1.73,.66);

    \node[auxiliary graph label,above] at (0,.69) {$B$};
    \node[auxiliary graph label,above] at (1.17,.69) {$i$};
    \node[auxiliary graph label,above] at (1.73,.69) {$j$};
    \node[auxiliary graph label,above] at (.42,.02) {$p$};
    \node[auxiliary graph label,above] at (1.88,.02) {$s$};
    \node[auxiliary graph label,above] at (2.48,.02) {$t$};
    \node[auxiliary graph decoration,below] at (0,-.33)
      {$\omega_{B\cup\{p\}}$};
  }%
}

\newcommand{\GraphMijPairCommonFactor}[1][1]{%
  \AuxiliaryGraphBox[#1]{%
    \node[auxiliary graph vertex] (a) at (0,0) {$1$};
    \node[auxiliary graph vertex] (z) at (1.45,0) {$0$};

    \draw[auxiliary graph edge] (a)--(z);
    \draw[auxiliary graph marking] (z)--(2.12,0);
    \draw[auxiliary graph marking] (a)--(0,.66);
    \draw[auxiliary graph marking] (z)--(1.17,.66);
    \draw[auxiliary graph marking] (z)--(1.73,.66);

    \node[auxiliary graph label,above] at (0,.69) {$B$};
    \node[auxiliary graph label,above] at (1.17,.69) {$i$};
    \node[auxiliary graph label,above] at (1.73,.69) {$j$};
    \node[auxiliary graph label,above] at (.42,.02) {$p$};
    \node[auxiliary graph label,right] at (2.14,.02) {$s$};
    \node[auxiliary graph decoration,below] at (0,-.33)
      {$\omega_{B\cup\{p\}}$};
  }%
}

\newcommand{\GraphMijPairRightPsi}[1][1]{%
  \AuxiliaryGraphBox[#1]{%
    \node[auxiliary graph vertex] (b) at (0,0) {$1$};

    \draw[auxiliary graph marking] (-.66,0)--(b);

    \node[auxiliary graph label,left] at (-.68,.02) {$t$};
    \node[auxiliary graph decoration,below] at (0,-.25)
      {$-\psi_t$};
  }%
}

\newcommand{\GraphMijPairRightLoop}[1][1]{%
  \AuxiliaryGraphBox[#1]{%
    \node[auxiliary graph vertex] (w) at (0,0) {$0$};

    \draw[auxiliary graph marking] (-.66,0)--(w);
    \draw[auxiliary graph marking] (.55,.45)--(w);
    \draw[auxiliary graph edge]
      (w) to[out=60,in=120,looseness=4.3] (w);

    \node[auxiliary graph label,left] at (-.68,.0) {$t$};
    \node[auxiliary graph label,right] at (.58,.47) {$i$};
  }%
}

\newcommand{\EtaPrimeMijPairPullbackDisplay}{%
  \begin{equation*}
    (\eta')^*( M_{ij})
    =
    24\,\GraphMijPairExpandedPsi[.76]
    +\GraphMijPairExpandedLoop[.76]
    =
    \GraphMijPairCommonFactor[.80]
    \mathbin{\otimes}
    \left(
      24\,\GraphMijPairRightPsi[.80]
      +\GraphMijPairRightLoop[.80]
    \right).
  \end{equation*}%
}

\newcommand{\GraphMijFourCommonFactor}[1][1]{%
  \AuxiliaryGraphBox[#1]{%
    \node[auxiliary graph vertex] (a) at (0,0) {$1$};

    \draw[auxiliary graph marking] (a)--(.74,0);
    \draw[auxiliary graph marking] (a)--(0,.66);

    \node[auxiliary graph label,above] at (0,.69) {$B$};
    \node[auxiliary graph label,right] at (.76,0) {$s$};
    \node[auxiliary graph decoration,below] at (0,-.33)
      {$\omega_{B\cup\{s\}}$};
  }%
}

\newcommand{\GraphMijFourRightOne}[1][1]{%
  \AuxiliaryGraphBox[#1]{%
    \node[auxiliary graph vertex] (z) at (0,0) {$0$};
    \node[auxiliary graph vertex] (a) at (1.30,0) {$1$};

    \draw[auxiliary graph edge] (z)--(a);
    \draw[auxiliary graph marking] (z)--(-.66,0);
    \draw[auxiliary graph marking] (z)--(-.24,.64);
    \draw[auxiliary graph marking] (z)--(.24,.64);

    \node[auxiliary graph label,left] at (-.68,0) {$t$};
    \node[auxiliary graph label,above] at (-.24,.67) {$i$};
    \node[auxiliary graph label,above] at (.24,.67) {$j$};

    \node[auxiliary graph decoration,below] at (1.30,-.33)
      {$-\psi_t$};
  }%
}

\newcommand{\GraphMijFourRightTwo}[1][1]{%
  \AuxiliaryGraphBox[#1]{%
    \node[auxiliary graph vertex] (z) at (0,0) {$0$};

    \draw[auxiliary graph marking] (-.66,0)--(z);
    \draw[auxiliary graph edge]
      (z) to[out=125,in=55,looseness=4.5] (z);

    \draw[auxiliary graph marking] (z)--(.67,.46);
    \draw[auxiliary graph marking] (z)--(.67,-.46);

    \node[auxiliary graph label,left] at (-.68,.02) {$t$};
    \node[auxiliary graph label,above] at (.72,.46) {$i$};
    \node[auxiliary graph label,below] at (.72,-.46) {$j$};

    \node[auxiliary graph decoration,below] at (0,-.33)
      {$-\psi_t$};
  }%
}

\newcommand{\GraphMijFourRightThree}[1][1]{%
  \AuxiliaryGraphBox[#1]{%
    \node[auxiliary graph vertex] (a) at (0,0) {$1$};
    \node[auxiliary graph vertex] (z) at (1.30,0) {$0$};

    \draw[auxiliary graph edge] (a)--(z);
    \draw[auxiliary graph marking] (-.66,0)--(a);
    \draw[auxiliary graph marking] (z)--(1.97,.46);
    \draw[auxiliary graph marking] (z)--(1.97,-.46);

    \node[auxiliary graph label,left] at (-.68,0) {$t$};
    \node[auxiliary graph label,above] at (2.15,.46) {$i$};
    \node[auxiliary graph label,below] at (2.15,-.46) {$j$};

    \node[auxiliary graph decoration,below] at (0,-.33)
      {$-\psi_t$};
  }%
}

\newcommand{\GraphMijFourRightFour}[1][1]{%
  \AuxiliaryGraphBox[#1]{%
    \node[auxiliary graph vertex] (z1) at (0,0) {$0$};
    \node[auxiliary graph vertex] (z2) at (1.25,0) {$0$};

    \draw[auxiliary graph marking] (-.66,0)--(z1);

    \draw[auxiliary graph edge]
      (z1) to[out=45,in=135,looseness=1.20] (z2);

    \draw[auxiliary graph edge]
      (z1) to[out=-45,in=-135,looseness=1.20] (z2);

    \draw[auxiliary graph marking] (z2)--(1.92,.46);
    \draw[auxiliary graph marking] (z2)--(1.92,-.46);

    \node[auxiliary graph label,left] at (-.68,.02) {$t$};
    \node[auxiliary graph label,above] at (2.12,.46) {$i$};
    \node[auxiliary graph label,below] at (2.12,-.46) {$j$};
  }%
}

\newcommand{\EtaPrimeMijFourTermPullbackDisplay}{%
  \begin{equation*}
    (\eta')^*(M_{ij})
    =
    \GraphMijFourCommonFactor[.80]
    \mathbin{\otimes}
    \left(
      24\,\GraphMijFourRightOne[.78]
      +\GraphMijFourRightTwo[.78]
      -24\,\GraphMijFourRightThree[.78]
      +2\,\GraphMijFourRightFour[.78]
    \right).
  \end{equation*}%
}

\newcommand{\GraphLPrimeElevenFourTarget}[1][1]{%
  \AuxiliaryGraphBox[#1]{%
    \node[auxiliary graph vertex] (a) at (0,0) {$1$};

    \draw[auxiliary graph marking] (a)--(0,.68);
    \draw[auxiliary graph marking] (a)--(.78,0);

    \node[auxiliary graph label,above] at (0,.71)
      {$[n]\setminus X'$};

    \node[auxiliary graph label,right] at (.80,0)
      {$s$};

    \node[auxiliary graph decoration,below] at (0,-.33)
      {$\omega_{B\cup\{s\}}$};
  }%
}

\newcommand{\LPrimeElevenFourProductDisplay}{%
  \begin{equation*}
    \mathcal L'_{B,X}
    \mathbin{\times}
    \GraphKunnethElevenFour[.82]
    =
    \begin{cases}
      \GraphLPrimeElevenFourTarget[.82]
      \mathbin{\otimes}\alpha
      &
      \text{if }X'=X,
      \\[1.5em]

      \hspace{.5cm}0
      &
      \text{otherwise.}
    \end{cases}
  \end{equation*}%
}

\newcommand{\GraphDoubleEdgeOmegaFactor}[1][1]{%
  \AuxiliaryGraphBox[#1]{%
    \node[auxiliary graph vertex] (a) at (0,0) {$1$};
    \node[auxiliary graph vertex] (z) at (1.40,0) {$0$};

    \draw[auxiliary graph edge]
      (a) to[out=35,in=145,looseness=1.20] (z);

    \draw[auxiliary graph edge]
      (a) to[out=-35,in=-145,looseness=1.20] (z);

    \draw[auxiliary graph marking] (a)--(0,.68);
    \draw[auxiliary graph marking] (z)--(2.05,.48);

    \node[auxiliary graph label,above] at (0,.71)
      {$[n]\setminus X$};

    \node[auxiliary graph label,above] at (2.05,.50)
      {$X$};

    \node[auxiliary graph decoration,below] at (0,-.34)
      {$\omega_B^{S_2}$};
  }%
}

\newcommand{\GraphDoubleEdgeBetaTarget}[1][1]{%
  \AuxiliaryGraphBox[#1]{%
    \node[auxiliary graph vertex] (a) at (0,0) {$1$};

    \draw[auxiliary graph marking] (a)--(0,.68);
    \draw[auxiliary graph marking] (a)--(.72,0);

    \node[auxiliary graph label,above] at (0,.71)
      {$[n]\setminus X'$};

    \node[auxiliary graph label,right] at (.74,0)
      {$s$};

    \node[auxiliary graph decoration,below] at (0,-.34)
      {$\omega_{B\cup\{s\}}$};
  }%
}

\newcommand{\DoubleEdgeElevenFourProductDisplay}{%
  \begin{equation*}
    \GraphDoubleEdgeOmegaFactor[.82]
    \mathbin{\times}
    \GraphKunnethElevenFour[.82]
    =
    \begin{cases}
      \GraphDoubleEdgeBetaTarget[.82]
      \mathbin{\otimes}\beta
      &
      \text{if }X'=X,
      \\[1.5em]

      \hspace{0.5cm}0
      &
      \text{otherwise.}
    \end{cases}
  \end{equation*}%
}

\newcommand{\GraphAlphaLoopTerm}[1][1]{%
  \AuxiliaryGraphBox[#1]{%
    \node[auxiliary graph vertex] (z) at (0,0) {$0$};

    \draw[auxiliary graph marking] (z)--(-.68,0);
    \draw[auxiliary graph marking] (z)--(.66,.48);

    \draw[auxiliary graph edge]
      (z) to[out=125,in=55,looseness=4.5] (z);

    \node[auxiliary graph label,left] at (-.70,.02)
      {$t$};

    \node[auxiliary graph label,above] at (.70,.50)
      {$X'$};

    \node[auxiliary graph decoration,below] at (0,-.34)
      {$-\psi_t$};
  }%
}

\newcommand{\GraphAlphaBoundaryTerm}[1][1]{%
  \AuxiliaryGraphBox[#1]{%
    \node[auxiliary graph vertex] (z) at (0,0) {$0$};
    \node[auxiliary graph vertex] (a) at (1.45,0) {$1$};

    \draw[auxiliary graph edge] (z)--(a);

    \draw[auxiliary graph marking] (z)--(-.68,0);
    \draw[auxiliary graph marking] (z)--(0,.66);
    \draw[auxiliary graph marking] (a)--(2.08,.48);

    \node[auxiliary graph label,left] at (-.70,.02)
      {$t$};

    \node[auxiliary graph label,above] at (0,.69)
      {$T$};

    \node[auxiliary graph label,above] at (2.08,.50)
      {$X'\setminus T$};

    \node[auxiliary graph decoration,below] at (0,-.34)
      {$-\psi_t$};
  }%
}

\newcommand{\GraphBetaDoubleEdgeTerm}[1][1]{%
  \AuxiliaryGraphBox[#1]{%
    \node[auxiliary graph vertex] (z1) at (0,0) {$0$};
    \node[auxiliary graph vertex] (z2) at (1.35,0) {$0$};

    \draw[auxiliary graph marking] (z1)--(-.68,0);
    \draw[auxiliary graph marking] (z2)--(2.03,.48);

    \draw[auxiliary graph edge]
      (z1) to[out=45,in=135,looseness=1.20] (z2);

    \draw[auxiliary graph edge]
      (z1) to[out=-45,in=-135,looseness=1.20] (z2);

    \node[auxiliary graph label,left] at (-.70,0)
      {$t$};

    \node[auxiliary graph label,above] at (2.2,.50)
      {$X'$};
  }%
}

\newcommand{\AlphaBetaDefinitionsDisplay}{%
  \begin{equation*}
    \alpha
    :=
    \GraphAlphaLoopTerm[.82]
    +
    24\!
    \sum_{\substack{\varnothing\neq T\subseteq X'}}
    \GraphAlphaBoundaryTerm[.82]
    \qquad\text{and}\qquad
    \beta
    :=
    \GraphBetaDoubleEdgeTerm[.82].
  \end{equation*}%
}

\newcommand{\GraphEtaStarAlpha}[1][1]{%
  \AuxiliaryGraphBox[#1]{%
    \node[auxiliary graph vertex] (z) at (0,0) {$0$};

    \draw[auxiliary graph marking] (z)--(-.68,0);
    \draw[auxiliary graph marking] (z)--(.68,.48);
    \draw[auxiliary graph marking] (z)--(.68,-.48);

    \draw[auxiliary graph edge]
      (z) to[out=125,in=55,looseness=4.5] (z);

    \node[auxiliary graph label,left] at (-.70,0)
      {$t$};

    \node[auxiliary graph label,above] at (.75,.50)
      {$y$};

    \node[auxiliary graph label,below] at (.75,-.50)
      {$a$};
  }%
}

\newcommand{\GraphEtaStarBeta}[1][1]{%
  \AuxiliaryGraphBox[#1]{%
    \node[auxiliary graph vertex] (z1) at (0,0) {$0$};
    \node[auxiliary graph vertex] (z2) at (1.35,0) {$0$};

    \draw[auxiliary graph marking] (z1)--(-.68,0);
    \draw[auxiliary graph marking] (z2)--(2.03,.48);
    \draw[auxiliary graph marking] (z2)--(2.03,-.48);

    \draw[auxiliary graph edge]
      (z1) to[out=45,in=135,looseness=1.20] (z2);

    \draw[auxiliary graph edge]
      (z1) to[out=-45,in=-135,looseness=1.20] (z2);

    \node[auxiliary graph label,left] at (-.70,.02)
      {$t$};

    \node[auxiliary graph label,above] at (2.2,.50)
      {$y$};

    \node[auxiliary graph label,below] at (2.2,-.50)
      {$a$};
  }%
}

\newcommand{\EtaStarAlphaBetaDisplay}{%
  \begin{equation*}
    \xi^*(\alpha)
    =
    \GraphEtaStarAlpha[.82]
    \mathbin{\otimes}1
    \qquad\text{and}\qquad
    \xi^*(\beta)
    =
    \GraphEtaStarBeta[.82]
    \mathbin{\otimes}1.
  \end{equation*}%
}

\newcommand{\GraphNprime}[1][1]{%
  \GraphBox[#1]{%
    \node[graph vertex] (n1) at (0,0) {$0$};
    \node[graph vertex] (n2) at (1.55,0) {$1$};
    \node[graph vertex,minimum size=7mm,inner sep=1.5pt]
      (n3) at (3.10,0) {$g\!-\!1$};

    \draw[graph edge] (n1) -- (n2);
    \draw[graph edge] (n2) -- (n3);

    \draw[graph marked points] (n1) -- (-0.50,0.55);
    \draw[graph marked points] (n2) -- (1.55,0.72);
    \draw[graph marked points] (n3) -- (3.60,0.55);

    \node[graph label,above] at (-0.59,0.65)
      {$X$};

    \node[graph label,above] at (1.55,0.86)
      {$(X\cup Y)^{c}$};

    \node[graph label,above] at (3.69,0.65)
      {$Y$};

    \node[graph label,above] at (1.14,0.02)
      {$p$};

    \node[graph label,above] at (1.97,0.02)
      {$q$};

    \node[graph label] at (1.55,-0.58)
      {$\omega_{B\cup p}$};
  }%
}

\newcommand{\GraphZ}[1][1]{%
  \GraphBox[#1]{%
    \node[graph vertex] (z1) at (0,0) {$1$};
    \node[graph vertex,minimum size=7mm,inner sep=1.5pt]
      (z2) at (2.15,0) {$g\!-\!1$};

    \draw[graph edge] (z1) -- (z2);

    \draw[graph marked points] (z1) -- (0,0.72);
    \draw[graph marked points] (z2) -- (2.65,0.55);

    \node[graph label,above] at (0,0.86)
      {$A$};

    \node[graph label,above] at (2.74,0.65)
      {$A^{c}$};

    \node[graph label,above] at (0.46,0.02)
      {$p$};

    \node[graph label] at (0,-0.58)
      {$\omega_{B\cup p}$};
  }%
}

\newcommand{\Graphreloneaa}[1][1]{%
  \GraphBox[#1]{%
    \node[graph vertex] (r1) at (0,0) {$0$};
    \node[graph vertex] (r2) at (1.55,0) {$1$};

    \draw[graph edge] (r1) -- (r2);

    \draw[graph marked points] (r1) -- (-0.55,0.50);
    \draw[graph marked points] (r1) -- (-0.55,-0.50);
    \draw[graph marked points] (r2) -- (1.90,0.62);

    \node[graph label,above] at (-0.65,0.59)
      {$x_1$};

    \node[graph label,below] at (-0.65,-0.59)
      {$x_2$};

    \node[graph label,above] at (1.97,0.74)
      {$B$};

    \node[graph label,above] at (1.14,0.02)
      {$p$};

    \node[graph label] at (1.55,-0.58)
      {$\omega_{B\cup p}$};
  }%
}

\newcommand{\Graphrelonebb}[1][1]{%
  \GraphBox[#1]{%
    \node[graph vertex] (r1) at (0,0) {$0$};
    \node[graph vertex] (r2) at (1.55,0) {$1$};

    \draw[graph edge] (r1) -- (r2);

    \draw[graph marked points] (r1) -- (-0.55,0.50);
    \draw[graph marked points] (r1) -- (-0.55,-0.50);
    \draw[graph marked points] (r2) -- (1.90,0.62);

    \node[graph label,above] at (-0.65,0.59)
      {$b_1$};

    \node[graph label,below] at (-0.65,-0.59)
      {$x_2$};

    \node[graph label,above] at (1.97,0.74)
      {$B\setminus b_1\cup x_1$};

    \node[graph label,above] at (1.14,0.02)
      {$p$};

    \node[graph label] at (1.55,-0.58)
      {$\omega_{B\setminus b_1\cup x_1\cup p}$};
  }%
}

\newcommand{\Graphrelonecc}[1][1]{%
  \GraphBox[#1]{%
    \node[graph vertex] (r1) at (0,0) {$0$};
    \node[graph vertex] (r2) at (1.55,0) {$1$};

    \draw[graph edge] (r1) -- (r2);

    \draw[graph marked points] (r1) -- (-0.55,0.50);
    \draw[graph marked points] (r1) -- (-0.55,-0.50);
    \draw[graph marked points] (r2) -- (1.90,0.62);

    \node[graph label,above] at (-0.65,0.59)
      {$x_1$};

    \node[graph label,below] at (-0.65,-0.59)
      {$b_1$};

    \node[graph label,above] at (1.97,0.74)
      {$B\setminus b_1\cup x_2$};

    \node[graph label,above] at (1.14,0.02)
      {$p$};

    \node[graph label] at (1.55,-0.58)
      {$\omega_{B\setminus b_1\cup x_2\cup p}$};
  }%
}

\tikzset{
  graph vertex small/.style={
    graph vertex,
    minimum size=4.2mm
  },
  graph half edge/.style={
    graph label,
    font=\fontsize{7}{8}\selectfont
  }
}

\newcommand{\Graphreltwoaa}[1][1]{%
  \GraphBox[#1]{%
    \node[graph vertex small] (r1) at (0,0) {$0$};
    \node[graph vertex small] (r2) at (0.85,0) {$1$};

    \draw[graph edge] (r1) -- (r2);

    \draw[graph marked points] (r1) -- (-0.38,0.34);
    \draw[graph marked points] (r1) -- (-0.38,-0.34);
    \draw[graph marked points] (r2) -- (0.99,0.48);
    \draw[graph marked points] (r2) -- (1.37,0);

    \node[graph label,above] at (-0.48,0.43)
      {$x_1$};

    \node[graph label,below] at (-0.48,-0.43)
      {$x_2$};

    \node[graph label,above] at (1.03,0.61)
      {$B$};

    \node[graph label,anchor=west] at (1.51,0)
      {$q$};

    \node[graph half edge,above] at (0.51,0.02)
      {$p$};

    \node[graph label] at (0.85,-0.46)
      {$\omega_{B\cup p}$};
  }%
}

\newcommand{\Graphreltwobb}[1][1]{%
  \GraphBox[#1]{%
    \node[graph vertex small] (r1) at (0,0) {$0$};
    \node[graph vertex small] (r2) at (0.85,0) {$1$};

    \draw[graph edge] (r1) -- (r2);

    \draw[graph marked points] (r1) -- (-0.38,0.34);
    \draw[graph marked points] (r1) -- (-0.38,-0.34);
    \draw[graph marked points] (r2) -- (0.99,0.48);
    \draw[graph marked points] (r2) -- (1.37,0);

    \node[graph label,above] at (-0.48,0.43)
      {$b_1$};

    \node[graph label,below] at (-0.48,-0.43)
      {$x_2$};

    \node[graph label,above] at (1.03,0.61)
      {$B\setminus b_1\cup x_1$};

    \node[graph label,anchor=west] at (1.51,0)
      {$q$};

    \node[graph half edge,above] at (0.51,0.02)
      {$p$};

    \node[graph label] at (0.85,-0.46)
      {$\omega_{B\setminus b_1\cup x_1\cup p}$};
  }%
}

\newcommand{\Graphreltwocc}[1][1]{%
  \GraphBox[#1]{%
    \node[graph vertex small] (r1) at (0,0) {$0$};
    \node[graph vertex small] (r2) at (0.85,0) {$1$};

    \draw[graph edge] (r1) -- (r2);

    \draw[graph marked points] (r1) -- (-0.38,0.34);
    \draw[graph marked points] (r1) -- (-0.38,-0.34);
    \draw[graph marked points] (r2) -- (0.99,0.48);
    \draw[graph marked points] (r2) -- (1.37,0);

    \node[graph label,above] at (-0.48,0.43)
      {$x_1$};

    \node[graph label,below] at (-0.48,-0.43)
      {$b_1$};

    \node[graph label,above] at (1.03,0.61)
      {$B\setminus b_1\cup x_2$};

    \node[graph label,anchor=west] at (1.51,0)
      {$q$};

    \node[graph half edge,above] at (0.51,0.02)
      {$p$};

    \node[graph label] at (0.85,-0.46)
      {$\omega_{B\setminus b_1\cup x_2\cup p}$};
  }%
}

\newcommand{\Graphreltwodd}[1][1]{%
  \GraphBox[#1]{%
    \node[graph vertex small] (r1) at (0,0) {$0$};
    \node[graph vertex small] (r2) at (0.85,0) {$1$};

    \draw[graph edge] (r1) -- (r2);

    \draw[graph marked points] (r1) -- (-0.38,0.34);
    \draw[graph marked points] (r1) -- (0,0.52);
    \draw[graph marked points] (r1) -- (-0.38,-0.34);
    \draw[graph marked points] (r2) -- (0.99,0.48);

    \node[graph label,above] at (-0.48,0.43)
      {$b_1$};

    \node[graph label,above] at (0,0.66)
      {$q$};

    \node[graph label,below] at (-0.48,-0.43)
      {$x_2$};

    \node[graph label,above] at (1.03,0.61)
      {$B\setminus b_1\cup x_1$};

    \node[graph half edge,above] at (0.51,0.02)
      {$p$};

    \node[graph label] at (0.85,-0.46)
      {$\omega_{B\setminus b_1\cup x_1\cup p}$};
  }%
}

\newcommand{\Graphreltwoee}[1][1]{%
  \GraphBox[#1]{%
    \node[graph vertex small] (r1) at (0,0) {$0$};
    \node[graph vertex small] (r2) at (0.85,0) {$1$};

    \draw[graph edge] (r1) -- (r2);

    \draw[graph marked points] (r1) -- (-0.38,0.34);
    \draw[graph marked points] (r1) -- (0,0.52);
    \draw[graph marked points] (r1) -- (-0.38,-0.34);
    \draw[graph marked points] (r2) -- (0.99,0.48);

    \node[graph label,above] at (-0.48,0.43)
      {$x_1$};

    \node[graph label,above] at (0,0.66)
      {$q$};

    \node[graph label,below] at (-0.48,-0.43)
      {$b_1$};

    \node[graph label,above] at (1.03,0.61)
      {$B\setminus b_1\cup x_2$};

    \node[graph half edge,above] at (0.51,0.02)
      {$p$};

    \node[graph label] at (0.85,-0.46)
      {$\omega_{B\setminus b_1\cup x_2\cup p}$};
  }%
}

\newcommand{\Graphreltwoff}[1][1]{%
  \GraphBox[#1]{%
    \node[graph vertex small] (r1) at (0,0) {$0$};
    \node[graph vertex small] (r2) at (0.85,0) {$1$};

    \draw[graph edge] (r1) -- (r2);

    \draw[graph marked points] (r1) -- (-0.38,0.34);
    \draw[graph marked points] (r1) -- (0,0.52);
    \draw[graph marked points] (r1) -- (-0.38,-0.34);
    \draw[graph marked points] (r2) -- (0.85,0.52);

    \node[graph label,above] at (-0.48,0.43)
      {$x_1$};

    \node[graph label,above] at (0,0.66)
      {$q$};

    \node[graph label,below] at (-0.48,-0.43)
      {$x_2$};

    \node[graph label,above] at (0.85,0.66)
      {$B$};

    \node[graph half edge,above] at (0.51,0.02)
      {$p$};

    \node[graph label] at (0.85,-0.46)
      {$\omega_{B\cup p}$};
  }%
}

\newcommand{\GraphrelMzeroaa}[1][1]{%
  \GraphBox[#1]{%
    \node[graph vertex] (m1) at (0,0) {$0$};
    \node[graph vertex] (m2) at (1.90,0) {$1$};

    \draw[graph edge] (m1) -- (m2);

    \draw[graph marked points] (m1) -- (-0.55,0.50);
    \draw[graph marked points] (m1) -- (-0.55,-0.50);
    \draw[graph marked points] (m2) -- (1.90,0.72);
    \draw[graph marked points] (m2) -- (2.62,0);

    \node[graph label,above] at (-0.65,0.59)
      {$x_1$};

    \node[graph label,below] at (-0.65,-0.59)
      {$x_2$};

    \node[graph label,above] at (1.90,0.86)
      {$B'$};

    \node[graph label,anchor=west] at (2.76,0)
      {$q$};

    \node[graph label,above] at (1.49,0.02)
      {$p$};

    \node[graph label] at (1.90,-0.58)
      {$\omega_{B'\cup p\cup q}$};
  }%
}

\newcommand{\GraphrelMzerobb}[1][1]{%
  \GraphBox[#1]{%
    \node[graph vertex] (m1) at (0,0) {$1$};
    \node[graph vertex] (m2) at (1.90,0) {$0$};

    \draw[graph edge] (m1) -- (m2);

    \draw[graph marked points] (m1) -- (-0.35,0.63);
    \draw[graph marked points] (m1) -- (0.35,0.63);
    \draw[graph marked points] (m2) -- (1.90,0.72);
    \draw[graph marked points] (m2) -- (2.62,0);

    \node[graph label,above] at (-0.42,0.75)
      {$B'$};

    \node[graph label,above] at (0.42,0.75)
      {$x_2$};

    \node[graph label,above] at (1.90,0.86)
      {$x_1$};

    \node[graph label,anchor=west] at (2.76,0)
      {$q$};

    \node[graph label,above] at (0.46,0.02)
      {$p$};

    \node[graph label] at (0,-0.58)
      {$\omega_{B'\cup x_2\cup p}$};
  }%
}

\newcommand{\GraphrelMzerocc}[1][1]{%
  \GraphBox[#1]{%
    \node[graph vertex] (m1) at (0,0) {$1$};
    \node[graph vertex] (m2) at (1.90,0) {$0$};

    \draw[graph edge] (m1) -- (m2);

    \draw[graph marked points] (m1) -- (-0.35,0.63);
    \draw[graph marked points] (m1) -- (0.35,0.63);
    \draw[graph marked points] (m2) -- (1.90,0.72);
    \draw[graph marked points] (m2) -- (2.62,0);

    \node[graph label,above] at (-0.42,0.75)
      {$B'$};

    \node[graph label,above] at (0.42,0.75)
      {$x_1$};

    \node[graph label,above] at (1.90,0.86)
      {$x_2$};

    \node[graph label,anchor=west] at (2.76,0)
      {$q$};

    \node[graph label,above] at (0.46,0.02)
      {$p$};

    \node[graph label] at (0,-0.58)
      {$\omega_{B'\cup x_1\cup p}$};
  }%
}

\newcommand{\GraphClass}[1]{\makebox[0pt]{\ensuremath{#1}}}

\tikzset{
  graph vertex tiny/.style={
    graph vertex,
    minimum size=3.4mm,
    font=\fontsize{6}{7}\selectfont
  },
  graph label tiny/.style={
    graph label,
    font=\fontsize{6}{7}\selectfont
  },
  graph half edge tiny/.style={
    graph label,
    font=\fontsize{5}{6}\selectfont
  }
}

\newcommand{\GraphMijaa}[1][1]{%
  \GraphBox[#1]{%
    \node[graph vertex tiny] (v1) at (0,0) {$1$};
    \node[graph vertex tiny] (v2) at (0.58,0) {$0$};
    \node[graph vertex tiny] (v3) at (1.16,0) {$1$};

    \draw[graph edge] (v1) -- (v2);
    \draw[graph edge] (v2) -- (v3);

    \draw[graph marked points] (v1) -- (0,0.38);
    \draw[graph marked points] (v2) -- (0.40,0.38);
    \draw[graph marked points] (v2) -- (0.76,0.38);

    \node[graph label tiny,above] at (0,0.48)
      {$X$};

    \node[graph label tiny,above] at (0.36,0.47)
      {$i$};

    \node[graph label tiny,above] at (0.80,0.47)
      {$j$};

    \node[graph half edge tiny,above] at (0.29,0.02)
      {$p$};

    \node[graph label tiny] at (0,-0.36)
      {\GraphClass{\omega_{X\cup\{p\}}}};
  }%
}

\newcommand{\GraphMijbb}[1][1]{%
  \GraphBox[#1]{%
    \node[graph vertex tiny] (v1) at (0,0) {$1$};
    \node[graph vertex tiny] (v2) at (0.58,0) {$0$};

    \draw[graph edge] (v1) -- (v2);
    \draw[graph edge] (v2)
      to[out=135,in=45,looseness=4.5] (v2);

    \draw[graph marked points] (v1) -- (0,0.38);
    \draw[graph marked points] (v2) -- (0.82,0.12);
    \draw[graph marked points] (v2) -- (0.82,-0.30);

    \node[graph label tiny,above] at (0,0.48)
      {$X$};

    \node[graph label tiny,above] at (0.90,0.15)
      {$i$};

    \node[graph label tiny,below] at (0.88,-0.38)
      {$j$};

    \node[graph half edge tiny,above] at (0.29,0.02)
      {$p$};

    \node[graph label tiny] at (0,-0.36)
      {\GraphClass{\omega_{X\cup\{p\}}}};
  }%
}

\newcommand{\GraphMijcc}[1][1]{%
  \GraphBox[#1]{%
    \node[graph vertex tiny] (v1) at (0,0) {$1$};
    \node[graph vertex tiny] (v2) at (0.58,0) {$0$};
    \node[graph vertex tiny] (v3) at (1.16,0) {$1$};

    \draw[graph edge] (v1) -- (v2);
    \draw[graph edge] (v2) -- (v3);

    \draw[graph marked points] (v1) -- (0,0.38);
    \draw[graph marked points] (v2) -- (0.66,0.38);
    \draw[graph marked points] (v3) -- (1.24,0.34);

    \node[graph label tiny,above] at (0,0.48)
      {$X$};

    \node[graph label tiny,above] at (0.68,0.48)
      {$i$};

    \node[graph label tiny,above] at (1.27,0.43)
      {$j$};

    \node[graph half edge tiny,above] at (0.29,0.02)
      {$p$};

    \node[graph label tiny] at (0,-0.36)
      {\GraphClass{\omega_{X\cup\{p\}}}};
  }%
}

\newcommand{\GraphMijdd}[1][1]{%
  \GraphBox[#1]{%
    \node[graph vertex tiny] (v1) at (0,0) {$1$};
    \node[graph vertex tiny] (v2) at (0.58,0) {$0$};
    \node[graph vertex tiny] (v3) at (1.16,0) {$1$};

    \draw[graph edge] (v1) -- (v2);
    \draw[graph edge] (v2) -- (v3);

    \draw[graph marked points] (v1) -- (0,0.38);
    \draw[graph marked points] (v2) -- (0.66,0.38);
    \draw[graph marked points] (v3) -- (1.24,0.34);

    \node[graph label tiny,above] at (0,0.48)
      {$X$};

    \node[graph label tiny,above] at (0.68,0.48)
      {$j$};

    \node[graph label tiny,above] at (1.27,0.43)
      {$i$};

    \node[graph half edge tiny,above] at (0.29,0.02)
      {$p$};

    \node[graph label tiny] at (0,-0.36)
      {\GraphClass{\omega_{X\cup\{p\}}}};
  }%
}

\newcommand{\GraphMijee}[1][1]{%
  \GraphBox[#1]{%
    \node[graph vertex tiny] (v1) at (0,0) {$1$};
    \node[graph vertex tiny] (v2) at (0.58,0) {$1$};
    \node[graph vertex tiny] (v3) at (1.16,0) {$0$};

    \draw[graph edge] (v1) -- (v2);
    \draw[graph edge] (v2) -- (v3);

    \draw[graph marked points] (v1) -- (0,0.38);
    \draw[graph marked points] (v3) -- (1.32,0.30);
    \draw[graph marked points] (v3) -- (1.32,-0.30);

    \node[graph label tiny,above] at (0,0.48)
      {$X$};

    \node[graph label tiny,above] at (1.38,0.38)
      {$i$};

    \node[graph label tiny,below] at (1.38,-0.38)
      {$j$};

    \node[graph half edge tiny,above] at (0.29,0.02)
      {$p$};

    \node[graph label tiny] at (0,-0.36)
      {\GraphClass{\omega_{X\cup\{p\}}}};
  }%
}

\newcommand{\GraphMijff}[1][1]{%
  \GraphBox[#1]{%
    \node[graph vertex tiny] (v1) at (0,0) {$1$};
    \node[graph vertex tiny] (v2) at (0.90,0) {$0$};

    \draw[graph edge] (v1)
      to[out=45,in=135,looseness=1.25] (v2);

    \draw[graph edge] (v1)
      to[out=-45,in=-135,looseness=1.25] (v2);

    \draw[graph marked points] (v1) -- (0,0.38);
    \draw[graph marked points] (v2) -- (1.14,0.30);
    \draw[graph marked points] (v2) -- (1.14,-0.30);

    \node[graph label tiny,above] at (0,0.48)
      {$X$};

    \node[graph label tiny,above] at (1.20,0.38)
      {$i$};

    \node[graph label tiny,below] at (1.20,-0.38)
      {$j$};

    \node[graph half edge tiny] at (0.34,0.12)
      {$p$};

    \node[graph half edge tiny] at (0.34,-0.12)
      {$q$};

    \node[graph label tiny] at (0,-0.40)
      {\GraphClass{\omega^{S_2}}};
  }%
}

\author[A.~Bradford]{Amy Bradford}

\author[A.~Iribar L\'opez]{Aitor Iribar L\'opez}

\author[A.~Kretschmer]{Andreas Kretschmer}

\author[H.~Larson]{Hannah Larson}

\title{The fifteenth cohomology group of moduli spaces of stable curves}

\begin{document}

\begin{abstract}
The new framework of semi-tautological extensions has allowed for explicit understanding of low-degree odd cohomology groups of moduli spaces of stable curves.
Let $S^*_\omega(\Mb_{g,n}) \subset H^*(\Mb_{g,n})$ denote the semi-tautological extension generated by $H^{11}(\Mb_{1,11})$, that is, the smallest system of subrings  of $H^*(\Mb_{g,n})$ containing $H^{11}(\Mb_{1,11})$ and the tautological subrings, which is also
closed under pullbacks along forgetful maps and pushforwards along gluing maps. 
Recently, it has been shown that
$H^{11}(\Mb_{g,n}) = S^{11}_\omega(\Mb_{g,n})$ \cite{CLP23eleventh} and $H^{13}(\Mb_{g,n}) = S^{13}_\omega(\Mb_{g,n})$ \cite{CLPW24polynomial} for all $g$ and $n$. Meanwhile, $H^{15}(\Mb_{1,n}) \neq S_\omega^{15}(\Mb_{1,n})$ for all $n \geq 15$. Nevertheless,
we prove that $H^{15}(\Mb_{2,n}) = S^{15}_\omega(\Mb_{2,n})$ for all $n$. Moreover, we determine all relations in $S_\omega^{15}(\Mb_{g,n})$ for all $g \geq 2$ and give explicit bases for this vector space in terms of decorated graphs. Conditional upon the conjectured dimension of $H^{15}(\Mb_{3,14})$, we also prove that $H^{15}(\Mb_{3,n}) = S^{15}_\omega(\Mb_{3,n})$ for all $n$.
As an application, we prove $H^*(\Mb_{g,n}) = RH^*(\Mb_{g,n})$ for several new small pairs $(g, n)$.
\end{abstract}

\maketitle

\section{Introduction}

A longstanding technique for studying the cohomology of $\Mbar_{g,n}$ has been to consider the system of \textit{tautological subrings} $RH^*(\Mbar_{g,n}) \subseteq H^*(\Mbar_{g,n})$, defined as the smallest system of subrings that is closed under pushforward and pullback along the natural gluing and forgetful morphisms.
There is a known systematic collection of relations among tautological classes, known as Pixton's relations \cite{PPZ}. Conjecturally, these known relations are all relations, thus providing a robust understanding of the cohomology groups that are entirely tautological.
Currently, it is known that $H^k(\Mbar_{g,n}) = RH^k(\Mbar_{g,n})$ for all $g$ and $n$ when $k \leq 5$, when $k = 7$ or $9$, or when $k = 6$ and $g \geq 10$ \cite{AC98, BFP24, CLP24extensions}. Additionally, it is known that the entire cohomology ring of $\Mb_{g,n}$ is tautological for $(g, n)$ satisfying $g = 0$ \cite{Keel}; $g = 1$ and $n \leq 10$ \cite{B}; $g = 2$ and $n \leq 9$; or $3 \leq g \leq 7$ and $2g + n\leq 14$
\cite{CL23chowMgbar}.

Meanwhile, non-tautological classes arise in $H^{11}(\Mb_{1,11})$. Here, one has
\[H^{11}(\Mbar_{1,11}) \otimes \mathbb{C} = H^{11,0}(\Mb_{1,11}) \oplus H^{0,11}(\Mb_{1,11}),\]
where $H^{11,0}(\Mb_{1,11})$ is $1$-dimensional, generated by the holomorphic form $\omega$ corresponding to the weight $12$ cusp form for $\mathrm{SL}_2(\mathbb{Z})$ \cite[p.\ 306]{FPnontautological}.
In \cite{CLP23eleventh}, Canning, Payne, and the fourth author showed that $H^{11}(\Mbar_{g,n})$ is nonzero if and only if $g = 1$ and $n \geq 11$.
In these cases, $H^{11}(\Mbar_{1,n})$ is generated by the pullbacks of the vector spaces $H^{11}(\Mbar_{1,11})$ as one ranges over the forgetful maps that forget all but $11$ of the markings.
In order to make sense of these new non-tautological classes, in
\cite{CLP24extensions}, the same authors introduce \textit{semi-tautological extensions} (STEs).
An STE is a collection of subrings $S^*(\Mbar_{g,n})$ of $H^*(\Mbar_{g,n}) $ which contain $RH^*(\Mbar_{g,n})$ and are closed under pushforwards along gluing maps and under pullbacks along forgetful maps.
A \textit{finitely generated} STE is the smallest STE containing a given finite collection of elements (or equivalently finite-dimensional $\qq$-vector subspaces) of $\coprod_{g,n} H^*(\Mbar_{g,n})$.
For example, the STE generated by $H^{11}(\Mbar_{1,11})$, denoted $S_\omega^*$, contains $H^{11}(\Mbar_{g,n})$ for all $g$ and $n$.
Although it need not be true for all STEs, $S^*_\omega$ is in fact closed under pushforwards along forgetful maps and under pullbacks along gluing maps \cite[Prop.\ 2.4]{CLP24extensions}.

Finitely generated STEs may be studied via combinatorial techniques on \textit{decorated (strata) graphs}.
For example, additive generators for the tautological ring are given by decorated graphs, whose underlying graphs correspond to boundary strata of $\Mbar_{g,n}$, and whose decorations at each of the vertices are monomials in the tautological $\kappa$ and $\psi$ classes.
Meanwhile, additive generators for the STE $S^*_\omega$ are given by decorated graphs where, in addition to the usual tautological decorations, one is permitted to decorate genus $1$ vertices with any class in $H^{11}(\Mb_{1,n})$.
Using this new kind of decorated graph,
in \cite{CLPW24polynomial}, 
the authors prove that $H^{13}(\Mb_{g,n}) = S^{13}_\omega(\Mb_{g,n})$ for all $g$ and $n$, and give generators and relations for the latter.
In this paper, we use the decorated graphs in this STE to study $H^{15}(\Mb_{g,n})$.

 In genus $1$, it is known that $H^{15,0}(\Mb_{1,15}) \neq 0$, corresponding to the weight $16$ cusp form for $\mathrm{SL}_2(\mathbb{Z})$ \cite{Getzler98semiclassical}.
However, examining the decorated graph generators for $S^*_\omega$, one readily sees that in degree $15$ they only produce classes of Hodge type $(13,2)$ and $(2,13)$. More precisely, writing $\mathsf{L}\coloneq H^2(\mathbb{P}^1)$ and $\mathsf{S}_{12}\coloneq H^{11}(\Mbar_{1,11})$ for the Hodge structures or Galois representations, one sees that $S^{15}_\omega(\Mb_{g,n}) = \bigoplus \mathsf{L}^2 \mathsf{S}_{12}$ for all $g$ and $n$. 
In particular,  in genus $1$, we have $H^{15}(\Mb_{1,n}) \neq S^{15}_{\omega}(\Mb_{1,n})$ for all $n \geq 15$.

Nevertheless, for $g \geq 2$, the results of \cite{CLP24extensions} show that $H^{15}(\Mb_{g,n})^\mathrm{ss} = \bigoplus \mathsf{L}^2\mathsf{S}_{12}$, suggesting that $H^{15}(\Mb_{g,n})$ should lie in the STE for all $n$ and all $g \geq 2$. Using an intricate combinatorial argument with decorated graphs, we prove this conjecture in genus $2$.

\begin{theorem}\label{theorem:main}
For $g = 2$ and any $n$, we have $H^{15}(\Mbar_{2,n}) = S^{15}_\omega(\Mb_{2,n})$.
\end{theorem}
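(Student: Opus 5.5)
We will follow the inductive strategy of \cite{CLP23eleventh, CLPW24polynomial}, comparing dimensions via the excision long exact sequence for the open stratum $\mathcal{M}_{2,n} \subset \Mb_{2,n}$ with boundary $\partial\Mb_{2,n}$. Normalization gives a surjection
\[
H^{13}(\widetilde{\partial \Mb_{2,n}})(-1) \to W_{15}H^{15}_c(\mathcal M_{2,n}) ,
\]
together with the sequence
\[
H^{15}_c(\mathcal M_{2,n}) \to H^{15}(\Mb_{2,n}) \to H^{15}(\partial\Mb_{2,n}).
\]
The image of $H^{15}(\Mb_{2,n})$ in the boundary is controlled by the pullback to the normalization $\widetilde{\partial\Mb_{2,n}} = \coprod \Mb_{1,A\cup p}\times\Mb_{1,A^c\cup q} \sqcup \Mb_{1,n+2}\sqcup \Mb_{0,\ast}\times\Mb_{2,\ast}$.

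\textbf{Step 1: the interior contribution.} We first show that the image of $H^{15}_c(\mathcal M_{2,n})\to H^{15}(\Mb_{2,n})$ lies in $S^{15}_\omega$. Since $H^{15}(\Mb_{2,n})$ is pure of weight $15$, only $W_{15}H^{15}_c(\mathcal M_{2,n})$ contributes. This is the image of pushforward from the boundary normalization of $H^{13}$ of the pieces, twisted by $\mathsf L$. By \cite{CLPW24polynomial}, $H^{13}$ of every boundary piece (products of $\Mb_{g',n'}$ with $g'\le 2$, using Künneth and $H^{odd}(\Mb_{0,\ast})=0$, $H^{13}(\Mb_{1,m})=S^{13}_\omega$, $H^{13}(\Mb_{2,m})=S^{13}_\omega$) lies in the STE. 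Since $S_\omega$ is closed under gluing pushforwards, this contribution lies in $S^{15}_\omega(\Mb_{2,n})$.

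\textbf{Step 2: the boundary restriction.} It then remains to show that the restriction map
\[
H^{15}(\Mb_{2,n})\to \bigoplus H^{15}(\widetilde{\partial\Mb_{2,n}})
\]
has image equal to the image of $S^{15}_\omega(\Mb_{2,n})$. The image consists of compatible tuples, that is, classes whose pullbacks to codimension-two strata agree. By Künneth, the genus-one-times-genus-one pieces contribute only $H^{11}\otimes H^4$ and $H^{13}\otimes H^2$ classes, all in the STE, since $H^{15}(\Mb_{1,m})$ with its $\mathsf S_{16}$ part cannot occur, having the wrong Hodge type (by \cite{CLP24extensions} the semisimplification is $\bigoplus \mathsf L^2\mathsf S_{12}$). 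The $\Mb_{1,n+2}$ piece, however, carries $H^{15}(\Mb_{1,n+2})$, which contains $\mathsf S_{16}$ parts outside the STE. We must argue that these do not appear, again by Hodge type, projecting onto the $\mathsf L^2\mathsf S_{12}$-isotypic part. After this, the compatible tuples form an explicit combinatorial space described by decorated graphs, namely the generators of Figure~\ref{fig:gentable} and their pullbacks.

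\textbf{Step 3: the main obstacle, a dimension count.} We must show that every compatible tuple is hit by some element of $S^{15}_\omega$. Concretely, we will compute the dimension of the space of $\mathsf L^2\mathsf S_{12}$-compatible tuples on the boundary and compare it with $\dim S^{15}_\omega(\Mb_{2,n})$ minus the dimension of the kernel of restriction intersected with the STE. This requires the full list of relations in $S^{15}_\omega$, namely the generators of Figure~\ref{fig:gentable} modulo the relations determined later in the paper. We expect the hardest part to be showing that the restriction of the explicit basis of decorated graphs to the boundary is injective modulo interior classes. To do this, we will compute pullbacks of the side, middle, loop, double-edge and psi generators to each boundary divisor using the pullback formulas (excess intersection, as in the displayed product computations), and then run an induction on $n$ via forgetful maps to reduce to a triangularity argument. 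The base cases with small $n$ follow because the cohomology is tautological, hence zero in this odd degree, for $n\le 9$.
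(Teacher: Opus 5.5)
Your Step~1 conflates ordinary and compactly supported cohomology of $\M_{2,n}$. The Gysin map $H^{13}(\widetilde{\partial\M_{2,n}})(-1)\to H^{15}(\Mb_{2,n})$ has image equal to the kernel of $H^{15}(\Mb_{2,n})\to H^{15}(\M_{2,n})$, i.e.\ the classes \emph{supported} on the boundary. That is the sequence \eqref{wtknorm}, and it is why this image lies in $S^{15}_\omega$.

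The image of $H^{15}_c(\M_{2,n})\to H^{15}(\Mb_{2,n})$ is a different subspace: the kernel of restriction to the boundary, i.e.\ the classes \emph{vanishing} on the boundary. There is no surjection from $H^{13}(\widetilde{\partial\M_{2,n}})(-1)$ onto it. The boundary maps into $H^{15}_c(\M_{2,n})$ only through the connecting map from $H^{14}(\partial\Mb_{2,n})$, whose image is exactly $W_{14}H^{15}_c(\M_{2,n})$, the part that dies in $H^{15}(\Mb_{2,n})$.

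By Poincar\'e duality on $\Mb_{2,n}$ and on the normalized boundary, the kernel of restriction is the annihilator of the Gysin image in complementary degree. Hence its dimension is $\dim W_{2n-9}H^{2n-9}(\M_{2,n})$; for $n=12$ it is dual to $W_{15}H^{15}(\M_{2,12})$, which is exactly the obstruction the theorem must kill. So Step~1 is not a formality but is as hard as the theorem itself, and your argument for it is invalid.

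Steps~2--3 cannot close the argument either. The image of $H^{15}(\Mb_{2,n})$ in the compatible tuples $\mathrm{Gr}^W_{15}H^{15}(\partial\Mb_{2,n})$ has cokernel $\mathrm{Gr}^W_{15}H^{16}_c(\M_{2,n})$, and nothing in your plan controls it. So counting compatible tuples only bounds the image from above. Computing pullbacks of decorated graphs to boundary divisors, which is what the paper does in Theorem~\ref{theorem:Genus2AnyMarkings}, can only ever produce \emph{lower} bounds on $\dim S^{13,2}_\omega$.

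The missing ingredients are an independent upper bound on $H^{15}(\Mb_{2,n})$ and a reduction to finitely many $n$; your ``induction on $n$ via forgetful maps'' is not such a mechanism. The paper supplies both:
\begin{enumerate}
\item By \eqref{wtknorm}, the theorem is equivalent to $W_{15}H^{15}(\M_{2,n})=0$.
\item Lemma~\ref{lemma:topweight} reduces this to $n\le 14$, and surjectivity of $\pi_*(c_1(\mathcal L)^k\cup -)$ reduces it to $n=14$.
\item Lemma~\ref{132reduction} reduces to the $(13,2)$ part.
\item The unconditional point count \cite{BergstromData} gives $\dim H^{13,2}(\Mb_{2,14})=234234$.
\end{enumerate}
Only then does the decorated-graph computation suffice. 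It shows that the only relations among the $239239$ natural generators are the $5005$ relations among the $\D'$ classes (Lemma~\ref{lemma:pullbacksMij}). Hence $\dim S^{13,2}_\omega(\Mb_{2,14})=234234$, matching the upper bound.
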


Meanwhile, for all $g \geq 2$ and any $n$, 
our second theorem 
determines the structure of $S_\omega^{15}(\Mb_{g,n})$
with generators and relations.
First, in Figure \ref{fig:gentable}, 
we provide a list of decorated graph generators. We think of these graphs as representing classes in $S_\omega^{13,2}(\Mb_{g,n})$, but as in \cite[Section 4.1]{CLPW24polynomial}, one can also interpret these decorated graphs as generators for the $\qq$-vector space $\mathrm{Hom}(\mathsf{L}^2\mathsf{S}_{12}, S_\omega^{15}(\Mb_{g,n}))$ and one has 
\[S^{15}_{\omega}(\Mb_{g,n}) = \mathrm{Hom}(\mathsf{L}^2\mathsf{S}_{12}, S^{15}_\omega(\Mb_{g,n})) \otimes_{\qq} \mathsf{L}^2\mathsf{S}_{12}.\] 
The relations among these decorated graph generators come in three flavors.
\begin{enumerate}
    \item Trivial relations come from different ways of drawing and labeling a graph, as explained in Section \ref{sec:triv}.
    \item Pushed forward relations come from pushing forward a known relation in lower cohomological degree along some gluing map, as explained in Section \ref{sec:rels_all_g}.
    \item Pulled back relations are relations not in the span of the above that come from pulling back relations that arise on a moduli space of curves with a small number of marked points. The initial instances of these relations were discovered by pairing generators against the Poincar\'e dual group. It is an interesting fact that these relations only occur when $g = 2$ and all relations are pulled back from moduli spaces with $n \leq 12$ markings. We describe these relations in Section \ref{sec:g2rel}. 
\end{enumerate}

\begin{theorem} \label{thm:rels}
(a) If $g = 2$, then the trivial relations \eqref{triv1}--\eqref{triv2}, the pushed forward relations \eqref{push1}--\eqref{push4}, \eqref{push5}, and the pulled back relations \eqref{rel10pull}--\eqref{rel12pull} generate all relations among the generators of $S_\omega^{13,2}(\Mb_{g,n})$ in Figure \ref{fig:gentable}.

(b) If $g \geq 3$, then the trivial relations \eqref{triv1}--\eqref{triv2} and pushed forward relations \eqref{push1}--\eqref{push4}, \eqref{push5} generate all relations among the generators of $S_\omega^{13,2}(\Mb_{g,n})$ in Figure \ref{fig:gentable}. 
\end{theorem}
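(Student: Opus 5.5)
The natural strategy is to prove that the listed relations are complete by a dimension count. Let $V_{g,n}$ be the formal $\qq$-vector space spanned by the decorated graph generators of Figure~\ref{fig:gentable}. Let $R_{g,n}\subset V_{g,n}$ be the span of the listed relations: trivial, pushed forward, and, when $g=2$, pulled back. There is a surjection $V_{g,n}/R_{g,n}\to \mathrm{Hom}(\mathsf{L}^2\mathsf{S}_{12},S^{15}_\omega(\Mb_{g,n}))$, and the task is to show it is injective. Since $S^{15}_\omega$ is closed under pullbacks along gluing maps and pushforwards along forgetful maps, I would bound the dimension of the target from below, and the dimension of the source from above, by the same number.

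\emph{Step 1 (upper bound).} Using the trivial relations, put every generator into a normal form; this is what the restrictions (R1)--(R10) encode. Then use the pushed forward relations \eqref{push1}--\eqref{push4}, \eqref{push5}, and for $g=2$ also \eqref{rel10pull}--\eqref{rel12pull}, to eliminate further generators. For this I would choose a total order on normal-form generators, for instance by graph type and then by $\min$ of the marking sets. Each relation should express a leading generator as a combination of smaller ones. The surviving generators form a candidate basis $\mathcal B_{g,n}$, so $\dim V_{g,n}/R_{g,n}\le |\mathcal B_{g,n}|$.

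\emph{Step 2 (lower bound, i.e.\ independence of $\mathcal B_{g,n}$).} Construct enough test functionals on $S^{15}_\omega(\Mb_{g,n})$. There are two natural choices.
\begin{itemize}
\item Pull back along gluing maps to products such as $\Mb_{1,A\cup\{s\}}\times\Mb_{g-1,A^c\cup\{t\}}$, project to the Künneth component $H^{13}\otimes H^2$ or $H^{11}\otimes H^4$, and pair with explicit tautological and $\omega$-classes. These are the product computations already illustrated by the graph displays in the preamble, for example $\mathcal L'_{B,X}$ paired against $H^{13}\otimes H^2$.
\item Pair with classes in the Poincaré dual degree.
\end{itemize}
The aim is a triangular pairing matrix between $\mathcal B_{g,n}$ and a family of test classes indexed compatibly with the chosen order. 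Known results would be used along the way: $H^{11}$ and $H^{13}$ are generated by the STE, with the generators and relations of \cite{CLPW24polynomial}. Also, $RH^*$ of genus $0$ and of small genus is understood, which handles the tautological factors.

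\emph{Step 3 (induction on $n$).} Reduce the general case to small $n$. Pulling back along the forgetful map $\pi\colon\Mb_{g,n+1}\to\Mb_{g,n}$ and pushing forward the $\psi$/boundary corrections should give an inductive structure $V_{g,n+1}/R_{g,n+1}\cong$ (pullbacks) $\oplus$ (new classes involving marking $n+1$). This would show that relations are only needed for small $n$, matching the claim that for $g=2$ the pulled back relations originate on $n\le12$. The base cases with few markings, together with $g=2,3$ and small $n$, would be verified by direct pairing computations. For $g\ge3$ the independence in part (b) should follow from the same Künneth test functionals, with no exceptional relations, because of the restrictions (R9) and (R10).

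The main obstacle is Step 2 in genus $2$. There the pulled back relations show that naive Künneth tests are not sufficient, and one must find test classes that exactly detect the quotient after \eqref{rel10pull}--\eqref{rel12pull}. My first attempt would be to combine the $H^{13}\otimes H^2$ projections (through $\Mb_{2}\times\Mb_{0}$ and $\Mb_{1}\times\Mb_{1}$ splittings) with pairing against the $\mathcal D$ and $\mathcal N$ generators. I would check on $\Mb_{2,12}$ by computer that the resulting matrix has the predicted rank, and then propagate the result by the induction of Step 3.
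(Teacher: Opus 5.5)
There is a genuine gap. Your Steps~1--2 are the right skeleton: the restricted generators span, and independence is to be proved via K\"unneth projections of boundary pullbacks. What is missing are the two mechanisms that make the independence proof work.

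\textbf{Genus $2$.} You flag this as the main obstacle yourself, but you leave the test family unspecified. Your plan is to propagate a computer check on $\Mbar_{2,12}$ by Step~3, and Step~3 has no content as stated. Pushforward along a forgetful map lands in $H^{13}$. Pullback only shows that old classes stay independent. No splitting $V_{g,n+1}/R_{g,n+1}\cong(\text{pullbacks})\oplus(\text{new classes})$ is available. Poincar\'e duality does not help for $n\geq 13$, since there is no explicit description of the dual group $H^{2n-9}(\Mbar_{2,n})$ to pair against.

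The missing idea is to pass to the natural generators. One replaces $\D_{B,ij}$ by $\D'_{B,ij}=\pi^*_{[n]\smallsetminus A}M_{ij}$, uses $\N'$ in place of $\M^0,\N^0$, and uses $\L'_{B,X}$ for $|X|\geq 3$. After this change, all the exceptional relations lie inside $\sum_A V_A$. One then proves two separate statements.
\begin{enumerate}
\item[(i)] The sum $\sum_A V_A$ is direct (Lemma~\ref{lemma:pullbacksMij}). This is proved by pulling back along the section-type gluings $\theta_{xy}$, using Lemma~\ref{lem:rel11} to get $\tilde\theta_{ij}^*M_{ij}=0$, and inducting on $n$. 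This is where an induction on $n$ genuinely enters. Consequently the only relations among the $\D'$ are those pulled back from $\Mbar_{2,12}$.
\item[(ii)] All remaining natural generators are independent \emph{modulo} the $\D'$. The test divisors are $\Mbar_1\times\Mbar_1$ (components $H^{11}\otimes H^4$ and $H^{13}\otimes H^2$) and $\Mbar_2\times\Mbar_0$ (component $H^{13}\otimes H^2$). On these, every $\D'$ pulls back to zero, and so does every $\L'$ in the relevant ranges (Lemmas~\ref{lemma:vanishingGenus2}, \ref{lemma:D'vanishingGenus2} and Step~(6) of the proof of Theorem~\ref{theorem:Genus2AnyMarkings}). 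The coefficients are then killed in a carefully chosen order by $|X'|$. This uses the genus-$1$ basis of Lemma~\ref{lemma:basisGenus1} and the absence of relations among the $Z_{B\subset A}$ in genus~$2$.
\end{enumerate}
This argument handles every $n$ at once, with no forgetful-map induction.

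\textbf{Genus $g\geq 3$.} You give no argument here. The restrictions (R9) and (R10) only say that $\P$ and $\K$ generators are admitted once $g\geq 3$, resp.\ $g\geq 4$. They say nothing about why the genus-$2$ exceptional relations do not recur.

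The idea you are missing is induction on the genus via the non-separating gluing $\xi\colon\Mbar_{g-1,[n]\cup\{x,y\}}\to\Mbar_{g,n}$:
\begin{enumerate}
\item[(a)] Compute $\xi^*$ of every generator explicitly. Rewrite $\P$ via \eqref{push1} when the target has genus $2$, and $\K$ via \eqref{push3} when it has genus $3$.
\item[(b)] Order the markings $x<y<1<\cdots<n$, so that leading terms such as $\D_{B,X\cup\{x,y\}}$ and $\N^{a-1}_{B,X\cup\{x,y\},Y}$ are restricted generators in genus $g-1$.
\item[(c)] Check that, after suitable projections, the matrix of $\xi^*$ is block lower triangular with identity diagonal blocks (Tables~\ref{tab:pullbackg3} and~\ref{tab:pullbackg4}).
\end{enumerate}
A relation among restricted generators in genus $g$ would then pull back to a nontrivial one in genus $g-1$. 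So (b) follows by induction on the genus, and it rests on (a) as its base case.
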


Using our relations to eliminate unnecessary generators, we provide an explicit basis for $S_\omega^{13,2}(\Mb_{g,n})$ in terms of decorated graphs, namely those in Figure \ref{fig:gentable} which also satisfy the restrictions (R1)--(R10) in the third column. From this, it is a straightforward combinatorial exercise to count the dimension of $S_\omega^{13,2}(\Mb_{g,n})$. In Section \ref{sec:dimcounts}, we provide explicit formulas for the dimension in genus $g = 2, 3$.
We have checked these formulas against the
point counting data of Bergstr\"om and Faber \cite{BergstromData} in genus $2$ and $3$ with low $n$, and in every instance, $\dim S^{13,2}_\omega(\Mb_{g,n})$ agrees with the conjectural dimension of $H^{13,2}(\Mb_{g,n})$.
In genus $3$ with $14$ markings, the conjectural dimension is
\begin{equation} \label{bf} 
\dim H^{13,2}(\Mb_{3,14}) = 657657.
\end{equation}

\begin{theorem} \label{thm:g3}
Conditional upon the conjectural assertion of the dimension in \eqref{bf} when $n = 14$, we have $H^{15}(\Mb_{3,n}) = S^{15}_\omega(\Mb_{3,n})$ for all $n$.
\end{theorem}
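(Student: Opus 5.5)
We argue by induction on $n$, following the strategy used for $H^{13}$ in \cite{CLPW24polynomial} and for genus $2$ in Theorem~\ref{theorem:main}.

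\textbf{Base cases.} For small $n$, $H^{15}(\Mb_{3,n})$ is known. When $2g+n\le 14$, i.e.\ $n\le 8$, the whole cohomology is tautological \cite{CL23chowMgbar}, and odd tautological cohomology vanishes. So $H^{15}(\Mb_{3,n})=0=S^{15}_\omega(\Mb_{3,n})$ for $n\le 8$.

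\textbf{Inductive step.} For the step from $n-1$ to $n$ we use the localization/excision sequence for $\partial\Mb_{3,n}\subset\Mb_{3,n}$ together with the weight argument of \cite{CLP24extensions}. The key input is that $W_{14}H^{15}_c(\M_{3,n})$ vanishes once we know the pure part has no contribution from the interior. Concretely, $H^{15}(\Mb_{3,n})$ is the image of the pushforwards from the normalizations of boundary divisors, $\widetilde{\partial\Mb_{3,n}}\to\Mb_{3,n}$, whenever the Gysin/restriction map $H^{15}(\Mb_{3,n})\to H^{15}(\M_{3,n})$ is zero.

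The boundary divisors are products of moduli spaces of genus at most $3$ with fewer markings, or $\Mb_{2,n+2}$. By the Künneth formula, Theorem~\ref{theorem:main}, the known results $H^{11}=S^{11}_\omega$ and $H^{13}=S^{13}_\omega$, and the inductive hypothesis, their $H^{13}$-classes pushed forward lie in $S^{15}_\omega$. One subtlety is that the genus-$1$ factors contribute $H^{15}(\Mb_{1,m})\ne S^{15}_\omega$. Such pieces of Hodge type $\mathsf{S}_{16}$ cannot survive in $H^{15}(\Mb_{3,n})$, because by \cite{CLP24extensions} we have $H^{15}(\Mb_{g,n})^{\mathrm{ss}}=\bigoplus\mathsf{L}^2\mathsf{S}_{12}$ for $g\ge2$. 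We would make this rigorous by restricting to the $\mathsf{L}^2\mathsf{S}_{12}$-isotypic part.

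It remains to show that the interior restriction $H^{15}(\Mb_{3,n})\to W_{15}H^{15}(\M_{3,n})$ vanishes. Following \cite{CLP23eleventh}, $H^{15}(\M_{3,n})$ has pure part zero for $n$ large relative to the STE-generated range only after a stabilization argument. The realistic route is instead the "$n\ge$ threshold" argument: for $n$ beyond a bound, classes in $H^{k}(\Mb_{g,n})$ are determined by pullbacks to boundary and forgetful maps. This uses that $H^{15}$ injects into $\bigoplus$ of the $H^{15}$ of boundary strata via pullback, once $W_{15}H^{15}(\M_{3,n})=0$. We expect that to hold by the known vanishing of pure cohomology of $\M_{3,n}$ in degree $15$ for all $n\ne 14$ range-wise, leaving a single unresolved case.

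\textbf{The obstacle, $n=14$.} Here the only unknown is whether $H^{15}(\M_{3,14})$ contributes a pure class not coming from the boundary. We handle it by a dimension count. The explicit basis from Theorem~\ref{thm:rels}(b), namely the graphs in Figure~\ref{fig:gentable} satisfying (R1)--(R10), gives $\dim S^{13,2}_\omega(\Mb_{3,14})=657657$. If the conjectural value \eqref{bf} holds, the inclusion $S^{13,2}_\omega\subseteq H^{13,2}$ is an equality. Since both sides are $\mathsf{L}^2\mathsf{S}_{12}$-isotypic (with the $\mathsf{S}_{16}$ part ruled out as above), we get $H^{15}(\Mb_{3,14})=S^{15}_\omega(\Mb_{3,14})$.

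For $n\ge15$ the induction resumes. We would apply the pullback-injectivity criterion of \cite{CLPW24polynomial}: a class in $H^{15}(\Mb_{3,n})$ whose pullbacks under all boundary gluing maps lie in the STE, and which is compatible in the sense of those results, lies in $S^{15}_\omega$ once $n$ exceeds the point where no new pure interior contributions arise, with $14$ being the last.

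The main obstacle is verifying that $n=14$ is the only value where interior pure classes can occur, so that the dimension hypothesis suffices, and carrying out the dimension count of the generators exactly. We would first attempt the vanishing via the Bergström–Faber Euler characteristics and the $\mathsf{L}^2\mathsf{S}_{12}$ weight bounds.
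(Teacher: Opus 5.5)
Your $n=14$ step is correct and matches the paper. The basis of Theorem~\ref{thm:rels}(b) gives $\dim S^{13,2}_\omega(\Mb_{3,14})=657657$, so under \eqref{bf} the $(13,2)$ parts coincide. Lemma~\ref{132reduction} (using $H^{15}\otimes\mathbb{C}=H^{13,2}\oplus H^{2,13}$ for $g\geq 2$) then upgrades this to $H^{15}(\Mb_{3,14})=S^{15}_\omega(\Mb_{3,14})$.

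The genuine gap is the passage from $n=14$ to all $n$. You leave it as an expectation, resting on a ``known vanishing of pure cohomology of $\M_{3,n}$ in degree $15$ for all $n\neq 14$'' and an unspecified pullback-injectivity criterion. No such vanishing is available to cite (certainly not for $n\geq 15$), and the argument does not need it. What is needed is that every $n$ reduces to $n=14$, in three steps:
\begin{enumerate}
\item The right exact sequence $H^{13}(\widetilde{\partial\M_{3,n}})\to H^{15}(\Mb_{3,n})\to W_{15}H^{15}(\M_{3,n})\to 0$ has left-hand image exactly $S^{15}_\omega(\Mb_{3,n})$. Hence $H^{15}=S^{15}_\omega$ if and only if $W_{15}H^{15}(\M_{3,n})=0$.
\item Lemma~\ref{lemma:topweight} \cite[Lemma~3.1.b]{CLP24extensions} says that for $g\geq 2$ and $n\geq 15$, $W_{15}H^{15}(\M_{g,n})$ is spanned by the pullbacks $\pi_i^*W_{15}H^{15}(\M_{g,n-1})$. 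So vanishing at $n=14$ propagates to every $n\geq 14$ by induction. This is the precise content of ``$14$ is the last value''; the other values are not settled independently.
\item For $n<14$ you push down instead. The map $\pi_*(c_1(\mathcal{L})^{14-n}\cup -)\colon H^{15}(\Mb_{3,14})\to H^{15}(\Mb_{3,n})$ is surjective for $\mathcal{L}$ relatively ample, $c_1(\mathcal{L})$ is tautological, and $S^*_\omega$ is closed under forgetful pushforward \cite[Prop.~2.4]{CLP24extensions}. So the $n=14$ case gives all smaller $n$, and your separate base case $n\leq 8$ is unnecessary.
\end{enumerate}

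Separately, your concern that genus-$1$ boundary factors contribute $H^{15}(\Mb_{1,m})\neq S^{15}_\omega(\Mb_{1,m})$ is a degree slip. The Gysin map from the normalized boundary raises degree by $2$, so only $H^{13}(\widetilde{\partial\M_{3,n}})$ enters. Each of its K\"unneth summands has an odd factor of degree at most $13$, hence of degree $11$ or $13$ by \cite{AC98,BFP24}, and such classes lie in $S^*_\omega$ by \cite{CLP23eleventh,CLPW24polynomial}. The complementary factor has degree $0$ or $2$ and is tautological. No $\mathsf{S}_{16}$ piece can arise, and no isotypic projection is needed at that step.
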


\begin{remark}
In genus $2$, the data in \cite{BergstromData} are unconditional.
In genus $3$, the predictions of Bergstr\"om--Faber are known unconditionally for $n \leq 11$ by \cite[Theorem 1.9]{CLP24extensions}.    
\end{remark}

Finally, building upon the results of \cite{CL23chowMgbar,CLP24extensions}, we prove that the entire cohomology ring of $\Mb_{g,n}$ is tautological for several new values of $g$ and $n$.
In these cases, we establish that all non-tautological cohomology classes are supported on the boundary, and furthermore lie in $S^*_\omega$. We then use representation theory and symmetry of decorated graphs to show that the non-tautological classes in $S^*_\omega$ must vanish in this range.

\begin{theorem} \label{thm:rh}
We have $H^*(\Mb_{g,n}) = RH^*(\Mb_{g,n})$ for the following pairs $(g,n)$:
\begin{gather} \label{goodpairs} (3,9) \qquad (4, 7) \qquad (4, 8) \qquad (5,5) \qquad (5, 6) \qquad (5,7) \\
(6,3) \qquad (6, 4) \qquad (6, 5) \qquad (7, 1) \qquad (7, 2) \qquad (7, 3) \qquad (8, 0) \notag
\end{gather}
\end{theorem}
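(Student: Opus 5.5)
The argument splits into odd and even cohomology and handles each pair $(g,n)$ in \eqref{goodpairs} by the same template.

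\emph{Step 1: odd cohomology lies in the boundary.} For each pair I would first show that every class in $H^*(\Mb_{g,n})$ restricts to zero on the interior $\mathcal M_{g,n}$, apart from classes that are already tautological. The inputs are the known vanishing and weight results for $H^*_c(\mathcal M_{g,n})$ and $H^*(\mathcal M_{g,n})$ in this range, from \cite{CL23chowMgbar,CLP24extensions}. In particular I would use:
\begin{itemize}
\item the Chow–Künneth generation property, or tautological Chow ring, of $\mathcal M_{g,n}$ for small $2g+n$;
\item the bounds showing that the pure part of the weight filtration on $H^k(\mathcal M_{g,n})$ is tautological.
\end{itemize}
The excision sequence
\[
H^{k-2}(\widetilde{\partial\mathcal M_{g,n}})\to H^k(\Mb_{g,n})\to W_kH^k(\mathcal M_{g,n})
\]
then shows that any non-tautological class is a pushforward from the normalization of the boundary. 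The normalization is a product $\prod_v \Mb_{g_v,n_v}$ (modulo automorphisms) with each $2g_v+n_v$ strictly smaller. By induction, using the previously known cases together with Theorems~\ref{theorem:main} and \ref{thm:g3} in degree $15$ and the $H^{11},H^{13}$ results, every cohomology class of these factors lies in $S^*_\omega$. Hence $H^*(\Mb_{g,n})=S^*_\omega(\Mb_{g,n})$ for each listed pair.

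The only Hodge structures that can occur here are the semisimplifications $\mathsf L^a\mathsf S_{12}$ and products of these. In this range the only possibly nonzero non-tautological pieces are $\mathsf S_{12}$, $\mathsf L\mathsf S_{12}$, $\mathsf L^2\mathsf S_{12}$, and possibly $\mathsf S_{12}^{\otimes 2}$ in even degree. The last is relevant only if two genus $1$ vertices each carry at least $11$ legs, which $2g+n$ small rules out.

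\emph{Step 2: the non-tautological generators vanish.} It then suffices to show that every decorated graph generator of $S^*_\omega(\Mb_{g,n})$ carrying an $H^{11}$ decoration is zero. Such a graph needs a genus-$1$ vertex $v$ with $n_v\ge 11$. The paper's count of genus and marking budget shows this forces a specific shape: essentially a genus-$1$ vertex with $11$ half-edges, most of which are non-leg half-edges, namely edges to the rest of the graph or self-loops. The key mechanism is symmetry. $H^{11}(\Mb_{1,11})$ is the sign representation restricted appropriately; it transforms under $S_{11}$ as the alternating representation $\mathrm{sgn}$, and $H^{11}(\Mb_{1,n})$ is spanned by pullbacks $\omega_B$ that are antisymmetric in $B$. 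Consider an automorphism of the graph that either
\begin{itemize}
\item swaps two half-edges at $v$ lying in $B$, such as the two ends of a loop or two parallel edges to the same vertex, or
\item swaps two isomorphic subtrees attached at $v$,
\end{itemize}
and acts trivially on the rest of the decoration. Such an automorphism sends the class to its negative, so the class is $0$. For every pair in \eqref{goodpairs} I would enumerate the stable graphs admitting a genus-$1$ vertex of valence at least $11$ and check in each that the $11$ chosen points $B$ must include such a symmetric pair. This works because $n$ is too small to supply $11$ distinct legs, so at least two of the points in $B$ are half-edges of loops or parallel edges. Where this argument alone fails, I would use the relations of Section~\ref{sec:rels_all_g}, for instance the relations \eqref{push1}--\eqref{push5}, to move the $\omega$ decoration onto a graph that does have such a symmetry.

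\emph{Main obstacle.} I expect the enumeration in Step 2 to be the hardest part, especially at the boundary cases $(3,9)$, $(4,8)$, $(5,7)$ and $(7,3)$. There the leg count allows graphs with $n$ legs plus several edges on $v$, so that $v$ can reach $11$ distinct non-symmetric half-edges. For these graphs I would do one of two things:
\begin{itemize}
\item show that the complementary vertices carry no class of the required degree, so the total degree exceeds $\dim_{\mathbb R}$ or the pieces vanish by Step 1 applied to the smaller factors;
\item invoke the $S_n$-representation argument, decomposing $\Ind$ of the sign-type representation and showing that its multiplicity in the relevant cohomology is zero.
\end{itemize}
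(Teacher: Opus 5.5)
\textbf{The even-degree half of the theorem is not actually proved.} Your only argument for even degrees is the claim in Step~1 that, by induction, every class on the boundary factors lies in $S^*_\omega$. From this you conclude $H^*(\Mb_{g,n})=S^*_\omega(\Mb_{g,n})$ in all degrees, so that only double-$\omega$ classes could be non-tautological. The inputs you list do not support this claim. The CKgP together with $A^*=R^*$ is only known for $n\le c(g)$ (Table~\ref{tab:c-values}), and the boundary recursion leaves that range at once:
\begin{itemize}
\item for $(3,9)$ the nonseparating divisor is already $\Mb_{2,11}$, and $11>c(2)=10$;
\item for $(4,8)$ and $(5,7)$ the iterated nonseparating boundaries $\Mb_{3,10},\Mb_{2,12}$ and $\Mb_{4,9},\Mb_{3,11},\Mb_{2,13}$ reach genus $2$ with $n\ge 12$.
\end{itemize}
The results on $H^{11},H^{13},H^{15}$ say nothing about even-degree classes on these factors, for example $H^{12}(\Mb_{2,11})$ feeding into $H^{14}(\Mb_{3,9})$. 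Nor can you handle only even degrees below the middle and finish with Poincar\'e duality. Duality does not carry tautologicality from $H^{2k}$ to $H^{2\dim-2k}$ unless you already know $RH^*$ satisfies Poincar\'e duality, which is precisely the question (cf.\ \cite{CanningRarelyGorenstein}). The paper supplies this half by citing \cite[Table~2, Theorem~8]{CanningRarelyGorenstein}, which gives that even cohomology is tautological for all listed pairs, and then only proves odd vanishing. You need that input, or an independent computation of the even cohomology of the boundary factors.

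For odd degrees your mechanism is the paper's: once the odd classes lie in $S_\omega$, the $\omega$-decorated graphs die by antisymmetry. Your condition is that every $11$-subset $B$ of the $n_v$ half-edges at $v$ contains both half-edges of one swapped pair. By pigeonhole this is exactly $m\ge n_v-10$, where $m$ counts disjoint transpositions realized by automorphisms fixing $v$ and the other decorations. That is the hypothesis of Lemma~\ref{lem:S2-invariants-H11} and Corollary~\ref{cor:graph-averaging-vanishing}. Your wording, ``at least two points of $B$ are half-edges of loops or parallel edges,'' is too weak: ends of two different loops are not exchanged by a transposition, and a decoration like $\psi_q$ can break the symmetry of a double edge (the paper's second example). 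Three further repairs are needed.
\begin{itemize}
\item You must invoke Poincar\'e duality to restrict to odd $k\le\dim_{\mathbb C}\Mb_{g,n}$. Otherwise your recursion again exits the CKgP range and reaches genus-one factors with $n\ge 15$, which carry classes such as $\mathsf S_{16}$ not in $S_\omega$.
\item Your list of Hodge types is wrong in exactly the hard cases. $H^{17}(\Mb_{4,8})$ and $H^{17},H^{19}(\Mb_{5,7})$ can a priori carry $\mathsf L^3\mathsf S_{12}$ and $\mathsf L^4\mathsf S_{12}$. That is where the paper's work lies: composite pushforwards of the Figure~\ref{fig:gentable} generators from $\Mb_{3,10}$, $\Mb_{4,9}$, $\Mb_{3,11}$, plus commutative squares for the separating divisors of $\Mb_{4,9}$.
\item Do not use Theorem~\ref{thm:g3}, which is conditional. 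The genus-$3$ factors that occur have $n\le 11$, so Lemma~\ref{lem:h=s} suffices.
\end{itemize}
Finally, the paper avoids your pair-by-pair enumeration. It treats only $(3,9)$, $(4,8)$, $(5,7)$ and deduces the other ten pairs from the boundary sequence and the CKgP of $\M_{g,n}$.
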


\begin{remark}
As noted in the survey article \cite[Proposition 3.9]{Lsurvey}, the cases
$(g,n) = (3,9), (4,7), (5,5), (6,3),$ and $(7,1)$ follow readily from results already in the literature. The proofs for the other cases above utilize our new results on $S_\omega^{15}(\Mb_{g,n})$. 
\end{remark}

\subsection{Outline} In Section \ref{section:gens}, we write down generators for $S_\omega^{13,2}(\Mbar_{g,n})$, as well as a collection of relations between them. In Section \ref{section:genus2}, we show that these relations are complete when $g=2$ and $n$ is arbitrary (Theorem \ref{thm:rels} (a)) by studying the pullback along the gluing maps
$$
\Mbar_{2,n-k+1}\times \Mbar_{0,k+1} \to \Mbar_{2,n}\,, \quad \Mbar_{1,n-k+1}\times \Mbar_{1,k+1} \to \Mbar_{2,n}\,.
$$
This lets us show that $H^{15}(\Mbar_{2,14}) = S_\omega^{15}(\Mbar_{2,14})$ by comparing the dimension of the semi-tautological ring with the calculation
$$
\dim H^{13,2}(\Mbar_{2,14}) = 234234
$$
from \cite{BergstromData}. In Section \ref{section:ReductionsTo14Markings}, we explain how from here, one can show that $H^{15}(\Mbar_{2,n}) = S_\omega^{15}(\Mbar_{2,n})$ for all $n$ (Theorem \ref{theorem:main}). In Section \ref{section:anyg}, we show that the relations between the generators of the STE in degree $15$ are complete (Theorem \ref{thm:rels} (b)) for all $g,n$ with $g \geq 3$ by studying the pullback along the gluing map
$$
\Mbar_{g-1,n+2} \to \Mbar_{g,n}\,.
$$
In Section \ref{sec:dimcounts}, we obtain a formula for $\dim(S_\omega^{13,2}(\Mbar_{g,n}))$ for $g=2,3$ and all $n$, matching \eqref{bf} for $(g,n)=(3,14)$. The reduction step explained in Section \ref{section:ReductionsTo14Markings} also applies in genus $3$, and we deduce Theorem \ref{thm:g3} from this at the end of Section \ref{sec:dimcounts}. In Section \ref{section:cohomology-is-taut} we use results from \cite{Lsurvey} to reduce Theorem \ref{thm:rh} to the cases $(g,n) = (3,9),(4,8), (5,7)$. In these cases, it is known by \cite{CanningRarelyGorenstein} that the even cohomology is tautological, and by \cite{CL23chowMgbar} that $\M_{g,n}$ has the CKgP, so all the odd cohomology comes from the boundary. We show that any odd class from the boundary vanishes after pushforward in these cases by analyzing all the possibilities.

\subsection{Acknowledgements}
Amy Q.\ Li made substantial contributions to this project, especially towards the proof of Theorem \ref{theorem:main}. We are grateful for the input on how to organize the pullbacks of generators of the STE to boundary divisors, which is one of the main difficulties of the proof. Amy also contributed significantly to the writing of earlier drafts of the paper.

This project began at the Graduate Student and Postdoc Bootcamp preceding the
2025 Summer Research Institute in Algebraic Geometry at Colorado State University. We would like to thank the organizers of the Bootcamp, Izzet Coskun, Emanuele Macr\`{i}, Alexander Perry, Kevin Tucker, and Isabel Vogt, as well as the other members of our mentoring group, Daksh Aggarwal,
Erin Dawson,
Federico Moretti,
Ignacio Rojas,
Neelarnab Raha, and Sean Edwards, for bringing us together and fostering a collaborative environment for learning and research. We would also like to thank Jonas Bergstr\"om and Carel Faber for generously sharing their data on the cohomology of $\Mb_{2,n}$. The second author also thanks Samir Canning for helpful discussions about the cohomology of the moduli space of curves during their time at ETH Z\"urich.

A.B. was supported by the NSF Graduate
Research Fellowship DGE-2137420. A.I.L. was supported by SNF-200020-219369. A.K. was supported by the ERC Advanced Grant SYZYGY (No. 834172, PI: Gavril Farkas).
H.L.\ was partially supported by the Clay Research Fellowship and NSF grant DMS-2544727.

\subsection{AI disclosure} We used Large Language Models to find typos in a draft of this document and to produce code for the stable graphs displayed in the paper. OpenAI’s ChatGPT was used to assist with editing the wording of Section 7, particularly Lemma \ref{lem:S2-invariants-H11} and Corollary \ref{cor:graph-averaging-vanishing}. The authors reviewed the resulting text and take full responsibility for its content.

\section{Reduction of Theorems \ref{theorem:main} and \ref{thm:g3} to \texorpdfstring{$n = 14$}{n=14} and strategy}\label{section:ReductionsTo14Markings}

In this section, we explain how existing results reduce Theorem \ref{theorem:main} and Theorem \ref{thm:g3} to the cases when $n = 14$. Given a smooth Deligne--Mumford stack $X$, we write
$W_kH^i(X)$ for the weight $k$ subspace of Deligne's weight filtration on the cohomology of $X$.
Recall that for any $k$, there is a right exact sequence 
\begin{equation} \label{wtknorm}
H^{k-2}(\widetilde{\partial \M_{g,n}}) \longrightarrow H^{k}(\Mbar_{g,n}) \longrightarrow W_{k} H^k(\M_{g,n}) \longrightarrow 0,
\end{equation}
where $\widetilde{\partial \M_{g,n}}$ denotes the normalization of the boundary.
The components of $\widetilde{\partial \M_{g,n}}$ are (finite group quotients of) products of smaller $\Mbar_{g',n'}$.

We are particularly interested in the case $k = 15$ in \eqref{wtknorm}. In this case,
the K\"unneth formula decomposes the left-hand term $H^{13}(\widetilde{\partial \M_{g,n}})$ into a direct sum of (finite group invariants of) tensor products of $H^l(\Mbar_{g',n'})$, where each summand has at least one factor with odd $l$.
By the earlier vanishing results on odd cohomology \cite{AC98,BFP24},
nonzero odd cohomology can only appear when $l = 11$ or $13$. Moreover, in these cases $H^l (\Mbar_{g',n'})$ lies in $S^*_\omega$ by \cite{CLP23eleventh,CLPW24polynomial}.
Thus, $S^{15}_\omega(\Mb_{g,n})$ is the image of the left-hand map in \eqref{wtknorm}, and we see that 
\[H^{15}(\Mb_{g,n}) = S^{15}_\omega(\Mb_{g,n}) \text{ if and only if } W_{15}H^{15}(\M_{g,n}) = 0.\]
In particular, Theorems \ref{theorem:main} and \ref{thm:g3} are equivalent to the claim that $W_{15} H^{15}(\M_{g,n}) = 0$ for every $n$ for $g = 2$ and $3$, respectively.

\subsection{Reduction to \texorpdfstring{$n = 14$}{n=14}} \label{sec:red}
In order to prove the $W_{15} H^{15}(\M_{g,n}) = 0$ vanishing result, we will use induction together with the following lemma.

\begin{lemma}[Lemma 3.1.b of \cite{CLP24extensions}]
\label{lemma:topweight} If $g \geq 2$, $n \geq k$ and $n >0$ then
\[
W_kH^k(\M_{g,n}) = \pi_1^*W_kH^k(\M_{g,n-1}) + \cdots + \pi_n^*W_kH^k(\M_{g,n-1})\, ,
\]
where $\pi_i \colon \M_{g,n} \to \M_{g,n-1}$ is the tautological morphism forgetting the $i$th marking.
\end{lemma}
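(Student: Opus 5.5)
The plan is to realize $W_kH^k(\M_{g,n})$ as the image of the cohomology of a smooth variety that fibers over $\M_g$, and then to analyze that cohomology fiberwise with Künneth. Since $g\geq 2$, let $\pi\colon \mathcal C_g\to \M_g$ be the universal curve and $\mathcal C_g^n$ its $n$-fold fiber product. Then $\M_{g,n}\subset\mathcal C_g^n$ is the open complement of the big diagonals, and $\mathcal C_g^n$ is smooth. For an open inclusion $U\subset X$ of smooth varieties, Deligne's theory gives that $W_kH^k(U)$ is the image of $H^k$ of any smooth compactification of $X$. Hence restriction $W_kH^k(\mathcal C_g^n)\to W_kH^k(\M_{g,n})$ is surjective, and it suffices to show the following: every class in $W_kH^k(\mathcal C_g^n)$ restricts to $\M_{g,n}$ as a sum of pullbacks along the forgetful maps $\pi_i$. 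These maps are compatible with the projections $\mathcal C_g^n\to\mathcal C_g^{n-1}$.

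Next I would decompose $H^*(\mathcal C_g^n)$ using the Leray spectral sequence of the smooth projective map $\mathcal C_g^n\to\M_g$. It degenerates by Deligne. Relative Künneth gives $R^b = \bigoplus_{b_1+\dots+b_n=b}\bigotimes_i R^{b_i}\pi_*\qq$ with $b_i\in\{0,1,2\}$, so $H^k(\mathcal C_g^n)\cong\bigoplus_{a+b=k} H^a(\M_g, R^b)$. To make this splitting compatible with the forgetful projections, I would use an explicit relative Chow--Künneth decomposition of the diagonal of $\mathcal C_g$ over $\M_g$. Its projectors are built from the relative dualizing class $K$ divided by $2g-2$, which is possible precisely because $g\geq 2$. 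With these projectors, each summand with some $b_i=0$ is literally pulled back from the factor $\mathcal C_g^{n-1}$ that omits the $i$-th point. Such classes therefore restrict to classes in $\pi_i^*W_kH^k(\M_{g,n-1})$, where the weight is preserved because pullback respects weights.

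It remains to treat the summands with all $b_i\geq 1$. Since $\sum b_i\le k\le n$, this forces $n=k$, every $b_i=1$, and $a=0$. These summands are therefore $H^0(\M_g,\mathbb V^{\otimes n})$ with $\mathbb V=R^1\pi_*\qq$, i.e.\ the $\mathrm{Sp}_{2g}$-invariants by monodromy considerations. By invariant theory these invariants are spanned by products of symplectic pairings $\langle\,,\rangle_{i,j}$ over perfect matchings of $[n]$. Geometrically, the pairing on factors $i,j$ is the $(1,1)$-Künneth component of the relative diagonal class $\Delta_{ij}$. That is, it equals $\Delta_{ij}$ minus correction terms $\tfrac{1}{2g-2}(\mathrm{pr}_i^*K+\mathrm{pr}_j^*K)$, up to terms pulled back from the base. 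Restricted to $\M_{g,n}$ the diagonal $\Delta_{ij}$ vanishes. Every remaining term involves a class of fiber degree $2$ at one point and fiber degree $0$ at the other, so it is pulled back along the map forgetting that other marking. Expanding the product over a matching then writes each such invariant as a sum of $\pi_j^*$-pullbacks.

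I expect the main obstacle to be the bookkeeping in the second step. One must check carefully that the Chow--Künneth splitting really makes the $b_i=0$ summands equal to honest pullbacks, not merely pullbacks modulo lower Leray filtration, and that weights are respected throughout. The boundary case $n=k$ is where the hypothesis $n\geq k$ is sharp, so the diagonal-correction computation there must be done with care.
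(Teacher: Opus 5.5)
The paper gives no proof of this statement; it is quoted verbatim from \cite[Lemma~3.1(b)]{CLP24extensions}. So there is no in-paper argument to compare with, and your proposal has to stand on its own. It does: it is a correct proof. The individual steps are all sound:
\begin{itemize}
\item the reduction to $W_kH^k(\mathcal C_g^n)$;
\item the splitting by relative K\"unneth projectors;
\item the degree count showing that for $n\ge k$ the only summand without a trivial slot is $H^0(\mathcal M_g,\mathbb V^{\otimes n})$, and only when $n=k$;
\item the treatment of that summand via perfect matchings of $(1,1)$-components of diagonals. Once restricted to $\mathcal M_{g,n}$, these become polynomials in the fiber-degree-one classes $e_i$ and base classes, hence sums of pullbacks.
\end{itemize}
The hypotheses $g\ge 2$ and $n\ge k$ enter exactly where they should. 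Your first worry is not an issue. The projector $p_0$ in slot $i$ acts as $\mathrm{pr}_{\hat i}^*\mathrm{pr}_{\hat i *}(e_i\cdot -)$, so its image consists of honest pullbacks along $\mathcal C_g^n\to\mathcal C_g^{n-1}$ whatever degree-one class $e$ you use.

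The point that does need correcting is idempotence. Projectors built literally from $e=K/(2g-2)$ are not orthogonal idempotents. Set $p_0=\mathrm{pr}_1^*e$ and $p_2=\mathrm{pr}_2^*e$. Then $p_0\circ p_2$ is the pullback from $\mathcal M_g$ of $\pi_*(e^2)=\kappa_1/(2g-2)^2$, which is nonzero in $H^2(\mathcal M_g)$ for $g\ge 3$. Consequently $p_1=\Delta-p_0-p_2$ satisfies $p_1^2=p_1+p_0\circ p_2$.

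You use idempotence twice:
\begin{itemize}
\item To see that $\Pi_{\underline b}$ vanishes on $H^k$ when $|\underline b|>k$. A priori it only acts by zero on each Leray graded piece, so it is merely nilpotent.
\item To see that the image of $\Pi_{(1,\dots,1)}$ injects into $H^0(\mathcal M_g,\mathbb V^{\otimes n})$, so that this image is spanned by the lifted matching products.
\end{itemize}
The standard fix is to replace $e$ by $e'=e-\tfrac12\pi^*\pi_*(e^2)$. This class still has fiber degree one and satisfies $\pi_*(e'^2)=0$, so $p_0,p_1,p_2$ become orthogonal idempotents. The $(1,1)$-class is then $\Delta_{ij}-\tfrac{1}{2g-2}(K_i+K_j)+\tfrac{\kappa_1}{(2g-2)^2}$, which matches your ``up to terms pulled back from the base''.

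The weight bookkeeping is harmless. All operators involved are algebraic correspondences or Gysin maps and preserve $W_k$. In any case, strictness of $\sum_i\pi_i^*$ with respect to the weight filtration lets you choose the preimages in $W_kH^k(\mathcal M_{g,n-1})$.
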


By the above discussion and lemma, Theorems \ref{theorem:main} and \ref{thm:g3} are reduced to proving, for $g = 2$ and $3$ respectively, that $W_{15}H^{15}(\M_{g,n}) = 0$ for $n \leq 14$, or equivalently that $H^{15}(\Mb_{g,n}) = S^{15}_\omega(\Mb_{g,n})$ for $n \leq 14$.
Since
$$
\pi_*(c_1(\mathcal L)^{k} \cup -) : H^{*}(\Mbar_{g, n+k}) \to H^*(\Mbar_{g,n})
$$
is surjective for $\mathcal L$ a relatively ample line bundle (so in particular $c_1(\mathcal L)$ is tautological) and  $S^*_\omega$ is closed under pushforward \cite[Prop. 2.4.]{CLP24extensions}, it is enough to consider the case $n=14$.

\begin{remark}
In fact, the other cases when $g = 2$ and $n \leq 13$ already follow from results in the literature.
When $n \leq 10$, we have $\dim \Mb_{2,n} \leq 13$ and so $H^{15}(\Mb_{2,n})$ is Poincar\'e dual to $H^{k}(\Mb_{2,n})$ for some odd $k \leq 11$. Thus, $H^{15}(\Mb_{2,n}) = 0$ for $n \leq 10$.
For $n = 11$, \cite[Lemma 3.2]{CLP24extensions} then implies that $H^{15}(\Mb_{2,11}) = S^{15}_\omega(\Mb_{2,11})$.
Next,
\cite[Lemma 7.3]{CLP24extensions} states that $H^{15}(\Mbar_{2,12}) = S^{15}_\omega(\Mb_{2,12})$.
Hence, another application of \cite[Lemma 3.2]{CLP24extensions}  implies $H^{15}(\Mbar_{2,13}) = S^{15}_\omega(\Mb_{2,13})$.
\end{remark}

Thus, to prove Theorems \ref{theorem:main} and \ref{thm:g3}, it remains to show that $H^{15}(\Mbar_{g,14}) = S^{15}_\omega(\Mb_{g,14})$ for $g = 2$ and $3$, respectively. To do so, we will follow a strategy similar to
 \cite[Lemma 7.3]{CLP24extensions}, which we briefly summarize below. First we reduce to proving an equality for a certain piece of the Hodge filtration.

\begin{lemma} \label{132reduction}
Fix some $g \geq 2$ and any $n$. 
We have $H^{15}(\Mb_{g,n}) = S_\omega^{15}(\Mb_{g,n})$ if and only if
$H^{13,2}(\Mb_{g,n}) = S_\omega^{13,2}(\Mb_{g,n})$.
\end{lemma}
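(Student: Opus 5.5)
The plan is to use the Hodge decomposition of $H^{15}(\Mb_{g,n})\otimes\cc$ together with the structure of $S^{15}_\omega$ described in the introduction. The forward direction is immediate: if $H^{15}=S^{15}_\omega$, then taking the $(13,2)$ Hodge components on both sides gives $H^{13,2}=S^{13,2}_\omega$. Here $S^{15}_\omega(\Mb_{g,n})$ is a sub-Hodge structure, since it is the image of a morphism of Hodge structures (the left map in \eqref{wtknorm}, up to Tate twist), so its $(13,2)$ piece is $S^{15}_\omega\otimes\cc\cap H^{13,2}$.

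For the converse, I would argue that the whole Hodge structure $H^{15}(\Mb_{g,n})$ is controlled by its $(13,2)$ piece.
\begin{enumerate}
\item By \cite{CLP24extensions}, for $g\ge 2$ the semisimplification is $H^{15}(\Mb_{g,n})^{\mathrm{ss}}=\bigoplus \mathsf{L}^2\mathsf{S}_{12}$. Since $H^{15}(\Mb_{g,n})$ is pure of weight $15$ (because $\Mb_{g,n}$ is smooth and proper), it is a polarizable pure Hodge structure, hence already semisimple. So $H^{15}\otimes\cc$ has nonzero Hodge components only in types $(13,2)$ and $(2,13)$, and $h^{13,2}=h^{2,13}=m$, where $m$ is the multiplicity of $\mathsf{L}^2\mathsf{S}_{12}$.
\item Likewise $S^{15}_\omega=\bigoplus\mathsf{L}^2\mathsf{S}_{12}$, with multiplicity $m'$ equal to $\dim S^{13,2}_\omega$, because each copy of $\mathsf{L}^2\mathsf{S}_{12}$ contributes exactly one dimension in type $(13,2)$.
\end{enumerate}

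Now assume $H^{13,2}=S^{13,2}_\omega$. Then $m=m'$, so
\[\dim H^{15} = 2m = 2m' = \dim S^{15}_\omega,\]
and the inclusion $S^{15}_\omega\subseteq H^{15}$ is an equality. Equivalently, complex conjugation carries $S^{13,2}_\omega$ onto $S^{2,13}_\omega$, since $S^{15}_\omega$ is defined over $\qq$; the two Hodge pieces together then span everything.

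The main point needing care is step 1: that $H^{15}(\Mb_{g,n})$ contains no Hodge types other than $(13,2)$ and $(2,13)$. This needs the quoted semisimplification result, which holds only for $g\ge2$; this is why the lemma assumes $g\geq 2$, as genus $1$ has the weight-$16$ cusp form of type $(15,0)$. It also needs the remark that Hodge numbers depend only on the semisimplification. Both are routine, so the argument is essentially a dimension count.
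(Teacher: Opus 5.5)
Your argument is correct and is essentially the paper's proof. Both arguments use \cite[Theorem 1.1]{CLP24extensions} to get $H^{15}(\Mb_{g,n})\otimes\cc = H^{13,2}\oplus H^{2,13}$ for $g\ge 2$, and both note that $S^{15}_\omega\otimes\cc$ splits the same way (the paper reads this off from the decorations having types $(11,0),(0,11),(1,1)$). Your closing dimension count, or equivalently complex conjugation exchanging the $(13,2)$ and $(2,13)$ pieces, makes explicit the step the paper leaves implicit when it concludes that equality of the $(13,2)$ parts forces equality of the whole.
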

\begin{proof}
We have $H^{15}(\Mb_{g,n}) = S_\omega^{15}(\Mb_{g,n})$ if and only if $H^{15}(\Mb_{g,n}) \otimes \cc = S^{15}_\omega(\Mb_{g,n}) \otimes \cc$. Since $g \geq 2$, \cite[Theorem 1.1]{CLP24extensions} tells us that $H^{15}(\Mb_{g,n}) \otimes \cc = H^{13,2}(\Mb_{g,n}) \oplus H^{2,13}(\Mb_{g,n})$. Meanwhile, the decorated graph generators for $S^{15}_\omega(\Mb_{g,n}) \otimes \cc$ involve only decorations of type $(11,0), (0,11), (1,1)$, so
$S^{15}_\omega(\Mb_{g,n})\otimes \cc = S^{13,2}(\Mb_{g,n}) \oplus S^{2,13}(\Mb_{g,n})$. Thus, we have  $H^{15}(\Mb_{g,n}) = S_\omega^{15}(\Mb_{g,n})$ if and only if $H^{13,2}(\Mb_{g,n}) = S_\omega^{13,2}(\Mb_{g,n})$.
\end{proof}

\subsection{Strategy for \texorpdfstring{$g=2$}{g=2} and \texorpdfstring{$n = 14$}{n=14}}\label{subsection:StrategyGenus2Markings14}
In genus $2$, the data in \cite{BergstromData} determining the equivariant Euler characteristics of $\Mb_{2,n}$ is unconditional. In particular, we have that
\[H^{15}(\Mb_{2,14})^{\mathrm{ss}} = (\mathsf{L}^2\mathsf{S}_{12})^{\bigoplus 234234}\]
and so $\dim H^{13,2}(\Mb_{2,14}) = 234234$.
By Lemma \ref{132reduction}, it suffices to show that $\dim S^{13,2}_\omega(\Mb_{2,14}) = 234234$. 

To prove this dimension statement, we first produce a collection of generators for $S_\omega^{13,2}(\Mb_{g,n})$ for all $g$ and $n$.
These generators are given by
decorated graphs, representing classes pushed forward along $H^{12,1}(\widetilde{\partial \M_{g,n}}) \to H^{15}(\Mbar_{g,n})$. In the case $g = 2$ and $n = 14$, there are relations among our generators, which come from pulling back known relations in $H^{13,2}(\Mb_{2,12})$. However, by pulling back these classes to various components of the normalized boundary, we demonstrate that these pulled back relations are (a) independent from each other (Lemma~\ref{lemma:pullbacksMij}) and (b) are the \emph{only} relations among our generators (Theorem~\ref{theorem:Genus2AnyMarkings}). Step (b) is the hardest part and involves a careful sequence of row and column operations on a matrix representing the pullbacks of our generators to selected boundary divisors.
Finally, taking the difference of the number of generators and the number of relations, we then see that $S^{13,2}(\Mb_{2,14})$ has the desired dimension.

\subsection{Strategy for \texorpdfstring{$g=3$}{g=3} and \texorpdfstring{$n = 14$}{n=14}}
The strategy for $g = 3$ and $n = 14$ is similar. 
By Section \ref{sec:red} and Lemma \ref{132reduction}, it suffices to prove that $\dim S_\omega^{13,2}(\Mb_{3,14}) = 657657$.
Once we establish Theorem \ref{thm:rels}, determining the dimension of $S_\omega^{13,2}(\Mb_{3,n})$ becomes a combinatorial exercise in counting decorated graphs, which we carry out in Section \ref{sec:dimcounts}. In fact, we give a formula for $\dim S_\omega^{13,2}(\Mb_{3,n})$ for all $n$, and see that it yields $657657$ when $n = 14$, see Table~\ref{fig:dimensions}.

\section{Decorated graph generators for \texorpdfstring{$S^{13,2}_\omega(\Mb_{g,n})$}{S132Mgn}}\label{section:gens}

\subsection{Notation and conventions}
A stable graph $\Gamma$ corresponds to a product of moduli spaces of curves \[\Mb_{\Gamma} \colonequals \prod_{v \in \Gamma} \Mb_{g(v), n(v)}\] equipped with a gluing map $\xi\colon \Mb_{\Gamma} \to \Mb_{g,n}$. 
We will use a decorated stable graph to represent the $\xi$-pushforward of a class corresponding to the decorations on $\Mb_\Gamma$. For example, when there are no decorations, the graph represents the class $\xi_*(1)$. Note that we do \emph{not} divide by the order of the automorphism group of the graph.

As observed in the proof of \cite[Prop 2.4]{CLP24extensions}, every class in $S^*_\omega(\Mb_{g,n})$ can be represented as a linear combination of decorated graphs all of whose nontautological decorations are $H^{11}$-classes on genus 1 vertices.
Since there are no odd degree tautological classes, any odd degree class in $S^*_\omega$ must be pushed forward from the boundary. Such classes are represented by decorated graphs which, in order to have odd degree, must have at least one genus $1$ vertex with a decoration in $H^{11}$.
Our notation for decorations in $H^{11,0}$ is as follows.
Given $A \subset \{1, \ldots, n\}$ with $|A|  = 11$, let $f_A: \Mb_{1,n} \to \Mb_{1,A}= \Mb_{1,11}$ be the map that forgets all markings besides those in $A$. Additionally, let $\omega \in H^{11,0}(\Mb_{1,11})$ be a generator.
Then, we define $\omega_A \colonequals f_A^*\omega \in H^{11,0}(\Mb_{1,n})$.
We call $\omega_A$ an \emph{$\omega$-decoration}.

To simplify our graphs, we will sometimes use the following conventions:
   \begin{enumerate}
       \item A set (sometimes with cardinality indicated) attached to a half edge signifies that those markings are present on the vertex,
       \item On a genus-$1$ vertex with two edges we set $\omega^{S_2}_B = \frac{1}{2}(\omega_{B \cup \{p\}} + \omega_{B \cup \{q\}})$, where $p$ and $q$ are the markings corresponding to the edges.
   \end{enumerate}

\subsection{The generators and restrictions}
Specializing to degree $15$, we see that $S^{13,2}_\omega(\Mb_{g,n})$ 
is generated by decorated graphs having a genus-$1$ vertex with an $\omega$-decoration and one or two edges.
Figure \ref{fig:gentable} lists a collection of such graphs, which we will eventually show form a basis for $S^{13,2}(\Mb_{g,n})$ when $g \geq 2$.
The goal of this section is to establish that the listed graphs in fact span $S^{13,2}(\Mb_{g,n})$. The first step is the following.

\GeneratorChart

\begin{lemma} \label{lem:span}
Without imposing the restrictions in the third column, the classes displayed in Figure \ref{fig:gentable} generate $S^{13,2}(\Mb_{g,n})$.
\end{lemma}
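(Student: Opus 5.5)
By the discussion above, every class in $S^{13,2}_\omega(\Mb_{g,n})$ is a linear combination of decorated graphs $\xi_*(\text{decoration})$ with exactly one genus-$1$ vertex carrying an $\omega$-decoration $\omega_A$, and with the remaining decoration tautological of complex degree $1$, i.e.\ type $(1,1)$. The plan is to enumerate all such decorated graphs by codimension and reduce each one to the shapes in Figure \ref{fig:gentable}. Write $v$ for the genus-$1$ vertex. Since $\omega_A$ has type $(11,0)$, the graph must have codimension $e$ and carry tautological decorations of total degree $2-e$, so $e\in\{1,2\}$.

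\emph{Case $e=2$ (no $\psi/\kappa$ decoration).} There are two edges. Since $H^{11}(\Mb_{1,m})$ is spanned by the $\omega_A$ with $A$ of size $11$ among the half-edges at $v$, two sub-cases arise.
\begin{itemize}
\item If only one edge is incident to $v$, cutting it leaves a genus-$(g-1)$ piece with one edge, either separating or a loop. The separating case gives the chain graphs $\S^a$, where $v$ is an end vertex. The loop case gives $\L_{B,X}$. A loop at $v$ itself is impossible, since $v$ has genus $1$ with an $\omega$-decoration and $g(v)=1$ is fixed.
\item If $v$ has two incident edges, the remaining vertices form either a chain $a$--$v$--$b$, giving $\M$ or $\N$, or a single genus-$(g-2)$ vertex joined by a double edge, giving $\D$.
\end{itemize}
The half-edges at $v$ enter $A$ in different ways. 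If $A$ contains both $p$ and $q$, we get $\M$. If $A$ contains exactly one of them, we get $\omega_{B\cup p}$ or $\omega_{B\cup q}$. Our generators only use the symmetrized $\omega^{S_2}_B$, so I will show that $\omega_{B\cup p}$ differs from $\omega_{B\cup q}$ by a combination of the other generators. For this I intend to use the relation in $H^{11}(\Mb_{1,12})$ expressing $\omega_{B\cup p}-\omega_{B\cup q}$, up to the antisymmetrization, via boundary classes of the form $0$-vertex/$1$-vertex. These are the known genus-$1$ relations of \cite{CLP23eleventh}, and I will push them forward. If that fails, $\omega_{B\cup p}$ and $\omega_{B\cup q}$ can simply both be kept, since the generator list allows $B$ to vary. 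Finally, $\omega_A$ with $A\subset$ markings can only occur when the genus-$0$ side is unstable, which cannot happen.

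\emph{Case $e=1$ (one degree-$1$ tautological decoration).} There is one edge. Either it separates $v$ from a genus-$(g-1)$ vertex, or it is a self-loop, which is impossible as noted above. The degree-$1$ class is then one of $\psi_h$ or $\kappa_1$ on either vertex.
\begin{itemize}
\item On the genus-$(g-1)$ vertex, $\psi_i$ with $i\in X\cup\{q\}$ and $\kappa_1$ give exactly $\P^i$ and $\K$.
\item On the genus-$1$ vertex, $\psi$ and $\kappa_1$ are boundary on $\Mb_{1,m}$, i.e.\ sums of genus-$0$/genus-$1$ boundary classes (using $\psi_i=\delta_{0,\{i\}\cup\cdots}$ relations and $\kappa_1$ via Arbarello--Cornalba). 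Multiplied by $\omega_A$ and pushed forward, these become codimension-$2$ graphs already treated in the case $e=2$.
\item On the genus-$(g-1)$ vertex in low genus, $\psi$ and $\kappa$ classes are likewise boundary (e.g.\ $g-1\le 2$). This is consistent with the restrictions (R9), (R10), but restrictions are not needed for spanning, so we simply keep $\P$ and $\K$.
\end{itemize}
The main obstacle is the $e=2$ bookkeeping for the two-edge genus-$1$ vertex: showing that every placement of $\omega_A$ reduces to $\omega_{B'\cup\{p,q\}}$, $\omega_{B\cup p}$, or $\omega^{S_2}_B$, with the required stability conventions. I would check this first, explicitly.
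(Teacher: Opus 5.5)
Your case division (one versus two edges, then normalizing the $\omega$-decoration on each graph shape) is the paper's. However, the decoration step, which you yourself single out as the main obstacle, is not carried out, and the mechanism you suggest for it is the wrong one. Relations in $H^{11}(\Mb_{1,m})$ involve no boundary classes, since an $\omega$-decorated graph with an edge already has degree at least $13$; they are linear relations among the $\omega_A$ alone. Three facts are needed.
(i) For any fixed half-edge $p$ at $v$, the $\omega_{B\cup p}$ with $|B|=10$ form a basis of $H^{11,0}(\Mb_{1,m})$ \cite[Corollary~2.4]{CLP23eleventh}. This brings every decoration on the $\S$, $\L$, $\P$, $\K$ shapes into the form of the figure. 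Decorations $\omega_A$ with $A$ contained in the markings do occur, contrary to your claim; they are simply rewritten in this basis.
(ii) For the chain $a$--$v$--$b$, decompose $H^{11,0}(\Mb_{1,(X\cup Y)^c\cup\{p,q\}})$ under $p\leftrightarrow q$. The invariant part is spanned by the $\omega^{S_2}_B$ and the anti-invariant part by the $\omega_{B'\cup p\cup q}$; concretely, \eqref{H11} writes $\omega_{B\cup p}-\omega_{B\cup q}$ as a combination of the $\omega_{B'\cup p\cup q}$. This gives exactly $\N$ and $\M$. Your fallback of keeping both $\omega_{B\cup p}$ and $\omega_{B\cup q}$ is not allowed, since the lemma concerns the specific classes in the figure.
(iii) For the double-edge graph there is no antisymmetric row, so you must show the decorations $\omega_{B'\cup p\cup q}$ contribute nothing. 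This holds because the automorphism $\sigma$ exchanging the two edges acts by $p\leftrightarrow q$ and $\xi\circ\sigma=\xi$. Your rule ``if $A$ contains both $p$ and $q$ we get $\M$'' does not apply to $\D$.

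Second, your enumeration of graphs rests on a false claim: a self-loop at $v$ is possible, since $g(v)=1$ is the vertex genus and a loop raises $h^1(\Gamma)$, not $g(v)$. For $g=2$, the one-edge graph with a single genus-one vertex carrying a loop, decorated by $\omega_A\psi$ or $\omega_A\kappa_1$, is a genuine generator. The paper treats it by noting that the vertex decoration lies in $H^{13}$ and replacing it by $\omega$-decorated graphs, which yields two-edge graphs. Two-edge graphs with a loop at $v$ also occur (joined to a genus-$(g-2)$ vertex, or with two loops when $g=3$), and they are not rows of the figure. The same device disposes of them. The vertex $v$ with its loop defines a class in $H^{12,1}(\Mb_{2,m})$, which is spanned by the $Z_{B\subset A}$ classes \cite{CLPW24polynomial}. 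Substituting these rewrites the graph in terms of $\S$, $\M$, $\N$, $\L$, $\D$ shapes, after one further step when $g=3$. The paper's sentence listing the two-edge possibilities also passes over this case, but its one-edge argument supplies the fix. Your handling of $\psi$ and $\kappa_1$ on the genus-one vertex via genus-one boundary expressions is fine.
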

\begin{proof}
A decorated graph generator for $S^{13,2}_\omega(\Mb_{g,n})$ can have one or two edges. We consider classes of each type in turn.

First suppose that the graph has two edges. In this case, the only decoration is an $\omega$-decoration on a genus-$1$ vertex. Possibilities for this decorated graph are given in the first five rows in Figure \ref{fig:gentable}. We now explain why only the listed $\omega$-decorations are necessary.
For classes of type $\S$ or type $\L$,
only the decorations $\omega_{B \cup p}$ are needed because they form a basis for $H^{11,0}(\Mb_{1, (X \cup Y)^c \cup p})$ by \cite[Corollary 2.4]{CLP23eleventh}. For classes of type $\M$ and $\N$ (which have the same underlying graph), consider $H^{11,0}(\Mb_{1,(X \cup Y)^c \cup \{p, q\}})$ together with the $S_2$ action that exchanges $p$ and $q$. The $S_2$-invariant subspace is spanned by the decorations $\omega_B^{S_2}$ for $B \subset (X \cup Y)^c$ with $|B| = 10$. Meanwhile, the
$S_2$ anti-invariant subspace is spanned by the classes $\omega_{B' \cup p \cup q}$ for $B' \subset (X \cup Y)^c$ with $|B'| = 9$. Thus, the listed $\omega$-decorated graphs span all $\omega$-decorated graphs of this type.
Finally, the graph for classes of type $\D$ has an automorphism exchanging the two edges which acts on the genus-$1$ vertex by exchanging $p$ and $q$. Since the pushforward along gluing equals the composition of the automorphism with the gluing map, all $S_2$ anti-invariant decorations would yield $0$.

Now suppose that the graph has one edge. If the edge is a self-edge, then the unique vertex in the graph would have genus $g-1$ and require a decoration in $H^{13}$ to produce a class in $H^{15}$. Replacing the class in $H^{13}$ with the corresponding $\omega$-decorated graph would yield a two-edged decorated graph.
Therefore, we may assume that the graph has two vertices: one vertex is genus-$1$ with an $\omega$-decoration, while the other vertex has a decoration in $H^2 = H^{1,1}$. If the decoration in $H^2$ is a boundary divisor, then again, we can expand the graph to have two edges. Thus, it suffices to assume the $H^2$ decoration is a $\psi$ or $\kappa$ class.
Meanwhile, by \cite[Corollary 2.4]{CLP23eleventh}, the $\omega$ decorations $\omega_{B \cup p} \in H^{11,0}(\Mb_{X^c \cup \{p\}})$ form a basis. Thus, the last two rows in the chart in Figure \ref{fig:gentable} cover all of the needed $1$-edged generators.
\end{proof}

In Sections \ref{sec:triv}-\ref{sec:g2rel}, we describe relations which allow us to rewrite a disallowed class in terms of allowed classes on the same row or classes on a higher row.
This completes the proof that the classes listed in Figure \ref{fig:gentable} generate $S^{13,2}(\Mb_{g,n})$ even after imposing the 
restrictions.

\subsection{Trivial relations} \label{sec:triv}
First off, we have the following trivial relations among the $\M$ and $\N$ classes, from drawing graphs in different ways:
\begin{align} \label{triv1} \M_{B',X,Y}^a &= -\M_{B',Y,X}^{g-a-1} \\
\N^a_{B, X, Y} &= \N^{g-a-1}_{B, Y, X} \label{triv2}
\end{align}
By applying these relations, we can eliminate classes that do not satisfy (R1), (R2) or (R4).

\subsection{Pushed forward relations} \label{sec:rels_all_g}
We can use known relations in $H^2(\Mb_{g-1,n'})$ to rewrite the $\P$ and $\K$ classes when $g$ is small. For example, the relation in \cite[(2.3)(c)]{AC98} (modified for our convention of not dividing by the order of the automorphism group of the graph) states that
\begin{equation}\label{eq:ACgenus1}
    \psi_i = \frac{1}{24} \delta_{irr} + \sum_{i \in S, |S| \geq 2} \delta_{0,S} \in H^{2}(\Mb_{1,n}).
\end{equation}
Pushing this forward, this gives a relation of the form
\begin{equation} \label{push1}\P_{B,X}^i = \frac{1}{24}\L_{B,X} + \begin{cases} \sum_{S, X = S \sqcup Y} \S^0_{B, S, Y}  & \text{if $i = q$} \\
\sum_{i \in S, X = S \sqcup Y} \S^0_{B, S, Y} + \sum_{i \in S, X = X' \sqcup S} \S^{g-1}_{B,X',S} & \text{if $i \neq q$}  \end{cases}
\end{equation}
when $g = 2$.
Using this relation, we can eliminate classes that do not satisfy restriction (R9).

Similarly, the relations in \cite[(2.3)(c)]{AC98} that express $\kappa_1$ in terms of other tautological classes give rise to the relations
\begin{equation} \label{push2} \K_{B,X} = \sum_{i \in X \cup q} \P^i_{B,X} - \sum_{X = X' \sqcup Y} \S^0_{B, X',Y} - \sum_{X = X' \sqcup Y} \S^1_{B, X', Y}
\end{equation}
if $g = 2$ and
\begin{equation} \label{push3} \K_{B,X} = \sum_{i \in X \cup q} \P^i_{B,X}  + \frac{1}{10} \L_{B,X} - 
\sum_{X = X' \sqcup Y} \S^0_{B, X',Y} - \sum_{X = X' \sqcup Y} \S^2_{B, X', Y}
+ \frac{7}{5}\sum_{X = X' \sqcup Y} \S^1_{B, X',Y}
\end{equation}
if $g = 3$. Using these, we can eliminate classes that do not satisfy the restriction (R10).

There are also relations among the generators that come from pushing forward relations in $H^{12,1}(\Mb_{1,n'})$. Essentially, we apply the same philosophy as above but to the two left vertices of the graph which represent a class in $H^{12,1}$. Generators and relations for $H^{12,1}(\Mb_{1,n'})$ are described in  \cite[Section 4.4]{CLPW24polynomial}.
The generator $Z_{B \subset A} \in H^{12,1}(\Mb_{g,n})$ is defined by the following decorated graph:
\begin{equation}\label{eq:GraphZ}
    Z_{B\subset A}=\GraphZ
\end{equation}

In genus $1$, the relations in $H^{12,1}(\Mb_{1,n'})$ allow one to write the classes with only two markings on the genus $0$ vertex in terms of others with three or more markings.

\begin{lemma}\cite[Lemma~4.11]{CLPW24polynomial}\label{lemma:basisGenus1}
The set
    \begin{equation*}
        \{Z_{B \subset A} : |A^c| \geq 3 \text{ or } |A^c| = 2 \text{ and } \min(A^c) < \min(B)\}
    \end{equation*}
    forms a basis for $H^{12,1}(\Mbar_{1,n})$.
\end{lemma}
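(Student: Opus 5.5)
The plan is to prove spanning and independence separately: first show that the classes $Z_{B\subset A}$ with $|A^c|\ge 2$ span $H^{12,1}(\Mb_{1,n})$, then use a relation to remove the generators with $|A^c|=2$ and $\min(A^c)>\min(B)$, and finally show that what remains is linearly independent.

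\emph{Spanning.} An odd class in $S^*_\omega(\Mb_{1,n})$ of Hodge type $(12,1)$ must be pushed forward from the boundary. For $g=1$ the usual structure of $S^*_\omega$ (see the proof of Lemma~\ref{lem:span}) gives that it is a combination of one-edge decorated graphs carrying an $\omega$-decoration of type $(11,0)$ on a genus-$1$ vertex. A self-loop leaves a genus-$0$ vertex, which cannot carry $H^{11}$. So the graph is a genus-$1$ vertex with markings $A\cup\{p\}$ glued to a genus-$0$ vertex with markings $A^c\cup\{q\}$, which needs $|A^c|\ge 2$. By \cite[Corollary 2.4]{CLP23eleventh}, the classes $\omega_{B\cup p}$ with $B\subset A$ and $|B|=10$ form a basis of $H^{11,0}(\Mb_{1,A\cup p})$. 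Hence the $Z_{B\subset A}$ with $|A^c|\ge 2$ span, and this step is formal.

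\emph{Reduction to the restricted set.} Next I would produce, for each $Z_{B\subset A}$ with $A^c=\{x_1,x_2\}$ and $b_1=\min(B)<\min(A^c)$, a relation expressing it through allowed classes. I would restrict to the divisor $\delta_{0,\{x_1,x_2\}}$ the linear relation among the twelve pullbacks $\omega_C\in H^{11,0}(\Mb_{1,12})$, $C\subset B\cup\{x_1,x_2\}$ with $|C|=11$. This relation exists because $\dim H^{11,0}(\Mb_{1,12})=11$. On the divisor, any $\omega_C$ with $C\supseteq\{x_1,x_2\}$ vanishes after restriction, since it is pulled back from $\Mb_{1,11}$ through a map contracting the genus-$0$ component. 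The surviving terms have $C$ containing exactly one of $x_1,x_2$, and they become $\omega$-decorations involving $p$.

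Writing out what survives should give a three-term identity, up to signs,
\[Z_{B\subset A}=Z_{(B\setminus b_1)\cup x_1\subset A'}+Z_{(B\setminus b_1)\cup x_2\subset A''},\]
where the genus-$0$ vertex of the two right-hand terms carries $\{b_1,x_2\}$, respectively $\{x_1,b_1\}$. This matches the graphs $\Graphreloneaa$, $\Graphrelonebb$, $\Graphrelonecc$. In both right-hand terms the genus-$0$ vertex contains $b_1$, which is smaller than every element of the new $B$. So every generator with $|A^c|=2$ and $\min(A^c)>\min(B)$ is rewritten in terms of allowed ones, and the restricted set spans.

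\emph{Independence.} This is the step I expect to be the main obstacle. I would try a dimension count first. Using the known $S_n$-equivariant structure of $H^{11}$ and $H^{13}$ of $\Mb_{1,n}$ (Getzler), compute $\dim H^{12,1}(\Mb_{1,n})$ and compare it with the number of admissible pairs,
\[\sum_{k\ge3}\binom nk\binom{n-k}{10}+\#\{(A^c,B): |A^c|=2,\ \min A^c<\min B\}.\]
If this count is not available, I would instead pull back to the boundary divisors $\Mb_{1,A\cup p}\times\Mb_{0,A^c\cup q}$. There each $Z_{B\subset A}$ restricts to $\omega_{B\cup p}\otimes 1$ plus terms of different K\"unneth type, coming from the excess self-intersection $-\psi_p-\psi_q$ and from other divisors meeting it. I would order the generators by $|A^c|$ and show that the resulting matrix is triangular with identity blocks on the diagonal. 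The delicate block is $|A^c|=2$: here $\Mb_{0,3}$ is a point, and the minimality condition must be exactly what makes the surviving restricted classes independent, via the same twelve-term relation.
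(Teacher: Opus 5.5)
The paper does not prove this lemma; it quotes it from \cite[Lemma~4.11]{CLPW24polynomial}. Your outline has the right shape: span by one-edge graphs, remove the bad $|A^c|=2$ classes with a three-term relation, then prove independence. The spanning step is fine once you add one input. Your argument only concerns $S^{13}_\omega$, so you also need that all of $H^{13}(\Mb_{1,n})$ is pushed forward from the boundary, i.e.\ $H^{13}(\Mb_{1,n})=S^{13}_\omega(\Mb_{1,n})$ from \cite{CLPW24polynomial}. The other two steps have genuine gaps.

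\textbf{Your argument does not produce the three-term relation.} Restricting the twelve-term relation to $\delta_{0,\{x_1,x_2\}}$ kills every $\omega_C$ except $C=B\cup x_1$ and $C=B\cup x_2$. Both of these restrict to $\pm\omega_{B\cup p}$. What survives is a vacuous two-term identity in $H^{11}$ of that single divisor. It says nothing in $H^{13}(\Mb_{1,n})$, and it cannot involve classes supported on $\delta_{0,\{b_1,x_2\}}$ or $\delta_{0,\{x_1,b_1\}}$. Cupping the relation with any class of $H^2(\Mb_{1,12})$ fails the same way, because on $\Mb_{1,12}$ the product $\omega_C\cdot\delta_{0,S}$ is nonzero only if $|S|=2$ and $S\not\subseteq C$. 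The three-term relation is genuinely new degree-$13$ information; it is \cite[Lemma~4.6]{CLPW24polynomial}, the input to \eqref{push4}. In the weight spectral sequence, relations among such pushforwards correspond to weight-$13$ classes in $H^{12}(\M_{1,12})$, since $H^9$ of the codimension-$2$ strata vanishes.

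One direct proof uses Poincar\'e duality on $\Mb_{1,12}$. There $H^{12,1}$ pairs perfectly with $H^{0,11}=\mathrm{span}\{\bar\omega_C\}$. The class $Z_{[12]\smallsetminus\{i,j\}\subset[12]\smallsetminus\{i,j\}}$ pairs nontrivially with $\bar\omega_C$ only for $C=[12]\smallsetminus i$ and $C=[12]\smallsetminus j$. Hence the signed triangle combinations pair to zero and so vanish.

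You must then pull back along $\Mb_{1,n}\to\Mb_{1,B\cup\{x_1,x_2\}}$, and for $n\geq 13$ your displayed identity is false as stated. The pullback also contains the classes whose genus-$0$ vertex carries $\{x_1,x_2\}\cup T$, $\{b_1,x_2\}\cup T$ or $\{x_1,b_1\}\cup T$, for $\varnothing\neq T\subseteq A\smallsetminus B$ (compare \eqref{push4}). These have $|A^c|\geq3$, so the reduction survives, but only with the corrected relation.

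\textbf{Independence is not established.} The dimension count would work, but computing $\dim H^{12,1}(\Mb_{1,n})$ is the actual content, and you do not do it. The restriction argument fails as described.
\begin{itemize}
\item A degree-$13$ class restricts into $H^{13}$ of the divisor, so there is no $\omega_{B\cup p}\otimes 1$ term.
\item The self-intersection contributes $-\omega_{B\cup p}\psi_p\otimes 1-\omega_{B\cup p}\otimes\psi_q$. For $|A^c|=2$ only the first term survives, and it is zero when $A=B$ because $H^{13}(\Mb_{1,11})=0$. So in exactly the delicate block the diagonal entries vanish rather than forming identity blocks.
\item At $n=12$ every component $\widetilde D$ of the normalized boundary of $\Mb_{1,12}$ has $H^{13}(\widetilde D)=0$. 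Every class therefore restricts to zero, and no restriction argument can prove the base case.
\end{itemize}
At $n=12$, independence does follow from the pairing above: $Z_{1j}$ pairs nontrivially only with $\bar\omega_{[12]\smallsetminus 1}$ and $\bar\omega_{[12]\smallsetminus j}$. For general $n$ you still need one of two things. Either compute $\dim H^{12,1}(\Mb_{1,n})$ from Getzler's equivariant formulas, or run an induction on $n$ that genuinely tracks these restrictions, ordering the new node marking first as in Step~(2) of the proof of Theorem~\ref{theorem:Genus2AnyMarkings}. Your sketch supplies neither.
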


Let us choose an ordering on $B' \cup \{x_1,x_2,q\}$ so that $\max(B')<x_1<x_2<q$. Applying 
\cite[Lemma 4.6]{CLPW24polynomial} with $(i, j,k) = (x_1, x_2, q)$ (and tracking additional signs related to the placement of $p$), we obtain the following relation in $H^{13}(\Mb_{1,B' \cup \{x_1,x_2,q\}})$:
\begin{equation*}
    \GraphrelMzeroaa \ = \ \GraphrelMzerobb \ +\  \GraphrelMzerocc
\end{equation*}

Next, we pull this relation back along the forgetful map $\Mb_{1,Y^c \cup q} \to \Mb_{1, B' \cup \{x_1,x_2,q\}}$ and then push the result forward along the gluing map $\Mb_{1,Y^c \cup q} \times \Mb_{g-1, Y \cup q'} \to \Mb_{g,n}$. This yields the relation
\begin{align}
\M^0_{B',\{x_1,x_2\},Y} &= 
\sum_{x_1 \in X' \subset Y^c \smallsetminus (B' \cup x_2)} \S^0_{B' \cup x_2, X', Y} +
\sum_{x_2 \in X' \subset Y^c \smallsetminus (B' \cup x_1)} \S^0_{B' \cup x_1, X', Y} \label{push4}  \\
&\qquad -
\sum_{\{x_1,x_2\} \subsetneq X' \subset Y^c \smallsetminus B'} \M^0_{B',X',Y} \notag
\end{align}
Using this relation, we can eliminate classes that do not satisfy (R3).

To find a similar relation involving the $\N$ classes, we first need to rewrite the symmetrized decoration. Let us write $B = \{b_1, \ldots, b_{10}\}$. The relations in $H^{11}(\Mb_{1,B \cup p \cup q})$ allow us to write
\begin{equation} \label{H11} \omega_B^{S_2} = \frac{1}{2}(\omega_{B \cup p} + \omega_{B \cup q}) = \omega_{B \cup p} + \frac{1}{2} \sum_{i=1}^{10}(-1)^i\omega_{(B \smallsetminus b_i) \cup p \cup q}.
\end{equation}
At this point, it is helpful to introduce a modified $\N$ class, denoted $\N'_{B,X,Y}$, defined by the decorated graph
\begin{equation*}
    \GraphNprime
\end{equation*}

This class $\N'_{B,X,Y}$ will also appear in Section \ref{subsection:NaturalGenerators}. 
Pushing forward \eqref{H11}, this modified class is related to our original generators by the equation
\begin{equation} \label{Nprime}\N'_{B,X,Y} = \N^0_{B,X,Y} - \frac{1}{2} \sum_{i=1}^{10} (-1)^i\M^0_{B\smallsetminus b_i,X,Y}. 
\end{equation}
Next, note that we have the following relation in $H^{13}(\Mb_{1,B \cup \{x_1,x_2\}})$:
\begin{equation*}
\Graphreloneaa \ = \ \Graphrelonebb \ + \ \Graphrelonecc
\end{equation*}

Pulling back along the forgetful map $\Mb_{1, B \cup \{x_1,x_2,q\})} \to \Mb_{1, B \cup \{x_1,x_2\}}$, we obtain the following relation in 
$H^{13}(\Mb_{1,B \cup \{x_1,x_2,q}\})$
\begin{equation*}
    \Graphreltwoaa = \Graphreltwobb + \Graphreltwocc + \Graphreltwodd + \Graphreltwoee - \Graphreltwoff
\end{equation*}

Now, we pull the above relation back along the forgetful map $\Mb_{1, Y^c \cup q} \to \Mb_{1, B \cup \{x_1,x_2,q\}}$ and then pushforward the result along the gluing map $\Mb_{1, Y^c \cup q} \times \Mb_{g-1, Y \cup q'} \to \Mb_{g,n}$. This gives
\begin{align}
\sum_{X' \subset Y^c \smallsetminus (B \cup \{x_1, x_2\})} \N'_{B, X' \cup \{x_1,x_2\}, Y}
&= \sum_{X' \subset Y^c \smallsetminus (B \cup \{x_1, x_2\})} \N'_{B \smallsetminus b_1 \cup x_1, X' \cup \{b_1,x_2\}, Y}
+\N'_{B \smallsetminus b_1 \cup x_2, X' \cup \{b_1,x_1\}, Y}  \notag \\
&+ \S^0_{B \smallsetminus b_1 \cup x_1, X' \cup \{b_1,x_2\}, Y}
+ \S^0_{B \smallsetminus b_1 \cup x_2, X' \cup \{b_1,x_1\}, Y}
- \S^0_{B, X' \cup \{x_1,x_2\}, Y} \label{Nprimerel}
\end{align}
Combining the above equation with \eqref{Nprime}, we obtain
\begin{align}
\N^0_{B, \{x_1,x_2\},Y} &= 
\frac{1}{2}\sum_{i=1}^{10}(-1)^i \M^0_{B \smallsetminus b_i, \{x_1, x_2\}, Y} -\sum_{\{x_1,x_2\} \subsetneq X' \subset Y^c \smallsetminus B}\left(\N^0_{B,X',Y} - \frac{1}{2} \sum_{i=1}^{10} (-1)^i\M^0_{B \smallsetminus b_i, X', Y} \right) \notag \\
& + \sum_{\{b_1,x_2\} \subset X' \subset Y^c \smallsetminus B}\left(\N^0_{B \smallsetminus b_1 \cup x_1,X',Y} - \frac{1}{2} \sum_{i=2}^{10} (-1)^{i-1} \M^0_{B \smallsetminus \{b_1,b_i\} \cup x_1, X', Y}  - \frac{1}{2} \M^0_{B \smallsetminus b_1, X', Y}\right) \notag \\
& + \sum_{\{b_1,x_1\} \subset X' \subset Y^c \smallsetminus B}\left(\N^0_{B \smallsetminus b_1 \cup x_2,X',Y} - \frac{1}{2} \sum_{i=2}^{10} (-1)^{i-1} \M^0_{B \smallsetminus \{b_1,b_i\} \cup x_2, X', Y}  - \frac{1}{2} \M^0_{B \smallsetminus b_1, X', Y}\right) \label{push5} \\
&+ \sum_{X' \subset Y^c \smallsetminus (B \cup \{x_1, x_2\})} \S^0_{B \smallsetminus b_1 \cup x_1, X' \cup \{b_1,x_2\}, Y}
+ \S^0_{B \smallsetminus b_1 \cup x_2, X' \cup \{b_1,x_1\}, Y}
- \S^0_{B, X' \cup \{x_1,x_2\}, Y} \notag
\end{align}
Notice how in \eqref{push5} the $\N$ classes on the right hand side with only two markings on the genus $0$ vertex have $b_1 = \min(B)$ among those two markings.
In particular, using these relations, we can eliminate classes that do not satisfy (R5).

\begin{remark}
The proof of Lemma \ref{lem:span} already uses up all pushed forward relations from $H^{11}(\Mb_{1,n'})$ in order to make the $\omega$-decorations on our generators have a certain shape.
\end{remark}

\subsection{Pulled back relations} \label{sec:g2rel}
We will show later that when $g \geq 3$, the relations in Sections \ref{sec:triv} and \ref{sec:rels_all_g} generate all relations among the decorated graph generators in Figure \ref{fig:gentable}.
However, in genus $2$, there are other, genuinely new relations that do not arise in this way. 

The following three lemmas record these additional relations for $g = 2$ and $n = 10, 11, 12$. We will show later that the pullbacks of these relations together with the relations in Sections \ref{sec:triv} and \ref{sec:rels_all_g}
in fact generate all relations in $S_\omega^{13,2}(\Mb_{2,n})$ for any $n$.

\begin{lemma} \label{lem:rel10}
When $n = 10$, the only relation among the generators is
\begin{equation*} \L_{[10], \varnothing} = 0 \in H^{15}(\Mb_{2,10}).
\end{equation*}
\end{lemma}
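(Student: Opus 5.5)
The plan is to list every decorated graph from Figure~\ref{fig:gentable} that exists for $(g,n)=(2,10)$, before imposing any restrictions. We then use the pushed forward relations of Section~\ref{sec:rels_all_g} to reduce the span to a single class, and finally show that this class vanishes because the whole group $H^{15}(\Mb_{2,10})$ is zero.

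\emph{Step 1: enumerate the generators.} Every row requires $|B|=10$, or $|B'|=9$ in the $\M$ row, with $B$ (resp.\ $B'$) contained in the markings on the genus-$1$ $\omega$-vertex.
\begin{itemize}
\item For the rows $\S,\N,\L,\D,\P,\K$ this forces $B=[10]$, so $X=Y=\varnothing$.
\item For $\S^a_{[10],\varnothing,\varnothing}$ we have $a+b=1$. One of the two outer vertices then has genus $0$ with no markings and at most two edges, so it is unstable. Hence there are no $\S$ classes.
\item For $\M$ and $\N$ we again have $a+b=1$. One end vertex has genus $0$ and carries at most one marking, since $|X\cup Y|\le 1$, together with one edge. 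It is therefore unstable, so no $\M$ or $\N$ classes occur.
\item For $\D_{[10],\varnothing}$ the genus-$0$ vertex has only two half-edges, so it is unstable.
\item What survives is $\L_{[10],\varnothing}$, whose genus-$0$ vertex has a loop and one edge and is stable, together with $\P^q_{[10],\varnothing}$ and $\K_{[10],\varnothing}$.
\end{itemize}
I expect this stability bookkeeping to be the only place where care is needed, for example in checking that no $\M$ or $\N$ graph survives.

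\emph{Step 2: reduce the span to $\L_{[10],\varnothing}$.} Apply \eqref{push1} with $i=q$ and \eqref{push2}, both valid since $g=2$. All $\S$ terms appearing there have $X=\varnothing$, and by Step~1 such graphs are unstable, so they are absent. This gives
\[
\P^q_{[10],\varnothing}=\tfrac1{24}\L_{[10],\varnothing},
\qquad
\K_{[10],\varnothing}=\P^q_{[10],\varnothing}.
\]
Hence $S^{13,2}_\omega(\Mb_{2,10})$ is spanned by $\L_{[10],\varnothing}$. Modulo the trivial and pushed forward relations, the formal span is one-dimensional with basis $\L_{[10],\varnothing}$. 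This is consistent with restriction (R6), which discards exactly this class.

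\emph{Step 3: the class vanishes.} Since $\dim_{\cc}\Mb_{2,10}=13$, Poincar\'e duality identifies $H^{15}(\Mb_{2,10})$ with the dual of $H^{11}(\Mb_{2,10})$. The latter is zero by \cite{CLP23eleventh}, because $H^{11}$ is nonzero only in genus $1$; this is the same argument as in the remark in Section~\ref{sec:red}. Therefore $\L_{[10],\varnothing}=0$.

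Since the space of generators modulo the known relations is one-dimensional with basis $\L_{[10],\varnothing}$, this is the only additional relation, which proves the lemma.
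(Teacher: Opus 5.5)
Your proposal is correct and follows the paper's proof. The paper asserts that $\L_{[10],\varnothing}$ is the only generator for $(g,n)=(2,10)$ and concludes because $H^{15}(\Mb_{2,10})$ is Poincar\'e dual to $H^{11}(\Mb_{2,10})=0$. You additionally spell out the stability check and the elimination of $\P^q_{[10],\varnothing}$ and $\K_{[10],\varnothing}$ via \eqref{push1} and \eqref{push2}, which the paper leaves implicit.
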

\begin{proof}
With $n = 10$, the only generator is $\L_{[10], \varnothing}$. Meanwhile,
$H^{15}(\Mb_{2,10})$ is Poincar\'e dual to $H^{11}(\Mb_{2,10}) = 0$.
\end{proof}

Pulling back the relation in Lemma \ref{lem:rel10} along the map $\Mb_{2,n} \to \Mb_{2,B}$ that forgets the marked points besides $B$ gives the relation 
\begin{equation} \label{rel10pull}
\L_{B, \varnothing} = - \sum_{\varnothing\neq S \subset [n]\smallsetminus B} \L_{B,S}.
\end{equation}
The relation \eqref{rel10pull} allows us to eliminate classes that do not satisfy (R6).

\begin{lemma} \label{lem:rel11}
When $n = 11$, all relations among the generators come from \eqref{rel10pull} together with the relation
\begin{equation*} \D_{[11] \smallsetminus i, i} + \L_{[11] \smallsetminus i, i} + 24  \cdot \S^0_{[11] \smallsetminus i, i, \varnothing} = 0 \in H^{15}(\Mb_{2,11})
\end{equation*}
for $i = 1, \ldots, 11$.
\end{lemma}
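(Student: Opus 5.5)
We prove the lemma in two steps: we show that the proposed relations hold, and then we show they span all relations by a dimension count.

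\textbf{Listing the generators.} First we list all generators of Figure \ref{fig:gentable} for $(g,n)=(2,11)$, before the pulled back relations are imposed. With eleven markings and $|B|=10$, the only possible graphs are the following.
\begin{itemize}
\item $\L_{B,X}$: here $X \subset [11]\smallsetminus B$, so $X=\varnothing$ or $X=\{i\}$.
\item $\D_{[11]\smallsetminus i,\,i}$.
\item $\S^0_{[11]\smallsetminus i,\,i,\,\varnothing}$: these are the side graphs whose middle genus-$0$ vertex carries $i$ and $p$. This is the only stable side configuration, since the genus-$1$ vertices need enough markings.
\end{itemize}
No $\M$ or $\N$ classes occur: they require $|B'|=9$ together with at least two more points on the middle vertex, or a genus-$0$ end carrying two markings, which leaves at most nine points for $B$. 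After imposing the trivial and pushed-forward relations (R1)--(R9), we also discard $\P$ classes via \eqref{push1} and all $\K$ classes. Finally, we use \eqref{rel10pull} to eliminate $\L_{B,\varnothing}$.

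We expect this to leave $3\cdot 11=33$ classes, namely $\L_{[11]\smallsetminus i,i}$, $\D_{[11]\smallsetminus i,i}$ and $\S^0_{[11]\smallsetminus i,i,\varnothing}$ for each $i$. The proposed relations cut this down to $22$. So the lemma amounts to two facts:
\begin{enumerate}
\item the displayed relation holds for each $i$;
\item $\dim S^{13,2}_\omega(\Mb_{2,11}) = 22$.
\end{enumerate}

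\textbf{Step (a): the relation holds.} By symmetry it suffices to take $i=11$. The relation is supported on classes pulled back from $\Mb_{2,B\cup\{i\}}$ and glued in a neighbourhood of the point $i$. We would derive it as follows. Start from the vanishing of $\L_{B,\varnothing}$ in $\Mb_{2,10}$ (Lemma \ref{lem:rel10}), or better, from the vanishing of $H^{15}$ of a smaller space. Pull this vanishing back along $\pi_i\colon \Mb_{2,11}\to\Mb_{2,10}$ and multiply by the tautological divisor $\psi_i$ (or by the class $\delta_{0,\{i,\cdot\}}$). The result is a vanishing class in $H^{17}$, which is the wrong degree.

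Since that route overshoots the degree, the cleaner approach is to test the relation against the Poincaré dual. Here $\dim\Mb_{2,11}=14$, so $H^{15}$ is dual to $H^{13}(\Mb_{2,11})$, which is spanned by the known $S^{13}_\omega$ generators of \cite{CLPW24polynomial}. We compute the pairings of the three classes against these generators. Each pairing reduces, via excess intersection on the graphs, to integrals $\int_{\Mb_{1,11}}\omega\bar\omega$ times tautological genus-$0$ and genus-$1$ numbers. The coefficients $1$, $1$, $24$ should then fall out of the computation, with the $24$ coming from $\int_{\Mb_{1,1}}\psi=\tfrac1{24}$. This is exactly the mechanism already visible in \eqref{eq:ACgenus1}. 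Since the pairing is perfect, vanishing of all pairings proves the relation.

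\textbf{Step (b): no further relations.} We need $\dim H^{13,2}(\Mb_{2,11})\ge 22$. Since $H^{15}(\Mb_{2,11})=S^{15}_\omega$ by the earlier remark, the cleanest source is the Bergström--Faber data \cite{BergstromData} for $\Mb_{2,11}$, which are unconditional in genus $2$. Alternatively, by Poincaré duality the number equals $\dim H^{13}(\Mb_{2,11})$ in the relevant Hodge type, and this dimension is computed in \cite{CLPW24polynomial}. In either case, the $22$ surviving classes $\L_{[11]\smallsetminus i,i}$ and $\S^0_{[11]\smallsetminus i,i,\varnothing}$ then form a basis.

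A more self-contained check pulls these $22$ classes back to the boundary divisors $\Mb_{1,\{i,s\}}\times\Mb_{1,[11]\smallsetminus i\cup t}$ and verifies that the resulting images are independent. The main obstacle is Step (a), specifically tracking the automorphism factors (we do not divide by $|\mathrm{Aut}|$) and the signs in the excess intersection for the self-loop and double-edge graphs. I would first fix normalizations by checking the relation against a single dual generator whose pairing with $\S^0$ is nonzero.
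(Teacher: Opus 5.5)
Your proposal follows the paper's proof: reduce to the $33$ generators $\D_{[11]\smallsetminus i,i}$, $\L_{[11]\smallsetminus i,i}$, $\S^0_{[11]\smallsetminus i,i,\varnothing}$; obtain $\dim H^{13,2}(\Mb_{2,11})=22$ from the $22$ conjugate classes $Z_{B\subset A}$ of \cite{CLPW24polynomial} spanning the Poincar\'e dual $H^{1,12}$; and verify the $11$ evidently independent relations by pairing against those classes (the abandoned forgetful-pullback detour in Step (a) can simply be dropped). Two small slips do not affect the count: $\M^0_{[11]\smallsetminus\{j,k\},\{j,k\},\varnothing}$ is a genuine stable graph here and is removed by (R3) via \eqref{push4} rather than being absent, and the restrictions available at this stage are (R1)--(R6), (R9), (R10), not (R1)--(R9), since (R7) is the consequence of this very lemma (indeed your list correctly keeps the $\D_{[11]\smallsetminus i,i}$ that (R7) would exclude).
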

\begin{proof}
We have that $H^{13,2}(\Mb_{2,11})$ is Poincar\'e dual to $H^{1,12}(\Mb_{2,11})$. A basis for $H^{1,12}(\Mb_{2,11})$ is given by the (conjugate) $Z_{B \subset A}$ classes for $B \subset A \subset [11]$, defined in \cite[Section 4]{CLPW24polynomial}. There are $11$ choices for the set $B$ of size $10$ and then either $A = B$ or $A = [11]$. Hence $\dim H^{13,2}(\Mb_{2,11}) = 22$. 
After taking into account (R1) -- (R6) and (R9) and (R10), there are $33$ generators remaining, namely
$\D_{[11] \smallsetminus i, i}$, $\L_{[11] \smallsetminus i, i}$, and $\S^0_{[11] \smallsetminus i, i, \varnothing}$, each for $i \in [11]$. Thus, there must be an $11$-dimensional space of relations among these classes.

One readily checks that the class given in the statement of the lemma pairs to zero with each of the conjugate $Z_{B \subset A}$, hence defines a relation. 
Since this gives $11$ relations and $\dim H^{13,2}(\Mb_{2,11}) = 22$, 
there can be no other relations.
\end{proof}

Pulling back the relation in Lemma \ref{lem:rel11} along the map $\Mb_{2,n} \to \Mb_{2,B \cup x}$ that forgets the marked points besides a size $10$ subset $B$ and another marking $x$, we find that

\begin{equation} \label{rel11pull}
\sum_{x \in X \subseteq [n]\smallsetminus B}\D_{B, X} = - \sum_{x \in S \subset [n] \smallsetminus B} \L_{B, S \cup x} - 24 \sum_{\substack{X \sqcup Y \subset [n] \smallsetminus B \\ x \in X}} \S^0_{B,X,Y}
\end{equation}
Using \eqref{rel11pull} allows us to eliminate classes that do not satisfy (R7).

The final restriction (R8) comes from relations with $12$ markings. These are the most complicated relations we describe. As in \cite[Lemma~7.3]{CLP24extensions}, after taking into account the restrictions (R1) -- (R7), (R9), and (R10), there are $891$ decorated graph generators remaining.
Let $M_{ij} \in H^{15}(\Mbar_{2,12})$ be defined as in the proof of \cite[Lemma~7.3]{CLP24extensions}, multiplied by $24$ in order to avoid denominators:

\begin{align}
\begin{split}\label{Mij-def}
M_{ij} &= 24\,\GraphMijaa + \GraphMijbb + 24\,\GraphMijcc + 24\,\GraphMijdd
        - 24\,\GraphMijee + 2\,\GraphMijff \\
        &= 24 \S_{X, \{i,j\}, \emptyset}^0 + \L_{X, \{i,j\}} + 24 \S_{X, \{i\}, \{j\}}^0 + 24\S_{X, \{j\}, \{i\}}^0 - 24 \S_{X, \emptyset, \{i,j\}}^1 + 2\D_{X,\{i,j\}}
\end{split}
\end{align}
Here $X = [12] \smallsetminus \{i,j\}$.
The symbols $M_{ij}$ correspond to the $66$-dimensional $\mathbb{S}_{12}$-representation 
\[\Ind_{\mathbb{S}_{10} \times \mathbb{S}_2}^{\mathbb{S}_{12}}(V_{1^{10}} \boxtimes \mathbf{1}) = V_{2,1^{10}} \oplus V_{3,1^9}.\]
This representation was also studied in \cite[Section 4.4]{CLPW24polynomial}. More concretely, if $Z_{ij} = Z_{\{1,\ldots , \hat{i},\ldots , \hat{j},\ldots 12\}\subset [12]}\in H^{12,1}(\Mbar_{1,12})$\footnote{The decorated graphs $Z_{B\subset A}$ were defined in \eqref{eq:GraphZ}} then, for any $\sigma \in \mathbb S_{12}$, with action $\sigma^* : H^*(\Mbar_{g,12})\to H^*(\Mbar_{g,12})$, we have
\begin{equation}\label{eq:same-action}
\sigma^* Z_{ij} = \operatorname{sgn}\big(\sigma |_{[12]\setminus \{i,j\}}\big)Z_{\sigma(i),\sigma(j)}\,, \quad \sigma^* M_{ij} = \operatorname{sgn}\big(\sigma |_{[12]\setminus \{i,j\}}\big)M_{\sigma(i),\sigma(j)}
\end{equation}
where $ \operatorname{sgn}\big(\sigma |_{[12]\setminus \{i,j\}}\big)$ is computed after the elements $\sigma([12]\setminus \{i,j\})$ are put in increasing order, cf. \cite[(4.3)]{CLPW24polynomial}. In \cite[Lemma 4.6]{CLPW24polynomial} it is shown that there are $55$ relations between the $Z_{ij}$, and in \cite[Lemma~7.3]{CLP24extensions} it is shown that there are $55$ relations among these $M_{ij}$, corresponding to the subrepresentation $V_{3,1^9}$. By \eqref{eq:same-action}, the relations between the $M_{ij}$ are the same as the relations between the symbols $Z_{ij}$, which were computed in \cite[Lemma 4.6]{CLPW24polynomial}. Thus, for $1 \leq i < j < k \leq 12$, we have
\[0 = (-1)^{i+j} M_{ij} - (-1)^{i+k} M_{ik} + (-1)^{j+k} M_{jk}.\]
These relations are generated by the ones for $1 = i < j < k$.
Moreover since $\dim H^{13,2}(\Mb_{2,12}) = 836$, these $55$ relations are the only remaining relations, giving the following lemma, proved in \cite[Lemma~7.3]{CLP24extensions}. 

\begin{lemma} \label{rel12}
When $n = 12$, all relations among the generators come from \eqref{rel10pull} and \eqref{rel11pull} together with the relations
\begin{equation*} 
M_{jk} = (-1)^{k} M_{1j} - (-1)^{j} M_{1k}
\end{equation*}
for all $j,k \in \{2, \ldots, 12\}$, $j < k$.
\end{lemma}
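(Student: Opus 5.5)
The plan is a dimension count: first show that the listed relations hold, then show that the quotient they cut out has dimension exactly $\dim H^{13,2}(\Mb_{2,12}) = 836$, which equals $\dim S^{13,2}_\omega(\Mb_{2,12})$ by \cite[Lemma~7.3]{CLP24extensions}.

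\textbf{Step 1: the relations hold.} The relations \eqref{rel10pull} and \eqref{rel11pull} are pullbacks along forgetful maps of the relations in Lemmas \ref{lem:rel10} and \ref{lem:rel11}, so they hold in $H^{15}(\Mb_{2,12})$. After imposing them, together with the trivial and pushed forward relations (R1)--(R7), (R9), (R10), there remain $891$ generators.

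\textbf{Step 2: the relations among the $M_{ij}$.} The classes $M_{ij}$ of \eqref{Mij-def} are particular linear combinations of these generators. To see that they satisfy the three-term relations, I would pair each $M_{ij}$ against the Poincaré dual group $H^{1,12}(\Mb_{2,12})$. Via \eqref{eq:same-action}, the span of the $M_{ij}$ is an $\mathbb S_{12}$-equivariant image of the $66$-dimensional representation $V_{2,1^{10}}\oplus V_{3,1^9}$. The task is to show that the kernel of the map from the formal span of the symbols $M_{ij}$ to $H^{15}$ is exactly the $V_{3,1^9}$ summand.

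\begin{itemize}
\item \emph{The kernel contains $V_{3,1^9}$.} It suffices, by equivariance, to check that one lowest-weight vector pairs to zero with the dual classes. The natural candidate is the combination $(-1)^{i+j}M_{ij} - (-1)^{i+k}M_{ik} + (-1)^{j+k}M_{jk}$.
\item \emph{The kernel does not contain $V_{2,1^{10}}$.} It suffices to find one nonzero pairing.
\end{itemize}

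The three-term relations then reduce to those with $i=1$ by a routine combinatorial manipulation. Since they span $V_{3,1^9}$, there are exactly $55$ of them, and rewriting them with $i=1$ gives the form stated in the lemma.

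\textbf{Step 3: these are all the relations.} Combining the counts gives $891 - 55 = 836 = \dim H^{13,2}(\Mb_{2,12})$. The relations found are therefore all of them, because the generators span a space of dimension at most $836$, and the span is the whole of $S^{13,2}_\omega$.

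\textbf{Main obstacle.} I expect the hardest step to be the explicit intersection pairings of the generators $\S,\L,\D$ against the dual classes, which are conjugate $Z$-type classes. This requires excess intersection computations on boundary strata. Rather than recompute them, I would take them from \cite[Lemma~7.3]{CLP24extensions}.
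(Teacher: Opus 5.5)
The counting skeleton matches the paper's: $891$ generators after (R1)--(R7), (R9), (R10); a $55$-dimensional kernel $V_{3,1^9}$ among the $M_{ij}$ taken from \cite[Lemma~7.3]{CLP24extensions}; and $891-55=836$. But the step that carries the content of the lemma is not proved. That content is that the $55$ relations are \emph{exactly} the stated three-term ones. You call $(-1)^{i+j}M_{ij}-(-1)^{i+k}M_{ik}+(-1)^{j+k}M_{jk}$ ``the natural candidate'' and propose to check it by pairing, and that check fails as written. Since $\dim_{\mathbb C}\Mb_{2,12}=15$, the group $H^{13,2}(\Mb_{2,12})$ is Poincar\'e dual to $H^{2,13}(\Mb_{2,12})$, the conjugates of degree-$15$ classes, not to $H^{1,12}(\Mb_{2,12})$. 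Pairing against conjugate $Z$-classes is the $n=11$ computation of Lemma~\ref{lem:rel11}; for $n=12$, a class in $H^{15}$ times a class in $H^{13}$ lands in $H^{28}$ and is not a number. Likewise, ``lowest-weight vector'' has no meaning for $\mathbb S_{12}$. Your equivariance reduction needs a nonzero vector known to lie in the $V_{3,1^9}$ summand, and that membership is exactly what you leave unchecked.

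Once you grant from \cite[Lemma~7.3]{CLP24extensions} that the kernel is the $55$-dimensional summand, the missing idea is \eqref{eq:same-action}. The $M_{ij}$ and the $Z_{ij}\in H^{12,1}(\Mb_{1,12})$ transform by the same signed permutation action. So both are equivariant images of the same multiplicity-free representation $V_{2,1^{10}}\oplus V_{3,1^9}$, of dimensions $11$ and $55$. Each kernel is $55$-dimensional: \cite[Lemma~7.3]{CLP24extensions} gives this for the $M_{ij}$, and \cite[Lemma~4.6]{CLPW24polynomial} gives it for the $Z_{ij}$, with the three-term relations written out explicitly. Hence both kernels equal $V_{3,1^9}$, and the relations transfer verbatim with no new intersection numbers.

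Alternatively, you can check membership directly. The sign convention identifies the symbols with $\wedge^{10}\mathbb Q^{12}\cong\mathrm{sgn}\otimes\wedge^2\mathbb Q^{12}$, sending the symbol of $M_{ij}$ to $\pm(-1)^{i+j}\epsilon_i\wedge\epsilon_j$ with a global sign. The three-term vector then becomes $\pm(\epsilon_i\wedge\epsilon_j-\epsilon_i\wedge\epsilon_k+\epsilon_j\wedge\epsilon_k)$. This is killed by contraction with $\sum_l\epsilon_l^*$, whose kernel is the $55$-dimensional summand.

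Two smaller points. First, the bound ``at most $891-55$'' requires the $55$ relations to be independent as formal combinations of the $891$ generators. This holds because $M_{ij}$ contains $2\D_{[12]\smallsetminus\{i,j\},\{i,j\}}$, and these are distinct generators. Second, your separate check that $V_{2,1^{10}}$ is not in the kernel is redundant, since $891-836=55$ already bounds the kernel's dimension.
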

\begin{remark}
To explain the signs, recall that an $\omega$-decoration $\omega_A$ is anti-symmetric in the order of the size $11$ subset $A$. 
If we apply the automorphism that swaps $j$ and $k$, then we will be inserting $j$ into the position of $k$ in some set $A$, which may make it no longer increasing. This is why there are additional signs in the statement of these relations. In the case that $k = j+1$, this swap does not require reordering and the formula for $M_{jk}$ is easily seen to be symmetric in $j$ and $k$.
\end{remark}

Now suppose $n \geq 12$ and we are given $B$ of size $10$ and $j<k$ with $j, k \notin B$ and $\min\{j, k\} > \min(B)$. 
Let $\pi\colon \Mb_{2,n} \to \Mb_{2,B \cup \{j,k\}}$ be the forgetful map. Let 
$b = \min(B) = \min(B \cup \{j,k\})$ and let $\beta(j)$ and $\beta(k)$ be the place rank (in $[12]$) of $j$ and $k$, respectively, in $B \cup \{j,k\}$.
Pulling back the definition of $M_{jk}$ and then applying the pullback of the relation in Lemma \ref{rel12}, we obtain the relation
\begin{align} \label{rel12pull} \D_{B, \{j,k\}} &= \tfrac{1}{2} \pi^* M_{jk} + \text{classes of type $\S^a_{B,X,Y}$, $\L_{B,X}$ and type $\D_{B,X}$ satisfying (R7)}  \notag \\
&= (-1)^{\beta(k)}\D_{(B\smallsetminus\{b\})\cup \{k\}, \{b,j\}} - (-1)^{\beta(j)}\D_{(B\smallsetminus\{b\})\cup \{j\},\{b,k\}}  \\
&\qquad \qquad + \text{classes of type $\S^a_{B,X,Y}$, $\L_{B,X}$ and type $\D_{B,X}$ satisfying (R7)} \notag
\end{align}

This allows us to eliminate classes that do not satisfy (R8).

\section{Generators and Relations for \texorpdfstring{$H^{15}(\Mbar_{2,n})$}{H15M2n}}\label{section:genus2}

In this section, we prove Theorem \ref{theorem:main} and Theorem \ref{thm:rels}(a).
The following lemma establishes that the different pullbacks of subspaces spanned by $M_{ij}$ classes are all independent. This implies in particular that the only relations among the different pullbacks of $M_{ij}$ classes are the pullbacks of relations on $\Mb_{2,12}$. This is a central ingredient in the proof of Theorem~\ref{theorem:Genus2AnyMarkings}, implying Theorems~\ref{theorem:main} and~\ref{thm:rels}(a).

\begin{lemma}\label{lemma:pullbacksMij}
     Let $V \subseteq H^{15}(\Mbar_{2,12})$ be the span of all $M_{ij}$. For any subset $A \subseteq [n]$ of cardinality $|A| = 12$, let $\pi_{[n] \setminus A}: \Mbar_{2,n} \rightarrow \Mbar_{2,A}$ be the forgetful morphism and set $V_A \coloneqq \pi_{[n] \setminus A}^\ast(V) \subseteq H^{15}(\Mbar_{2,n})$. Then the sum $\sum_{A \subseteq [n]} V_A$ is direct. Equivalently, the dimension of the latter is $11 \cdot \binom{n}{12}$ and, as $\mathbb{S}_n$-representations,
    \begin{equation*}
        \sum_{A \subseteq [n]} V_A \cong \mathrm{Ind}_{\mathbb{S}_{12} \times \mathbb{S}_{n-12}}^{\mathbb{S}_n} V_{2,1^{10}}.
    \end{equation*}
\end{lemma}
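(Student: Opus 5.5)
The plan is to detect each $V_A$ separately by pulling back to a boundary stratum where only the classes coming from $V_A$ survive. By the discussion preceding Lemma \ref{rel12}, $V$ is an $11$-dimensional quotient of the span of the symbols $M_{ij}$, isomorphic to $V_{2,1^{10}}$, and a basis of $V$ is given by $M_{1j}$, $j=2,\dots,12$. So $\dim V_A\le 11$ for each $A$, and once the sum is known to be direct the description as an induced representation is formal. Indeed, $\mathbb S_n$ permutes the $V_A$ transitively, and the stabilizer of $V_{[12]}$ is $\mathbb S_{12}\times\mathbb S_{n-12}$, acting through the first factor. It therefore suffices to show that the map $\bigoplus_A V_A\to H^{15}(\Mbar_{2,n})$ is injective.

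To separate the different $A$, I would use test pullbacks along gluing maps. Fix $A$ and consider the gluing map
\[\xi_A\colon \Mbar_{2,A\cup\{s\}}\times\Mbar_{0,([n]\smallsetminus A)\cup\{t\}}\to\Mbar_{2,n}.\]
Its boundary divisor is the locus where all markings outside $A$ collide on a rational tail. Suppose $A'\neq A$, so that some $a\in A'$ lies on the genus-$0$ side. Then $\pi_{[n]\smallsetminus A'}\circ\xi_A$ factors through a map that forgets all markings of the rational component but one, namely $a$. Hence $\xi_A^*\pi^*_{[n]\smallsetminus A'}(M)$ is a pullback from $\Mbar_{2,A\cup\{s\}}$ to $\Mbar_{2,(A'\cap A)\cup\{s\}}$, tensored with $1$, where $s$ plays the role of $a$.

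When $A'=A$, the map $\pi\circ\xi_A$ is the identity on the first factor (with $s$ forgotten), tensored with the fundamental class. A cleaner way to isolate $A$ is to apply $\xi_A^*$ and then push forward (or pair) along the map forgetting $s$, or to restrict further to the factor $H^{15}(\Mbar_{2,A\cup s})\otimes H^0$ and project onto the complement of classes pulled back from $\Mbar_{2,A}$.

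The key step is a triangularity argument. Order the subsets $A$ lexicographically, and suppose $\sum_A v_A=0$ with $v_A\in V_A$. Choose $A_0$ maximal with $v_{A_0}\neq 0$. Applying $\xi_{A_0}^*$ sends every other term to a class pulled back from some $\Mbar_{2,A''\cup s}$ with $|A''|=12$ and $s\in A''$, since $A'\neq A_0$ forces the collision point $s$ to carry a marking of $A'$. The term $v_{A_0}$, by contrast, goes to $\pi_s^*v_{A_0}$, which does not involve $s$. I would then use Lemma \ref{lemma:topweight}-style reasoning at the level of $S_\omega$, or better the explicit generator basis in Figure \ref{fig:gentable}, to see that on $\Mbar_{2,13}$ the subspace $\pi_s^*V$ meets the sum of the $V_{A''}$ with $s\in A''$ trivially. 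This reduces the whole lemma to the case $n=13$.

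The main obstacle is this $n=13$ computation. Granting it, we get $v_{A_0}=0$, a contradiction.

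For $n=13$ I would check directness directly, writing each pulled-back $M_{ij}$ in the generator basis. The pullback of $M_{ij}$ contains the term $2\D_{X,\{i,j\}}$, which survives the forgetful pullback, giving $\D_{X,\{i,j\}}$ plus $\D_{X,\{i,j,x\}}$. The terms of type $\D_{B,X}$ with $|X|=2$ and $B\cup X=A$ are recorded in $V_A$ only. Since $\pi_A^*M_{ij}$ contributes $2\D_{X,\{i,j\}}$ with $X\cup\{i,j\}=A$, while other $V_{A'}$ contribute such terms with different $B\cup X$, projecting onto the coordinates $\D_{B,\{i,j\}}$ separates the $A$.

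The delicate point is that these $\D$ coordinates are not yet a basis, because of relations \eqref{rel11pull} and \eqref{rel12pull}. I would therefore use a pairing instead: pair against the Poincaré-dual classes built from conjugate $Z_{B\subset A}$ classes glued along $\xi_A$. These detect exactly the $V_{2,1^{10}}$ piece through \eqref{eq:same-action}, and I would verify that the pairing matrix is block diagonal with invertible $11\times 11$ blocks.
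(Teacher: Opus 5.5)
\textbf{There is a genuine gap: your reduction step rests on a false factorization.}

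For $A'\neq A$ you assert that $\pi_{[n]\smallsetminus A'}\circ\xi_A$ forgets all but one marking on the rational tail. This holds only when $|A'\smallsetminus A|=1$.

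If $|A'\smallsetminus A|\geq 2$, the tail still carries $A'\smallsetminus A$ and the node. The composite then factors through the gluing map $\Mbar_{2,(A\cap A')\cup\{s\}}\times\Mbar_{0,(A'\smallsetminus A)\cup\{t\}}\to\Mbar_{2,A'}$. So $\xi_A^*$ of a class in $V_{A'}$ is a gluing pullback of an element of $V$, not a forgetful one.

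\begin{itemize}
\item On the $H^{15}\otimes H^0$ component the cases $|A'\smallsetminus A|\geq 3$ are harmless. There the genus-$2$ factor has at most $10$ markings, and $H^{15}(\Mbar_{2,m})=0$ for $m\leq 10$.
\item For $|A'\smallsetminus A|=2$, however, you pick up forgetful pullbacks of classes in $\tilde\theta_{xy}^*(V)\subseteq H^{15}(\Mbar_{2,11})$. Here $\tilde\theta_{xy}$ glues a $\Mbar_{0,3}$-tail carrying $x,y$. Nothing says these classes lie in any $V_{A''}$, so your reduction to ``$\pi_s^*V$ meets $\sum_{s\in A''}V_{A''}$ trivially on $\Mbar_{2,13}$'' does not follow.
\end{itemize}

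What is missing is the one non-formal input of the paper's proof: $\tilde\theta_{xy}^*(V)=0$.
\begin{itemize}
\item For $\{i,j\}\neq\{x,y\}$ this holds termwise.
\item For $\{i,j\}=\{x,y\}$, the two graphs with $i,j$ on different vertices die. After expanding $\psi_s$ via the relation in $H^2(\Mbar_{1,2})$, the surviving terms of $M_{ij}$ add up to twice the relation of Lemma~\ref{lem:rel11}.
\end{itemize}
This is the only place where the specific coefficients of $M_{ij}$ matter, and your proposal never uses them.

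Once this vanishing is added, your $\xi_A$-argument, restricted to the $H^{15}\otimes H^0$ component, does reduce every $n\geq 14$ to $n=13$ in one step. The paper instead uses only the divisors $\theta_{xy}$ with a $\Mbar_{0,3}$-tail. There $V_A$ dies if $\{x,y\}\subseteq A$. Otherwise $\theta_{xy}$ is a section of $\pi_x$, so $\theta_{xy}^*$ is injective on $V_A$ and lands in some $V_{A'}$ on $n-1$ markings; this drives an induction on $n$.

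\textbf{The base case $n=13$, which you call the main obstacle, is also left open.}

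Your pairing plan uses classes ``glued along $\xi_A$''. For $n=13$ one has $|[n]\smallsetminus A|=1$, so no such divisor exists. You also concede that projecting onto the $\D_{B,\{i,j\}}$-coordinates does not work.

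The paper's closing step, ``running over all pairs $\{x,y\}$'', only kills $v_A$ for $A$ disjoint from some pair. That is vacuous when $n=13$, so an extra argument is needed there as well.

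The same vanishing closes this case:
\begin{itemize}
\item Write the component for $A=[13]\smallsetminus z$ as $\pi_z^*w_z$.
\item $\theta_{xy}^*$ kills the components with $z\neq x,y$. It maps the remaining two isomorphically onto the same copy of $V$ on $\Mbar_{2,([13]\smallsetminus\{x,y\})\cup\{p\}}$.
\item This gives $w_y=-\rho_{xy}^*w_x$, where $\rho_{xy}\colon\Mbar_{2,[13]\smallsetminus y}\to\Mbar_{2,[13]\smallsetminus x}$ renames the label $x$ as $y$.
\item Going around a triangle $x,y,z$ yields $w_x=-\tau_{yz}^*w_x$ for every transposition $\tau_{yz}$ of $[13]\smallsetminus x$.
\item Hence $w_x$ lies in the sign-isotypic part of $V\cong V_{2,1^{10}}$, which is zero.
\end{itemize}
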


\begin{proof}
    For any $2$-element subset $\{x,y\} \subseteq [n]$, let
    \begin{equation*}
         \theta_{xy}: \Mbar_{2,([n]\setminus xy) \cup p} \cong \Mbar_{2,([n]\setminus xy) \cup p} \times \Mbar_{0, \{x,y,q\}} \rightarrow \Mbar_{2,n}
    \end{equation*}
   be the map that glues $p$ and $q$. We first assume $\{x,y\} \subseteq A$. Then
   \begin{equation*}
       \theta_{xy}^\ast \pi_{[n] \setminus A}^\ast(V) = \pi_{[n] \setminus A}^\ast \tilde{\theta}_{xy}^\ast(V),
   \end{equation*}
   where $\tilde{\theta}_{xy}$ is the same map as $\theta_{xy}$ with the set $[n]$ replaced by $A$. We claim $\tilde{\theta}_{xy}^\ast(V) = 0$. Indeed, this follows by computing the pullback of each term in $M_{ij}$ using the excess intersection formula on decorated graphs, as we now explain. 
   
   Let $X = A \setminus\{i,j\}$.
   The pullbacks of the terms in $M_{ij}$ vanish individually if $\{i,j\} \neq \{x,y\}$. If $\{i,j\} = \{x,y\}$, the only possible non-vanishing K\"unneth component of the pullback is $H^{15} \otimes H^0$. Since $i$ and $j$ are not on the same vertex, the third and fourth graphs appearing in the definition \eqref{Mij-def} of $M_{ij}$ vanish under pullback to this boundary divisor. We then obtain the following expression for the $H^{15}$ K\"unneth factor of the pullback $\tilde{\theta}_{ij}^*M_{ij}$. The equality of the first line with the second line follows by expanding the $\psi_s$ class using the relations in $H^2(\Mb_{1,2})$ from \cite[Theorem 2.2(c)]{AC98}:

\begin{align*}
&24\GraphEquationChainS[.78]
+\GraphEquationLoopS[.78]
-24\GraphEquationPsiS[.78]
+2\GraphEquationDoubleEdgeS[.78]\\
 &=2\Bigl(
24\GraphEquationChainS[.78]
+\GraphEquationLoopS[.78]
+\GraphEquationDoubleEdgeS[.78]
\Bigr).
\end{align*}
    
\noindent The latter vanishes due to the relation in Lemma \ref{lem:rel11}.

    Next, we assume $\{x,y\} \not\subseteq A$, say $x \in [n] \setminus A$. Then $\theta_{xy}$ can be viewed as the $y$-th section of $\pi_{x}: \Mbar_{2,n} \rightarrow \Mbar_{2, [n] \setminus x}$, up to replacing $y$ by $p$ in the source. Let $\sigma_{p,y} : \Mbar_{2,([n]\setminus xy) \cup p} \rightarrow \Mbar_{2, [n] \setminus x}$ be the isomorphism that maps $p$ to $y$ and all other markings to themselves. It follows that, in $H^{15}(\Mbar_{2, ([n] \setminus xy) \cup p})$, we have
    \begin{equation*}
        \theta_{xy}^\ast(V_A) = \theta_{xy}^\ast \pi_{x}^\ast \pi_{[n] \setminus (A \cup x)}^\ast(V) = \sigma_{p,y}^\ast \pi_{[n] \setminus (A \cup x)}^\ast(V) = \begin{cases}
            \pi_{[n] \setminus (A \cup x)}^\ast(V) & \text{if } y \in A\\
            \pi_{([n] \cup p) \setminus (A \cup xy)}^\ast(V) & \text{if } y \in [n] \setminus A.
        \end{cases}
    \end{equation*}
    The cardinality of $([n] \setminus xy) \cup p$ is $n-1$, so by induction on $n$ we find that, for any fixed $\{x,y\} \subseteq [n]$, the sum
    \begin{equation*}
        \sum_{A \subseteq [n] \setminus \{x,y\}} \pi_{([n] \cup p) \setminus (A \cup xy)}^\ast(V) \subseteq H^{15}(\Mbar_{2, ([n] \setminus xy) \cup p})
    \end{equation*}
    is direct and intersects $\sum_{|A \cap \{x,y\}| = 1} \pi_{[n] \setminus (A \cup xy)}^\ast(V) \subseteq H^{15}(\Mbar_{2, ([n] \setminus xy) \cup p})$ trivially. Hence, the sum
    \begin{equation*}
        \sum_{A \subseteq [n] \setminus \{x,y\}} V_A \subseteq H^{15}(\Mbar_{2,n})
    \end{equation*}
    is also direct and intersects $\sum_{|A \cap \{x,y\}| \geq 1} V_A \subseteq H^{15}(\Mbar_{2,n})$ trivially. Running over all $2$-element subsets $\{x,y\} \subseteq [n]$ then yields the desired result.
\end{proof}

\subsection{The natural generators in genus 2}\label{subsection:NaturalGenerators}
The generators for $S_\omega^{13,2}(\Mbar_{g,n})$ that we want to use for $g = 2$ differ slightly from those introduced in Figure~\ref{fig:gentable}: Instead of the generators $\M^0$ and $\N^0$ where the $\omega$-marking on the middle vertex is anti-symmetric and symmetric, respectively, under exchanging the internal half-edges $p$ and $q$, we choose the decoration $\omega_{B \cup p}$ where $B \subseteq A \cup \{q\}$ is any subset of cardinality $|B| = 10$, where $A \coloneqq (X \cup Y)^c$ in the notation of Figure~\ref{fig:gentable}. Note that $B$ may or may not contain $q$. 
We denote these new classes by $\N'_{B,X,Y}$:
\[
\GraphChainXComplementY .
\]
\noindent They are related to the generators in Figure \ref{fig:gentable} by
\begin{equation} \label{Nprimedef} \N'_{B,X,Y} = \begin{cases}
\M^0_{B \smallsetminus q, X, Y} & \text{if $q \in B$} \\
\N^0_{B,X,Y} - \frac{1}{2} \sum_{i=1}^{10}(-1)^i \M^0_{B \smallsetminus b_i, X, Y} & \text{if $q \notin B$.}
\end{cases}
\end{equation}
As explained following \eqref{push4}, we can assume that if $|X| = 2$, then $\min(X) < \min(B)$ and $q \not\in B$. 

Next, we define \emph{modified double edge generators} for $|X| = 2$ by
\begin{equation}\label{D'}
    \D'_{B,ij} \coloneqq \pi_{[n] \setminus A}^\ast(M_{ij}),
\end{equation}
where $A \coloneqq B \cup \{i,j\}$ and $\pi_{[n] \setminus A}: \Mbar_{2,n} \rightarrow \Mbar_{2,A}$ is the forgetful morphism, i.e., we replace $\D_{B,ij}$ by $\D'_{B, ij}$ for all $i$, $j$ and $B$ with $B \subseteq [n] \setminus \{i,j\}$, $|B| = 10$. We keep the $\D_{B,X}$ with $|X| \geq 3$ and call them \emph{unmodified double edge generators}.

Moreover, we define \emph{modified loop generators} by
\begin{equation}\label{L'}
    \L'_{B,X} \coloneqq
    \L_{B,X} + 24 \sum_{\emptyset \neq T \subseteq X} \S^0_{B,T,X \setminus T}
\end{equation}
for $|X| \geq 3$. This means that we replace $\L_{B,X}$ by $\L'_{B,X}$ for all $X$ with $|X| \geq 3$. We keep $\L_{B,X}$ for $|X| \in \{1,2\}$ and call them \emph{unmodified loop generators}.
All of these modifications only amount to a change of basis for the generators of $S_\omega^{13,2}(\Mbar_{2,n})$; in particular, the resulting classes still span $S_\omega^{13,2}(\Mbar_{2,n})$.
From now on, we refer to this new set of generators for $S_\omega^{13,2}(\Mbar_{2,n})$ as \emph{the natural generators}.\\

\subsection{The natural generators span \texorpdfstring{$H^{15}(\Mbar_{2,n})$}{H15M2n}}
The following is the main result of this section. The proof involves formulas for the K\"unneth components of the pullbacks of our decorated graph generators to various boundary divisors, which are computed using the calculus of decorated graphs and the excess intersection formula. The necessary formulas are given in Lemmas \ref{lemma:intersectionTableH13xH2} -- 
\ref{lemma:independenceGenus1}, which are collected after the proof.

\begin{theorem}\label{theorem:Genus2AnyMarkings}
   (a) We have
    \begin{equation*}
        H^{13,2}(\Mbar_{2,n}) = S_\omega^{13,2}(\Mbar_{2,n})
    \end{equation*}
    for all $n$.
    
    (b) Moreover, the only relations among the natural generators are those pulled back from $\Mbar_{2,12}$, i.e., the only relations are the pullbacks under forgetful maps of the known relations among the $M_{ij}$.
\end{theorem}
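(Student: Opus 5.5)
The plan is to prove (b) first and deduce (a) from it by a dimension count at $n=14$, using the reduction of Section~\ref{sec:red}. By Lemma~\ref{lemma:pullbacksMij}, the pulled-back relations among the $M_{ij}$ span a space of dimension exactly $55\binom{n}{12}$: each $V_A$ is $66$-dimensional on the symbols $\pi^*_{[n]\setminus A}M_{ij}$ but only $11$-dimensional as a subspace, and the sum over $A$ is direct. So (b) is equivalent to the following statement. Fix, for each $A$, a basis of $V_A$ consisting of $11$ of the modified double edge generators $\D'_{B,ij}$ with $B\cup\{i,j\}=A$. Then these, together with all other natural generators ($\N'$, $\S^a$, unmodified and modified $\L$ and $\D$, and $\P$, $\K$ after the eliminations of Section~\ref{section:gens}), are linearly independent. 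I would prove this by induction on $n$. The base cases $n\le 12$ are Lemmas~\ref{lem:rel10}, \ref{lem:rel11} and \ref{rel12}, which are already reformulated in terms of the natural generators.

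For the inductive step, suppose $\sum c_G\, G=0$. I would test this against pullbacks along two families of gluing maps. The first is $\theta_{X'}\colon \Mbar_{2,([n]\setminus X')\cup s}\times\Mbar_{0,X'\cup t}\to\Mbar_{2,n}$, taking the $H^{15}\otimes H^0$ and $H^{13}\otimes H^2$ Künneth components. The second is the genus-$1$/genus-$1$ divisors $\Mbar_{1,([n]\setminus X')\cup s}\times\Mbar_{1,X'\cup t}$, taking the $H^{13}\otimes H^2$ and $H^{11}\otimes H^4$ components. For each generator type, the excess intersection formula on decorated graphs computes these components; this is the content of the lemmas announced after the statement.

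The $H^{15}\otimes H^0$ component on $\theta_{X'}$ with $|X'|\ge 2$ lands in $H^{15}(\Mbar_{2,n-|X'|+1})$. By induction the natural generators there have only the $M_{ij}$-pullback relations. As in the proof of Lemma~\ref{lemma:pullbacksMij}, the $\D'$ classes pull back either to $0$ (when $\{i,j\}=X'$, via the relation of Lemma~\ref{lem:rel11}) or to $\D'$ classes on the smaller space. This shows the coefficients are consistent with the pulled-back relations, so after subtracting an element of $\bigoplus_A V_A$ we may assume all $c_G$ with $G$ a $\D'$ outside our chosen basis vanish.

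Next I would order the remaining generators by the size of the genus-$0$ or genus-$1$ "tail" $X$ and show the pullback matrix is block triangular. A generator with tail $X$ is detected by the divisor with $X'=X$, and it vanishes or lands in strictly smaller tails on divisors with $X'\not\subseteq X$. Concretely:
\begin{itemize}
\item $\S^a$ and $\N'$ classes are detected by the $H^{13}\otimes H^2$ components on the divisors matching their outer vertices. Here the $\psi$-terms $-\psi_t$ appear, as in the product formulas.
\item $\P$ and $\K$ classes (only present when $g\ge3$ restrictions fail, i.e.\ absent for $g=2$ after (R9), (R10)) need no separate treatment.
\item $\L'_{B,X}$ and $\D_{B,X}$ with $|X|\ge3$ are detected through the $H^{11}\otimes H^4$ component on $\Mbar_{1}\times\Mbar_1$ with $X'=X$. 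This yields classes $\omega_{B\cup s}\otimes\alpha$ and $\omega_{B\cup s}\otimes\beta$, which are exactly why $\L'$ was introduced: it makes the $H^{13}\otimes H^2$ contributions triangular.
\end{itemize}

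The main obstacle I expect is the last step: showing that $\alpha$ and $\beta$ are linearly independent in $H^4(\Mbar_{1,X'\cup t})$ for $|X'|\ge3$, jointly with the remaining $H^{13}\otimes H^2$ pieces. My first attempt would be to pull back further along $\xi\colon\Mbar_{0,\{t,y,a,\dots\}}$-type boundary maps to reduce to the irreducible loop graph versus the double-edge graph on genus-$0$ vertices, where independence is visible in $H^4$ of a small $\Mbar_{0,m}$. The low-$|X|$ cases $|X|\in\{1,2\}$, for the unmodified $\L$, $\D$ and the $\S^0$ classes adjacent to them, have to be handled separately using the restrictions (R6)–(R8). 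Once independence holds, (b) follows.

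For (a), count the natural generators at $n=14$ under the restrictions and subtract $55\binom{14}{12}$. The expected result is $234234=\dim H^{13,2}(\Mbar_{2,14})$, from \cite{BergstromData}. Lemma~\ref{132reduction} and the reduction of Section~\ref{sec:red} then give (a) for all $n$.
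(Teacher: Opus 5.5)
Your overall architecture matches the paper's: the same test divisors and K\"unneth components, the classes $\alpha$ and $\beta$ to separate $\L'_{B,X}$ from $\D_{B,X}$, Lemma~\ref{lemma:pullbacksMij} for the span of the $\D'$ classes, and the count $239239-5005=234234$ at $n=14$. (Note that $239239$ counts the natural generators \emph{without} (R8); with (R8) imposed there is nothing left to subtract.)

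The gap is in how the $\D'$ classes enter the elimination. Your $H^{15}\otimes H^0$ induction step does no work. In your reformulation the relation $\sum c_G G=0$ involves only the chosen basis of each $V_A$, so concluding that the coefficients of non-basis $\D'$ classes vanish is vacuous. The claim that the coefficients are ``consistent with the pulled-back relations'' would require expressing the $H^{15}\otimes H^0$ component of every natural generator ($\S^a$, $\N'$, $\L$, $\L'$, $\D$, not only $\D'$) in terms of natural generators on $\Mbar_{2,n-|X'|+1}$, which you never do. In the block-triangular step the basis $\D'$ classes are still present in the relation, yet you never say what their components on the test divisors are. If these were nonzero, they would mix with exactly the components you use to detect $\S^a$, $\N'$, $\L$, $\L'$ and $\D$, and the elimination would not close.

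The missing ingredient is that every $\D'$ class is invisible to every test the argument uses. This is precisely why $\D_{B,ij}$ is replaced by $\D'_{B,ij}=\pi^*_{[n]\setminus A}M_{ij}$.
\begin{itemize}
\item On the genus-$1$/genus-$1$ divisors (Lemma~\ref{lemma:D'vanishingGenus2}), pulling back through the square with $\eta'\colon \Mbar_{1,(A\setminus X')\cup\{s\}}\times\Mbar_{1,(A\cap X')\cup\{t\}}\to\Mbar_{2,A}$ reduces the $H^{13}\otimes H^2$ and $H^{11}\otimes H^4$ components to those of $(\eta')^*M_{ij}$. These vanish by \eqref{eq:ACgenus1}, by $H^{13}(\Mbar_{1,11})=0$, and by a check in $H^4(\Mbar_{1,3})$.
\item On the genus-$2$/genus-$0$ divisors, only the $-24\,\S^1$ term of $M_{ij}$ (the only term with a rational tail) contributes to $H^{13}\otimes H^2$, and it vanishes by Lemma~\ref{lemma:genus2-0bdryDiv}.
\end{itemize}
Hence all test maps factor through the quotient by $\operatorname{span}(\D')$. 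The triangular elimination then shows the remaining natural generators are independent modulo $\operatorname{span}(\D')$, and Lemma~\ref{lemma:pullbacksMij} finishes (b); the $\theta_{xy}$ pullbacks and the induction on $n$ live only inside that lemma's proof.

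The elimination itself also needs more care than your sketch gives it.
\begin{itemize}
\item It runs over increasing $|X'|$. It relies on larger-tail classes vanishing on smaller divisors, which is nontrivial for $\L'$ (Lemma~\ref{lemma:vanishingGenus2}).
\item Classes such as $\S^0_{B,X,S}$ with $S\subsetneq X'$ do appear on the divisor indexed by $X'$.
\item The small cases are steps (1)--(5) of the paper. These use Lemma~\ref{lemma:basisGenus1} with $s$ minimal together with (R5), not (R6)--(R8), for the $\N'$ classes with $|X|=2$. They also need an explicit independence check in $H^4(\Mbar_{1,3})$.
\end{itemize}
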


\begin{proof}
The strategy to prove part (a) has already been outlined in Section~\ref{section:ReductionsTo14Markings}. Ignoring restriction (R8) in Figure~\ref{fig:gentable}, a simple count of the number of the natural generators in genus $g = 2$ for $n = 14$ markings gives $239239$ (see Section \ref{sec:dimcounts} for general formulas counting generators).
Meanwhile, the number of relations among the $\D'_{B,ij}$ is $55 \cdot \binom{14}{12} = 5005$ by Lemma~\ref{lemma:pullbacksMij}. The difference is precisely the number $234234 = \dim H^{13,2}(\Mbar_{2,14})$ discussed in Section~\ref{subsection:StrategyGenus2Markings14}. 
For part (a), 
it is therefore enough to show that, modulo the $\D'$ classes, the remaining natural generators are linearly independent when $n = 14$.

Meanwhile, for part (b), we must show that, for any number of markings, the only relations are those pulled back from $\Mb_{2,12}$. By Lemma \ref{lemma:pullbacksMij}, the only relations in the subspace spanned by pullbacks of $M_{ij}$ are the pullbacks of relations from $\Mb_{2,12}$. To show there are no more relations, it suffices to show that, modulo the $\D'$ classes, the remaining natural generators are linearly independent for all $n$.

To do so, suppose we are given a linear relation among the natural generators of the form
\begin{align} 
&\sum s^a_{B,X,Y} \S^a_{B,X,Y} + \sum n_{B,X,Y} \N'_{B,X,Y} + \sum_{|X| \in \{1,2\}} \ell_{B,X} \L_{B,X} \notag \\
& \qquad \qquad \qquad \qquad \qquad \qquad  + \sum_{|X| \geq 3} \ell'_{B,X} \L'_{B,X} + \sum_{|X| \geq 3} d_{B,X} \D_{B,X}= \sum_{|X| = 2} d'_{B,X} \D'_{B,X}. \label{eq:rel}
\end{align}
Our goal is to show that $s^a_{B,X,Y} = n_{B,X,Y} = \ell_{B,X} = \ell'_{B,X} = d_{B,X} = 0$.

The basic strategy is to pull back this linear relation along different boundary divisors, obtaining relations in various K\"unneth components of the boundary divisor. We indicate the K\"unneth components under consideration by writing them above the stable graph corresponding to the boundary divisor under consideration.
Ranging over different pullback maps and K\"unneth components allows us to show that different coefficients in the linear relation \eqref{eq:rel} must vanish. When we have shown that the coefficient of a certain generator is zero, we will say that this generator has been eliminated.

We proceed by computing pullbacks in a
carefully chosen order. The first five steps show the vanishing of the coefficients $\ell_{B,X}$, and vanishing of the coefficients for the generators with a rational tail with exactly $2$ markings on the rational component. Step (6) eliminates all remaining (unmodified) natural generators that have a rational tail. The final Steps (8)--(9) inductively eliminate the remaining side generators, all unmodified double edge generators and all modified loop generators.

    \begin{enumerate}
        \item We pull back to \GraphKunnethElevenFour[.75] with $X' = \{i\}$. By Lemma \ref{lemma:intersectionTableH11xH4}, all natural generators give zero except for $\L_{B,i}$ whose pullbacks are linearly independent as we run over $B \subseteq [n] \setminus \{i\}$. We conclude that $\ell_{B,i} = 0$ for all $B$ and $i$.
        
        \item We pull back to  \GraphKunnethThirteenTwo[.75] with $X' = \emptyset$. By Lemmas~\ref{lemma:vanishingGenus2} and~\ref{lemma:D'vanishingGenus2}, the pullbacks of the $\D'$ and $\L'$ classes all vanish. By Lemma~\ref{lemma:intersectionTableH13xH2}, the pullbacks also vanish for all the $\D$ and $\S^1$, as well as the $\S^0_{B,X,Y}$ and $\N'_{B,X,Y}$ with $Y \neq \varnothing$.
Moreover, we have already shown that $\ell_{B,X} = 0$ for $|X| = 1$.
We break the remaining generators into four groups according to the shape of their $H^{13}$ K\"unneth component under this pullback.
\begin{enumerate}
    \item[(I)] The $\S^0_{B,X,\varnothing}$ with $|X|= 2$ and $\L_{B,X}$ with $|X| = 2$ both yield 
$Z_{B \subset A}$ classes in $H^{13}(\Mb_{1,[n] \cup s})$ with $A^c = X \cup s$. In particular, $s \in A^c$ and $|A^c| = 3$.
\item[(II)] The 
$\S^0_{B,X,\varnothing}$ with $|X|> 2$ yield 
$Z_{B \subset A}$ classes in $H^{13}(\Mb_{1,[n] \cup s})$ with $A^c = X \cup s$. In particular, $s \in A^c$ and $|A^c| > 3$.
\item[(III)] The $\S^0_{B, i, \varnothing}$ yield $Z_{B \subset A}$ with $A^c = \{i, s\}$. In particular, $|A^c| = 2$ and $s \in A^c$. 
\item[(IV)] The $\N'_{B,X, \varnothing}$
yield $Z_{B \subset A}$ classes 
with $A = X^c \cup s$. In particular, $s \notin A^c$.
Additionally, our assumption that if $|X| = 2$, then $q \notin B$ and $\min(X) < \min(B)$ ensures that the resulting $Z_{B \subset A}$ satisfy $\min(A^c) < \min(B)$ when $|A^c| = 2$.
\end{enumerate}
We now apply Lemma
~\ref{lemma:basisGenus1} to $H^{13}(\Mb_{1,[n] \cup s})$, using an ordering in which $s$ is minimal.
In particular, the classes in (II), (III) and (IV) are independent  modulo those in (I).     
Thus, we conclude that 
$s^0_{B,X,\varnothing} = 0$ for $|X| \geq 3$, and $|X| = 1$, and $n_{B, X, \emptyset} = 0$ for all $X$.

\item We pull back to \GraphKunnethElevenFour[.75] with $X' = \{i,j\}$. Since the $\L_{B,X}$ with $|X| = 1$ and the $\S^0_{B,i,\emptyset}$ have been eliminated, and all the $\L'$ and $\D'$ generators evaluate to zero by Lemmas~\ref{lemma:vanishingGenus2} and~\ref{lemma:D'vanishingGenus2}, we deduce from Lemma~\ref{lemma:intersectionTableH11xH4} that the only three sets of generators that do not pull back to zero are $\S^0_{B_1,\{i,j\},\emptyset}$, $\S^1_{B_2,\emptyset,\{i,j\}}$ and $\L_{B_3,\{i,j\}}$. Their pullbacks have the same first Künneth components if and only if $B_1 = B_2 = B_3$, and in this case their second Künneth components in $H^4(\Mbar_{1,\{t,i,j\}}) = H^4(\Mb_{1,3})$ can be explicitly shown to be linearly independent using \texttt{admcycles}. Moreover, the first Künneth component of each forms a basis of $H^{11}(\Mbar_{1,([n]\setminus ij) \cup s})$ when running over all $B \subseteq [n] \setminus ij$. 
Hence, we conclude $s^0_{B,X, \varnothing} = s^1_{B, \varnothing, Y} = \ell_{B, X} = 0$ for $|X| = 2$ or $|Y| = 2$ and any $B$.
        
\item We pull back to \GraphKunnethThirteenTwo[.75] with $X' = \{i,j\}$. Since we have already eliminated the $\S^1_{B,\emptyset,\{i,j\}}$ in Step~(3), Lemma~\ref{lemma:intersectionTableH13xH2} now shows that we can also eliminate all the $\S^1_{B,S,\{i,j\}}$ with $S \neq \emptyset$. This is because these are the only remaining natural generators whose pullbacks have a nonzero second Künneth component that, after expanding each $\psi_t$ in terms of boundary divisors via \cite[eq.~(1.11)]{AC98}, involves a reducible boundary divisor with the marking $t$ on the genus~$1$ vertex. Since there are no non-trivial relations among boundary divisors in genus~$1$, this together with Step~(3) eliminates all the $\S^1_{B,X,Y}$ with $|Y| = 2$.

\item We again pull back to \GraphKunnethThirteenTwo[.75], this time with $X'$ arbitrary. By Lemmas \ref{lemma:intersectionTableH13xH2} and \ref{lemma:D'vanishingGenus2} respectively, this K\"unneth component of the pullbacks of the $\D$ and $\D'$ classes vanish. We have already eliminated the $\L$ classes in steps (1) and (3). We break the remaining generators into groups according to the shape of their $H^{13}$ K\"unneth component under this pullback, which yields $Z_{B \subset A}$ classes in $H^{13}(\Mb_{1, [n]\smallsetminus X' \cup s})$. 

\begin{enumerate}
\item[(I)] The $\S$ classes correspond to Rows 1, 2, 3, 6, 7 in Lemma \ref{lemma:intersectionTableH13xH2}. After possibly expanding a $\psi_s$ class in terms of boundary divisors whose genus $0$ component contains $s$, we see that the $H^{13}$ K\"unneth factor of the pullbacks are all linear combinations of $Z_{B \subset A}$ classes with $s \in A^c$.
\item[(II)] After possibly expanding the $\psi_s$ class, the $\L$ classes in Rows 8 and 9 of Lemma \ref{lemma:intersectionTableH13xH2} yield linear combinations of $Z_{B \subset A}$ classes with $s \in A^c$. Since the $\L'$ classes are linear combinations of $\L$ classes and $\S$ classes, combined with our observation in (I), the $\L'$ classes also yield linear combinations of $Z_{B \subset A}$ classes with $s \in A^c$. 
\item[(III)] The $\N'_{B, X, S}$ with $|X| \geq 3$ yield $Z_{B \subset A}$ with $A^c = X$. In particular $s \notin A^c$ and $|A^c| \geq 3$.
\item[(IV)] The $\N'_{B,X,S}$ with $|X| = 2$ and $S \subset X'$ correspond to classes in Rows 4 and 5 of Lemma \ref{lemma:intersectionTableH13xH2}. The $H^{13}$ K\"unneth component is $Z_{B \subset A}$ with $A^c = X$. Notice that the first K\"unneth component is determined just by $B$ and $X$. Moreover, it satisfies $|A^c| = 2$ and $s \notin A^c$, and our assumption that $\min(X) < \min(B)$ ensures that the condition $\min(A^c) < \min(B)$ is satisfied. Applying  Lemma~\ref{lemma:basisGenus1}, we see that these classes are independent of the span of the classes in groups (I) -- (III). Additionally, as we run over all $S \subset X'$, the second K\"unneth factors in $H^2(\Mb_{1,X' \cup t})$ of the pullbacks run over different separating boundary divisors and $-\psi_t$. Since $-\psi_t = -\frac{1}{24} \delta_{irr} + (\text{other boundary classes})$, and there are no relations among the boundary divisors on $H^2(\Mb_{1,X' \cup t})$ \cite[Theorem 2.2(c)]{AC98}, these second K\"unneth components are all independent. 
\end{enumerate}
By applying this argument as we vary over different $X'$, we conclude that $n_{B,X,S} = 0$ for all $|X| = 2$ and any $S$.

        \item We pull back to $G_{X'} \coloneqq$ \GraphKunnethThirteenTwoGenusZero[.75], running over all $X'$ in any order of increasing cardinality, starting with all $X'$ of cardinality $|X'| = 3$. The key observation is that every decorated graph that does not have a rational tail has vanishing $H^{13} \otimes H^2$ K\"unneth component under pullback. 
        Indeed, suppose $\Gamma$ is the underlying stable graph and $\Gamma$ has no rational tail. Any generic $(G_{X'},\Gamma)$-graph $C$ must have a rational tail consisting of a single genus $0$ vertex with the markings $X'$. Moreover, the edge to this tail gets contracted in the map $C \to \Gamma$. Hence, this edge is not an excess edge and so does not get decorated with any $\psi$ classes in the excess intersection formula. Consequently, $C$ contributes only to the $H^{15} \otimes H^0$ K\"unneth component.
        In particular, all $\L'$ classes have vanishing $H^{13} \otimes H^2$ K\"unneth component. We claim that also all $\D'$ classes pull back to $G_{X'}$ trivially. Indeed, by the above, the pullback of $\D'_{B,ij}$ equals the pullback of $-24 \pi_{[n] \setminus A}^\ast(\S^1_{B, \emptyset, ij})$ to $G_{X'}$, where $A = B \cup \{i,j\}$. Now, consider the commutative diagram
        \begin{equation*}
        \begin{tikzcd}
        {\Mbar_{2,([n]\setminus X') \cup \{s\}} \times \Mbar_{0,X' \cup\{t\}}} \arrow[rr] \arrow[d, "\eta"'] &  & {\Mbar_{2,(A \setminus X') \cup\{s\}} \times \Mbar_{0, (A \cap X') \cup \{t\}}} \arrow[d, "\eta'"] \\
        {\Mbar_{2,n}} \arrow[rr, "{\pi_{[n]\setminus A}}"] &  & {\Mbar_{2,A}}.
        \end{tikzcd}
        \end{equation*}
        The horizontal arrows are forgetful morphisms and the vertical arrows glue the markings $s$ and $t$. In particular pulling back to $G_{X'}$ means to single out the $H^{13} \otimes H^2$-Künneth component of $\eta^\ast(-)$. Now, in order to show that the $H^{13} \otimes H^2$-Künneth component of $\eta^\ast(\pi_{[n] \setminus A}^\ast(\S^1_{B,\emptyset,ij}))$ vanishes, it suffices to prove the vanishing of the $H^{13} \otimes H^2$-Künneth component of $(\eta')^\ast(\S^1_{A,\emptyset,ij})$. This, in turn, follows from the second part of Lemma~\ref{lemma:genus2-0bdryDiv}.\\
        Next, since we have eliminated all the natural generators $\N'_{B,X,S}$ and $\S^1_{B,S,X}$ with $|X| = 2$ in the previous steps, we can now apply Lemma~\ref{lemma:genus2-0bdryDiv}, first with $|X'| = 3$, to show the linear independence of all the $\N'_{B,X,S}$ and $\S^1_{B,S,X}$ with $|X| = 3$ modulo the span of all remaining natural generators. For this, we consider the first Künneth components of their pullbacks and use \cite[Lemma~4.12]{CLPW24polynomial}, which states that there are no relations among the $Z_{B \subset A}$ classes in genus $2$. Then, having eliminated all $\N'_{B,X,S}$ and $\S^1_{B,S,X}$ with $|X| \leq 3$, we may apply Lemma~\ref{lemma:genus2-0bdryDiv} with $|X'| = 4$ and deduce the linear independence of all the $\N'_{B,X,S}$ and $\S^1_{B,S,X}$ with $|X| = 4$ etc. In this way, we successively eliminate all the $\N'$ and $\S^1$ classes.
        \item Let us pull back to \GraphKunnethThirteenTwo[.75] with $|X'| \in \{1,2\}$. In the previous steps, we have eliminated all $\S^0_{B,X, \varnothing}$ classes, all $\L_{B,X}$ with $|X| \leq 2$, and all $\N'$ and $\S^1$ classes. Moreover, all $\L'$ and $\D'$ classes pull back to zero by Lemmas~\ref{lemma:vanishingGenus2} and~\ref{lemma:D'vanishingGenus2}. Applying Lemma~\ref{lemma:intersectionTableH13xH2} with $|X'| = 1$ gives the linear independence of the $\S^0_{B,X,X'}$ modulo all remaining generators. Running over all $X'$ of cardinality $|X'| = 1$, we thus eliminate all the $\S^0_{B,X,S}$ with $|S| = 1$. The same reasoning then also works for $|X'| = 2$, so we can eliminate all the $\S^0_{B,X,S}$ classes for $|S| \leq 2$.
\end{enumerate}
\noindent
In the final steps we alternate between pulling back to  \GraphKunnethElevenFour[.75] and \GraphKunnethThirteenTwo[.75], first running over all $X'$ with $|X'| = 3$ for the $H^{11} \otimes H^4$-Künneth component, then over the same set for the $H^{13} \otimes H^2$-Künneth component, afterwards running over all $X'$ with $|X'| = 4$ for the $H^{11} \otimes H^4$-Künneth component, then over the same set for the $H^{13} \otimes H^2$-Künneth component etc. In the process we will eliminate the remaining natural generators on the left-hand side of \eqref{eq:rel}: the $\L'_{B,X}$ with $|X| \geq 3$, the $\D_{B,X}$ with $|X| \geq 3$, and the $\S^0_{B,X,S}$ with $|S| \geq 3$.
\begin{enumerate}
        \item[(8)] 
        We start by pulling back to \GraphKunnethElevenFour[.75] with $|X'| = 3$. All $\D'$ classes pull back to zero by Lemma~\ref{lemma:D'vanishingGenus2} and we have eliminated all $\N'$ and $\S^1$ classes as well as all $\L_{B,X}$ ($|X| \leq 2$) and all $\S^0_{B,X,S}$ classes with $|S| \leq 2$. By Lemma~\ref{lemma:intersectionTableH11xH4}, the $\S^0_{B,X,S}$ with $|S| \geq 3$ pull back to zero. Moreover, by Lemma~\ref{lemma:independenceGenus1}, all other remaining natural generators pull back to zero except for the $\D_{B,X'}$ and the $\L'_{B,X'}$. By the same lemma, the latter two classes have linearly independent second Künneth factors, and their first Künneth factors run through the same basis of $H^{11}(\Mbar_{1,([n] \setminus X') \cup s})$ as we vary $B$. This proves the linear independence of all the $\D_{B,X}$ and $\L'_{B,X}$ for $|X| = 3$.
        \item[(9)] Now we pull back to  \GraphKunnethThirteenTwo[.75] with $|X'| = 3$. As before, by Lemmas~\ref{lemma:vanishingGenus2} and~\ref{lemma:D'vanishingGenus2}, all $\D'$ pull back to zero and so do all the remaining $\L'_{B,X}$ with $|X| \geq 4$. Since we have in particular eliminated all the $\S^0_{B,X,S}$ with $|S| \leq 2$, Lemma~\ref{lemma:intersectionTableH13xH2} shows that the only remaining natural generators which do not pull back to zero are precisely the $\S^0_{B,X,X'}$, and the first Künneth components of their pullbacks are linearly independent by Lemma~\ref{lemma:basisGenus1}. Running over all $X'$ with $|X'| = 3$, this proves that the $\S^0_{B,X,S}$ with $|S| = 3$ are linearly independent modulo the span of all other remaining natural generators.
\end{enumerate}
The last two steps can now be repeated with $|X'| = 4$ instead of $3$: The $H^{11} \otimes H^4$-Künneth component first shows that all the $\D_{B,X'}$ and the $\L'_{B,X'}$ are linearly independent modulo the span of all other remaining natural generators, thus eliminating all the $\D_{B,X}$ and $\L'_{B,X}$ with $|X| = 4$. Then, using that all the $\S^0_{B,X,S}$ with $|S| \leq 3$ have already been eliminated, the $H^{13} \otimes H^2$-Künneth component shows the linear independence of the $\S^0_{B,X,X'}$ modulo all other remaining natural generators. Running over all $X'$ with $|X'| = 4$ thus eliminates all the $\S^0_{B,X,S}$ with $|S| = 4$ as well. We may continue in this way, successively increasing the cardinality of $X'$. In the end, we have eliminated all natural generators on the left-hand side of \eqref{eq:rel}, thus showing that they are independent modulo the $\D'$ classes.
\end{proof}

The following three lemmas compute K\"unneth components of pullbacks of unmodified generators to boundary divisors. These formulas are obtained by computing generic $(\Gamma, \Gamma')$-graphs and applying the excess intersection formula, as in \cite[Appendix A]{GraberPandharipande}.

\begin{lemma}\label{lemma:intersectionTableH13xH2}
    Intersection table for \GraphKunnethThirteenTwo[.75]. If an unmodified generator is not listed, then it pulls back to zero.

\begingroup
\allowdisplaybreaks[4]
\begin{alignat*}{3}
\text{1.}
    &\quad \mathcal S^0_{B,X,S}
    &&\qquad \BoundaryCaseOneSource[.76]
    &&\qquad \longrightarrow\quad \BoundaryCaseOneTarget[.76]
    \\[.75em]
\text{2.}
    &\quad \mathcal S^0_{B,X,X'}
    &&\qquad \BoundaryCaseTwoSource[.76]
    &&\qquad \longrightarrow\quad \BoundaryCaseTwoTarget[.76]
    \\[.75em]
\text{3.}
    &\quad \mathcal S^0_{B,X,S}
    &&\qquad \BoundaryCaseThreeSource[.76]
    &&\qquad \longrightarrow\quad \BoundaryCaseThreeTarget[.76]
    \\[.75em]
\text{4.}
    &\quad \mathcal N'_{B,X,X'}
    &&\qquad \quad \BoundaryCaseFourSource[.76]
    &&\qquad \longrightarrow\quad
      \BoundaryCaseFourTarget[.76]\quad(q\mapsto s)
    \\[.75em]
\text{5.}
    &\quad \mathcal N'_{B,X,S}
    &&\qquad \BoundaryCaseFiveSource[.76]
    &&\qquad \longrightarrow\quad \BoundaryCaseFiveTarget[.76]
    \\[.75em]
\text{6.}
    &\quad \mathcal S^1_{B,S,X}
    &&\qquad ~~ \BoundaryCaseSixSource[.76]
    &&\qquad \longrightarrow\quad \BoundaryCaseSixTarget[.76]
    \\[.75em]
\text{7.}
    &\quad \mathcal S^1_{B,S,X}
    &&\qquad ~ \BoundaryCaseSevenSource[.76]
    &&\qquad \longrightarrow\quad \BoundaryCaseSevenTarget[.76]
    \\[.75em]
\text{8.}
    &\quad \mathcal L_{B,X'}
    &&\qquad \quad ~ \BoundaryCaseEightSource[.76]
    &&\qquad \longrightarrow\quad \BoundaryCaseEightTarget[.76]
    \\[.75em]
\text{9.}
    &\quad \mathcal L_{B,X}
    &&\qquad \quad  \BoundaryCaseNineSource[.76]
    &&\qquad \longrightarrow\quad \BoundaryCaseNineTarget[.76]
\end{alignat*}
\endgroup  
\end{lemma}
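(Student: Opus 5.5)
The plan is to compute each pullback with the standard excess intersection formula for decorated strata classes, as in \cite[Appendix A]{GraberPandharipande}. Let $\Gamma'$ denote the divisor graph: a genus $1$ vertex carrying $[n]\setminus X'$ and $s$, glued along $s$--$t$ to a genus $1$ vertex carrying $X'$ and $t$. For a generator with underlying stable graph $\Gamma$, the pullback along $\xi_{\Gamma'}$ is a sum over generic $(\Gamma,\Gamma')$-graphs $C$. Each such $C$ has edges coming from $\Gamma'$, from $\Gamma$, or from both. Edges common to both are excess edges and receive $-\psi_s-\psi_t$. Decorations on $\Gamma$ are pulled back along the induced map $C\to\Gamma$.

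We keep only the $H^{13}\otimes H^2$ Künneth component. Because the $\omega$-decoration sits in $H^{11}$ on a genus $1$ vertex, it must land on the $s$-side genus $1$ vertex. The $t$-side therefore needs exactly one $H^2$ class on $\Mb_{1,X'\cup t}$, and the total degree forces $C$ to have exactly one extra edge or one excess edge.

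First I would sort the generators by their underlying graph. The two genus $1$ vertices of $\Gamma'$ must be matched with the genus $1$ vertices of $\Gamma$, or with a genus $0$ vertex carrying a self-loop; each side must receive total genus $1$.

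\begin{enumerate}
\item[(a)] For $\S^0$ (the chain $1$--$0$--$1$), the genus $1$ vertices map to distinct sides. $C$ can identify the edge of $\Gamma'$ with either edge of the chain.
\begin{itemize}
\item If the $\omega$-side edge is identified and the genus $0$ vertex lies on the $t$-side, the result is an excess edge at $s$. This forces $S$ to be the part of $X'$ away from the genus $0$ vertex and gives Row 1, where $-\psi_s$ is distributed to the $s$-side.
\item If the other edge is identified, we get $\psi_t$ (Row 2, when the full set $X'$ matches), or a genuine extra edge splitting the genus $0$ vertex (Row 3, when $S\subsetneq X'$ and $X\setminus X'$ is nonempty).
\end{itemize}
\item[(b)] $\N'$ and $\S^1$ are handled the same way, giving Rows 4--7. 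For $\N'$ with $q$ on the $s$-side, the decoration $\omega_{B\cup p}$ with $q\mapsto s$ is read off directly.
\item[(c)] For $\L$, the self-loop genus $0$ vertex supplies the $t$-side genus, giving Rows 8 and 9 in the analogous way.
\item[(d)] $\D$, $\P$ and $\K$ contribute nothing. In $\D$ the genus $1$ vertex has both edges to the genus $g-2=0$ vertex, so the divisor graph can only split off a genus $1$ piece if a nonseparating edge becomes separating, which is impossible. In $\P$ and $\K$ ($g\ge3$) the decoration degrees do not fit.
\end{enumerate}

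Then I would check the decorations case by case. Excess $\psi$-classes at half-edges on genus $0$ vertices with at most three special points vanish. The degree count kills any $C$ in which the $t$-side would need an $H^{11}$ class. The pulled-back $\omega_{B\cup p}$ becomes $\omega_{B\cup s}$ after relabelling the half-edge $p$ as $s$. For each generator I also need to verify that the induced graphs listed are stable, so that the stated conditions ($A\subsetneq[n]\setminus X'$, $S\subsetneq X'$, $X\supsetneq X'$, and so on) are exactly those under which $C$ exists.

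The main obstacle is the bookkeeping of all $(\Gamma,\Gamma')$-graphs for $\S^0$ and $\N'$. Here the position of the rational vertex relative to $X'$ produces several cases. One must confirm that cases where $X'$ straddles the genus $0$ vertex give either zero or exactly the split graphs in Rows 3 and 5. I would enumerate generic graphs by which edge set of $C$ maps onto the $\Gamma'$ edge, and discard those whose vertex genera fail, which should leave precisely the nine rows. Signs from the orientation of $\omega$-decorations would be fixed by comparing with the genus $1$ relation \eqref{eq:ACgenus1}.
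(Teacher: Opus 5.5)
Your method is the paper's own. The paper says only that the table comes from enumerating generic $(\Gamma,\Gamma')$-graphs and applying the excess intersection formula, and your derivation of Rows 1--9 is consistent with it.

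The genuine problem is step (d) for $\D_{B,X}$, which rests on a false claim. Write $v$ and $w$ for the genus-$1$ and genus-$0$ vertices of $\D_{B,X}$. A generic $(\Gamma,\Gamma')$-graph \emph{does} exist:
\begin{itemize}
\item Split $v$ into a genus-$1$ vertex $v_1$ carrying $[n]\setminus X'$ and a genus-$0$ vertex $v_2$ carrying $X'\setminus X$ and both half-edges $p,q$, joined by a new edge $e$.
\item Then $e$ is separating, and the side $\{v_2,w\}$ has genus $1$ because the double edge now forms a cycle through two rational vertices.
\item Contracting $e$ recovers $\Gamma$, and contracting the double edge recovers the divisor. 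No nonseparating edge ever has to become separating.
\end{itemize}
Your matching principle (genus-$1$ vertices of $\Gamma'$ come from genus-$1$ vertices of $\Gamma$ or from a looped genus-$0$ vertex) misses this. The genus of a side can come from a cycle, after a genus-$1$ vertex splits as $1+0$ and $\Gamma$-edges migrate to the rational piece. This is the same mechanism that produces Rows 5 and 7.

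For $X\subseteq X'$ and $B\subseteq[n]\setminus X'$, this graph contributes $\omega_{B\cup s}$ tensored with the double-edge stratum class of $\Mbar_{1,X'\cup t}$. This is Row 8 of Lemma~\ref{lemma:intersectionTableH11xH4}, and for $X'=X$ it is the class $\beta$ of Lemma~\ref{lemma:independenceGenus1}. Step (8) in the proof of Theorem~\ref{theorem:Genus2AnyMarkings} relies on $\beta$ being nonzero. Taken literally, your argument would show that $\D$ pulls back to zero in every K\"unneth component.

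The vanishing you need is still true, but for a degree reason you should state explicitly. The contribution of $C$ is the pushforward of the pulled-back decoration times $-\psi_s-\psi_t$ for the excess edge, if there is one. It lands in $H^{13}\otimes H^2$ only if $\omega$ is on the $s$-side and \emph{exactly one} of the following sits on the $t$-side: an edge of $C$ other than the $\Gamma'$-edge, or the $\psi$-factor. Your remark that $C$ has one extra edge or one excess edge holds automatically and does not decide the K\"unneth component. For $\D$ the only candidate puts both edges on the $t$-side, so it lands in $H^{11}\otimes H^4$.

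Two smaller points:
\begin{itemize}
\item $\P$ and $\K$ are simply not genus-$2$ generators, by (R9) and (R10). Your stated reason, that the degrees do not fit, is not valid: $\P^q_{B,X'}$ would in general contribute $-\omega_{B\cup s}\psi_s\otimes\psi_t$ to exactly this component.
\item To read off the decoration in Row 5, and to see which $B$ survive, you need a fact you never state. The restriction of $f_A^*\omega$ to a divisor $\Mbar_{1,\cdot}\times\Mbar_{0,\cdot}$ vanishes when at least two points of $A$ lie on the rational side, because $H^{11}(\Mbar_{1,m})=0$ for $m\leq 10$ and $\Mbar_{0,k}$ has no odd cohomology. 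Otherwise it equals $\omega$ with that point renamed by the node.
\end{itemize}
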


\begin{lemma}\label{lemma:intersectionTableH11xH4}
    Intersection table for  \GraphKunnethElevenFour[.75]. If an unmodified generator is not listed, then it pulls back to zero.
    \SecondBoundaryCaseList
\end{lemma}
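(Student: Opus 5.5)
This table is obtained the same way as Lemma~\ref{lemma:intersectionTableH13xH2}: for each unmodified generator $\Gamma$ (decorated graph with underlying stable graph $\Gamma$ and pushforward map $\xi_\Gamma$), I would compute $\xi_D^*(\xi_\Gamma)_*(\text{decoration})$ for the divisor $D$ with two genus-$1$ vertices, markings $[n]\setminus X'$ and $X'$, glued along $s,t$. The tool is the excess intersection formula of \cite[Appendix A]{GraberPandharipande}. The pullback is a sum over generic $(D,\Gamma)$-graphs $C$, i.e.\ stable graphs with two edge-contraction maps to $D$ and $\Gamma$ whose edge sets jointly cover both. Each contributes the pushforward from $\Mb_C$ of the pulled-back decoration times $\prod(-\psi_h-\psi_{h'})$ over edges common to both. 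Afterwards I project onto the $H^{11}\otimes H^4$ K\"unneth summand of $H^{15}(\Mb_{1,([n]\setminus X')\cup s}\times\Mb_{1,X'\cup t})$.

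The first step is a degree and genus bookkeeping that kills most graphs. The $H^{11}$ factor must be pure $\omega$-type, so the left vertex (genus $1$) must carry the $\omega$-decoration. The right genus-$1$ factor must receive a tautological class of degree $4$, i.e.\ complex codimension $2$. Since $\Gamma$ has at most two edges and the non-$\omega$ decorations have degree at most $2$, the right factor needs either two extra boundary edges from $C$, or one edge plus a $\psi$ (from excess or from a $\mathcal P$-decoration). With $g=2$, a $\mathcal K$ class is absent by (R9)/(R10) conventions, and $\mathcal P$ is eliminated in genus $2$. This rules out every generator type except those in which the $\omega$-vertex can sit on the $[n]\setminus X'$ side and the remainder is a genus-$1$ subgraph carrying $X'$. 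That gives $\mathcal S^0$ (rows 1--2), $\mathcal N'$ (row 3), $\mathcal S^1$ (rows 4--5), $\mathcal L$ (rows 6--7) and $\mathcal D$ (row 8). Then I would enumerate, for each surviving $\Gamma$, how the marking set $X'$ can sit relative to the vertex sets of $\Gamma$. There are two cases. If the edge of $D$ coincides with an edge of $\Gamma$ (marking sets match exactly), we get an excess edge, hence a factor $-\psi_s-\psi_t$. Only the $-\psi_t$ piece lands in $H^{11}\otimes H^4$ (the $-\psi_s$ piece raises the $\omega$-side degree). This produces rows 1, 4 and 6. If instead the $D$-edge is a new edge, it splits the vertex containing the $\omega$-decoration into an $\omega$-vertex and a genus-$0$ vertex carrying $X'\cap A$ (for instance $A\cap X$ or $A\cap X'$). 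This gives rows 2, 3, 5, 7 and 8, with no excess factor. There the relevant containments ($B\subseteq[n]\setminus X'\subsetneq A$, $q\notin B$) are forced because the pulled-back $\omega_{B\cup p}$ must live on the vertex with markings $[n]\setminus X'$, relabelled with $s$ in place of $p$. In row 3 the genus-$0$ vertex becomes trivalent with the extra $q$-edge.

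Finally I would check that in every remaining case the $\omega$-decoration restricts correctly. The forgetful compatibility $\omega_{B\cup p}\mapsto\omega_{B\cup s}$ holds because $\omega_A$ is pulled back from $\Mb_{1,11}$ and the new genus-$0$ components are contracted. For $\mathcal D$, the symmetrized $\omega^{S_2}_B$ collapses to $\omega_{B\cup s}$ once both edges go to the same side. I would also check that the automorphism factors agree with the paper's convention of not dividing by $|\mathrm{Aut}|$.

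I expect the main obstacle to be completeness of the enumeration of generic $(D,\Gamma)$-graphs for the two-edge generators. Each such graph can degenerate in several ways, and I must verify that configurations placing the $\omega$-vertex on the $X'$ side, or producing a genus-$0$ loop on the right, contribute only to other K\"unneth summands. I would settle this by organizing cases by which vertex of $\Gamma$ contains $s$ versus $t$. I would then sanity-check small cases (say $n=12$, $|X'|\le 3$) with \texttt{admcycles}.
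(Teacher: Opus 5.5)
Your proposal is correct and follows the paper's route: the paper justifies the table only by enumerating generic $(D,\Gamma)$-graphs and applying the excess intersection formula of \cite{GraberPandharipande}, and your degree count produces exactly the listed rows. That count forces the $[n]\setminus X'$ side to be a single $\omega$-vertex, so either the $D$-edge is the edge of $\Gamma$ isolating that vertex (rows 1, 4, 6, with only the $-\psi_t$ term surviving) or it splits a genus-$0$ vertex off that vertex (rows 2, 3, 5, 7, 8). One small precision: the strict inclusion $[n]\setminus X'\subsetneq A$ in rows 2, 5, 7 comes from stability of the new genus-$0$ vertex rather than from the $\omega$-pullback, and the label $A\cap X$ you copied from row 2 should read $A\cap X'$.
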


\begin{lemma}\label{lemma:genus2-0bdryDiv}
    We have
\ProductCaseDisplay
    and
\SecondProductCaseDisplay
\end{lemma}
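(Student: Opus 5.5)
The plan is to compute both products by the excess intersection formula for decorated strata classes, as in \cite[Appendix A]{GraberPandharipande}. We pair the first factor, viewed as a class on $\Mb_{2,n}$, with the gluing map
\[
\eta\colon \Mb_{2,([n]\setminus X')\cup s}\times\Mb_{0,X'\cup t}\to\Mb_{2,n},
\]
and then extract the $H^{13}\otimes H^2$ K\"unneth component. Concretely, we enumerate the generic $(G_{X'},\Gamma)$-graphs, where $\Gamma$ is the underlying graph of $\S^1_{B,X,Y}$ with vertices of genus $0,1,1$ (respectively the chain $A$--$Y$--$X$ in the second display). Each such graph is obtained by choosing an edge of a common refinement that maps to the edge $s$--$t$ of $G_{X'}$.

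There are two kinds of contributions.

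\emph{Case $X'=X$.} The edge of $G_{X'}$ is identified with the existing edge separating the rational tail carrying $X$. This edge is then an excess edge, and contributes $-\psi_s-\psi_t$. The genus $0$ vertex has $\psi_t=0$ in $H^2(\Mb_{0,X\cup t})$ in the relevant degree only if $|X|=2$. In general we keep $-\psi_t$ on the genus $0$ side, since $H^{13}\otimes H^2$ forces exactly one $H^2$ class onto $\Mb_{0,X'\cup t}$. The $-\psi_s$ term lands in $H^{15}\otimes H^0$ and is discarded. This yields the first displayed case, after relabeling so that the genus $2$ side is drawn as the chain with vertices $1,1,0$ and the half-edge $s$ replacing the old attachment.

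\emph{Case $X\subsetneq X'$.} The edge of $G_{X'}$ is a new edge in the refinement, splitting off a genus $0$ vertex with markings $X'\setminus X$ between the tail $X$ and the adjacent vertex. Its $\Mb_{0,X'\cup t}$-part is a boundary divisor, which gives the $H^2$ factor. This is only possible if all of $X'\setminus X$ sits on the vertex adjacent to the tail, i.e. $X'\setminus X\subseteq A$ in the first display and $\subseteq Y$ in the second. In the first display the $\omega$-decoration must also survive on $A\setminus X'$, which is the condition $B\subseteq A\setminus X'$. Otherwise the forgetful compatibility $\omega_{B\cup p}$ would have a marking from $B$ on the genus $0$ side and the class vanishes.

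Every other generic graph either places the $X'$-tail inside a genus $1$ vertex, which gives zero since it requires $H^2$ of that vertex together with $H^{11}$ and no room is left, or contributes only to $H^{15}\otimes H^0$, when the edge is non-excess and no new boundary divisor appears on the genus $0$ side.

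The main obstacle is the bookkeeping in the case analysis of generic graphs. One has to check that configurations where $X'$ meets $Y$ (respectively $A$), or cuts through $B$, give zero in the $H^{13}\otimes H^2$ component. This uses that $\omega_{B\cup p}$ pulled back along a boundary map separating points of $B\cup p$ onto a rational component vanishes, which follows from $H^{11}(\Mb_{1,m})$ restricting to zero on such divisors. The signs, including the $-\psi_t$ and orientation/relabeling signs from $\omega$, then need to be checked directly.
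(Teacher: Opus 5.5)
Your proposal is correct and takes the same route the paper indicates, which states these formulas without a detailed proof as the result of computing generic $(G_{X'},\Gamma)$-graphs and applying the excess intersection formula of \cite[Appendix A]{GraberPandharipande}: the $X'=X$ case comes from the excess factor $-\psi_t$, and the $X\subsetneq X'$ case comes from splitting the vertex adjacent to the $X$-tail, with $B\subseteq A\setminus X'$ needed so that $\omega_{B\cup p}$ survives. Two small slips, neither of which affects your case analysis: the first factor is $\N'_{B,X,Y}$, with the $\omega$-decoration on the middle vertex adjacent to the tail, not $\S^1_{B,X,Y}$; and rational tails split off a genus-$1$ vertex drop out of the $H^{13}\otimes H^2$ component simply because the new edge is non-excess, so the genus-$0$ factor is $1\in H^0$, not because of any ``$H^2$ of that vertex.''
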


The next three lemmas compute K\"unneth components of pullbacks of the modified $\D'$ and $\L'$ generators to boundary divisors.

\begin{lemma}\label{lemma:vanishingGenus2}
    Let $|X| \geq 3$ and $|X| > |X'|$ or $|X| = |X'|$ but $X \neq X'$. Then:
    \begin{equation*}
        \L'_{B,X} \times \GraphKunnethThirteenTwo[.75]=0
    \end{equation*}
\end{lemma}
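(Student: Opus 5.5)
The plan is to expand $\L'_{B,X} = \L_{B,X} + 24 \sum_{\emptyset \neq T \subseteq X} \S^0_{B,T,X\setminus T}$ term by term. Each term is pulled back to the divisor \GraphKunnethThirteenTwo[.6] using the intersection table of Lemma~\ref{lemma:intersectionTableH13xH2}. We then show that the surviving $H^{13}\otimes H^2$ contributions assemble into a common first K\"unneth factor tensored with a class in $H^2(\Mb_{1,X'\cup t})$. That second factor will be the genus-$1$ relation \eqref{eq:ACgenus1}, so it vanishes.

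\emph{Step 1: which terms survive.} In Lemma~\ref{lemma:intersectionTableH13xH2}, a loop class $\L_{B,X}$ contributes only through Row 8 (if $X = X'$) or Row 9 (if $X \supsetneq X'$). A side class $\S^0_{B,T,X\setminus T}$ contributes only through:
\begin{itemize}
\item Row 1, which forces $T \sqcup (X\setminus T) = X'$, i.e.\ $X = X'$;
\item Row 2, which forces $X\setminus T = X'$;
\item Row 3, which forces $X\setminus T = S \subsetneq X'$ and $T \supseteq X \setminus X'$, with the target graph having the rational vertices $X\setminus X'$ and $X'\setminus S$.
\end{itemize}
Under the hypothesis ($|X|>|X'|$, or $|X|=|X'|$ with $X\neq X'$), we have $X \neq X'$, so Rows 1 and 8 never occur. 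If $X \not\supseteq X'$, every remaining row is excluded as well, and the pullback is zero. I will check this carefully against the set-theoretic conditions in each row, since Row 3 requires the marked sets on the rational vertices to be consistent with $X' \subseteq X$. So from now on assume $X \supsetneq X'$.

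\emph{Step 2: factor out the common piece.} For $X \supsetneq X'$, each surviving target graph has the same left part. This is the genus-$1$ vertex carrying $[n]\setminus X$ and $\omega_{B\cup\{p\}}$, joined to a genus-$0$ vertex carrying $X\setminus X'$ and the half-edge $s$. Write this common part as the first factor $\Phi$. The right parts, living on $\Mb_{1,X'\cup t}$ after gluing at $s,t$, are then:
\begin{itemize}
\item from Row 9 ($\L_{B,X}$): the genus-$0$ vertex with a self-loop and markings $X'\cup t$, i.e.\ $\delta_{irr}$;
\item from Row 2 (the single $T = X\setminus X'$), with coefficient $24$: a genus-$1$ vertex with $-\psi_t$;
\item from Row 3 (each $T = X\setminus S$, $S\subsetneq X'$), with coefficient $24$: the separating divisor $\delta_{0,(X'\setminus S)\cup t}$.
\end{itemize}
Reindexing by $T' = X'\setminus S$, which ranges over all nonempty subsets of $X'$, the Row 3 terms give $24\sum_{\emptyset\ne T'\subseteq X'}\delta_{0,T'\cup t}$. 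Altogether the pullback equals
\[
\Phi \otimes \Bigl(\delta_{irr} - 24\psi_t + 24\sum_{\emptyset\ne T'\subseteq X'}\delta_{0,T'\cup t}\Bigr).
\]

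\emph{Step 3: apply the genus-$1$ relation.} Relation \eqref{eq:ACgenus1} in $H^2(\Mb_{1,X'\cup t})$, with the paper's convention of not dividing by automorphisms, states $24\psi_t = \delta_{irr} + 24\sum_{t\in S,|S|\ge 2}\delta_{0,S}$. Hence the bracketed second factor vanishes, and so does the whole $H^{13}\otimes H^2$ component.

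The main obstacle is bookkeeping. One must confirm that the coefficients and signs in Rows 2, 3 and 9 match exactly: the $-\psi_t$ sign in Row 2, any orientation signs on $\omega_{B\cup\{p\}}$ when $p$ is moved, and the automorphism factor of the self-loop against the $\frac1{24}\delta_{irr}$ convention. This sign and coefficient matching is precisely what the factor $24$ in the definition \eqref{L'} of $\L'$ was chosen to arrange. I would first verify it in the smallest case $|X'| = 1$, either directly or with \texttt{admcycles}, and then argue that the general case has the same local structure at the vertex containing $X'$.
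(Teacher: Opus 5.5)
Your proposal is correct and follows the paper's proof. For $X\supsetneq X'$, Rows 2, 3 and 9 of Lemma~\ref{lemma:intersectionTableH13xH2} give exactly the paper's factorization $\Phi\otimes\bigl(\delta_{irr}+24\sum_{\emptyset\neq T\subseteq X'}\delta_{0,T\cup t}-24\psi_t\bigr)$, which vanishes by the genus-$1$ relation \eqref{eq:ACgenus1} (the paper cites \cite[Proposition~1.9]{AC98}); the case $X'\not\subseteq X$ is immediate. The sign and normalization worries you raise do not actually occur: none of the surviving rows relabels $p$, and the self-loop convention is already built into \eqref{eq:ACgenus1}, so no separate $|X'|=1$ check is needed.
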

\begin{proof}
    This is immediate for $X' \nsubseteq X.$ If $X'\subsetneq X$, then one computes:
    \LPrimeProductDisplay
    \noindent The sum on the right of the tensor product is 0 by the relation in \cite[Proposition~1.9]{AC98}.
\end{proof}

\begin{lemma}\label{lemma:D'vanishingGenus2}
    For all $B, i, j$, and $X'$, we have that:
    \begin{equation*}
        \D'_{B,\{i,j\}} \times \GraphKunnethThirteenTwo[.75] =0 \qquad \text{and} \qquad \D'_{B,\{i,j\}} \times \GraphKunnethElevenFour[.75]=0
    \end{equation*}
\end{lemma}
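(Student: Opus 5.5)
Since $\D'_{B,\{i,j\}} = \pi_{[n]\setminus A}^\ast(M_{ij})$ with $A = B\cup\{i,j\}$, the plan is to reduce both vanishing statements to $\Mbar_{2,A}$, where they can be computed directly on the explicit expression \eqref{Mij-def}. Let $\eta$ be the gluing map for the divisor in question, with the genus $2$ (resp.\ genus $1$) side carrying $([n]\setminus X')\cup\{s\}$ and the other side carrying $X'\cup\{t\}$. As in Step (6) of the proof of Theorem~\ref{theorem:Genus2AnyMarkings}, there is a commutative square relating $\eta$ and a gluing map $\eta'$ on $\Mbar_{2,A}$ via forgetful maps. Here $\eta'$ glues along the partition $(A\setminus X', A\cap X')$, provided that side is stable; otherwise $\eta$ is (up to relabeling) a section of a forgetful map. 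Pulling back commutes with forgetful pullback and is compatible with the Künneth decomposition. It therefore suffices to show that the relevant Künneth component of $(\eta')^\ast M_{ij}$ vanishes, and to treat the unstable cases separately.

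\emph{Unstable cases.} If $|A\cap X'|\le 1$, then $\eta^\ast\pi^\ast M_{ij}$ is the pullback of $M_{ij}$ or of a section pullback of it. Its Künneth component lives entirely on the side containing $s$, while the factor on the $t$-side is $1\in H^0$. Hence the $H^{13}\otimes H^2$ and $H^{11}\otimes H^4$ components vanish. A similar degree argument handles $A\cap X' = A\setminus X'$ being too small on the other side. The relevant cases are thus $2\le |A\cap X'|$ together with stability.

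\emph{$H^{13}\otimes H^2$ on the genus $2$/genus $0$ divisor.} If $\{i,j\}\not\subseteq X'$, or if some element of $B$ lies in $X'$, then every graph in $M_{ij}$ has $\omega_{X\cup p}$-type decorations whose $11$ indices would have to be split across the separating edge. Those terms either vanish or contribute only to $H^{15}\otimes H^0$. The remaining case $A\cap X' = \{i,j\}$ is exactly the computation already carried out in the proof of Lemma~\ref{lemma:pullbacksMij}. There the only nonzero Künneth component is $H^{15}\otimes H^0$, and it vanishes by Lemma~\ref{lem:rel11}. The same graph-by-graph excess intersection bookkeeping should show that no $H^{13}\otimes H^2$ term survives: the rational tail with $\{i,j\}$ is either contracted with no $\psi$ excess, or arises from the edges $s,t$ of the $\S^0$, $\S^1$, $\D$ graphs. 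In the latter case I would expand $-\psi_t$ on $\Mbar_{0,3}$, which is zero. For the genus $1$/genus $1$ divisor, the $H^{13}\otimes H^2$ component uses the stated table $\GraphKunnethThirteenTwo$. Here I would apply Lemma~\ref{lemma:intersectionTableH13xH2} term by term to the six summands of $M_{ij}$ and collect the result in the form ``common factor $\otimes$ (element of $H^2(\Mbar_{1,X'\cup t})$)''. For $X'\supseteq\{i,j\}$ this factor should be $24\cdot$(boundary with $i,j$ on genus $0$) $+\delta_{irr}$-type term $-24\psi_t+\dots$, which vanishes by \eqref{eq:ACgenus1}. This is the same mechanism as in Lemma~\ref{lemma:vanishingGenus2}.

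\emph{$H^{11}\otimes H^4$ and the main obstacle.} For the $H^{11}\otimes H^4$ component I would use Lemma~\ref{lemma:intersectionTableH11xH4}. Each summand of $M_{ij}$ contributes $\omega_{B\cup s}\otimes(\text{class in }H^4(\Mbar_{1,X'\cup t}))$ with the same first factor. The claim then reduces to showing that a specific linear combination in $H^4(\Mbar_{1,3})$ vanishes (for $X'=\{i,j\}$; larger $X'$ are pulled back from it). The combination is
\[
24\,[\text{chain}] + [\text{loop}] - 24\,[\text{$\psi_t$ term}] + 2\,[\text{double edge}] ,
\]
which is the display in the statement defining $(\eta')^\ast(M_{ij})$. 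I expect this to be the main obstacle. It requires carefully tracking the signs and multiplicities from the $\S^0_{X,\{i\},\{j\}}$ and $\S^0_{X,\{j\},\{i\}}$ terms, as well as the factor $2$ from the automorphism of the double edge. I would verify it by substituting Getzler's relation / the Arbarello--Cornalba expression of $\psi_t$ on $\Mbar_{1,3}$ in codimension $2$, or, failing a clean hand computation, by checking it in \texttt{admcycles}, as was done in Step (3).
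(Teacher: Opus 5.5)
Your reduction to $(\eta')^\ast M_{ij}$ on $\Mbar_{2,A}$ is the same as the paper's, but your treatment of the $H^{13}\otimes H^2$ component has a genuine gap.

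\textbf{Where the gap is.} Both pictures in the statement are the genus-$1$/genus-$1$ divisor $\Mbar_{1,([n]\setminus X')\cup\{s\}}\times\Mbar_{1,X'\cup\{t\}}$. The genus-$2$/genus-$0$ divisor you also discuss is not part of this lemma; it is handled in Step (6) of the proof of Theorem~\ref{theorem:Genus2AnyMarkings} via Lemma~\ref{lemma:genus2-0bdryDiv}. On the genus-$1$/genus-$1$ divisor, the reduced map $\eta'\colon\Mbar_{1,(A\setminus X')\cup\{s\}}\times\Mbar_{1,(A\cap X')\cup\{t\}}\to\Mbar_{2,A}$ is an honest gluing map for every $X'$. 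When $|A\cap X'|\le 1$ the $t$-side is $\Mbar_{1,1}$ or $\Mbar_{1,2}$, which is stable and has nonzero $H^2$. So your ``unstable cases'' step, which asserts that the $t$-side factor is $1\in H^0$, is false here. It throws away exactly the cases where the lemma has content.

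\textbf{What those cases require.} For $A\cap X'=\{i\}$, three summands of \eqref{Mij-def} contribute: $24\S^0_{B,\{i,j\},\varnothing}$, $\L_{B,\{i,j\}}$ and $24\S^0_{B,\{j\},\{i\}}$. They pull back to a common nonzero $H^{13}$ factor, namely the genus-$1$ vertex with $\omega_{B\cup\{p\}}$ joined to a genus-$0$ vertex carrying $j$ and $s$. This factor is tensored with $24\,\delta_{0,\{i,t\}}$, $\delta_{irr}$ and $-24\psi_t$ in $H^2(\Mbar_{1,\{i,t\}})$ respectively; the last comes from the excess class on the shared edge. For $A\cap X'=\varnothing$ one similarly gets a common factor tensored with $\delta_{irr}-24\psi_t\in H^2(\Mbar_{1,1})$. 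The individual terms are nonzero, and the sums vanish only by the genus-$1$ relation \eqref{eq:ACgenus1}. Nothing in your plan supplies this.

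\textbf{The case you singled out is trivial.} The case $A\cap X'=\{i,j\}$ needs no relation in $H^2(\Mbar_{1,X'\cup\{t\}})$. The $H^{13}$ factor lives on $\Mbar_{1,B\cup\{s\}}\cong\Mbar_{1,11}$, and $H^{13}(\Mbar_{1,11})=0$. Concretely, the double-edge term pulls back to zero, and every other term acquires the decoration $\omega_{B\cup\{s\}}\cdot(-\psi_s)$ on that vertex. The combination ``$24\cdot(\ldots)+\delta_{irr}-24\psi_t$'' you predict does not arise there. That mechanism belongs to the cases $|A\cap X'|\le 1$ that you discarded.

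\textbf{The $H^{11}\otimes H^4$ part.} Your argument matches the paper's: only $A\cap X'=\{i,j\}$ contributes, and the resulting four-term class in $H^4(\Mbar_{1,3})$ is checked to vanish with \texttt{admcycles}. However, excluding $|A\cap X'|\le 1$ there also needs a different justification than instability. First, $H^4(\Mbar_{1,1})=0$. Second, for $A\cap X'=\{i\}$ the $s$-side would have to be a single genus-$1$ vertex carrying both the $\omega$-class and the marking $j$. This cannot happen, because the $\omega$-vertex of every graph in $M_{ij}$ carries only markings from $B$.
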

\begin{proof}
    Consider the following commuting diagram, where the horizontal maps are forgetful maps and the vertical maps glue $s$ and $t$ together:
    \begin{center}
    \begin{tikzcd}
    {\Mbar_{1,([n]\setminus X') \cup \{s\}} \times \Mbar_{1,X' \cup\{t\}}} \arrow[rr] \arrow[d, "\eta"'] &  & {\Mbar_{1,(A \setminus X') \cup\{s\}} \times \Mbar_{1, (A \cap X') \cup \{t\}}} \arrow[d, "\eta'"] \\
    {\Mbar_{2,n}} \arrow[rr, "{\pi_{[n]\setminus A}}"]                                                   &  & {\Mbar_{2,A}.}                                       \end{tikzcd}
    \end{center}
    Let $A = B \cup \{i,j\}$.
    Since $\D'_{B, ij} = \pi^*_{[n]\setminus A}(M_{ij})$, if we can show that $\eta'^*(M_{ij}) = 0$, then the lemma holds. To this end, we compute the pullback of each term in $M_{ij}$.
    
    For $H^{13} \otimes H^2$, it is straightforward to see that $\eta'^*(M_{ij})$ can be nonzero only in the cases where $A \cap X' = \{i,j\}, \{i\}, \{j\},$ or $\emptyset$. We treat each of these cases in turn.
    \begin{enumerate}
    \item When $A \cap X'=\{i,j\}$,
        \[
(\eta')^*\!\left(
  \GraphEtaPrimeDoubleEdge[.90]
\right)=0.
\]
    \noindent Other than the double edge graph, each term in $M_{ij}$ pulls back to the same underlying graph, but with the decoration $\omega_{B \cup \{s\}} \cdot (-\psi_s)$ on the genus-1 vertex with 11 markings, e.g.: 
        \[
(\eta')^*\!\left(
  \GraphEtaPrimeChainSource[.90]
\right)
=
\GraphEtaPrimeChainTarget[.90].
\]

\noindent As $H^{13}(\Mbar_{1,11}) = 0$, each pullback is 0. 
        \item When $A \cap X' = \{i\}$, we have:

\EtaPrimeMijPullbackDisplay
        
        \noindent The last expression is 0 by Equation \ref{eq:ACgenus1}.
        The same reasoning holds for $A \cap X' = \{j\}$, with $i$ and $j$ swapped.
        \item When $A \cap X' = \emptyset$, we have that 

        \EtaPrimeMijPairPullbackDisplay
        \noindent The last expression is 0 by Equation \ref{eq:ACgenus1}.
    \end{enumerate}
    
    \noindent For $H^{11} \otimes H^4$, it is straightforward to see that $\eta'^*(M_{ij})$ can be nonzero only in the case where $A \cap X' = \{i,j\}$.
    In this case, we have

    \EtaPrimeMijFourTermPullbackDisplay
    \noindent Using \texttt{admcycles}, the second tensor factor is $0$ in $H^4(\Mbar_{1,3}).$
\end{proof}

\begin{lemma}\label{lemma:independenceGenus1}
    For any $X$ with $|X| \geq |X'| \geq 3$, we have:
    
        \LPrimeElevenFourProductDisplay
    \noindent and

        \DoubleEdgeElevenFourProductDisplay
    where $\alpha$ and $\beta$ are classes in $H^4(\Mbar_{1, X' \cup \{t\}})$ defined by: 

    \AlphaBetaDefinitionsDisplay

    \noindent Moreover, $\alpha$ and $\beta$ are both nonzero and linearly independent in $H^4 (\Mbar_{1, X' \cup \{t\}})$.
\end{lemma}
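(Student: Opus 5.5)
We compute both pullbacks to the boundary divisor $\eta\colon \Mbar_{1,([n]\setminus X')\cup s}\times \Mbar_{1,X'\cup t}\to \Mbar_{2,n}$ in the same way as in the proofs of Lemmas \ref{lemma:vanishingGenus2} and \ref{lemma:D'vanishingGenus2}. We enumerate generic $(G,\Gamma)$-graphs, where $G$ is the divisor graph and $\Gamma$ is either the loop graph of $\L_{B,X}$ or the double edge graph of $\D_{B,X}$, and apply the excess intersection formula. We then read off the $H^{11}\otimes H^4$ K\"unneth component. The $\omega$-decorated genus-$1$ vertex carries $H^{11}$, so it must land in the first factor. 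The second factor, which must absorb degree $4$, is therefore the genus-$1$ vertex of $G$ containing $X'$ and $t$. It receives the genus-$0$ part of $\Gamma$ (carrying $X$ and the loop or the double edge), together with possibly an excess $-\psi_t$ decoration.

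\textbf{Case $X'\neq X$.} We first show the product vanishes. If $X'\not\subseteq X$, some marking of $X'$ lies on the $\omega$-vertex side. In every generic graph this either forces markings of $X'$ onto the genus-$1$ $\omega$-vertex, which is impossible since that vertex goes to the first factor, or makes the two sides incompatible. So the pullback is $0$. If $X'\subsetneq X$, the genus-$1$ vertex $\{X'\cup t\}$ splits off the genus-$0$ vertex of $\Gamma$. This leaves a genus-$0$ vertex carrying $X\setminus X'$ on the first side, and the first factor then has an $\omega$-vertex glued to a genus-$0$ vertex. Such a class lies in $H^{13}$, not $H^{11}$, or else the dimension count of the second factor forces degree $<4$. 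Either way the $H^{11}\otimes H^4$ component is zero. I expect this degree bookkeeping to be routine.

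\textbf{Case $X'=X$.} The first factor is the $\omega$-vertex with markings $[n]\setminus X'$ and $s$, decorated by $\omega_{B\cup s}$ after renaming $p\mapsto s$. The second factor collects the contributions from $\L'_{B,X}=\L_{B,X}+24\sum_T\S^0_{B,T,X\setminus T}$. The loop graph contributes the genus-$0$ loop vertex, with the excess class $-\psi_t$ coming from the non-transverse self-intersection. Each $\S^0_{B,T,X\setminus T}$ contributes the boundary term with $T$ on a genus-$0$ vertex attached to a genus-$1$ vertex carrying $X'\setminus T$, again decorated by $-\psi_t$. This gives exactly $\alpha$. For $\D_{B,X}$ the double edge is nonexcess, and the $S_2$-symmetrised decoration restricts to $\omega_{B\cup s}$, giving $\beta$. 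The $\psi$ excess factors and the automorphism factors (recall that we do not divide by $|\mathrm{Aut}|$) are the points needing care, and they should be checked against the analogous computation in Lemma \ref{lemma:vanishingGenus2}.

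\textbf{Independence of $\alpha$ and $\beta$.} This is the main obstacle. My plan is to reduce to the case $|X'|=3$ by pulling back along the forgetful map $\Mbar_{1,X'\cup t}\to \Mbar_{1,\{t,y,a\}}$. Pullback of $\beta$ is the double edge graph, and pullback of $\alpha$ is the loop graph, since the $T$-sum cancels the extra boundary terms created by the $\psi_t$ pullback; in both the surviving graphs have the markings other than $t$, say $y,a$, together. This is the pullback $\xi^*$ displayed in the paper. It therefore suffices to check that the loop class $\xi^*\alpha$ and the banana class $\xi^*\beta$ in $H^4(\Mbar_{1,3})$ are nonzero and independent. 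I would do this by pairing them against $H^2$ classes such as $\delta_{irr}$ and $\psi_t$ using \texttt{admcycles}, or by hand via standard intersection numbers on $\Mbar_{1,3}$, verifying that the resulting $2\times 2$ matrix is nonsingular.
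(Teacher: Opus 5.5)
Your derivation of the two product formulas is the same generic-graph and excess-intersection computation the paper has in mind, and it is fine at the level of detail given. The gap is in the independence step.

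The forgetful map $\pi\colon\Mbar_{1,X'\cup\{t\}}\to\Mbar_{1,\{t,y,a\}}$ points the wrong way. $\pi^*$ sends classes on $\Mbar_{1,3}$ up to $\Mbar_{1,X'\cup\{t\}}$, so it cannot turn a putative relation $c_1\alpha+c_2\beta=0$ into a relation in $H^4(\Mbar_{1,3})$. Nor does $\pi_*$ help, since it lowers degree by $2(|X'|-2)$ and so gives no $H^4$ test. Your identification of $\alpha$ and $\beta$ with $\pi$-pullbacks of the loop and banana classes is also false. For instance, $\pi^*$ of the banana class with $t$ on one vertex and $y,a$ on the other also contains the bananas in which markings of $X'\setminus\{y,a\}$ sit next to $t$, whereas $\beta$ has $t$ alone. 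The factor $\otimes 1$ in the display you are borrowing signals that the relevant map has a product as its source. The paper restricts along the boundary gluing map $\zeta\colon\Mbar_{1,\{y,t,a\}}\times\Mbar_{0,(X'\setminus y)\cup\{b\}}\to\Mbar_{1,X'\cup\{t\}}$ and takes the $H^4\otimes H^0$ K\"unneth component of $\zeta^*$. Any relation between $\alpha$ and $\beta$ survives this restriction, and independence is then checked in $H^4(\Mbar_{1,3})$ with \texttt{admcycles}.

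When you carry this out, keep the decorations. The $H^4\otimes H^0$ component of $\zeta^*\alpha$ has two terms:
\begin{itemize}
\item the loop on the genus-$0$ vertex carrying $t,y,a$, decorated by $-\psi_t$;
\item $24$ times the graph joining a genus-$1$ vertex to a genus-$0$ vertex carrying $t,y,a$, again decorated by $-\psi_t$. This is the $T=X'$ contribution.
\end{itemize}
So the $T$-sum does not cancel. An undecorated loop would lie in $H^2(\Mbar_{1,3})$, not in $H^4$.

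You can in fact bypass the reduction altogether. Applying \eqref{eq:ACgenus1} to $\psi_t$ on $\Mbar_{1,X'\cup\{t\}}$ gives $\delta_{irr}+24\sum_{\varnothing\neq T\subseteq X'}\delta_{0,\{t\}\cup T}=24\psi_t$, hence $\alpha=-24\psi_t^2$. Now pair against two test classes:
\begin{itemize}
\item Pairing with $\psi_t^{|X'|-1}$ gives $-1$ for $\alpha$. It gives $0$ for $\beta$, because the banana vertex carrying $t$ is an $\Mbar_{0,3}$.
\item Pairing $\beta$ with $\psi_y^{|X'|-1}$ gives $\int_{\Mbar_{0,|X'|+2}}\psi_y^{|X'|-1}=1$.
\end{itemize}
The resulting $2\times 2$ matrix is triangular with nonzero diagonal, so $\alpha$ and $\beta$ are nonzero and linearly independent.
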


\begin{proof}
The formulas follow from direct computation (similarly to the previous lemmas).
To see that $\alpha$ and $\beta$ are linearly independent, let $y \in X'$ and consider the map
\[
\zeta: \Mbar_{1, \{y,t\} \cup \{a\}} \times \Mbar_{0, (X'\setminus y) \cup \{b\}} \rightarrow \Mbar_{1, X' \cup \{t\}}.
\]
The $H^4 \otimes H^0$ parts of $\zeta^*(\alpha)$ and $\zeta^*(\beta)$ are

\EtaStarAlphaBetaDisplay

\noindent One can check using \texttt{admcycles} that these are linearly independent in $H^4(\Mbar_{1,3})$.
\end{proof}

\section{Proof of Theorem \ref{thm:rels}}\label{section:anyg}

In this section, we prove the independence of the decorated graph generators in Figure \ref{fig:gentable} satisfying (R1)--(R10). 
The restrictions (R1)--(R10) are constructed so that no trivial relations, pushed forward relations, or pulled back relations exist among the remaining classes. Therefore, if the remaining classes are independent, there are no more relations.

The basic strategy for proving the independence of these remaining generators is to use induction on the genus and pullbacks along the self-gluing map $\Mb_{g-1,n+2} \to \Mb_{g,n}$. We have already proved Theorem \ref{thm:rels}(a), which covers the $g = 2$ case, in Theorem
\ref{theorem:Genus2AnyMarkings}.

\subsection{Pullback formulas}\label{sec:pullbackforms}
The first step of the proof is to obtain formulas for the pullbacks of our generators along the self-gluing map
$$
\xi \colon \Mb_{g-1,[n] \cup \{x,y\}} \to \Mb_{g,[n]}
$$
that glues $x$ and $y$. The pullback map is well-defined at the level of decorated graph generators, and we will write down the pullbacks explicitly. We use the convention that if any vertex of a decorated graph has negative genus or is unstable, then the graph is $0$.

We order the set $\{x,y\} \cup \{1, \ldots , n\}$ according to
\begin{equation}\label{eq:order}
    x <y < 1< \cdots < n\,.\tag{$\star$}
\end{equation}

Also, it will be convenient to use the symbols
\begin{equation*}
    \M_{B', X, Y}^a := -\M_{B', Y, X}^{g-1-a} \qquad \text{and} \qquad \N_{B, X, Y}^a := \N_{B, Y, X}^{g-1-a}
\end{equation*}
when $a > \frac{g-1}{2}$. By the usual pullback formulas and the self-intersection formula of \cite{GraberPandharipande}, we have
\begin{align*}
\xi^*\S^a_{B,X,Y} &= \S^a_{B, X, Y \cup \{x,y\}} + \S^{a-1}_{B, X \cup \{x, y\}, Y} \\
\xi^*\M^a_{B',X,Y} &= \M^{a-1}_{B',X \cup \{x, y\}, Y} + \M^a_{B',X,Y \cup \{x,y\}}  \qquad (a, X) \neq (1, \emptyset)\\
\xi^* \M_{B', \emptyset, Y}^1 &=\M_{B', \emptyset, Y \cup \{x,y\}}^1 + \sum_{X\subset Y^c \setminus B'}\big(\S^0_{B'\cup \{x\}, X\cup \{y\}, Y}+\S^0_{B'\cup \{y\}, X\cup \{x\}, Y}\big)\\
&\quad -\sum_{\emptyset \neq X\subset Y^c \setminus B'} \M^0_{B', X\cup \{x,y\}, Y}\\
\xi^* \N_{B, X, Y}^a  &= \N_{B, X \cup \{x,y\}, Y}^{a-1} + \N^a_{B, X, Y \cup \{x,y\}}  \quad \\
\xi^* \L_{B, X} &= \L_{B , X\cup \{x,y\}} - 2\P_{B, X \cup \{x, y\}}^x - 2\P_{B, X \cup \{x, y\}}^y + 2\sum_{\substack{\{\alpha, \beta\} = \{x,y\}\\g_1 + g_2 = g-2\\ X_1 \cup X_2 =X}}\S_{B, X_1 \cup\{\alpha\}, X_2 \cup \{\beta\}}^{g_1} \quad(g\geq 4)\\
\xi^* \D_{B, X} &= \D_{B, X\cup \{x,y\}}  + \text{ additional terms}\,, \quad \text{ additional terms} \in \operatorname{span} (\S, \M, \N, \P) \quad (g\geq 4)\\
\xi^* \P_{B,X}^i &= \P_{B, X \cup \{x,y\}}^i\quad(g\geq 4) \\
\xi^* \K_{B,X} & = \K_{B, X \cup \{x,y\}} \quad (g\geq 5)
\end{align*}
The ordering \eqref{eq:order} ensures that the terms $\D_{B, X \cup \{x,y\}}$ and $\N^{a-1}_{B, X\cup \{x,y\},Y}$ are part of the preferred set of generators of Figure \ref{fig:gentable}.

In $\Mbar_{2, [n] \cup \{x,y\}}$, the $\mathcal P^i_{B,X}$ classes are not part of our preferred set of generators in Figure \ref{fig:gentable}. If we rewrite them using the relation \eqref{push1}, we obtain the following formulas for pullbacks in genus $3$ in terms of our preferred generating set:
\begin{align*}
    \xi^* \L_{B, X} &= \frac{5}{6}\L_{B, X \cup \{x,y\}} - 2\sum_{Y \subset X \cup \{x,y\}} |Y \cap \{x,y\}|\big( \S_{B,Y, X \setminus Y}^0 + \S^1_{B, X \setminus Y, Y}\big) \\
    &\qquad + 2\sum_{\substack{\{\alpha, \beta\} = \{x,y\}\\g_1 + g_2 = 1\\ X_1 \cup X_2 =X}}\S_{B, X_1 \cup\{\alpha\}, X_2 \cup \{\beta\}}^{g_1} \\
    &=\frac{5}{6}\L_{B, X \cup \{x,y\}} - 4\sum_{Y \subset X}\big( \S_{B,Y\cup \{x,y\}, X \setminus Y}^0 + \S^1_{B, X \setminus Y, Y\cup \{x,y\}}\big)\\
    \xi^*\P^i_{B, X} &=\frac{1}{24}\L_{B, X \cup \{x,y\}} + \sum_{i \in Y \subset X \cup \{x,y\}} \big(\S_{B, Y, X\cup \{x,y\}\setminus Y}^0 + \S_{B, X\cup \{x,y\}\setminus Y,Y}^1\big) \quad (i \neq q)\\
    \xi^*\P^q_{B, X} &=\frac{1}{24}\L_{B, X \cup \{x,y\}} + \sum_{Y \subset X \cup \{x,y\}} \S_{B, Y, X\cup \{x,y\}\setminus Y}^0 \\
    \xi^* \D_{B, X} &= \D_{B, X\cup \{x,y\}}  + \text{additional terms}\,, \quad \text{additional terms} \in \operatorname{span} (\S, \M, \N)
\end{align*}

It will be important later that in the formula for $\xi^*\L_{B,X}$, all the $\S$ terms that appear with non-zero coefficient have the markings $x$ and $y$ on the same vertex.

The $\K_{B,X}$ pullback is not in the preferred list of generators of Figure \ref{fig:gentable} on $\Mbar_{3, n \cup \{x,y\}}$. In this case, it is given by \eqref{push3}, yielding the following pullback formula in genus $4$:
\begin{align*}
\xi^*\K_{B,X} &= \sum_{i \in X \cup \{x,y,q\}} \P^i_{B,X\cup \{x,y\}}  + \frac{1}{10} \L_{B,X\cup \{x,y\}} - 
\sum_{X\cup \{x,y\} = X' \sqcup Y} \S^0_{B, X',Y} \\
&\quad -\sum_{X\cup \{x,y\} = X' \sqcup Y} \S^2_{B, X', Y}
+ \frac{7}{5}\sum_{X\cup \{x,y\} = X' \sqcup Y} \S^1_{B, X',Y}
\end{align*}

\subsection{Proof of Theorem \ref{thm:rels} (b).}

The formulas in the previous section express the $\xi^*$ pullbacks of the classes in the distinguished basis from Figure \ref{fig:gentable} for $\Mbar_{g, n}$ in terms of the distinguished basis for $\Mbar_{g-1, [n] \cup \{x,y\}}$ with the ordering \eqref{eq:order}. By arranging them appropriately and choosing projections onto different subspaces, (part of) the pullback matrix can be arranged as in Table \ref{tab:pullbackg3} for $g=3$, or Table \ref{tab:pullbackg4} for $g\geq 4$.
\begin{table}[htbp]
\centering
\caption{The matrix of the pullback $\xi^*  : S^{13,2}(\Mbar_{3,n}) \to S^{13,2}(\Mbar_{2,[n] \cup \{x,y\}})$, projected onto appropriate subspaces. The $\mathbbm{1}$ indicate that the identity matrix can be arranged under the natural ordering that matches the indexing in the symbols indicated in the rows and columns}
\label{tab:pullbackg3}
 
\renewcommand{\arraystretch}{2.2}
\setlength{\tabcolsep}{4pt}
 
\begin{tabular}{|c|*{8}{c|}}
    \hline
    & $\D_{B,X}$
    & $\M_{B',\emptyset,Y}^1$
    & $\begin{gathered}
        \M_{B',X,Y}^a\\
        (a,X)\neq(1,\emptyset)
    \end{gathered}$
    & $\P^q_{B,X}$
    & $\begin{gathered}
        \P_{B,X}^i\\
        (i \neq q)
    \end{gathered}$
    & $\L_{B,X}$
    & $\N_{B,X,Y}^a$
    & $\S_{B,X,Y}^a$ \\
    \hline
 
    $\D_{B, X\cup\{x,y\}}$
    & $\one$ & $0$ & $0$ & $0$ & $0$ & $0$ & $0$ & $0$ \\
    \hline
 
    $\S_{B'\cup \{x\}, \{y\}, Y}^0$
    & $\star$ & $\one$ & $0$ & $0$ & $0$ & $0$ & $0$ & $0$ \\
    \hline
 
    $\M_{B', X, Y \cup \{x,y\}}^a$
    & $\star$ & $\star$ & $\one$ & $0$ & $0$ & $0$ & $0$ & $0$ \\
    \hline
 
    $\S_{B, \{x\}, X \cup \{y\}}^0$
    & $\star$ & $\star$ & $\star$ & $\one$ & $0$ & $0$ & $0$ & $0$ \\
    \hline
 
    $\S_{B, \{i\}^c \cup \{y\}, \{i,x\}}^1$
    & $\star$ & $\star$ & $\star$ & $\star$ & $\one$ & $0$ & $0$ & $0$ \\
    \hline
 
    $\frac{5}{6}\L_{B, X\cup \{x,y\}}$
    & $\star$ & $\star$ & $\star$ & $\star$ & $\star$ & $\one$ & $0$ & $0$ \\
    \hline
 
    $\N^a_{B, X, Y\cup \{x,y\}}$
    & $\star$ & $\star$ & $\star$ & $\star$ & $\star$ & $\star$ & $\one$ & $0$ \\
    \hline
 
    $\S^{a}_{B, X, Y\cup \{x,y\}}$
    & $\star$ & $\star$ & $\star$ & $\star$ & $\star$ & $\star$ & $\star$ & $\one$ \\
    \hline
\end{tabular}
\end{table}
\newpage

\begin{table}[htbp]
\centering
\caption{The matrix of the pullback $\xi^*  : S^{13,2}(\Mbar_{g,n}) \to S^{13,2}(\Mbar_{g-1, [n] \cup \{x,y\}})$ for $g \geq 4$, projected onto appropriate subspaces. The $\mathbbm{1}$ indicate that the identity matrix can be arranged under the natural ordering that matches the indexing in the symbols indicated in the rows and columns}
\label{tab:pullbackg4}
 
\renewcommand{\arraystretch}{2.2}
\setlength{\tabcolsep}{4pt}
 
\begin{tabular}{|c|*{8}{c|}}
    \hline
    & $\D_{B,X}$
    & $\L_{B,X}$
    & $\K_{B,X}$
    & $\P_{B,X}^i$
    & $\M_{B',\emptyset,Y}^1$
    & $\begin{gathered}
        \M_{B',X,Y}^a\\
        (a,X)\neq(1,\emptyset)
    \end{gathered}$
    & $\N_{B,X,Y}^a$
    & $\S_{B,X,Y}^a$ \\
    \hline
 
    $\D_{B, X\cup\{x,y\}}$
    & $\one$ & $0$ & $0$ & $0$ & $0$ & $0$ & $0$ & $0$ \\
    \hline
 
    $-2\P_{B, X \cup \{x,y\}}^x$
    & $\star$ & $\one$ & $0$ & $0$ & $0$ & $0$ & $0$ & $0$ \\
    \hline
 
    \multicolumn{1}{|@{}c@{}|}{%
    \renewcommand{\arraystretch}{1.4}%
    \begin{tabular}[c]{c}
        $\K_{B,X\cup\{x,y\}}\ (g\geq 5)$\\
        \hline
        $\frac{1}{10}\L_{B,X\cup\{x,y\}}\ (g=4)$
    \end{tabular}%
    }
    & $\star$ & $\star$ & $\one$ & $0$ & $0$ & $0$ & $0$ & $0$ \\
    \hline
 
    $\P_{B, X\cup\{x,y\}}^i$
    & $\star$ & $\star$ & $\star$ & $\one$ & $0$ & $0$ & $0$ & $0$ \\
    \hline
 
    $\S_{B'\cup \{x\}, \{y\}, Y}^0$
    & $\star$ & $\star$ & $\star$ & $\star$ & $\one$ & $0$ & $0$ & $0$ \\
    \hline
 
    $\M_{B', X, Y \cup \{x,y\}}^a$
    & $\star$ & $\star$ & $\star$ & $\star$ & $\star$ & $\one$ & $0$ & $0$ \\
    \hline
 
    $\N_{B, X, Y\cup \{x,y\}}^{a}$
    & $\star$ & $\star$ & $\star$ & $\star$ & $\star$ & $\star$ & $\one$ & $0$ \\
    \hline
 
    $\S^{a}_{B, X, Y\cup \{x,y\}}$
    & $\star$ & $\star$ & $\star$ & $\star$ & $\star$ & $\star$ & $\star$ & $\one$ \\
    \hline
\end{tabular}
\end{table}

The vanishings to the right of each diagonal box are clear by examining the pullback formulas in \sectionsign \ref{sec:pullbackforms}. For example, the fifth row in Table \ref{tab:pullbackg3} has zeros to the right of the diagonal because the pullback formulas for $\L, \N, \S$ classes always have $x$, $y$ in the same vertex.

That each diagonal block can be arranged as the identity also follows from a careful examination of the pullback formulas. We give two examples to illustrate this. In the seventh diagonal block of Table \ref{tab:pullbackg3}, the decorated graph $\mathcal N_{B_1,X_1,Y_1 \cup \{x,y\}}^{a_1}$ appears in the pullback of $\mathcal N_{B_2, X_2, Y_2}^{a_2}$ if and only if $a_1 = a_2$, $B_1= B_2$, $X_1 = X_2$, and $Y_1 = Y_2$. This is because $\mathcal N_{B_2, X_2, Y_2}^{a_2}$ is uniquely recoverable from $\mathcal N_{B_2, X_2, Y_2\cup \{x,y\}}^{a_2}$ by locating the vertex with $\{x,y\}$, removing them from the set of markings and increasing the genus of this vertex by $1$. 

Since Tables \ref{tab:pullbackg3} and \ref{tab:pullbackg4} are lower triangular, we conclude that if there are no relations between the natural generators in genus $g-1$, there cannot be relations between the natural generators in genus $g$, for $g \geq 3$. By Theorem \ref{thm:rels} (a), there are no relations for $g=2$ between the preferred set of generators. By induction, this concludes the proof of Theorem \ref{thm:rels} (b).

\newpage
\section{Dimension counts and proof of Theorem \ref{thm:g3}} \label{sec:dimcounts}

By Theorem \ref{thm:rels}, the only relations among the decorated graph generators are \eqref{triv1}--\eqref{push4}, \eqref{push5}--\eqref{rel12pull} and once (R1)--(R10) are imposed, no relations remain among the restricted generators. Thus, in order to determine the dimension of $S_\omega^{13,2}(\Mb_{g,n})$, one simply needs to count the number of decorated graphs of each type satisfying the restrictions (R1)--(R10). We provide explicit formulas in genus $2$ and $3$. It is possible to derive similar formulas in any genus.

In genus $2$, summing the terms below provides a formula for $\dim S_\omega^{13,2}(\Mb_{2,n})$.
\begin{align*}
\#\S^{0}_{B,X,Y} &= \sum_{x=1}^{n} \sum_{y=0}^{n} {n \choose x} {n-x \choose y}{n-x-y \choose 10}\\
\#\S^{1}_{B,X,Y} &= \sum_{x=0}^{n} \sum_{y=2}^{n} {n \choose x} {n-x \choose y}{n-x-y \choose 10} \\
\#\M_{B',X,Y}^0 &= \sum_{x=3}^n\sum_{y=0}^{n} {n \choose x}{n-x \choose y}{n - x - y \choose 9}\\
\#\N_{B,X,Y}^0 &= \sum_{y=0}^n {n \choose y}{n-y \choose 12}11 + \sum_{x=3}^n \sum_{y=0}^n {n \choose x} {n - x \choose y}{n - x - y \choose 10}\\
\#\L_{B,X} &= \sum_{x=1}^n {n \choose x} {n -x \choose 10} \\
\#\D_{B,X} &= \sum_{x=3}^n {n \choose x} {n-x \choose 10} + {n \choose 12}11  \\
\# \P^i_{B,X} &= 0\\
\#\K_{B,X} &= 0
\end{align*}

In genus $3$, summing the terms below provides a formula for $\dim S_\omega^{13,2}(\Mb_{3,n})$.
{\allowdisplaybreaks
\begin{align*}
\#\S^{0}_{B,X,Y} &= \sum_{x=1}^{n} \sum_{y=0}^{n} {n \choose x} {n-x \choose y}{n-x-y \choose 10}\\
\#\S^{1}_{B,X,Y} &= \sum_{x=0}^{n} \sum_{y=0}^{n} {n \choose x} {n-x \choose y}{n-x-y \choose 10} \\
\#\S^{2}_{B,X,Y} &= \sum_{x=0}^{n} \sum_{y=2}^{n} {n \choose x} {n-x \choose y}{n-x-y \choose 10} \\
\#\M_{B',X,Y}^0 &= \sum_{x=3}^n\sum_{y=0}^{n} {n \choose x}{n-x \choose y}{n - x - y \choose 9}\\
\#\M_{B',X,Y}^1 &= \frac{1}{2} \left(\sum_{x=0}^n \sum_{y=0}^n{n \choose x}{n -x \choose y}{n - x - y \choose 9}\right) - \frac{1}{2}{n \choose 9} \\
\#\N_{B,X,Y}^0 &= \sum_{y=0}^n {n \choose y}{n-y \choose 12}11 + \sum_{x=3}^n \sum_{y=0}^n {n \choose x} {n - x \choose y}{n - x - y \choose 10}\\
\#\N_{B,X,Y}^1 &= \frac{1}{2} \left(\sum_{x=0}^n \sum_{y=0}^n{n \choose x}{n -x \choose y}{n - x - y \choose 10} \right) + \frac{1}{2} {n \choose 10} \\
\#\L_{B,X} &= \sum_{x=0}^n {n \choose x} {n -x \choose 10} \\
\#\D_{B,X} &= \sum_{x=0}^n {n \choose x} {n-x \choose 10} \\
\# \P^i_{B,X} &= \sum_{x=0}^{n} {n \choose x}{n-x \choose 10}(x+1) \\
\#\K_{B,X} &= 0
\end{align*}
}

We summarize the results of these formulas for low values of $n$ in the following tables.
\begin{table}[h]
\caption{The dimensions of $S_\omega^{13, 2}(\Mbar_{g,n})$ for $g =2,3$ and $n \leq 20$.}
\label{fig:dimensions}
\begin{center}
\begin{tabular}{r|r}
$n$ & $\dim S_\omega^{13,2}(\Mb_{2,n})$ \\
 \hline
 \hline
 $\leq 9$ & 0 \\
10 & 0 \\
11 & 22 \\
12 & 836 \\
13 & 16874 \\
14 & 234234 \\
15 & 2532530 \\
16 & 22902308 \\
17 & 181094914 \\
18 & 1289698982 \\
19 & 8446397674 \\
20 & 51653049526
\end{tabular}
\hspace{1in}
\begin{tabular}{r|r}
$n$ & $\dim S_\omega^{13,2}(\Mb_{3,n})$ \\
 \hline
 \hline
 $\leq 9$ & 0 \\
    10 & 15 \\
     11 & 363 \\
     12 & 5467 \\
     13 & 65065 \\
     14 & 657657 \\
     15 & 5832827 \\
     16 &  46381192 \\
     17 & 336358022 \\
     18 & 2256303478 \\
     19 & 14165150142 \\
     20 & 84038634836
\end{tabular}
\end{center}
\end{table}

The dimension formulas above agree with the data of Bergstr\"om--Faber \cite{BergstromData}, which was obtained from point counts over finite fields.

\begin{proof}[Proof of Theorem \ref{thm:g3}]
Assuming that $\dim H^{13,2}(\Mb_{3,14}) = 657657$ as in the statement of the theorem, the fact that $\dim S_\omega^{13,2}(\Mb_{3,14}) = 657657$ from the table above shows that
$H^{13,2}(\Mb_{3,14}) = S_\omega^{13,2}(\Mb_{3,14})$. By Lemma \ref{132reduction}, we have $H^{15}(\Mb_{3,14}) = S_\omega^{15}(\Mb_{3,14})$. The theorem for all $n$ now follows from the reduction to $n = 14$ explained in Section \ref{sec:red}. 
\end{proof}

\section{Proof of Theorem \ref{thm:rh}}\label{section:cohomology-is-taut}

In this section, we prove Theorem \ref{thm:rh} that $H^*(\Mb_{g,n}) = RH^*(\Mb_{g,n})$ for various small values of $g$ and $n$. Let $(g,n)$ be as in Theorem \ref{thm:rh}.
It was shown in \cite[Table 2]{CanningRarelyGorenstein} that $H^k(\Mb_{g,n}) = RH^k(\Mb_{g,n})$ for all even $k$. Thus, our task is to prove that the odd degree cohomology of $\Mb_{g,n}$ vanishes in these cases. First, we demonstrate that the odd cohomology is pushed forward from the boundary, using results on the Chow--K\"unneth generation property, which we now recall.

\begin{definition}~\cite[Definition~4.1]{CLP24extensions}
    Let $X$ be a smooth algebraic stack of finite type over a field, stratified by quotient stacks. We say that $X$ has the \textit{Chow--Künneth generation Property} (CKgP) if for all algebraic stacks $Y$ (of finite type, stratified by quotient stacks) the exterior product
    \[
    A_*(X) \otimes A_*(Y) \to A_*(X \times Y)
    \]
    is surjective, where $A_*(X)$ and $A_*(Y)$ are the Chow groups of $X$ and $Y$ respectively. 
\end{definition}

If $X$ is a smooth Deligne--Mumford stack with the CKgP, then the
cycle class map
    \[
    \mathrm{cl}: \bigoplus_i A_i(X) \to \bigoplus_k W_kH^k(X)
    \]
    is surjective by \cite[Lemma~4.3]{CLP24extensions}. In particular, if $k$ is odd then $W_kH^k(X)=0$.

The following table records the currently known $(g,n)$ such that $\M_{g,n}$ has the CKgP and $A^*(\M_{g,n}) = R^*(\M_{g,n})$. The cases with \(g\leq 7\), apart from the updated genus-\(5\)
entry, are recorded in \cite[Table~1]{CLP24extensions}. The cases
\(\M_{5,8}\) and \(\M_{5,9}\) follow from
\cite[Theorem~1.3]{LiuM58M59}. Finally, the arguments of
\cite{CL24chowMg} showing the Chow ring is tautological also establish the CKgP for \(\M_8\) and
\(\M_9\), as recorded in
\cite[Remark~3.1 and Section~3.3]{Lsurvey}.

\begin{table}[ht]
\centering
\begin{tabular}{|c|c|c|c|c|c|c|c|c|c|c|}
\hline
\(g\)    & \(0\)      & \(1\)  & \(2\)  & \(3\)  & \(4\)
         & \(5\)      & \(6\)  & \(7\)  & \(8\)  & \(9\) \\
\hline
\(c(g)\) & \(\infty\) & \(10\) & \(10\) & \(11\) & \(11\)
         & \(9\)      & \(5\)  & \(3\)  & \(0\)  & \(0\) \\
\hline
\end{tabular}
\caption{$\M_{g,n}$ has the CKgP and $A^*(\M_{g,n}) = R^*(\M_{g,n})$ for $n \leq c(g)$.}
\label{tab:c-values}
\end{table}

For $0 \leq g \leq 9$ and $n \leq c(g)$ and $k$ even, it follows that $W_kH^k(\M_{g,n})$ is generated by tautological classes. Meanwhile, for $0 \leq g \leq 9$ and $n \leq c(g)$ and $k$ odd, we have
\begin{equation}  \label{eq:oddsurj}
H^{k-2}(\widetilde{\partial \M}_{g,n}) \to H^{k}(\Mb_{g,n}) \to W_{k}(\M_{g,n}) = 0.
\end{equation}
In particular, all odd degree cohomology is pushed forward from the boundary.
Since $H^{13}(\Mb_{g,n}) = S^{13}_\omega(\Mb_{g,n})$ for all $g$ and $n$, \eqref{eq:oddsurj} with $k = 15$ immediately yields the following.

\begin{lemma} \label{lem:h=s}
If $0 \leq g \leq 9$ and $n \leq c(g)$, then $H^{15}(\Mb_{g,n}) = S_{\omega}^{15}(\Mb_{g,n})$.
\end{lemma}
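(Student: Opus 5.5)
The plan is to apply the right exact sequence \eqref{wtknorm} with $k=15$, together with the vanishing $W_{15}H^{15}(\M_{g,n})=0$ supplied by \eqref{eq:oddsurj}. In the range $0\le g\le 9$ and $n\le c(g)$, Table \ref{tab:c-values} says $\M_{g,n}$ has the CKgP. By \cite[Lemma~4.3]{CLP24extensions}, the cycle class map onto $\bigoplus_k W_kH^k(\M_{g,n})$ is then surjective. Since Chow classes only land in even degrees, $W_{15}H^{15}(\M_{g,n})=0$. Hence the gluing pushforward $H^{13}(\widetilde{\partial\M_{g,n}})\to H^{15}(\Mb_{g,n})$ is surjective.

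Next I would show that the image of this map lies in $S^{15}_\omega(\Mb_{g,n})$, repeating the argument already given after \eqref{wtknorm} in Section \ref{section:ReductionsTo14Markings}. Each component of $\widetilde{\partial\M_{g,n}}$ is a finite quotient of a product $\prod_v \Mb_{g_v,n_v}$. By the Künneth formula, $H^{13}$ of such a product is a sum of tensor products of cohomology groups of the factors, and the invariants under the finite group form a direct summand. Since the total degree $13$ is odd, every summand has at least one factor $H^{l}(\Mb_{g_v,n_v})$ with $l$ odd and $l\le 13$.

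By \cite{AC98,BFP24}, such a factor vanishes unless $l\in\{11,13\}$. In those two cases it equals $S^l_\omega$ by \cite{CLP23eleventh} (for $l=11$) and \cite{CLPW24polynomial} (for $l=13$, the result quoted just before the lemma). The remaining factors have even degree $\le 2$, so they are tautological: $H^0$ and $H^2$ are tautological for all $(g,n)$ by \cite{AC98}. Thus every Künneth class is an exterior product of classes in the STE $S^*_\omega$. The STE is closed under pushforward along gluing maps, so pushing forward lands in $S^{15}_\omega(\Mb_{g,n})$. The reverse inclusion $S^{15}_\omega\subseteq H^{15}$ is trivial, which gives the equality.

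The only point needing care is the claim that exterior products of STE classes on a product of moduli spaces, pushed forward along the gluing map, remain in $S^*_\omega$. This is built into the decorated-graph description of $S^*_\omega$ recalled in Section \ref{section:gens}, following \cite[Prop.~2.4]{CLP24extensions}. I therefore expect no genuine obstacle. The main work is bookkeeping: confirming that the CKgP inputs in Table \ref{tab:c-values} cover exactly the stated range.
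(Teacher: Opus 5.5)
Your proposal is correct and follows the paper's argument: the CKgP from Table~\ref{tab:c-values} forces $W_{15}H^{15}(\M_{g,n})=0$, so by \eqref{eq:oddsurj} all of $H^{15}(\Mb_{g,n})$ is pushed forward from $H^{13}$ of the normalized boundary, and that image lies in $S^{15}_\omega$ by the K\"unneth argument of Section~\ref{section:ReductionsTo14Markings} together with $H^{11}=S^{11}_\omega$ and $H^{13}=S^{13}_\omega$. The paper compresses this into one sentence, and your spelled-out version (one odd K\"unneth factor of degree $11$ or $13$, the other factors tautological in total degree at most $2$) is the reasoning it leaves implicit.
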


The following lemma explains how repeated application of \eqref{eq:oddsurj} allows us to reduce Theorem \ref{thm:rh} to the cases $(g,n) = (3,9), (4,8)$ and $(5,7)$.

\begin{lemma}\label{lem: implies taut for many g,n}
Suppose $H^*(\Mb_{g,n}) = RH^*(\Mb_{g,n})$ for $(g,n) = (3,9), (4,8)$ and $(5,7)$. Then $H^*(\Mb_{g,n}) = RH^*(\Mb_{g,n})$ for all $(g,n)$ in Theorem \ref{thm:rh}.
\end{lemma}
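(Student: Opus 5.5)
The plan is to use the two standard mechanisms already recalled in this section and in Section~\ref{sec:red}: forgetful pushforward to lower $n$, and boundary pushforward to lower genus. The first mechanism is this: for $\pi\colon \Mb_{g,n+k}\to\Mb_{g,n}$ and a relatively ample $\mathcal L$, the map $\pi_*(c_1(\mathcal L)^k\cup -)$ is surjective on $H^*$. Since $c_1(\mathcal L)$ is tautological and pushforwards of tautological classes are tautological, $H^*(\Mb_{g,n+k})=RH^*(\Mb_{g,n+k})$ implies $H^*(\Mb_{g,n})=RH^*(\Mb_{g,n})$. Applying this to the hypotheses gives $(4,7)$ from $(4,8)$, and $(5,5),(5,6)$ from $(5,7)$. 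Together with the given $(3,9),(4,8),(5,7)$, this handles all pairs with $g\le 5$.

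For $g\in\{6,7,8\}$, every pair in \eqref{goodpairs} satisfies $n\le c(g)$ by Table~\ref{tab:c-values}. Even cohomology is tautological by \cite{CanningRarelyGorenstein}. By \eqref{eq:oddsurj}, every odd class is then the pushforward of an odd class on a component of $\widetilde{\partial\M_{g,n}}$. It therefore suffices to show that every such boundary component has entirely tautological cohomology: its odd cohomology then vanishes, since tautological classes have even degree, and the Künneth summands feeding odd degrees are zero.

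The boundary components fall into two types.
\begin{itemize}
\item Irreducible: $\Mb_{g-1,n+2}$.
\item Separating: $\Mb_{g_1,n_1+1}\times\Mb_{g_2,n_2+1}$ with $g_1+g_2=g$ and $n_1+n_2=n$.
\end{itemize}
The irreducible divisor produces the chains
\[
(6,3)\leftarrow(5,5),\qquad (6,4)\leftarrow(5,6),\qquad (6,5)\leftarrow(5,7),
\]
\[
(7,1)\leftarrow(6,3),\qquad (7,2)\leftarrow(6,4),\qquad (7,3)\leftarrow(6,5),\qquad (8,0)\leftarrow(7,2).
\]
So proceeding in order of increasing $g$ covers all irreducible components, using the $g\le 5$ cases established above.

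For separating components, each factor has genus $<g$ and at most $n+1\le 6$ markings. The relevant list is: genus $0$ (Keel); genus $1$ with $\le 6\le 10$ points; genus $2$ with $\le 6\le 9$ points; genus $3\le g'\le 7$ with $2g'+n'\le 14$ \cite{CL23chowMgbar}. Any remaining factors, such as $(5,m)$ with $m\le 4$, $(6,m)$ with $m\le 3$, and $(7,1)$, are among cases already established earlier in the induction. Having tautological cohomology on each factor, the Künneth formula makes the product's cohomology tautological. Hence the pushforward to $\Mb_{g,n}$ is tautological, the odd part vanishes, and the result follows.

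The main obstacle is bookkeeping rather than anything conceptual. One must check that each separating factor $(g',n')$ for the relevant $(g,n)$ indeed falls into a known range or an earlier step of the induction. The borderline checks are factors like $(4,m)$ with $m\le 5$ (here $2g'+n'\le 13$) and $(3,m)$ with $m\le 6$. Note that genus-$1$ factors never reach $11$ markings, which is precisely why the genus-$1$ nontautological classes $H^{11}(\Mb_{1,11})$ do not interfere. I would organize this verification as a short table.
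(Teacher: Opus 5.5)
Your argument is correct in outline, but it reaches the genus $\le 5$ cases by a different route than the paper.

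\textbf{Comparison with the paper.} The paper runs the boundary argument along three chains through the irreducible divisor:
\begin{itemize}
\item $(3,9)\to(4,7)\to(5,5)\to(6,3)\to(7,1)$, taken from \cite{Lsurvey};
\item $(4,8)\to(5,6)\to(6,4)\to(7,2)\to(8,0)$;
\item $(5,7)\to(6,5)\to(7,3)$.
\end{itemize}
At each step it uses that $W_kH^k(\M_{g,n})$ is tautological for $n\le c(g)$ and that all separating-divisor factors are already known. You instead obtain $(4,7)$, $(5,6)$ and $(5,5)$ directly from $(4,8)$ and $(5,7)$ via $\alpha=\frac{1}{2g-2+n}\pi_*(\psi_{n+1}\cdot\pi^*\alpha)$. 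This is valid and avoids the CKgP and boundary analysis in genus $4$ and $5$. The boundary argument is then needed only for $g=6,7,8$, where your irreducible-divisor chains coincide with the paper's.

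What the paper's organization buys is finer information. It shows that $(3,9)$, together with the literature, already yields $(4,7),(5,5),(6,3),(7,1)$; this is why those cases are attributed to existing results in the remark after Theorem~\ref{thm:rh}. Your route makes every genus-$5$ case depend on $(5,7)$. You also split off even degrees via \cite{CanningRarelyGorenstein} and only prove odd vanishing. The paper's step handles all degrees at once, since boundary pushforwards of tautological classes are tautological. Either works.

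\textbf{A false claim to fix.} In your separating-divisor step, it is not true that each factor has genus $<g$. Divisors with a rational component, $\Mb_{0,n_1+1}\times\Mb_{g,n_2+1}$ with $n_1\ge 2$, have a factor of full genus $g$ with up to $n-1$ markings. So $(6,5)$ needs $(6,4)$, $(6,4)$ needs $(6,3)$, $(6,3)$ needs $(6,2)$ and $(6,1)$, and $(7,3)$ needs $(7,2)$ and $(7,1)$.

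Your stated ranges are also too small. The pair $(6,5)$ has factors $\Mb_{5,6}\times\Mb_{1,1}$ and $\Mb_{4,6}\times\Mb_{2,1}$, and $(7,3)$ has $\Mb_{6,4}\times\Mb_{1,1}$.

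None of this breaks the argument. The factors $(4,6)$, $(5,4)$, $(6,2)$ and $(6,1)$ satisfy $2g'+n'\le 14$, so \cite{CL23chowMgbar} covers them. The factors $(5,5),(5,6),(6,3),(6,4),(7,1),(7,2)$ are earlier cases, provided you induct on $n$ within each genus and not just on $g$. Concretely, treat $(6,3),(6,4),(6,5),(7,1),(7,2),(7,3),(8,0)$ in that order. Your verification table should list these factors explicitly.
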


\begin{proof}
  In \cite[$\mathsection$5]{Lsurvey}, it is shown that the fact that the cohomology of $\Mb_{3,9}$ is tautological implies that the cohomology of $\Mb_{g,n}$ is tautological for $(g,n)\in \{(4,7), (5,5), (6,3), (7,1)\}$. We will review this argument supposing the cohomology of $\Mb_{4,8}$ is tautological, and show this implies that the cohomology of $\Mb_{g,n}$ is tautological for $(g,n) \in \{(5,6),(6,4),(7,2),(8,0)\}$.
  
 To prove the case $(g,n) = (5,6)$, we consider the exact sequence
    \[
    H^{k-2}( \widetilde{\partial\M_{5,6}}) \to H^k(\Mb_{5,6}) \to W_kH^k(\M_{5,6}).
    \]
By Table \ref{tab:c-values}, $W_kH^k(\M_{5,6})$ is generated by tautological classes. Thus, it suffices to prove that all classes pushed forward from the boundary are also tautological. We assume $\Mb_{4,8}$ is tautological and so all classes from the irreducible boundary divisor are also tautological. Now consider a separating boundary divisor which is the image of a gluing map
    \[
    \Mb_{g_1,n_1+1} \times \Mb_{g_2, n_2+1} \to \Mb_{5,6}.
    \]
   By the smaller cases of Theorem \ref{thm:rh} already established, every $\Mb_{g_i,n_i+1}$ appearing as a factor of a separating boundary divisor has tautological cohomology. The proofs for the remaining pairs of $(g,n)\in \{(6,4),(7,2),(8,0)\}$ are similar. 

    The remaining $(g,n)$ in Theorem \ref{thm:rh} are proved by the same argument applied to $\Mb_{5,7}$.
\end{proof}

The case $(g,n) = (3,9)$ of Theorem \ref{thm:rh} was already proved in \cite[$\mathsection$5]{Lsurvey}, by observing that $\Mb_{3,9}$ has polynomial point count and hence its odd cohomology vanishes. We briefly explain an alternative proof using the results of this paper.
From earlier results on odd cohomology, one immediately sees that $H^{k}(\Mb_{3,9}) = 0$ for $k \leq 13$ odd.  Meanwhile, Lemma \ref{lem:h=s} shows that $H^{15}(\Mb_{3,9}) = S^{15}_\omega(\Mb_{3,9})$. Then, since there are no generators satisfying the restrictions in Figure \ref{fig:gentable} for $(g,n) = (3,9)$, it follows that $S^{15}_\omega(\Mb_{3,9}) = 0$. Finally, the higher degree odd cohomology vanishes by Poincar\'e duality.

The cases $(g,n) = (4,8)$ and $(5, 7)$ are more involved, but repeated application of \eqref{eq:oddsurj} and Lemma \ref{lem:h=s} help us organize generators for
$H^k(\Mb_{g,n})$ for odd $k$,
building on the generators in Figure \ref{fig:gentable}. These cases are treated in Sections \ref{sec:48} and \ref{sec:57}, respectively.

In order to show that these resulting generators vanish, we make use of the following representation-theoretic lemmas.

\begin{lemma}\label{lem:S2-invariants-H11}
Let $11 \leq n \leq 20$ and $2m \leq n$, and let $\mathbb{S}_2^{m} \subset \mathbb{S}_n$ be the Young subgroup generated by $m$ disjoint transpositions. If $m \geq n-10$ then the invariant subspace $H^{11}(\Mb_{1,n})^{\mathbb{S}_2^m}=0$. 
\end{lemma}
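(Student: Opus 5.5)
We will prove the lemma by computing the $\mathbb{S}_n$-representation on $H^{11}(\Mb_{1,n})$ explicitly and then restricting it to the Young subgroup $\mathbb{S}_2^m$. By \cite{CLP23eleventh} (see also \cite[Corollary 2.4]{CLP23eleventh}), the classes $\omega_{B\cup\{p\}}$ give a basis. More intrinsically, $H^{11,0}(\Mb_{1,n})$ is spanned by the $\omega_A$ with $|A| = 11$, subject to the relations coming from $H^{11,0}(\Mb_{1,12})$. As an $\mathbb{S}_n$-representation this gives
\[
H^{11,0}(\Mb_{1,n}) \cong \Ind_{\mathbb{S}_{11}\times \mathbb{S}_{n-11}}^{\mathbb{S}_n}\big(V_{1^{11}}\boxtimes \mathbf 1\big) \ominus \Ind_{\mathbb{S}_{12}\times\mathbb{S}_{n-12}}^{\mathbb{S}_n}\big(V_{1^{12}}\boxtimes \mathbf 1\big) + \cdots,
\]
which by Pieri's rule collapses to a single irreducible, namely $H^{11,0}(\Mb_{1,n})\cong V_{n-10,1^{10}}$. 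This matches the basis count $\binom{n-1}{10}$. Since $H^{11}\otimes\cc = H^{11,0}\oplus H^{0,11}$ and complex conjugation commutes with the $\mathbb{S}_n$-action, it suffices to show $(V_{n-10,1^{10}})^{\mathbb{S}_2^m}=0$.

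By Frobenius reciprocity, $\dim V_\lambda^{\mathbb{S}_2^m}$ equals the multiplicity of $V_\lambda$ in $\Ind_{\mathbb{S}_2^m\times \mathbb{S}_1^{n-2m}}^{\mathbb{S}_n}\mathbf 1$. This induced representation is the permutation module $M^\mu$ with $\mu = (2^m,1^{n-2m})$, and the multiplicity of $V_\lambda$ in it is the Kostka number $K_{\lambda\mu}$. So we need $K_{(n-10,1^{10}),(2^m,1^{n-2m})}=0$. A semistandard tableau of hook shape $(n-10,1^{10})$ must have strictly increasing first column, with $11$ distinct entries. It has content consisting of $m$ values each used twice and $n-2m$ values each used once, so there are $n-m$ distinct values in total. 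Each value that occurs twice must place one copy in the first row outside the corner, since the column is strict.

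The cleanest way to finish is a counting argument on rows. The first row is weakly increasing and the corner entry is the smallest value $1$. The first column entries are distinct, say $c_1=1<c_2<\dots<c_{11}$, and the arm entries of the first row are all $\ge 1$. The hook condition forces every value to appear somewhere, and the column holds only $11$ distinct values. Any doubled value $v$ therefore has at most one copy in the column, so at least one copy sits in the arm. The arm has length $n-11$.

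The count that must fail is as follows. There are $n-m$ distinct values, of which at most $11$ lie in the column, so at least $n-m-11$ distinct values appear only in the arm. Each doubled value also needs at least one arm slot. We must check whether these requirements exceed $n-11$ arm slots. Here we expect the main obstacle: the naive count gives only $(n-m-11)+(\text{doubled values in the column})$, which need not exceed $n-11$. So the simple count does not obviously give the vanishing, and the sharper input is the standard fact that $K_{\lambda\mu}\ne0$ iff $\lambda\trianglerighteq\mu$. We will therefore check dominance directly. The first column of $\lambda$ has length $11$, while $\mu=(2^m,1^{n-2m})$ has $n-m$ rows. Dominance $\lambda\trianglerighteq\mu$ forces $\ell(\lambda)\le\ell(\mu)$, i.e.\ $11\le n-m$. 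The hypothesis $m\ge n-10$ gives $n-m\le 10<11$, so $\lambda\not\trianglerighteq\mu$ and $K_{\lambda\mu}=0$.

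The restriction $n\le 20$ is then only needed so that $2m\le n$ is compatible with $m\ge n-10$. The real content is the identification $H^{11,0}(\Mb_{1,n})\cong V_{n-10,1^{10}}$ from \cite{CLP23eleventh}, which we take as the key cited input.
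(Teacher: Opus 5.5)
Your proof is correct. It reaches the vanishing by a genuinely different computation of the invariants, although it rests on the same cited input as the paper: each Hodge summand of $H^{11}(\Mb_{1,n})$ is $V_{(n-10,1^{10})}$. This is exactly \cite[Proposition~2.3]{CLP23eleventh}, so you can replace your Pieri-rule telescoping sketch with that citation.

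\textbf{The paper's route.} It uses $V_{(n-10,1^{10})}\cong\bigwedge^{10}\mathrm{Std}_n$ and restricts $\mathrm{Std}_n$ to $\mathbb{S}_2^m$ as $\mathbf{1}^{\oplus(n-m-1)}\oplus\chi_1\oplus\cdots\oplus\chi_m$. Only $\bigwedge^{10}\mathbf{1}^{\oplus(n-m-1)}$ survives, so the invariants have dimension exactly $\binom{n-m-1}{10}$.

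\textbf{Your route.} You use Frobenius reciprocity and Young's rule to identify this dimension with the Kostka number $K_{(n-10,1^{10}),(2^m,1^{n-2m})}$. You then kill it by the length obstruction $\ell(\lambda)=11>10\ge n-m=\ell(\mu)$.

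\textbf{What each buys.} Your approach is more general. It applies verbatim to any Young subgroup $\mathbb{S}_\mu$, giving vanishing exactly when $\ell(\mu)\le 10$. Since the hook Kostka number is $\binom{\ell(\mu)-1}{10}$ (corner $=1$, then choose the other $10$ column entries), it even recovers the paper's exact count. The paper's argument, by contrast, is elementary linear algebra that avoids symmetric-function combinatorics.

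\textbf{A presentational fix.} Your ``main obstacle'' paragraph is an unnecessary detour. You had already observed that the first column needs $11$ distinct entries while the content has only $n-m\le 10$ distinct values; that finishes the proof on the spot. The appeal to the dominance criterion restates this same column observation, and only the easy direction ($K_{\lambda\mu}\neq 0\Rightarrow\ell(\lambda)\le\ell(\mu)$) is ever used.
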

\begin{proof}
By \cite[Proposition~2.3]{CLP23eleventh}, the $\mathbb S_n$-representation underlying each Hodge summand of $H^{11}(\Mb_{1,n})$ is the hook representation
$$
V_{(n-10,1^{10})}
\cong
\bigwedge^{10}\mathrm{Std}_n,
$$
where $\mathrm{Std}_n$ denotes the standard representation of $\mathbb S_n$. Restricting the standard representation to $\mathbb{S}_2^m$ gives
$$
\operatorname{Res}^{\mathbb S_n}_{\mathbb{S}_2^m}\mathrm{Std}_n
\cong
\mathbf{1}^{\oplus(n-m-1)}
\oplus
\chi_1\oplus\cdots\oplus\chi_m,
$$
where $\chi_i$ is the nontrivial character of the $i$th factor of $\mathbb{S}_2^m$. Since the characters $\chi_1,\ldots,\chi_m$ are independent, the product of any nonempty subset of them is nontrivial. Therefore,
$$
\left(\bigwedge^{10}\mathrm{Std}_n\right)^{\mathbb{S}_2^m}
\cong
\bigwedge^{10}\mathbf{1}^{\oplus(n-m-1)}.
$$
If $m\geq n-10$, then $n-m-1\leq 9$, so the right-hand side vanishes.
\end{proof}

\begin{corollary}\label{cor:graph-averaging-vanishing}
Let $\Gamma$ be a stable graph with a genus-one vertex $v$ incident to $n$ half edges. Suppose that a decoration of $\Gamma$ has the form
$$
\gamma=\alpha\otimes\beta,
\qquad
\alpha\in H^{11}(\Mb_{1,n}),
$$
where $\beta\in \bigotimes_{w \in V(\Gamma) \smallsetminus \{v\}} H^{k(w)}(\Mb_{g(w),n(w)} )$ denotes the decorations on all remaining vertices. Assume that $\operatorname{Aut}(\Gamma)$ contains a subgroup
$
K \cong \mathbb{S}_2^m
$
that fixes $v$ and $\beta$, and acts on the half edges incident to $v$ by $m$ disjoint transpositions. If $11 \leq n \leq 20$ and $m\geq n-10$, then the associated decorated graph vanishes upon pushforward along the gluing morphism $\xi_\Gamma$.
\end{corollary}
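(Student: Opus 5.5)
The plan is to use that the gluing pushforward is invariant under automorphisms of the graph, then average over $K$ and apply Lemma~\ref{lem:S2-invariants-H11} to the genus-one factor. Write $\xi_\Gamma\colon \Mb_\Gamma=\prod_{w}\Mb_{g(w),n(w)}\to \Mb_{g,n}$ for the gluing map. Every $\sigma\in\operatorname{Aut}(\Gamma)$ induces an automorphism $\sigma\colon \Mb_\Gamma\to\Mb_\Gamma$ with $\xi_\Gamma\circ\sigma=\xi_\Gamma$. This is the same observation used in the proof of Lemma~\ref{lem:span} to discard $S_2$ anti-invariant decorations on the $\D$ graphs. Consequently $\xi_{\Gamma*}(\sigma^*\gamma)=\xi_{\Gamma*}(\gamma)$ for every decoration $\gamma$. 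Averaging over $K$ then gives
\[
\xi_{\Gamma*}(\gamma)=\xi_{\Gamma*}\Bigl(\tfrac{1}{|K|}\sum_{\sigma\in K}\sigma^*\gamma\Bigr).
\]

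Next I compute the action of $K$ on $\gamma=\alpha\otimes\beta$. By hypothesis each $\sigma\in K$ fixes the vertex $v$ and fixes $\beta$. On the factor $\Mb_{1,n}$ at $v$, it acts by the permutation of the $n$ half edges at $v$ given by the corresponding product of the $m$ disjoint transpositions. Since $K$ fixes $v$, the induced automorphism of $\Mb_\Gamma$ respects the product decomposition: it is this relabeling on the $v$ factor and some automorphism of the other factors, under which $\beta$ is invariant. Hence $\sigma^*(\alpha\otimes\beta)=(\sigma|_v^*\alpha)\otimes\beta$. The average is therefore $\bar\alpha\otimes\beta$, where $\bar\alpha=\frac{1}{|K|}\sum_{\sigma}\sigma|_v^*\alpha$ is the projection of $\alpha$ onto $H^{11}(\Mb_{1,n})^{\mathbb{S}_2^m}$. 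Here $\mathbb{S}_2^m\subset\mathbb{S}_n$ is the Young subgroup acting on the $n$ half edges of $v$, regarded as markings of $\Mb_{1,n}$.

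Finally, since $11\le n\le 20$ and $m\ge n-10$ (and $2m\le n$ holds automatically because the transpositions are disjoint), Lemma~\ref{lem:S2-invariants-H11} gives $H^{11}(\Mb_{1,n})^{\mathbb{S}_2^m}=0$. So $\bar\alpha=0$, and therefore $\xi_{\Gamma*}(\gamma)=0$.

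The only delicate point is the bookkeeping in the second paragraph: the identification of $\Mb_\Gamma$ as a product depends on how half edges are labeled, and an automorphism may also permute other vertices or half edges. That is exactly why the hypotheses require $K$ to fix $v$ and $\beta$. I would state explicitly that the resulting $\mathbb{S}_2^m$-action on $H^{11}(\Mb_{1,n})$ is the restriction of the standard $\mathbb{S}_n$-action, so that the lemma applies. Any sign from odd-degree classes under permutation of tensor factors does not arise, because $K$ does not move $v$.
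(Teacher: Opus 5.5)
Your proposal is correct and follows essentially the same argument as the paper. Both use $\xi_\Gamma\circ\sigma=\xi_\Gamma$ to replace $\gamma$ by its $K$-average, use the $K$-invariance of $\beta$ to write that average as $\bar\alpha\otimes\beta$, and kill $\bar\alpha$ with Lemma~\ref{lem:S2-invariants-H11}; your remarks that $2m\le n$ is automatic and that no sign issues arise because $v$ is fixed are bookkeeping the paper leaves implicit.
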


\begin{proof}
For every \(\sigma\in K\), we have
$
\xi_\Gamma\circ\sigma=\xi_\Gamma.
$
Consequently,
\[\xi_{\Gamma *}(\gamma) = \frac{1}{|K|}
\xi_{\Gamma *}\left(\sum_{\sigma\in K}
\sigma\cdot\gamma\right).\]
Thus, it suffices to show the right hand side vanishes. Since $\beta$ is $K$-invariant, Lemma~\ref{lem:S2-invariants-H11} gives
\begin{equation*}
    \frac{1}{|K|}\sum_{\sigma\in K}\sigma \cdot \gamma= \Bigg(\frac{1}{2^m}\sum_{\sigma\in K}\sigma \cdot \alpha \Bigg)\otimes \beta=0. \qedhere
\end{equation*}
\end{proof}

\subsection{Proving \texorpdfstring{$\Mb_{4,8}$}{Mb(4,8)} is tautological} \label{sec:48}
In this section, we prove that all odd-degree cohomology groups of
\(\Mb_{4,8}\) vanish. Together with the known description of its
even-degree cohomology, this shows that its cohomology is tautological.

\begin{proposition}
    The cohomology ring of $\Mb_{4,8}$ is tautological; that is,
    $$H^*(\Mb_{4,8})=RH^*(\Mb_{4,8}).$$
\end{proposition}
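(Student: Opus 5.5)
Since \cite[Table 2]{CanningRarelyGorenstein} gives that the even cohomology of $\Mb_{4,8}$ is tautological, it suffices to show $H^k(\Mb_{4,8})=0$ for all odd $k$. The dimension of $\Mb_{4,8}$ is $17$, so by Poincar\'e duality it suffices to treat odd $k\le 17$. By the vanishing results \cite{AC98,BFP24} and \cite{CLP23eleventh}, $H^k(\Mb_{4,8})=0$ for odd $k\le 9$ and for $k=11$, since $H^{11}$ is nonzero only in genus $1$. This leaves $k=13,15,17$.

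For $k=13$ and $k=15$, Table \ref{tab:c-values} gives $c(4)=11\ge 8$. Hence \eqref{eq:oddsurj} shows every odd class is pushed forward from the boundary, and these classes lie in $S_\omega^*$ by \cite{CLPW24polynomial} and Lemma \ref{lem:h=s}.

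\begin{itemize}
\item For $k=15$ we can invoke Theorem \ref{thm:rels}(b): $S^{13,2}_\omega(\Mb_{4,8})$ has a basis given by the graphs of Figure \ref{fig:gentable} satisfying (R1)--(R10). Each such graph has a genus-one vertex carrying $\omega_{B\cup\{p\}}$ (or $\omega^{S_2}_B$, or $\omega_{B'\cup\{p,q\}}$). This requires at least $10$ of the markings to sit on that vertex, or $9$ in the $\M$ case. With only $8$ markings, no generator exists, so $S^{15}_\omega(\Mb_{4,8})=0$.
\item For $k=13$, the classes are generated by $\omega$-decorated graphs with one edge and no further decoration. The genus-one vertex then has at most $8+1=9<11$ half edges, so $H^{11}$ of that vertex vanishes.
\end{itemize}

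The main obstacle is $k=17$. Here \eqref{eq:oddsurj} requires $W_{17}H^{17}(\M_{4,8})=0$, which the CKgP gives. Generators are then boundary pushforwards of $H^{15}$ classes on the normalized boundary, so each is a decorated graph with a genus-one vertex carrying an $H^{11}$ decoration. The vertex must have $n\ge 11$ half edges, hence at least $3$ edges to compensate for only $8$ markings, together with $H^{2}$ or $H^4$ tautological decorations elsewhere.

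The plan is to enumerate the possible stable graphs of genus $4$ with $8$ legs and a genus-one vertex of valence $n$, where $11\le n\le 12$ or so. The remaining genus $3$ is distributed among self-loops at $v$, multiple edges to other vertices, and other vertices. Two cases arise:
\begin{itemize}
\item If some edges are self-loops at $v$, or parallel edges between $v$ and a single other vertex, then the automorphisms swapping the two half edges of each loop, or permuting parallel edges, give a subgroup $K\cong\mathbb S_2^m$. This $K$ fixes the decoration $\beta$ after symmetrizing the tautological decorations on the other vertices.
\item If $m\ge n-10$, Corollary \ref{cor:graph-averaging-vanishing} kills the class.
\end{itemize}
The remaining configurations are those with $v$ joined by single edges to distinct higher-genus vertices. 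For these I would use the fact that the complementary vertices have small $(g,n)$. Their cohomology in the needed degrees is then tautological and low-degree, so the graph reduces, via relations in $H^2$ or $H^4$ of those vertices (as in \eqref{push1}--\eqref{push3}), to graphs with more loops or rational tails. Rational tails with few points are in turn handled by the $H^{11}$ relations on $v$.

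The delicate point is checking that every case falls under the corollary or reduces to one that does. I expect this bookkeeping to be the bulk of the work.
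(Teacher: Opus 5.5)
Your treatment of odd $k\le 15$ is fine. For $k=13$, your observation that a one-edge $\omega$-graph has an $\omega$-vertex of valence at most $9$ is a valid substitute for citing \cite[Theorem~1.7]{CLPW24polynomial}. The gap is at $k=17$, which you leave as ``bookkeeping'', and your case split there targets the wrong cases.

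Write a class as $\omega$-decorated graphs with $E$ edges and tautological decoration degree $d$. Then $2E+d=6$, and the $\omega$-vertex $v$ needs at least three edge half-edges (not three edges), so $E\ge 2$. For $E=2$ you are forced to have a loop at $v$ plus an edge to a genus-$2$ vertex carrying an $H^2$ class; the loop flip kills this ($n=11$, $m=1$). For $E=3$ there is no room for any tautological decoration.

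\emph{The configurations you call remaining are not a problem.} Single edges from $v$ to distinct vertices reduce to the star with three bare genus-one leaves and all $8$ legs on $v$. There $n=11$, and exchanging two isomorphic leaves is a transposition at $v$, so Corollary~\ref{cor:graph-averaging-vanishing} applies directly. Leaf-exchange symmetry is missing from your list, and you also need it for $v$ with a loop and two genus-one leaves.

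\emph{The cases that actually escape the corollary have several loops at $v$.} Two examples:
\begin{itemize}
\item a single genus-one vertex with $8$ legs and three self-loops, where $n=14$ and there are at most $m=3$ disjoint transpositions;
\item $v$ with $8$ legs, two self-loops and an edge to a bare genus-one vertex, where $n=13$ and $m\le 2$.
\end{itemize}
In both, $m<n-10$. Your fallback, relations in $H^2$ or $H^4$ of complementary vertices, has nothing to act on, since these graphs carry no tautological decoration. The classes do vanish, but only by a further argument. The loop-flip invariants of $V_{(n-10,1^{10})}\cong\bigwedge^{10}\mathrm{Std}_n$ are $\bigwedge^{10}$ of a $10$-dimensional space, hence a line. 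An automorphism exchanging two loops acts on that line by $-1$, so you must average over the whole automorphism group, not just an $\mathbb S_2^m$.

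The paper sidesteps this by not enumerating graphs on $\Mb_{4,8}$ at all.
\begin{enumerate}
\item It disposes of the separating divisors by K\"unneth: each factor has too few markings to carry $H^{13}$ or $H^{15}$, by \cite[Theorem~1.7]{CLPW24polynomial}, Lemma~\ref{lem:h=s} and Figure~\ref{fig:gentable}. This leaves only $(\eta_1)_*H^{15}(\Mb_{3,10})$.
\item It uses that $H^{15}(\Mb_{3,10})=S^{15}_\omega(\Mb_{3,10})$ is spanned by six explicit families: $\S^1_{[10],\varnothing,\varnothing}$, $\M^1_{[10]\smallsetminus i,i,\varnothing}$, $\N^1_{[10],\varnothing,\varnothing}$, $\L_{[10],\varnothing}$, $\D_{[10],\varnothing}$, $\P^q_{[10],\varnothing}$.
\item None of these has a loop at the $\omega$-vertex. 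So gluing $s$ and $t$ creates exactly the symmetry the corollary needs: $n=11$, $m=1$, or $n=12$, $m=2$ for $\N$ and $\D$.
\end{enumerate}
Passing through this generating set in effect rewrites the multi-loop graphs before gluing. To complete your route, you need either this reduction to the $\Mb_{3,10}$ generators or the strengthened averaging argument, followed by a genuinely exhaustive enumeration.
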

\begin{proof}
    By \cite[Theorem 8]{CanningRarelyGorenstein} the even degree cohomology of $\Mb_{4,8}$ is tautological. It therefore remains to show that its odd degree cohomology vanishes. By \cite{BFP24} and \cite{CLP23eleventh},
    $$H^k(\Mb_{4,8})=0 \qquad \text{for }k\in\{1,3,5, \ldots, 11\}. $$ 
    Since $\dim \Mb_{4,8}=17$, Poincaré duality reduces the remaining odd degree vanishing to degrees, $13,15$, and $17$. 
    
    By \cite[Theorem~1.7]{CLPW24polynomial},
    \(
    H^{13}(\Mbargn)=0 
    \)
    whenever $n<10$.
    In particular $H^{13}(\Mb_{4,8})=0$, so it remains to consider degrees $15$ and $17$. As in the preceding sections, we represent the remaining odd degree classes by decorated graphs and show that these classes vanish. 

Let $k=15$. By Lemma \ref{lem:h=s}, $H^{15}(\Mb_{4,8})=S_\omega^{15}(\Mb_{4,8})$ thus we may refer to Figure \ref{fig:gentable} for a list of generators. When $(g,n)=(4,8)$, there are not enough marked points to realize any of these generators. Therefore $H^{15}(\Mb_{4,8})=0$.

Next, let $k=17$. Equation \eqref{eq:oddsurj} gives a surjection
\[
H^{15}(\widetilde{\partial\M_{4,8}}) \to H^{17}(\Mb_{4,8}).
\]
Each separating boundary divisor is the image of a gluing map $\Mb_{a,A \cup s} \times \Mb_{4-a,A^c \cup t} \to \Mb_{4,8}$ where $|A \cup s|, |A^c \cup t| \leq 9$.
By the Künneth decomposition,
\begin{align}
  H^{15}(\Mb_{a, A \cup s} \times \Mb_{4-a, A^c \cup t}) &=  H^{15}(\Mb_{a, A \cup s} ) \otimes H^{0}(\Mb_{4-a, A^c \cup t} )  \oplus H^{13}(\Mb_{a, A \cup s} ) \otimes H^{2}(\Mb_{4-a, A^c \cup t} ) \label{eq: H^15 of separating boundary for 4,8}\\ & \oplus H^{2}(\Mb_{a, A \cup s} ) \otimes H^{13}(\Mb_{4-a, A^c \cup t} ) \oplus H^{0}(\Mb_{a, A \cup s} ) \otimes H^{15}(\Mb_{4-a, A^c \cup t} ).  \notag
\end{align}

The $H^{13}$-terms vanish because each factor has too few marked points.  Lemma \ref{lem:h=s} applies to both $({a,A \cup s})$ and $(4-a, A^c \cup t)$. Figure \ref{fig:gentable} shows that these factors have too few marked points to support a nonzero generator of $S_\omega^{15}$.

Thus, the only potential nonzero contribution comes from the image of $H^{15}(\Mb_{3,[8]\cup \{s,t\}})$ under the self-gluing map. Here, we identify $s, t$ with the last two markings in $[10]$. By Lemma \ref{lem:h=s}, we obtain the following generating set from Figure \ref{fig:gentable}:
\begin{align}\label{eq:gs}
\begin{split}
    \S^1_{[10],\emptyset,\emptyset}& = \GraphSthreeten \\
    \M^1_{[10]\setminus i,i,\emptyset}&=\GraphMthreeten \\
    \N^1_{[10], \emptyset, \emptyset}&=\GraphNthreeten\\
    \L_{[10],\emptyset}&=\GraphLthreeten\\
    \D_{[10],\emptyset}&=\GraphDthreeten\\
    \P^q_{[10],\emptyset}&=\GraphPthreeten   
\end{split}
\end{align}

\noindent Corollary \ref{cor:graph-averaging-vanishing} now shows the images of all these generators under the self-gluing map
\begin{equation}\label{eq:eta gluing M_3,10 to M_4,8}
\eta_1 \colon \Mb_{3,[8]\cup\{s,t\}}\longrightarrow \Mb_{4,8},
\end{equation}
which glues the marked points \(s\) and \(t\), vanishes.
For example, to show that $(\eta_1)_*(\S^1_{[10],\emptyset,\emptyset}) = 0$, we consider the graph
\begin{align*}
   \Gamma_1= \GraphSthreetenpushforward
\end{align*}
 obtained by gluing $s$ and $t$ on $\S^1_{[10],\varnothing,\varnothing}$ and the decoration  $\gamma_1= \omega_{[10] \cup \{p\}} \otimes 1 \otimes 1$.
Then $\mathrm{Aut}(\Gamma_1) \cong\mathbb{S}_2$ fixes $v$ and $\beta=1 \otimes 1$. Since $1 \geq 11-10$, Corollary \ref{cor:graph-averaging-vanishing} shows $(\eta_1)_*(\S^1_{[10], \emptyset,\emptyset})=0$.
A similar argument is used to show the vanishing of the pushforward under $\eta_1$ of the other generators. Indeed, each of these pushforwards is represented by a decorated graph obtained by adding an edge to one of the graphs in \eqref{eq:gs} between $s$ and $t$. This always results in a graph whose  automorphism group has a subgroup acting in the manner required to apply Corollary \ref{cor:graph-averaging-vanishing}.
\end{proof}

\begin{remark}
This result is sharp in genus $4$ in the sense that odd cohomology classes appear once we add one more marking. Indeed, by Theorem \ref{thm:rels} and Figure \ref{fig:gentable}, we have $\M^1_{[9],\varnothing, \varnothing} \neq 0$, so $H^{15}(\Mb_{4,9}) \neq 0$.
\end{remark}

\subsection{Proving \texorpdfstring{$\Mb_{5,7}$}{Mb(5,7)} is tautological}\label{sec:57}
By Lemma \ref{lem: implies taut for many g,n}, the following proposition is the final case needed to prove Theorem \ref{thm:rh}.

\begin{proposition}
    The cohomology ring of $\Mb_{5,7}$ is tautological; that is, $$H^*(\Mb_{5,7})=RH^*(\Mb_{5,7}).$$
\end{proposition}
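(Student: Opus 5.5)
The plan follows the template of the $\Mb_{4,8}$ case. Even-degree cohomology of $\Mb_{5,7}$ is tautological by \cite[Table 2]{CanningRarelyGorenstein}, so it suffices to show all odd cohomology vanishes. Since $\dim \Mb_{5,7} = 19$, Poincar\'e duality reduces us to odd $k \leq 19$. By \cite{BFP24,CLP23eleventh}, $H^k = 0$ for $k \leq 11$ odd, and by \cite[Theorem~1.7]{CLPW24polynomial}, $H^{13}(\Mb_{5,7}) = 0$ because $n = 7 < 10$. For $k = 15$, Lemma \ref{lem:h=s} applies since $7 \leq c(5) = 9$. Every generator in Figure \ref{fig:gentable} needs at least $10$ markings on the genus-one vertex, counting internal half-edges: $|B| = 10$, or $|B'| = 9$ plus $p,q$. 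With only $7$ external markings and at most two half-edges, none of these generators exist, so $H^{15}(\Mb_{5,7}) = 0$.

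For $k = 17$ and $k = 19$, we use \eqref{eq:oddsurj}, which gives surjections $H^{k-2}(\widetilde{\partial\M_{5,7}}) \to H^k(\Mb_{5,7})$.

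\textbf{Separating divisors.} These are $\Mb_{a,A\cup s}\times\Mb_{5-a,A^c\cup t}$, and we take the K\"unneth decomposition.
\begin{itemize}
\item The odd factor lies in $H^{j}$ with $j \in \{11,13,15,17\}$.
\item $H^{11}$ needs genus $1$ and at least $11$ markings, but each factor has at most $8$ markings, so it vanishes.
\item $H^{13}$ needs at least $10$ markings, so it vanishes.
\item $H^{15}$ vanishes by Lemma \ref{lem:h=s} together with the marking count, exactly as for $\Mb_{5,7}$ itself.
\item $H^{17}$ of a factor with at most $8$ markings should be handled either by Poincar\'e duality to lower odd degree or by the already established smaller cases of Theorem \ref{thm:rh}, such as $(5,5)$, $(4,7)$ and $(3,9)$, applied inductively.
\end{itemize}

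\textbf{Self-gluing divisor.} The remaining contribution is the image of $H^{k-2}(\Mb_{4,[7]\cup\{s,t\}})$ under $\eta\colon \Mb_{4,9}\to\Mb_{5,7}$, gluing $s$ and $t$. For $k = 17$ this is $H^{15}(\Mb_{4,9}) = S^{15}_\omega(\Mb_{4,9})$ (Lemma \ref{lem:h=s} with $9 \leq c(4) = 11$). The restrictions in Figure \ref{fig:gentable} leave only generators with $9$ markings on the genus-one vertex: $\M^a_{[9],\varnothing,\varnothing}$ with $a \in \{0,1\}$, with $b = 3-a$ (and the $a=0$, $X = \varnothing$ case possibly excluded as unstable). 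Pushing forward along $\eta$ adds an edge between $s$ and $t$. The graph obtained from $\M^1_{[9],\varnothing,\varnothing}$ becomes a genus-one vertex with $11$ half-edges $[7]\cup\{s,t,p,q\}$, where $s$ and $t$ form a self-loop.

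This is the step I expect to be the main obstacle: the self-loop swap is one transposition, and Corollary \ref{cor:graph-averaging-vanishing} needs $m \geq 11 - 10 = 1$. So it applies, provided the automorphism swapping $s$ and $t$ fixes the other decorations; it does, since the other vertices are undecorated. The subtlety is that if $s$ and $t$ land on different vertices (for instance one on each side), no such automorphism exists. In that case we instead have to identify the glued graph as a $\D$- or loop-type class on $\Mb_{5,7}$ with too few markings and check that it vanishes, probably by applying the Corollary to the edge-swap automorphism of the resulting double edge.

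For $k = 19$ we need $H^{17}(\Mb_{4,9})$, which is Poincar\'e dual to $H^{13}(\Mb_{4,9}) = 0$ (since $9 < 10$). So $H^{19}(\Mb_{5,7})$ vanishes, and indeed $H^{19}$ is middle degree and would also follow from duality with $H^{19}$ itself. Combining the degree-by-degree vanishings with Poincar\'e duality then shows that all odd cohomology of $\Mb_{5,7}$ vanishes, which proves the proposition.
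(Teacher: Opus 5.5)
\textbf{Verdict: the proposal has a genuine gap in degree $19$.} Your handling of degrees $\le 17$ matches the paper's. For $k=17$, the only generator $\M^1_{[9],\varnothing,\varnothing}$ carries all of $[9]$, hence both $s$ and $t$, on its genus-one vertex. So your ``different vertices'' worry never arises, and Corollary~\ref{cor:graph-averaging-vanishing} applies with $n=11$, $m=1$.

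You assert that $H^{17}(\Mb_{4,9})$ is Poincar\'e dual to $H^{13}(\Mb_{4,9})$. But $\dim \Mb_{4,9}=3\cdot 4-3+9=18$, so $H^{17}(\Mb_{4,9})$ is dual to $H^{19}(\Mb_{4,9})$, and duality gives nothing. The claimed vanishing is in fact false. The paper's remark after the $\Mb_{4,8}$ case gives $H^{15}(\Mb_{4,9})\neq 0$, since $\M^1_{[9],\varnothing,\varnothing}\neq 0$. By hard Lefschetz, cupping with an ample class is injective $H^{15}(\Mb_{4,9})\to H^{17}(\Mb_{4,9})$ because $15<18$, so $H^{17}(\Mb_{4,9})\neq 0$. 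Your aside that $H^{19}(\Mb_{5,7})$ ``would follow from duality with $H^{19}$ itself'' is vacuous for the same reason. What must be shown is that $\eta_*\colon H^{17}(\Mb_{4,9})\to H^{19}(\Mb_{5,7})$ kills a nonzero group, and the proposal gives no argument for this.

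The paper does this by descending one more level. Since $\M_{4,9}$ has the CKgP, the map $H^{15}(\widetilde{\partial\M_{4,9}})\to H^{17}(\Mb_{4,9})$ is surjective, and each boundary component of $\Mb_{4,9}$ is treated in turn:
\begin{itemize}
\item Components with a genus-$0$ vertex contribute nothing, by Lemma~\ref{lem:h=s} and marking counts.
\item For $\Mb_{3,10}\times\Mb_{1,1}$ and $\Mb_{2,10}\times\Mb_{2,1}$, the composite with $\eta$ factors through $\Mb_{4,8}\times\Mb_{1,1}$, respectively $\Mb_{3,8}\times\Mb_{2,1}$. Their odd cohomology vanishes by the $(4,8)$ case and \cite[Table~3]{CanningRarelyGorenstein}.
\item For the nonseparating component $\Mb_{3,11}$, one lists the generator families of $H^{15}(\Mb_{3,11})=S^{15}_\omega(\Mb_{3,11})$ from Figure~\ref{fig:gentable} and glues two disjoint pairs of markings. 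Corollary~\ref{cor:graph-averaging-vanishing} is then applied to each resulting graph, sometimes with a proper subgroup $K\subsetneq\mathrm{Aut}(\Gamma)$. For example, for $\P^q_{B,y}$ the full automorphism group does not fix $\psi_q$.
\end{itemize}

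A smaller point concerns the separating divisors at $k=19$. The $H^{17}$ factors not disposed of by duality to a lower odd degree are $\Mb_{5,5}$, $\Mb_{5,6}$ and $\Mb_{4,8}$. So besides $(5,5)$ you specifically need the cases $(5,6)$ and $(4,8)$; the paper proves $(4,8)$ first and deduces $(5,6)$ from it. The cases $(4,7)$ and $(3,9)$ that you list are not what is needed there.
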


\begin{proof}
    By \cite[Theorem~8]{CanningRarelyGorenstein} the even degree cohomology of $\Mb_{5,7}$ is tautological so it remains to show that its odd degree cohomology vanishes. Since $\dim \Mb_{5,7}=19$, Poincaré duality reduces this to proving $H^k(\Mb_{5,7})=0$ for $k=13,15,17,19$. By \cite[Theorem~1.7]{CLPW24polynomial}, we have $H^{13}(\Mb_{5,7})=0$. We treat the remaining degrees case by case.

    Let $k=15$. By Lemma \ref{lem:h=s}, $H^{15}(\Mb_{5,7})=S_\omega^{15}(\Mb_{5,7})$. When $(g,n)=(5,7)$ there are not enough marked points to obtain a nonzero generator in Figure \ref{fig:gentable}.

    Let $k=17$. Equation \eqref{eq:oddsurj} gives the surjection
    \[
    H^{15}(\widetilde{\partial\M_{5,7}}) \to H^{17}(\Mb_{5,7}).
    \]
Each separating boundary divisor is the image of a gluing map $\Mb_{a,A \cup s} \times \Mb_{5-a,A^c \cup t} \to \Mb_{5,7}$ where $|A \cup s|, |A^c \cup t| \leq 8$.
For each separating divisor, the Künneth decomposition, together with the restrictions on the number of marked points, gives
\[
H^{15}(\Mb_{a,A \cup s} \times \Mb_{5-a,A^c \cup t})=0.
\]
Thus the only possible nonzero contribution comes from the image of $H^{15}(\Mb_{4,[7] \cup \{s,t\}})$ under the self-gluing map. By Lemma \ref{lem:h=s}, we refer to Figure \ref{fig:gentable} which gives the single generator
\[
\M^1_{[9], \emptyset,\emptyset}=\GraphMfournine \ ,
\]
where we identify $s$ and $t$ with the last two markings of $[9]$.
Consider its image under the self-gluing map
\begin{equation}\label{eq:eta gluing M_4,9 to M_5,7}
\eta_2: \Mb_{4,[7] \cup \{s,t\} } \to \Mb_{5,7},
\end{equation}
which glues the marked points $s,t$. Applying Corollary \ref{cor:graph-averaging-vanishing} shows the pushforward vanishes in $H^{17}(\Mb_{5,7})$.

Let $k=19$. We have the surjection
\[
H^{17}(\widetilde{\partial\M_{5,7}}) \to H^{19}(\Mb_{5,7}).
\]
We begin by analyzing the separating boundary divisors. 
First consider $\Mb_{a,A \cup s} \times \Mb_{5-a,A^c \cup t} \to \Mb_{5,7}$ with $a = 5$. In this case, $|A^c| \geq 2$, so $|A \cup s| \leq 6$. By the smaller cases of Theorem \ref{thm:rh} already proved, the cohomology of the $\Mb_{5,m}$ with $m \leq 6$ is tautological (see proof of 
Lemma \ref{lem: implies taut for many g,n}). Hence, these separating boundary divisors make no contribution.

Next consider $\Mb_{a,A \cup s} \times \Mb_{5-a,A^c \cup t} \to \Mb_{5,7}$ with $a = 4$. In this case, $|A \cup s| \leq 8$. We already showed that the $\Mb_{4,m}$ with $m \leq 8$ have tautological cohomology, so these separating boundary divisors likewise make no contribution. 

Finally when $a = 3$, \cite[Table~3]{CanningRarelyGorenstein} shows that all odd degree cohomology of $\Mb_{2,n}$ and $\Mb_{3,n}$ vanishes for $n\leq 8$. Consequently, none of the separating boundary divisors contributes to $H^{19}(\Mb_{5,7})$.

It remains to analyze the image of $H^{17}(\Mb_{4,[7]\cup\{s,t\}}) \to H^{19}(\Mb_{5,7})$. Using \eqref{eq:oddsurj}, the map
\[
H^{15}(\widetilde{\partial\M_{4,9}}) \to H^{17}(\Mb_{4,9})
\]
is surjective. The normalization of the boundary is
\begin{align*}
\widetilde{\partial\M_{4,9}}=\bigcup \Mb_{4,Y_1\cup \{r_1\}} \times\Mb_{0,Y_1^c\cup \{r_2\}} \sqcup \bigcup \Mb_{3,Y_2\cup \{v_1\}} \times \Mb_{1,Y_2^c\cup \{v_2\}} \\
\sqcup \bigcup \Mb_{2,Y_3\cup \{w_1\}} \times \Mb_{2,Y_3^c\cup \{w_2\}} \sqcup\Mb_{3,[9]\cup \{x,y\}}/S_2.
\end{align*}
Here, the union is over $Y_i \cup Y_i^c=[9]$, with $0 \leq |Y_1|\leq 7$, and $0 \leq |Y_j| \leq 9$ for $j=2,3$. By Lemma \ref{lem:h=s}, together with the marked point constraints established above, the separating boundary divisors with a genus $0$ component make no contribution. We proceed case by case for the remaining boundary divisors.

First, consider the boundary divisor $\Mb_{3,Y_2\cup \{v_1\}} \times \Mb_{1,Y_2^c\cup \{v_2\}}$, \cite[Table~3]{CanningRarelyGorenstein} shows the only possible nonzero generators can occur when $|Y_2 \cup \{v_1\}|=9,10$. Furthermore in these cases,
\begin{equation}\label{eq:H15 of M3,Y2 times M1,Y2^c}
\begin{aligned}
H^{15}(\Mb_{3,Y_2\cup \{v_1\}} \times \Mb_{1,Y_2^c\cup \{v_2\}})&= H^{15}(\Mb_{3,Y_2\cup \{v_1\}}) \otimes H^0(\Mb_{1,Y_2^c\cup \{v_2\}})\\
& \quad\oplus H^{13}(\Mb_{3,Y_2\cup \{v_1\}}) \otimes H^2(\Mb_{1,Y_2^c\cup \{v_2\}}).
\end{aligned}
\end{equation}

When \(|Y_2\cup\{v_1\}|=9\), Lemma~\ref{lem:h=s} and
Figure~\ref{fig:gentable} give
\(H^{15}(\Mb_{3,9})=0\), while
\cite[Theorem~1.7]{CLPW24polynomial} gives
\(H^{13}(\Mb_{3,9})=0\). To show vanishing of the pushforwards of classes in \eqref{eq:H15 of M3,Y2 times M1,Y2^c} when $|Y_2\cup \{v_1\}|=10$, we first note the following diagram commutes:
\[
\begin{tikzcd}
\Mb_{3,10} \times \Mb_{1,1} \arrow[r, "\xi"] \arrow[d, "\eta_1 \times \mathrm{id}"']
  & \Mb_{4,9} \arrow[d, "\eta_2"] \\
\Mb_{4,8} \times {\Mb_{1,1}} \arrow[r, "\xi"']
  & \Mb_{5,7}
\end{tikzcd}.
\]
\noindent By the preceding section, $H^k(\Mb_{4,8})=0$ for all odd $k$. Combining this with the commutative diagram above shows that the pushforward of \eqref{eq:H15 of M3,Y2 times M1,Y2^c} under the gluing map \eqref{eq:eta gluing M_4,9 to M_5,7} is zero.

We next consider the separating divisor $\Mb_{2,Y_3 \cup \{w_1\} } \times \Mb_{2,Y_3^c \cup \{w_2\} }$. By \cite[Table~3]{CanningRarelyGorenstein}, a nonzero contribution can occur only if  $|Y_3 \cup \{w_1\}|=10$ or $|Y_3^c \cup \{w_2\}|=10$. By symmetry it suffices to consider the first scenario. In this case
\begin{equation}\label{eq:H15 of M2,Y3 times M2,Y3^c}
\begin{aligned}
H^{15}(\Mb_{2,Y_3 \cup \{w_1\} } \times \Mb_{2,Y_3^c \cup \{w_2\} })&=H^{15}(\Mb_{2,Y_3 \cup \{w_1\} }) \otimes H^0(\Mb_{2,Y_3^c \cup \{w_2\} }) \\
& \quad \oplus H^{13}(\Mb_{2,Y_3 \cup \{w_1\} }) \otimes H^2(\Mb_{2,Y_3^c \cup \{w_2\} }).\\ 
\end{aligned}
\end{equation}

\noindent As in the preceding case, the following diagram commutes:
\[
\begin{tikzcd}
\Mb_{2,10} \times \Mb_{2,1} \arrow[r, "\xi"] \arrow[d, "\eta_3 \times \mathrm{id}"']
  & \Mb_{4,9} \arrow[d, "\eta_2"] \\
\Mb_{3,8} \times {\Mb_{2,1}} \arrow[r, "\xi"']
  & \Mb_{5,7}
\end{tikzcd}.
\]
Here $\eta_3: \Mb_{2,10} \to \Mb_{3,8}$ is the self gluing map. By \cite[Table~3]{CanningRarelyGorenstein}, $H^{k}(\Mb_{3,8})=0$ for all odd $k$. Thus \eqref{eq:H15 of M2,Y3 times M2,Y3^c} vanishes under further pushforward to $H^{19}(\Mb_{5,7})$ under the self gluing map \eqref{eq:eta gluing M_4,9 to M_5,7}.

The remaining boundary component is the nonseparating component
\(\Mb_{3,[9]\cup\{x,y\}}\). It remains to analyze its
degree-\(15\) cohomology. We identify $x$ and $y$ with the last two markings in $[11]$. By Lemma \ref{lem:h=s}, we obtain the following generating set from Figure \ref{fig:gentable}:

{\allowdisplaybreaks[4]
\begin{alignat}{2}
&\S^1_{B,\emptyset,\emptyset}
    &&\quad= \GraphLambdaoneB ,
\\
&\S^1_{B,i,\emptyset}
    &&\quad= \GraphLambdatwo ,
    \notag\\
&\S^1_{B,\emptyset,i}
    &&\quad= \GraphLambdathree ,
    \notag\\
&\S^0_{B,i,\emptyset}
    &&\quad= \GraphLambdafour ,
    \notag\\
&\M^1_{B',i,\emptyset}
    &&\quad= \GraphLambdafive ,
    \notag\\
&\M^1_{B',\{i,j\},\emptyset}
    &&\quad= \GraphLambdasix ,
    \notag\\
&\M^1_{B',i,j}
    &&\quad= \GraphLambdaseven ,
    \notag\\
&\N^1_{B,\emptyset,\emptyset}
    &&\quad= \GraphLambdanine ,
    \notag\\
&\N^1_{B,i,\emptyset}
    &&\quad= \GraphLambdaten ,
    \notag\\
&\L_{B,\emptyset}
    &&\quad= \GraphLambdaeleven ,
    \notag\\
&\L_{B,i}
    &&\quad= \GraphLambdatwelve ,
    \notag\\
&\D_{B,\emptyset}
    &&\quad= \GraphDLambdathirteen ,
    \notag\\
&\D_{B,i}
    &&\quad= \GraphDLambdaforteen ,
    \notag\\
&\P^q_{B,\emptyset}
    &&\quad= \GraphLambdafifteen ,
    \notag\\
&\P^q_{B,i}
    &&\quad= \GraphLambdasixteen ,
    \notag\\
&\P^i_{B,i}
    &&\quad= \GraphLambdaseventeen .
    \notag
\end{alignat}
}
\noindent For each generator, the composite pushforward
$$
H^{15}(\Mb_{3,11})
\longrightarrow H^{17}(\Mb_{4,9})
\longrightarrow H^{19}(\Mb_{5,7})
$$
is obtained by choosing two disjoint pairs of marked points and gluing the points in each pair. Considering all possible such choices, Corollary~\ref{cor:graph-averaging-vanishing} applies directly to every resulting graph. 
For clarity we write out two examples of how to apply Corollary~\ref{cor:graph-averaging-vanishing}.

\begin{example}
    Consider the generator $\M^1_{B',x,\varnothing}$ and push it forward to $H^{19}(\Mb_{5,7})$ by gluing the marked points $x,y$ and $s,t$. The resulting graph is
    \[
    \Lambda:= \GraphMoneBxpush.
    \]

    In the context of Corollary \ref{cor:graph-averaging-vanishing}, $v$ is the central vertex, $\alpha=\omega_{B'\cup \{p\} \cup \{q\} }$, $\beta= 1 \otimes 1$, and $K =\mathrm{Aut}(\Lambda)\cong (\mathbb{S}_2)^2$. Since $2\geq12-10$, Corollary
\ref{cor:graph-averaging-vanishing} implies that $\Lambda$ vanishes.
\end{example}

In this example, the subgroup \(K\) from Corollary~\ref{cor:graph-averaging-vanishing} coincides with the full automorphism group of \(\Lambda\). The next example illustrates a situation in which \(K\) must instead be chosen as a proper subgroup, since the full automorphism group does not satisfy the hypotheses of the corollary.

\begin{example}
    Consider the generator $\P^q_{B,y}$ and push it forward to $H^{19}(\Mb_{5,7})$ by gluing the marked points $x,y$ and $s,t$. The resulting graph is
    \[
\Delta= \GraphDLambdasixteenpush ~.
\]
In the context of Corollary \ref{cor:graph-averaging-vanishing}, $v$ is the leftmost vertex, $\alpha= \omega_{B \cup \{p\} }$, and $\beta= \psi_q$. Choose $K\cong \mathbb{S}_2$ to be the subgroup of $\mathrm{Aut}(\Delta)\cong (\mathbb{S}_2)^2$ generated by the transposition $(st)$. Then $K$ fixes $v$ and $\beta$ and $1 \geq 11-10$, so $\Delta$ vanishes. Note that if we had instead chosen $K =\mathrm{Aut}(\Delta)$, we would not be able to apply Corollary \ref{cor:graph-averaging-vanishing}, as the whole automorphism group does not fix the decoration $\psi_q$.
\end{example}

All other possible pushforwards vanish under similar arguments. This proves that all odd degree cohomology of $\Mb_{5,7}$ vanishes and completes the proof.
\end{proof}


\bibliography{references}

@misc{CLPW24polynomial,
      title={Moduli spaces of curves with polynomial point counts}, 
      author={Samir Canning and Hannah Larson and Sam Payne and Thomas Willwacher},
      year={2024},
      eprint={2410.19913},
      archivePrefix={arXiv},
      primaryClass={math.AG},
      url={https://arxiv.org/abs/2410.19913}, 
}

@misc{Lsurvey,
      title={Chow rings, cohomology rings, and point counts of moduli spaces of curves}, 
      author={Hannah Larson},
      year={2026},
      eprint={2606.29656},
      archivePrefix={arXiv},
      primaryClass={math.AG},
      url={https://arxiv.org/abs/2606.29656}, 
}

@article {GraberPandharipande,
    AUTHOR = {Graber, T. and Pandharipande, R.},
     TITLE = {Constructions of nontautological classes on moduli spaces of
              curves},
   JOURNAL = {Michigan Math. J.},
  FJOURNAL = {Michigan Mathematical Journal},
    VOLUME = {51},
      YEAR = {2003},
    NUMBER = {1},
     PAGES = {93--109},
      ISSN = {0026-2285,1945-2365},
   MRCLASS = {14H10 (14C15 14C25)},
  MRNUMBER = {1960923},
MRREVIEWER = {Jos\'e\ M.\ Mu\~noz Porras},
       DOI = {10.1307/mmj/1049832895},
       URL = {https://doi.org/10.1307/mmj/1049832895},
}

@incollection {FPnontautological,
    AUTHOR = {Faber, C. and Pandharipande, R.},
     TITLE = {Tautological and non-tautological cohomology of the moduli
              space of curves},
 BOOKTITLE = {Handbook of moduli. {V}ol. {I}},
    SERIES = {Adv. Lect. Math. (ALM)},
    VOLUME = {24},
     PAGES = {293--330},
 PUBLISHER = {Int. Press, Somerville, MA},
      YEAR = {2013},
      ISBN = {978-1-57146-257-2},
   MRCLASS = {14H10 (14C17 14F25)},
  MRNUMBER = {3184167},
MRREVIEWER = {Lars\ Halvard\ Halle},
}

@book {B,
    AUTHOR = {Belorousski, Pavel},
     TITLE = {Chow rings of moduli spaces of pointed elliptic curves},
      NOTE = {Thesis (Ph.D.)--The University of Chicago},
 PUBLISHER = {ProQuest LLC, Ann Arbor, MI},
      YEAR = {1998},
     PAGES = {65},
      ISBN = {978-0591-85283-7},
   MRCLASS = {99-05},
  MRNUMBER = {2716762},
       URL =
              {http://gateway.proquest.com/openurl?url_ver=Z39.88-2004&rft_val_fmt=info:ofi/fmt:kev:mtx:dissertation&res_dat=xri:pqdiss&rft_dat=xri:pqdiss:9832113},
}

@article {Keel,
    AUTHOR = {Keel, Sean},
     TITLE = {Intersection theory of moduli space of stable {$n$}-pointed
              curves of genus zero},
   JOURNAL = {Trans. Amer. Math. Soc.},
  FJOURNAL = {Transactions of the American Mathematical Society},
    VOLUME = {330},
      YEAR = {1992},
    NUMBER = {2},
     PAGES = {545--574},
      ISSN = {0002-9947,1088-6850},
   MRCLASS = {14C15 (14C17 14H10)},
  MRNUMBER = {1034665},
MRREVIEWER = {Steven\ E.\ Landsburg},
       DOI = {10.2307/2153922},
       URL = {https://doi.org/10.2307/2153922},
}

@article {PPZ,
    AUTHOR = {Pandharipande, Rahul and Pixton, Aaron and Zvonkine, Dimitri},
     TITLE = {Relations on {$\overline{ M}_{g,n}$} via {$3$}-spin
              structures},
   JOURNAL = {J. Amer. Math. Soc.},
  FJOURNAL = {Journal of the American Mathematical Society},
    VOLUME = {28},
      YEAR = {2015},
    NUMBER = {1},
     PAGES = {279--309},
      ISSN = {0894-0347,1088-6834},
   MRCLASS = {14H10 (14N35)},
  MRNUMBER = {3264769},
MRREVIEWER = {Shengmao\ Zhu},
       DOI = {10.1090/S0894-0347-2014-00808-0},
       URL = {https://doi.org/10.1090/S0894-0347-2014-00808-0},
}

@article{CLP24extensions,
    AUTHOR = {Canning, Samir and Larson, Hannah and Payne, Sam},
     TITLE = {Extensions of tautological rings and motivic structures in the
              cohomology of {$\overline {\mathcal M}_{g,n}$}},
   JOURNAL = {Forum Math. Pi},
  FJOURNAL = {Forum of Mathematics. Pi},
    VOLUME = {12},
      YEAR = {2024},
     PAGES = {Paper No. e23, 31},
      ISSN = {2050-5086},
   MRCLASS = {14H10 (14C15)},
  MRNUMBER = {4827752},
       DOI = {10.1017/fmp.2024.24},
       URL = {https://doi.org/10.1017/fmp.2024.24},
}

@article{CLP23eleventh,
    AUTHOR = {Canning, Samir and Larson, Hannah and Payne, Sam},
     TITLE = {The eleventh cohomology group of
              {${\overline{\mathcal{M}}_{g,n}}$}},
   JOURNAL = {Forum Math. Sigma},
  FJOURNAL = {Forum of Mathematics. Sigma},
    VOLUME = {11},
      YEAR = {2023},
     PAGES = {Paper No. e62, 18},
      ISSN = {2050-5094},
   MRCLASS = {14H10 (14C30)},
  MRNUMBER = {4616435},
MRREVIEWER = {Milagros\ Izquierdo},
       DOI = {10.1017/fms.2023.59},
       URL = {https://doi.org/10.1017/fms.2023.59},
}

@article{CL24chowMg,
    AUTHOR = {Canning, Samir and Larson, Hannah},
     TITLE = {The {C}how rings of the moduli spaces of curves of genus 7, 8,
              and 9},
   JOURNAL = {J. Algebraic Geom.},
  FJOURNAL = {Journal of Algebraic Geometry},
    VOLUME = {33},
      YEAR = {2024},
    NUMBER = {1},
     PAGES = {55--116},
      ISSN = {1056-3911,1534-7486},
   MRCLASS = {14H10 (14C15 14C17)},
  MRNUMBER = {4693574},
MRREVIEWER = {Yongnam\ Lee},
}

@misc{CL23chowMgbar,
      title={On the Chow and cohomology rings of moduli spaces of stable curves}, 
      author={Samir Canning and Hannah Larson},
      year={2023},
      eprint={2208.02357},
      archivePrefix={arXiv},
      primaryClass={math.AG},
      url={https://arxiv.org/abs/2208.02357}, 
}

@article{Getzler98semiclassical,
    AUTHOR = {Getzler, E.},
     TITLE = {The semi-classical approximation for modular operads},
   JOURNAL = {Comm. Math. Phys.},
  FJOURNAL = {Communications in Mathematical Physics},
    VOLUME = {194},
      YEAR = {1998},
    NUMBER = {2},
     PAGES = {481--492},
      ISSN = {0010-3616,1432-0916},
   MRCLASS = {14H10 (14H15 14H52)},
  MRNUMBER = {1627677},
MRREVIEWER = {Andreas\ Gathmann},
       DOI = {10.1007/s002200050365},
       URL = {https://doi.org/10.1007/s002200050365},
}

@article{AC98,
    AUTHOR = {Arbarello, Enrico and Cornalba, Maurizio},
     TITLE = {Calculating cohomology groups of moduli spaces of curves via
              algebraic geometry},
   JOURNAL = {Inst. Hautes \'Etudes Sci. Publ. Math.},
  FJOURNAL = {Institut des Hautes \'Etudes Scientifiques. Publications
              Math\'ematiques},
    NUMBER = {88},
      YEAR = {1998},
     PAGES = {97--127},
      ISSN = {0073-8301,1618-1913},
   MRCLASS = {14H10 (14C30)},
  MRNUMBER = {1733327},
MRREVIEWER = {Elham\ Izadi},
       URL = {http://www.numdam.org/item?id=PMIHES_1998__88__97_0},
}

@unpublished{BergstromData,
    author = {Bergstr{\"o}m, Jonas},
    title = {Cohomology of moduli spaces of curves},
    note = {\url{https://github.com/jonasbergstroem/Cohomology-of-moduli-spaces-of-curves}}
}

@article {BFP24,
    AUTHOR = {Bergstr\"om, Jonas and Faber, Carel and Payne, Sam},
     TITLE = {Polynomial point counts and odd cohomology vanishing on moduli
              spaces of stable curves},
   JOURNAL = {Ann. of Math. (2)},
  FJOURNAL = {Annals of Mathematics. Second Series},
    VOLUME = {199},
      YEAR = {2024},
    NUMBER = {3},
     PAGES = {1323--1365},
      ISSN = {0003-486X,1939-8980},
   MRCLASS = {14H10 (11G20 14C30 14F20 14G15)},
  MRNUMBER = {4740541},
MRREVIEWER = {R.\ Parthasarathi},
       DOI = {10.4007/annals.2024.199.3.7},
       URL = {https://doi.org/10.4007/annals.2024.199.3.7},
}

@article {CanningRarelyGorenstein,
    AUTHOR = {Canning, Samir},
     TITLE = {The tautological ring of
              {$\overline{\mathcal{M}}_{g,n}$} is rarely
              {G}orenstein},
   JOURNAL = {Geom. Topol.},
  FJOURNAL = {Geometry \& Topology},
    VOLUME = {29},
      YEAR = {2025},
    NUMBER = {7},
     PAGES = {3905--3919},
      ISSN = {1465-3060,1364-0380},
       DOI = {10.2140/gt.2025.29.3905},
   MRCLASS = {14C15 (14C17)},
  MRNUMBER = {4974994},
}

@misc{LiuM58M59,
    AUTHOR = {Liu, Yuhan},
     TITLE = {On the {C}how Rings of the Moduli Spaces
              {$\mathcal{M}_{5,8}$} and {$\mathcal{M}_{5,9}$}},
      YEAR = {2025},
    EPRINT = {2509.02950},
ARCHIVEPREFIX = {arXiv},
PRIMARYCLASS = {math.AG}
}
\bibliographystyle{alphaurl}

\end{document}